\newcommand{\leg}{L_{R}(E_{\mathcal{G}})}
\newcommand{\ta}{t_{\alpha}t_{\beta^{*}}}
\newtheorem{theorem}{Theorem}[section]
\newtheorem{prop}{Proposition}[theorem]
\newtheorem{lemma}[theorem]{Lemma}
\newtheorem{corr}[theorem]{Corollary}
\theoremstyle{definition}
\newtheorem{definition}[theorem]{Definition}
\theoremstyle{remark}
\newtheorem{remark}[theorem]{Remark}
\newtheorem{example}[theorem]{Example}
\newtheorem{claim}[theorem]{Claim}
\begin{document}

\title{Morita Equivalence of Graph and Ultragraph Leavitt Path Algebras}
\author[1]{Michael Mekonen Firrisa}
\date{May, 2020}
\affil[1]{\small{Dartmouth College, Hanover, New Hampshire}}
\maketitle

\begin{abstract}
The primary purpose of this thesis is to show every ultragraph Leavitt path algebra is Morita equivalent, as a ring, to a graph Leavitt path algebra. Takeshi Katsura, Paul Muhly, Aidan Sims, and Mark Tomforde showed every ultragraph $C^{*}$-algebra is Morita equivalent, in the $C^{*}$-sense, to a graph $C^{*}$-algebra; our result is an algebraic analog of this fact. Further, we will use our result to give an alternate proof for established conditions which guarantee the simplicity of an ultragraph Leavitt path algebra over a field. 
\end{abstract}

\section{Historical Background}

Before proceeding with the task at hand, we would like the reader to know the work presented here is the author's Ph.D. thesis. The history of Leavitt path algebras, as with most areas of mathematics, is the confluence of ideas from different fields brought about by the work of numerous mathematicians. The interested reader can find a great description of the history in \cite{Gene-Abrams-Pere-Ara-Mercedes-Siles-Molina:2017aa}, but we will give a comparatively brief treatment here.

Our story starts with an investigation into the \textit{invariant basis number} (IBN) property. A unital ring $R$ satisfies the IBN property if the free modules $R^{n}$ and $R^{m}$ being isomorphic implies $m=n$. Some rings (e.g., non-trivial commutative rings, matrix rings over a field) always satisfy the IBN property, but not all do. In the early 60's, the eponymous William G. Leavitt proved that given any two positive integers $n$, $m$, with $n\neq m$, there exists a ring $R$ such that $R^{n}\cong R^{m}$ \cite[Theorem 8]{Leavitt:1962aa}; for $k=|m-n|$, one can show 
$$R^{i}\cong R^{j} \iff i\equiv j\ (mod\ k).$$
If $m$ is the smallest integer such that there exists an integer $l>m$ with $R^{m}\cong R^{l}$, and $n>m$ is the smallest such integer for which $R^{m}\cong R^{n}$, we say $R$ has \textit{module type $(m,n)$}. Leavitt achieved his result in part by showing, given any field $K$, one can construct a unital $K$-algebra (see Definition \ref{Ralg}), $L_{K}(m,n)$, with module type $(m,n)$. The algebra $L_{K}(m,n)$ is at times referred to as the \textit{Leavitt algebra of type $(m,n)$}. Leavitt goes on to prove interesting results regarding these algebras. Of particular interest is the universal property they possess: if $A$ is a unital $K$-algebra with module type $(m,n)$, then there exists a unital $K$-algebra homomorphism
$$\phi: L_{K}(m,n)\to A.$$
We will see later on that Leavitt path algebras have a similar universal property.

While Leavitt path algebras do not live in the world of analysis, an important component of their story comes from \textit{$C^{*}$-algebras}. To briefly describe what a $C^{*}$-algebra is, let $A$ be a $\mathbb{C}$-algebra. Suppose $A$ is equipped with an involution map, $*:A\to A$, with $a^{*}$ denoting the image of $a\in A$,  such that 
$$(a+b)^{*}=a^{*}+b^{*},\ (ab)^{*}=b^{*}a^{*},\ \text{and }(ka)^{*}=\overline{k}a^{*}$$
for all $a,b\in A$ and $k\in\mathbb{C}$. Lastly, suppose $A$ is also equipped with a norm, $\parallel\cdot\parallel$, such that $\parallel ab \parallel \leq \parallel a \parallel \parallel b \parallel$, and 
$$\parallel aa^{*}\parallel=\parallel a \parallel^{2}=\parallel a^{*} \parallel^{2} (C^{*}\text{-identity}),$$
for all $a,b \in A$. If $A$ is a complete topological space with respect to the topology induced by the norm, $A$ is a $C^{*}$-algebra. The study of these objects has its roots in quantum mechanics. It has since grown into a vast area of functional analysis in its own right. Mathematicians have worked on classifying them, and giving explicit constructions of $C^{*}$-algebras with desired properties, for decades since. In line with this endeavor, Joachim Cuntz gave an explicit construction for obtaining unital, simple, infinite, separable $C^{*}$-algebras (\textit{Cuntz algebras}) in the late 70's \cite{Cuntz:1977aa}---a $C^{*}$-algebra $A$ is separable if it contains a dense countable subset, it is infinite if there exits $a\in A$ such that $aa^{*}=1$ and $a^{*}a\neq1$, and it is simple if it doesn't contain any non-trivial closed two-sided ideals (i.e., $A$ and $\{0\}$ are the only closed two-sided ideals). Mathematicians, including Cuntz himself, then took on the enterprise of generalizing Cuntz algebras, giving us \textit{Cuntz-Krieger algebras}. By 1982, Yasuo Watatani had noticed each Cuntz-Krieger algebra has a generating set whose elements and relations can be encoded by a directed graph with a finite number of vertices, where each vertex receives and emits a non-zero finite amount of edges. This laid the foundation for graph algebras. Of course, as time went on, graph algebras came to include more arbitrary directed graphs than those introduced by Watatani. Although it didn't immediately attract much attention, many had taken notice of the utility in the graph approach before the 90's were through. In showing one can deduce results regarding $C^{*}$-algebras by studying the properties of the much more tractable directed graphs, papers such as \cite{Alex-Kumjian-David-Pask-Iain-Raeburn--Jean-Renault:1997aa}, \cite{arghh}, and \cite{Alex-Kumjian-David-Alan-Pask-Iain-Raeburn:1998aa} helped to illumine the power and elegance of graph algebras. 

Motivated by purely infinite simple $C^{*}$-algebras, the mathematicians Pere Ara, K. R. Goodearl, and Enrique Pardo introduced an algebraic analog to these objects in the early 2000's: purely infinite simple rings \cite{Pere-Ara-K.-R.-Goodearl-Enrique-Pardo:2002aa}. The same group of mathematicians, along with M. A. Gonz\'alez-Barroso, set out to find ways to construct explicit examples of such rings. In \cite{Pere-Ara-M.A.-Gonzalez-Barroso-K.-R.-Goodearl--Enrique-Pardo:2003aa}, they introduced a way to construct a class of rings called \textit{fractional skew monoid rings}, which are themselves purely infinite simple rings. In investigating the algebraic $K$-theory of fractional skew monoid rings, the aforementioned authors introduced Leavitt path algebras in the same paper---having been inspired by the work of $C^{*}$-algebraists, the authors used the term ``graph algebras'' instead of ``Leavitt path algebras.'' As it turned out, another group of mathematicians had happened upon Leavitt path algebras as well.

During a conference held in 2004 on graph $C^{*}$-algebras, which some ring theorists were invited to attend, one of the ring theorists present, Gene Abrams, noticed the algebraic data of graph $C^{*}$-algebras is quite similar to objects known well by algebraists as \textit{path algebras}. Inspired by what he saw at the conference, Abrams, together with Aranda Pino, published \cite{Gene-Abrams-and-Gonzalo-Aranda-Pino:2005aa}. In their paper, Abrams and Aranda Pino define Leavitt path algebras to stand as an algebraic analog to graph $C^{*}$-algebras. Much like the initial results established for graph $C^{*}$-algebras, the main result of \cite{Gene-Abrams-and-Gonzalo-Aranda-Pino:2005aa} is a simplicity theorem for Leavitt path algebras; graph $C^{*}$-algebras are much older than Leavitt path algebras and have more established results, a lot of the research in Leavitt path algebras is thus dedicated to proving analogs of these results (e.g., this thesis). Now, Abrams had been studying Leavitt algebras prior to his joint work with Aranda Pino. His particular focus had been the Leavitt algebra $L_{K}(1,n)$. Through the course of their work, Abrams and Aranda Pino realized the Leavitt algebra $L_{K}(1,n)$ is an example of a Leavitt path algebra. It is this realization which imparted the ``Leavitt'' to Leavitt path algebras. The name ``Leavitt path algebra'' became ubiquitous due to the fact \cite{Gene-Abrams-and-Gonzalo-Aranda-Pino:2005aa} appeared in print before \cite{Pere-Ara-M.A.-Gonzalez-Barroso-K.-R.-Goodearl--Enrique-Pardo:2003aa}. Regardless, both \cite{Gene-Abrams-and-Gonzalo-Aranda-Pino:2005aa} and \cite{Pere-Ara-M.A.-Gonzalez-Barroso-K.-R.-Goodearl--Enrique-Pardo:2003aa} are taken to be the foundation of Leavitt path algebras. It's worth noting that Leavitt path algebras were always taken to be over a field in their original definition. It was not until Mark Tomforde's work in \cite{Tomforde:2011aa} that they were defined for algebras over unital commutative rings.

The main aim of this thesis is to establish the Morita equivalence of graph and ultragraph Leavitt path algebras as rings. As mentioned before, this endeavor mirrors a result from the world of $C^{*}$-algebras as well; a result which showed how graph $C^{*}$-algebras are related to another class of $C^{*}$-algebras: \textit{Exel-Laca} algebras. In their original formulation, Cuntz-Krieger algebras are $C^{*}$-algebras associated to finite matrices with entries in $\{0,1\}$. Watatani's work led to their generalization as graph algebras. However, by dropping the finite restriction on the associated matrices, with the only criteria being they don't contain a zero row, Exel-Laca algebras were introduced as another generalization of Cuntz-Krieger algebras \cite{Ruy-Exel-Marcelo-Laca:1999aa}. So, naturally, one is compelled to ask exactly how these two class of $C^{*}$-algebras are related, seeing they both generalize Cuntz-Krieger algebras. It was worked out relatively quickly that they are not the same---there are Exel-Laca algebras which cannot be realized as graph $C^{*}$-algebras \cite[Example 4.2]{Iain-Raeburn-Wojciech-Szymanski:2004aa}, and there are graph $C^{*}$-algebras which cannot be realized as Exel-Laca algebras \cite[Proposition A.16.2]{Tomforde:2002aa}. Then exactly how are they related? In \cite{Takeshi-Katsura-Paul-Muhly-Aidan-Sims--Mark-Tomforde:2010aa}, the authors showed the two classes are the same up to Morita Equivalence. In \cite{Tomforde:2003aa}, Tomforde introduced the notion of ultragraphs and ultragarph $C^{*}$-algebras as a way of unifying graph algebras and Exel-Laca algebras. The class of ultragraph $C^{*}$-algebras properly contains graph algebras and Exel-Laca algebras \cite[Section 5]{Tomforde:2003aaa}. Thus, to establish the Morita equivalence of graph algebras and Exel-Laca algebras, it suffices to show any ultragraph $C^{*}$-algebra is Morita equivalent to a graph algebra, and vice versa. That is precisely what was established in \cite{Takeshi-Katsura-Paul-Muhly-Aidan-Sims--Mark-Tomforde:2010aa}, an algebraic analog of which we wish to prove in this thesis.


\section{Graphs and Ultragraphs}

\begin{definition}\label{graph}
A \textit{graph}, $E=(E^{0},E^{1},r_{E},s_{E})$, consists of a set of countable vertices $E^{0}$, a set of countable edges $E^{1}$, and range and source maps $r_{E},\ s_{E}:E^{1}\to E^{0}$.
\end{definition}

A \textit{finite path} in $E$, $\alpha=e_{1}\ e_{2}\dots e_{n}$, is a sequence of edges such that $s_{E}(e_{i+1})=r_{E}(e_{i})$ for $1\leq i \leq n-1$. We set $|\alpha|:=n$ to be the \textit{length} of $\alpha$; the set of such paths of finite length is denoted by $E^{*}$ ($E^{0}$ is included in $E^{*}$ as paths of length 0). The range and source maps are extended to $E^{*}$ by $r_{E}(\alpha)=r_{E}(e_{n})$ and $s_{E}(\alpha)=s_{E}(e_{1})$; we set $r_{E}(v)=s_{E}(v)=v$ for $v\in E^{0}$. Given $\alpha,\beta\in E^{*}$, with $r_{E}(\alpha)=s_{E}(\beta)$, we get an element of $E^{*}$ by concatenating $\alpha$ with $\beta$ which is denoted by $\alpha\beta$. A path $\alpha$ such that $s_{E}(\alpha)=r_{E}(\alpha)$ is called a \textit{cycle}. Finally, we call $v\in E^{0}$ a \textit{singular vertex} if $|s_{E}^{-1}(v)|=0 \text{ or } \infty$.

\begin{remark}\label{whatitdo}
The reader might find, when reading about graph $C^{*}$-algebras in particular, a slightly different definition of a finite path: a sequence of edges $\alpha=e_{1}\ e_{2}\dots e_{n}$ such that $s_{E}(e_{i})=r_{E}(e_{i+1})$ for all $1\leq i\leq n-1$; further, $r_{E}(\alpha):=r_{E}(e_{1})$ and $s_{E}(\alpha):=s_{E}(e_{n})$. This difference in convention doesn't change any of the established results so long as one accounts for the reversal in roles of the range and source maps. The author has yet to see this convention used within the context of Leavitt path algebras.  
\end{remark}

Pictorially, we depict graphs in terms of dots and arrows: a dot for each vertex, an arrow for each edge, and the placement of the arrow indicates the source and range vertices of the associated edge. 

\clearpage

\begin{example}\label{gr1}
\end{example}
\begin{figure}[h!]
\begin{center}
\begin{tikzpicture}[shorten >=1pt,auto, scale=0.5]   
\tikzset{vertex/.style = {shape=circle, draw=black!100,fill=black!100, thick, inner sep=0pt, minimum size=2 mm}}
   \node[vertex](foo) [label=below:$v$] {$v$};
  \path[->,every loop/.style={>=latex, line width=1pt, looseness=50}] (foo)
         edge  [in=60,out=120,loop] node {$e_{1}$} ();
   \path[->,every loop/.style={>=latex, line width=1pt, looseness=50}] (foo)
         edge  [in=100,out=160,loop] node {$e_{n}$} ();
   \path[->,every loop/.style={>=latex, line width=1pt, looseness=50}] (foo)
         edge  [in=20,out=80,loop] node {$e_{2}$} ();
    \path[->,every loop/.style={>=latex, line width=1pt, looseness=50}] (foo)
         edge  [in=-20,out=40,loop] node {$e_{3}$} ();
    \node [shape=circle,minimum size=1.5em] (d2) at (1,-.2) {};
     \node [shape=circle,minimum size=1.5em] (d3) at (-1,.7) {};
     \path (d2) edge [bend left=90, loosely dotted, line width=1pt] node[pos=0.2]{} (d3);
  \end{tikzpicture}
  \caption{$E^{0}=\{v\},\ E^{1}=\{e_{1},e_{2},\dots,e_{n}\},\ \text{and } s_{E}(e_{i})=r_{E}(e_{i})=v\text{ for each } i.$}
  \label{im1}
  \end{center}
 \end{figure}
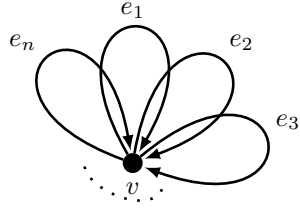

\begin{example}\label{gr2}
\end{example}
\begin{figure}[h!]
\begin{center}
\begin{tikzpicture}
\tikzset{vertex/.style = {shape=circle, draw=black!100,fill=black!100, thick, inner sep=0pt, minimum size=2 mm}}
\tikzset{edge/.style = {->, line width=1pt}}
\tikzset{v/.style = {shape=rectangle, dashed, draw, inner sep=0pt, minimum size=2em, minimum width=3em}}
    \node[vertex] (a) at (-9,1) [label=above:$v_{1}$]{};
    \node[vertex] (b) at (-6,1) [label=above:$v_{2}$]{};
    \node[vertex] (c) at (-3,1) [label=above:$v_{3}$]{};
    \node[vertex] (d) at (2,1) [label=above:$v_{n-1}$]{};
     \node[vertex] (e) at (5,1) [label=above:$v_{n}$]{};   
       
   \path (a) edge [->, >=latex, line width=.75pt, shorten <= 2pt, shorten >= 2pt, right] node[above]{$e_{1}$} (b);
   
   \path (b) edge [->, >=latex, line width=.75pt, shorten <= 2pt, shorten >= 2pt, right] node[above]{$e_{2}$} (c);

    \node [shape=circle,minimum size=1.5em] (d1) at (0,1) {};
    
     \path (c) edge [->, >=latex, line width=.75pt, shorten <= 2pt, shorten >= 2pt, right] (d1);

   \path (d1) to node {\dots} (d);

   \path (d) edge [->, >=latex, line width=.75pt, shorten <= 2pt, shorten >= 2pt, right] node[above]{$e_{n-1}$} (e);
   
   \end{tikzpicture}
   \caption{$E^{0}=\{v_{0},v_{1},\dots, v_{n}\},\ E^{1}=\{e_{1},e_{2},\dots,e_{n}\},\ s_{E}(e_{i})=v_{i-1} \text{ and } r_{E}(e_{i})=v_{i}\text{ for each } i.$}
  \label{im2}
\end{center}
\end{figure}
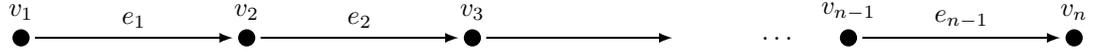

\begin{definition}\label{ultgraph}\cite{Tomforde:2003aa}   
An \textit{ultragraph} $\mathcal{G}=(G^{0},\mathcal{G}^{1},r,s)$ consists of a set of countable vertices $G^{0}$, a set of countable edges $\mathcal{G}^{1}$, a source map $s:\mathcal{G}^{1}\to G^{0}$, and a range map $r:\mathcal{G}^{1}\to \mathcal{P}(G^{0})\setminus\{\emptyset\}$ ($\mathcal{P}(G^{0})$ denotes the power set of $G^{0}$). Further, $\mathcal{G}^{0}$ denotes the smallest subset of $\mathcal{P}(G^{0})$ containing $\{v\}$ for each $v\in G^{0}$ and $r(e)$ for each $e\in\mathcal{G}^{1}$, and is closed under finite unions, finite intersections, and relative complements. 
\end{definition}

Similar to graphs, a \textit{finite path} in $\mathcal{G}$, $\alpha=e_{1}e_{2}\dots e_{n}$, is a sequence of edges such that $s(e_{i+1})\in r(e_{i})$ for $1\leq i\leq n-1$; we define its \textit{length} to be $|\alpha|:=n$. The set of finite paths in $\mathcal{G}$ is denoted by $\mathcal{G}^{*}$. We extend the range and source maps to $r:\mathcal{G}^{*}\to\mathcal{G}^{0}$ and $s:\mathcal{G}^{*}\to \mathcal{G}^{0}$ by $s(\alpha)=s(e_{1})$ and $r(\alpha)=r(e_{n})$; further, since $\mathcal{G}^{0}$ sits in $\mathcal{G}^{*}$ as paths of length 0, for $A\in\mathcal{G}^{0}$, we set $r(A)=s(A)=A$. Given $\alpha,\beta\in \mathcal{G}^{*}$, with $s(\beta)\in r(\alpha)$, we get an element of $\mathcal{G}^{*}$ by concatenating $\alpha$ with $\beta$; we denote this element by $\alpha\beta$. If $\alpha$ is a path such that $s(\alpha)\in r(\alpha)$, it is called a \textit{cycle}. Also, we call $v\in G^{0}$ a \textit{singular vertex} if $|s^{-1}(v)|=0 \text{ or } \infty$.

Again, similar to graphs, we depict ultragraphs in terms of dots and arrows: a dot for each vertex, an arrow for each edge, and the placement of the arrow indicates the source vertex and the range ``generalized'' vertex of the associated edge. The following is a simple example but one which will be useful later.

\clearpage

\begin{example}\label{ugr1}
\end{example}

\begin{figure}[h!]
\begin{center}
\begin{tikzpicture}
\tikzset{vertex/.style = {shape=circle, draw=black!100,fill=black!100, thick, inner sep=0pt, minimum size=2 mm}}
\tikzset{edge/.style = {->, line width=1pt}}
\tikzset{v/.style = {shape=rectangle, dashed, draw, inner sep=0pt, minimum size=2em, minimum width=3em}}
    \node[vertex] (a) at (0,-1) [label=below:$v_{0}$]{};
    \node[vertex] (b) at (-2,1) [label=above:$v_{1}$]{};
    \node[vertex] (c) at (0,1) [label=above:$v_{2}$]{};
     \node[vertex] (d) at (3,1) [label=above:$v_{n}$]{};
     
      \node (e) at (.5,1) {};
    \node (f) at (2.5,1) {};
    
     \path (e) -- (f) node [midway, sloped] {$\dots$};
      
    \path (a) edge [->, >=latex, line width=1pt] node[left]{$e$} (b);
   \path (a) edge [->, >=latex, line width=1pt] node[left]{$e$} (c);
  \path (a) edge [->, >=latex, line width=1pt] node[sloped, above]{$e$} (d);
   
    \node (g) at (3.5,1) {};
    \node (h) at (4,1) {};
    
    \path (g) -- (h) node [midway, sloped] {$\dots$};
    
      \node (i) at (.5,0) {};
    \node (j) at (1,0) {};
    
    \path (i) -- (j) node [midway, sloped] {$\dots$};
    
      \node (k) at (2.5,0) {};
    \node (l) at (3,0) {};
    
    \path (k) -- (l) node [midway, sloped] {$\dots$};
              
   \end{tikzpicture}
   \caption{$G^{0}=\{v_{0},v_{1},v_{2},\dots, v_{n},\dots\},\ \mathcal{G}^{1}=\{e\},\ \mathcal{G}^{0}=\{\mathcal{S}\in\mathcal{P}(G^{0}):\ \mathcal{S}, \text{ or } G^{0}\setminus\mathcal{S},\text{ is finite}\},\ s(e)=v_{0},\  r(e)=\{v_{1},v_{2},\dots,v_{n},\dots\}.$}
   \label{im3}
\end{center}
\end{figure}
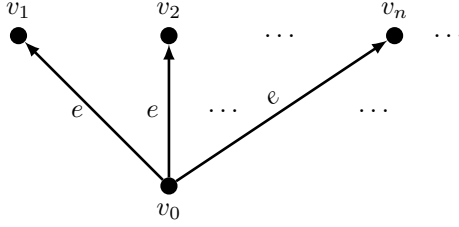
\vspace{.5cm}


\subsection{Constructing a Graph from an Ultragraph}

The following construction, from \cite{Takeshi-Katsura-Paul-Muhly-Aidan-Sims--Mark-Tomforde:2010aa}, plays a fundamental role in our work. For $n\in\mathbb{N}$, let $\{0,1\}^{n}$ denote the set of words of length $n$ whose entry are 0's and 1's. Given $\omega\in\{0,1\}^{n}$, and $1\leq i\leq n$, $\omega_{i}:=\text{$i$-th entry of } \omega$. We will at times write $\omega =(\omega_{1}\omega_{2}...\omega_{n})$ and set its length to be $|\omega|:=n$. Further, we define elements $(\omega,0),(\omega,1)\in\{0,1\}^{n+1}$ by $(\omega,0):=(\omega_{1}\omega_{2}...\omega_{n}0) \text{ and } (\omega,1):=(\omega_{1}\omega_{2}...\omega_{n}1);$ and for $m\leq n$, we define $\omega|_{m}\in\{0,1\}^{m}$ by $\omega|_{m}:=(\omega_{1}\omega_{2}...\omega_{m}).$ Finally, we let $0^{n}$ (similarly, $1^{n}$) denote the sequence of $n$ 0's (and $n$ 1's). We will also write, for example, $(0^{n-1},1)$ to denote the sequence of $(n-1)$ 0's followed by a 1.

With this in mind, let $\mathcal{G}$ be an ultragraph and $\{e_{1}, e_{2},...\}$ an enumeration of $\mathcal{G}^{1}$. For $\omega\in\{0,1\}^{n}\setminus\{0^{n}\}$, set 
$$r(\omega):=\Bigg(\bigcap\limits_{\{i:\ \omega_{i}=1\}}r(e_{i})\Bigg) \setminus \Bigg(\bigcup\limits_{\{j:\ \omega_{j}=0\}}r(e_{j})\Bigg)\subseteq G^{0},$$
also set
\begin{align*}
\hspace{1cm} & \Delta_{n}:=\{\omega\in\{0,1\}^{n}\setminus\{0^{n}\}:\ |r(\omega)|=\infty\},\ \ \ \Delta:=\bigcup\limits_{i=1}^{\infty}\Delta_{n},\\
& \Gamma_{0}:=\{(0^{n},1):\ n\geq0,\ |r((0^{n},1))|=\infty\},\ \ \ \ \Gamma_{+}:=\Delta\setminus\Gamma_{0},\\
&  W_{+}:=\bigcup\limits_{\omega\in\Delta}r(\omega)\subseteq G^{0},\ \ \ \ \text{ and } \ W_{0}:=G^{0}\setminus W_{+}.
\end{align*}

\begin{lemma}\label{sig}
There exists a function $\sigma: W_{+}\to \Delta$ such that $v\in r(\sigma(v))$ for each $v\in W_{+}$, and $\sigma^{-1}(\omega)$ is finite (possibly empty) for each $\omega\in\Delta$.
\end{lemma}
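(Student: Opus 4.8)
The plan is to encode each vertex by its pattern of membership in the range sets and then choose $\sigma(v)$ to be a suitable prefix of that pattern. For $v\in W_+$ define its \emph{type} $\tau(v)\in\{0,1\}^{\mathbb N}$ by $\tau(v)_i=1$ if $v\in r(e_i)$ and $\tau(v)_i=0$ otherwise. Unwinding the definition of $r(\omega)$, for a word $\omega$ of length $n$ one has $v\in r(\omega)$ if and only if $\omega=\tau(v)|_n$ and $\omega\neq 0^n$; thus the words $\omega$ with $v\in r(\omega)$ are exactly the nonzero initial segments of $\tau(v)$. Since $v\in W_+$ lies in $r(e_i)$ for at least one $i$, the first index $f(v):=\min\{i:\tau(v)_i=1\}$ exists. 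I would then record which prefixes land in $\Delta$ by setting $D(v):=\{n:\tau(v)|_n\in\Delta\}$.

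First I would establish the combinatorial shape of $D(v)$. Because imposing one more coordinate only shrinks the range, $r(\tau(v)|_{n+1})\subseteq r(\tau(v)|_n)$, so $|r(\tau(v)|_n)|$ is non-increasing in $n$; together with the fact that $\tau(v)|_n\neq 0^n$ exactly when $n\geq f(v)$, this forces $D(v)$ to be an interval $[f(v),L(v)]$ with $L(v)\in\{f(v),f(v)+1,\dots\}\cup\{\infty\}$. It is nonempty because $v\in W_+$ provides some $\omega\in\Delta$ with $v\in r(\omega)$. This splits $W_+$ into the \emph{bounded} vertices $W_B=\{v:L(v)<\infty\}$, which possess a longest $\Delta$-prefix, and the \emph{unbounded} vertices $W_A=\{v:L(v)=\infty\}$, for which every sufficiently long prefix lies in $\Delta$.

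On $W_B$ I would set $\sigma(v)=\tau(v)|_{L(v)}$, the longest prefix in $\Delta$; clearly $v\in r(\sigma(v))$. To see the fibers are finite, fix $\omega\in\Delta$ of length $n$ and suppose $\sigma(v)=\omega$ with $v\in W_B$. Then $n+1\notin D(v)$, and since $\omega$ contains a $1$ the only possibility is $|r(\tau(v)|_{n+1})|<\infty$; that is, $v$ lies in the child $r((\omega,b))$ with $b=\tau(v)_{n+1}$, and this child is finite. As $r(\omega)=r((\omega,0))\sqcup r((\omega,1))$ is infinite, at most one child is finite, so $\sigma^{-1}(\omega)\cap W_B$ is contained in a finite set.

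The genuinely delicate case, and the step I expect to be the main obstacle, is $W_A$: such a vertex has \emph{no} longest $\Delta$-prefix, and $r(\omega)$ is infinite for every prefix $\omega$, so one cannot assign all of $W_A$ naively. Here I would exploit that each $v\in W_A$ has infinitely many admissible targets $\tau(v)|_n$ ($n\geq f(v)$), all distinct and all in $\Delta$, and build $\sigma|_{W_A}$ to be injective. Enumerating $W_A=\{v_1,v_2,\dots\}$ (possible as $G^0$ is countable), I would choose $n_k\geq f(v_k)$ inductively so that $\tau(v_k)|_{n_k}$ avoids the finitely many words $\tau(v_1)|_{n_1},\dots,\tau(v_{k-1})|_{n_{k-1}}$ already used; this is possible since there are infinitely many candidate lengths. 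Setting $\sigma(v_k)=\tau(v_k)|_{n_k}$ makes $\sigma$ injective on $W_A$, so each fiber meets $W_A$ in at most one point. Combining the two cases, $\sigma^{-1}(\omega)$ is finite for every $\omega\in\Delta$, with $v\in r(\sigma(v))$ throughout, completing the construction.
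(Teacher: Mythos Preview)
Your argument is correct. The paper does not prove this lemma itself but simply cites \cite[Lemma 3.7]{Takeshi-Katsura-Paul-Muhly-Aidan-Sims--Mark-Tomforde:2010aa}, so there is no in-paper proof to compare against directly; that said, your construction is essentially the one given in the cited reference. The key observations---that $v\in r(\omega)$ forces $\omega$ to be a prefix of the membership sequence $\tau(v)$, that the set of admissible lengths $D(v)$ is an interval by monotonicity of $n\mapsto |r(\tau(v)|_n)|$, and that the bounded case is handled by taking the maximal prefix while the unbounded case is handled by an injective enumeration---are exactly the ingredients of the original proof. Your finiteness argument for the bounded fibers (via the finite child of $r(\omega)$) is clean and correct: any $v\in W_B$ with $\sigma(v)=\omega$ must land in a child $r((\omega,b))$ with $|r((\omega,b))|<\infty$, and since $r(\omega)$ is infinite at most one such child exists.
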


\begin{proof}
\cite[Lemma 3.7]{Takeshi-Katsura-Paul-Muhly-Aidan-Sims--Mark-Tomforde:2010aa}. 
\end{proof}

As for $\sigma$ in Lemma \ref{sig}, it can be extended  to a function $\overline{\sigma}:G^{0}\to\Delta\cup\{\emptyset\}$ by setting $\overline{\sigma}|_{W_{+}}:=\sigma$ and $\overline{\sigma}|_{W_{0}}:=\emptyset$. Abusing notation, we will write ``$\sigma$'' to also denote $\overline{\sigma}$. Now, for each $n\in\mathbb{N}$, set 
$$X(e_{n}):=\{v\in r(e_{n}):\ |\sigma(v)|<n\}\sqcup\{\omega\in\Delta_{n}:\ \omega_{n}=1\}\subseteq G^{0}\sqcup\Delta;$$
we have the following useful lemma.

 \begin{lemma}\label{lemxen}
For each $n\in\mathbb{N}$, $X(e_{n})$ is nonempty and finite.
\end{lemma}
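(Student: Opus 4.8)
The plan is to treat the two pieces of the disjoint union
$X(e_n)=\{v\in r(e_n):|\sigma(v)|<n\}\sqcup\{\omega\in\Delta_n:\omega_n=1\}$ --- call them $A_n$ and $B_n$ --- separately, and to organize everything around the partition of $G^0$ induced by the sets $r(\omega)$. The first thing I would record is some elementary bookkeeping: for $v\in G^0$ and $m\in\mathbb{N}$, let $\omega(v,m)\in\{0,1\}^m$ be the word with $\omega(v,m)_i=1\iff v\in r(e_i)$. Then $v\in r(\omega(v,m))$, the set $r(e_n)$ is the disjoint union of those $r(\omega)$ with $\omega\in\{0,1\}^n$ and $\omega_n=1$, and --- crucially --- $r(\omega(v,m))\subseteq r(\omega(v,n))$ whenever $m\geq n$, since imposing the membership pattern on more edges only shrinks the set. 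Because $r(e_n)\neq\emptyset$ by the definition of an ultragraph, at least one block of this partition is nonempty.

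Finiteness of $B_n$ is immediate, as it is a subset of the finite set $\{0,1\}^n$. For $A_n$ I would split according to whether $v\in W_0$ or $v\in W_+$. Since the empty word has length $0$, every $v\in W_0$ has $|\sigma(v)|=0<n$, so $A_n\cap W_0=r(e_n)\cap W_0$; and if $v\in r(e_n)\cap W_0$ then $v\notin r(\mu)$ for all $\mu\in\Delta$, yet $v\in r(\omega(v,n))$, forcing $\omega(v,n)\notin\Delta_n$ and hence $|r(\omega(v,n))|<\infty$. Thus $r(e_n)\cap W_0$ lies in the union of the finitely many finite blocks $r(\omega)$ with $\omega_n=1$ and $|r(\omega)|<\infty$, so it is finite. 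On the other side, $A_n\cap W_+\subseteq\{v\in W_+:|\sigma(v)|<n\}=\bigcup_{\omega\in\Delta,\ |\omega|<n}\sigma^{-1}(\omega)$, which is finite because there are only finitely many words of length less than $n$ and each fiber $\sigma^{-1}(\omega)$ is finite by Lemma \ref{sig}. Hence $A_n$, and therefore $X(e_n)$, is finite.

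For nonemptiness I would split into two cases. If some $\omega\in\{0,1\}^n$ with $\omega_n=1$ has $|r(\omega)|=\infty$, then $\omega\in B_n$ and we are done. Otherwise every such block is finite, so $B_n=\emptyset$ and $r(e_n)$ is a finite, nonempty union of finite sets; I then claim every $v\in r(e_n)$ already lies in $A_n$. Indeed, if $v\in W_0$ then $|\sigma(v)|=0<n$; and if $v\in W_+$ then $v\in r(\sigma(v))$ forces $\sigma(v)=\omega(v,m)$ with $m=|\sigma(v)|$ and $|r(\omega(v,m))|=\infty$, so the monotonicity observation rules out $m\geq n$ (that would give $r(\omega(v,m))\subseteq r(\omega(v,n))$ with the latter finite, since $\omega(v,n)_n=1$ in the current case), leaving $m<n$. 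Either way $v\in A_n$, so $A_n=r(e_n)\neq\emptyset$.

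The step I expect to be the main obstacle is this last case of the nonemptiness argument: it is the only place where the interaction between $\sigma$ and the nesting of the sets $r(\omega)$ genuinely matters, and it rests on the two facts that $v\in r(\sigma(v))$ (so that $\sigma(v)$ is forced to be the initial membership pattern $\omega(v,|\sigma(v)|)$) and that extending a word shrinks its range set. Keeping the $W_0$-versus-$W_+$ bookkeeping straight --- in particular remembering that vertices outside $W_+$ are automatically counted in $A_n$ because $|\emptyset|=0$ --- is the other point requiring care.
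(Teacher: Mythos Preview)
Your argument is correct and complete. The paper does not actually prove this lemma; it simply cites \cite[Lemma 3.11]{Takeshi-Katsura-Paul-Muhly-Aidan-Sims--Mark-Tomforde:2010aa}, so there is no in-paper proof to compare against. Your self-contained treatment---partitioning $r(e_n)$ by the words $\omega\in\{0,1\}^n$ with $\omega_n=1$, handling $A_n$ via the $W_0/W_+$ split (using finiteness of the fibres $\sigma^{-1}(\omega)$ from Lemma~\ref{sig} for $W_+$ and the finite blocks for $W_0$), and establishing nonemptiness by the dichotomy on whether some block $r(\omega)$ with $\omega_n=1$ is infinite---is exactly the natural argument and matches what the cited source does. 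The key observation you flagged, that $v\in r(\sigma(v))$ forces $\sigma(v)=\omega(v,|\sigma(v)|)$ and hence the monotonicity $r(\omega(v,m))\subseteq r(\omega(v,n))$ for $m\geq n$ rules out $|\sigma(v)|\geq n$ in the all-finite case, is precisely the point on which the lemma turns.
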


\begin{proof}
\cite[Lemma 3.11]{Takeshi-Katsura-Paul-Muhly-Aidan-Sims--Mark-Tomforde:2010aa}.      
\end{proof}

\begin{definition}\label{gfromug}
Let $\mathcal{G}=(G^{0},\mathcal{G}^{1}, r,s)$ be an ultragraph. Fix an enumeration of $\mathcal{G}^{1}$ and let $\sigma$ be the function described following Lemma \ref{sig}, we can then construct a graph $E_{\mathcal{G}}=(E_{\mathcal{G}}^{0},E_{\mathcal{G}}^{1},r_{E},s_{E})$ associated to $\mathcal{G}$. First, set
\begin{align*}
&E_{\mathcal{G}}^{0}:=\{v_{\iota}\}_{\iota\in G^{0}\sqcup\Delta}, \ \ E_{\mathcal{G}}^{1}:=\{e_{\kappa}\}_{\kappa\in\{W_{+}\sqcup\Gamma_{+}\}\sqcup\{(e_{n},x):\ e_{n}\in\mathcal{G}^{1},\ x\in X(e_{n})\}};
\end{align*}
then, define the range and source maps as follows:
\begin{align*}
&\text{if $\kappa=v\in W_{+}$, then } r_{E}(e_{v})=v_{v} \text{ and } s_{E}(e_{v})=v_{\sigma(v)},\\
&\text{if $\kappa=\omega\in\Gamma_{+}$, then } r_{E}(e_{\omega})=v_{\omega} \text{ and } s_{E}(e_{\omega})=v_{\omega|_{|\omega|-1}},\\
&\text{and finally, } r_{E}(e_{(e_{n},x)})=v_{x} \text{ and } s_{E}(e_{(e_{n},x)})=v_{s(e_{n})}.
\end{align*}
\end{definition}
For the sake of convenient notation, we will write ``$(e_{n},x)$'' for $e_{(e_{n},x)}$. We will also usually write the other elements of $E_{\mathcal{G}}^{0}$ as ``$v$'', or ``$\omega$.'' It's important to note $E_{\mathcal{G}}$ depends on our choice of ordering of $\mathcal{G}^{1}$ as well as our choice of $\sigma$ (please see Figures \ref{im4} and \ref{im5} in Example \ref{grugrex} for an illustration).

\begin{definition}\label{subgraph}
Let $E=(E^{0},E^{1},r_{E},s_{E})$ be a graph. A \textit{subgraph} $F=(F^{0},F^{1},r_{F},s_{F})$ of $E$ is a graph such that: $F^{0}\subseteq E^{0},\ F^{1}\subseteq E^{1},\ r_{F}=r_{E}|_{F^{1}},\ \text{and}\ s_{F}=s_{E}|_{F^{1}},$ where, for each $e\in F^{1}$, $s_{E}(e),r_{E}(e)\in F^{0}$.
\end{definition}

\begin{remark}\label{FsubE}
There is an important subgraph $F$ of $E_{\mathcal{G}}$ which we will call upon in later chapters; it is the subgraph where $F^{0}=E_{\mathcal{G}}^{0}\ \text{and}\ F^{1}=\{e_{\kappa}\}_{\kappa\in W_{+}\sqcup\Gamma_{+}}.$ For more details, see Section 4 of \cite{Takeshi-Katsura-Paul-Muhly-Aidan-Sims--Mark-Tomforde:2010aa}.
\end{remark}

\begin{figure}[h!]

\begin{example}\label{grugrex}
\raggedright

\vspace{1cm}

\begin{tikzpicture}
\tikzset{vertex/.style = {shape=circle, draw=black!100,fill=black!100, thick, inner sep=0pt, minimum size=1 mm}}
\tikzset{edge/.style = {->, line width=1pt}}
\tikzset{v/.style = {shape=rectangle, dashed, draw, inner sep=0pt, minimum size=2em, minimum width=3em}}

    \node[vertex] (v0) at (0,-2) [label=below:{$v_{0}$}]{};
    \node[vertex] (v1) at (-6,2) [label=left:$v_{1}$]{};
    \node[vertex] (v2) at (-4,2) [label=left:$v_{2}$]{};
    \node[vertex] (v3) at (-2,2) [label=left:$v_{3}$]{};
    \node[vertex] (v4) at (0,2) [label=left:$v_{4}$]{};   
    \node[vertex] (vk) at (6,2) [label=right:$v_{k}$]{};

    \node[vertex] (w1) at (-6,4) [label=left:$w_{1}$]{}; 
    \node[vertex] (w3) at (-6,6) [label=left:$w_{3}$]{}; 
    \node[vertex] (w4) at (-6,8) [label=left:$w_{4}$]{};
     \node[vertex] (w10) at (-6,10) [label=left:$w_{10}$]{};

      \node[vertex] (w2) at (-4,4) [label=left:$w_{2}$]{}; 
    \node[vertex] (w5) at (-4,6) [label=left:$w_{5}$]{}; 
    \node[vertex] (w9) at (-4,8) [label=left:$w_{9}$]{};
     \node[vertex] (w11) at (-4,10) [label=left:$w_{11}$]{};   
     
      \node[vertex] (w6) at (-2,4) [label=left:$w_{6}$]{}; 
    \node[vertex] (w8) at (-2,6) [label=left:$w_{8}$]{}; 
    \node[vertex] (w12) at (-2,8) [label=left:$w_{12}$]{};
     \node[vertex] (a1) at (-2,10) {} ; 
     
      \node[vertex] (w7) at (0,4) [label=left:$w_{7}$]{}; 
    \node[vertex] (w13) at (0,6) [label=left:$w_{13}$]{}; 
    \node[vertex] (a2) at (0,8) {};
     \node[vertex] (a3) at (0,10) {} ; 
     
      \node[vertex] (a4) at (6,4) {}; 
    \node[vertex] (a5) at (6,6) {}; 
    \node[vertex] (a6) at (6,8) {};
     \node[vertex] (a7) at (6,10) {} ;

     \path (w1) edge [->, >=latex, color=gray, bend left, dashed, line width=.5pt] (w2);
     \path (w2) edge [->, >=latex, color=gray, dashed, line width=.5pt] (w3);
     \path (w3) edge [->, >=latex, color=gray, bend right, dashed, line width=.5pt] (w4);
     \path (w4) edge [->, >=latex, color=gray, dashed, line width=.5pt] (w5);
     \path (w5) edge [->, >=latex, color=gray, dashed, line width=.5pt] (w6);
     \path (w6) edge [->, >=latex, color=gray, bend left, dashed, line width=.5pt] (w7);
     \path (w7) edge [->, >=latex, color=gray, dashed, line width=.5pt] (w8);
     \path (w8) edge [->, >=latex, color=gray, dashed, line width=.5pt] (w9);
     \path (w9) edge [->, >=latex, color=gray, dashed, line width=.5pt] (w10);
     \path (w10) edge [->, >=latex, color=gray, bend left, dashed, line width=.5pt] (w11);
     \path (w11) edge [->, >=latex, color=gray, dashed, line width=.5pt] (w12);
     \path (w12) edge [->, >=latex, color=gray, dashed, line width=.5pt] (w13);
     
      \node [shape=circle,minimum size=1.5em] (d1) at (2,4) {};
      
     \path (w13) edge [->, >=latex, color=gray, dashed, line width=.5pt] (d1);

       \path (v4) to node {\dots} (vk);
       
       \node [shape=circle,minimum size=1.5em] (d2) at (10,2) {};
       \path (vk) to node {\dots} (d2);

       \node [shape=circle,minimum size=1.5em] (d3) at (-6,12) {};
       \path (w10) -- (d3) node [midway, sloped] {$\dots$};

        \node [shape=circle,minimum size=1.5em] (d4) at (-4,12) {};
        \path (w11) -- (d4) node [midway, sloped] {$\dots$};

          \node [shape=circle,minimum size=1.5em] (d5) at (-2,12) {};
          \path (a1) -- (d5) node [midway, sloped] {$\dots$};

          \node [shape=circle,minimum size=1.5em] (d6) at (0,12) {};
          \path (a3) -- (d6) node [midway, sloped] {$\dots$};

          \node [shape=circle,minimum size=1.5em] (d7) at (6,12) {};
           \path (a7) -- (d7) node [midway, sloped] {$\dots$}; 
         
           \path (v0) edge [->, >=latex, line width=.5pt, shorten <= 2pt, shorten >= 2pt, right] node[sloped, above] {\scriptsize{$e_{1}$}} (v1);
           
            \path (v0) edge [->, >=latex, line width=.5pt, shorten <= 2pt, shorten >= 2pt, right] node[sloped, above] {\scriptsize{$e_{1}$}} (v2);

             \path (v0) edge [->, >=latex, line width=.5pt, shorten <= 2pt, shorten >= 2pt, right] node[sloped, above] {\scriptsize{$e_{1}$}} (v3);
             
              \path (v0) edge [->, >=latex, line width=.5pt, shorten <= 2pt, shorten >= 2pt, right] node[right] {\scriptsize{$e_{1}$}} (v4);
              
               \path (v0) edge [->, >=latex, line width=.5pt, shorten <= 2pt, shorten >= 2pt, right] node[sloped, above] {\scriptsize{$e_{1}$}} (vk);

                \path (v1) edge [->, >=latex, line width=.5pt, shorten <= 2pt, shorten >= 2pt, right] node[left] {\scriptsize{$e_{2}$}} (w1);
                
                 \path (w1) edge [->, >=latex, line width=.5pt, shorten <= 2pt, shorten >= 2pt, right] node[left] {\scriptsize{$e_{4}$}} (w3);
                 
                 \path (w3) edge [->, >=latex, line width=.5pt, shorten <= 2pt, shorten >= 2pt, right] node[left] {\scriptsize{$e_{5}$}} (w4);
                 
                 \path (w4) edge [->, >=latex, line width=.5pt, shorten <= 2pt, shorten >= 2pt, right] node[left] {\scriptsize{$e_{11}$}} (w10);

                 \path (v2) edge [->, >=latex, line width=.5pt, shorten <= 2pt, shorten >= 2pt, right] node[left] {\scriptsize{$e_{3}$}} (w2);
                 
                 \path (w2) edge [->, >=latex, line width=.5pt, shorten <= 2pt, shorten >= 2pt, right] node[left] {\scriptsize{$e_{6}$}} (w5);
                 
                 \path (w5) edge [->, >=latex, line width=.5pt, shorten <= 2pt, shorten >= 2pt, right] node[left] {\scriptsize{$e_{10}$}} (w9);
                 
                 \path (w9) edge [->, >=latex, line width=.5pt, shorten <= 2pt, shorten >= 2pt, right] node[left] {\scriptsize{$e_{12}$}} (w11);

                 \path (v3) edge [->, >=latex, line width=.5pt, shorten <= 2pt, shorten >= 2pt, right] node[left] {\scriptsize{$e_{7}$}} (w6);
                 
                 \path (w6) edge [->, >=latex, line width=.5pt, shorten <= 2pt, shorten >= 2pt, right] node[left] {\scriptsize{$e_{9}$}} (w8);
                 
                 \path (w8) edge [->, >=latex, line width=.5pt, shorten <= 2pt, shorten >= 2pt, right] node[left] {\scriptsize{$e_{13}$}} (w12);
                 
                 \path (w12) edge [->, >=latex, line width=.5pt, shorten <= 2pt, shorten >= 2pt, right] node[sloped, below] {$$} (a1);

                 \path (v4) edge [->, >=latex, line width=.5pt, shorten <= 2pt, shorten >= 2pt, right] node[left] {\scriptsize{$e_{8}$}} (w7);
                 
                 \path (w7) edge [->, >=latex, line width=.5pt, shorten <= 2pt, shorten >= 2pt, right] node[left] {\scriptsize{$e_{14}$}} (w13);
                 
                 \path (w13) edge [->, >=latex, line width=.5pt, shorten <= 2pt, shorten >= 2pt, right] node[sloped, below] {$$} (a2);
                 
                 \path (a2) edge [->, >=latex, line width=.5pt, shorten <= 2pt, shorten >= 2pt, right] node[sloped, below] {$$} (a3);

                 \path (vk) edge [->, >=latex, line width=.5pt, shorten <= 2pt, shorten >= 2pt, right] node[sloped, below] {$$} (a4);
                 
                 \path (a4) edge [->, >=latex, line width=.5pt, shorten <= 2pt, shorten >= 2pt, right] node[sloped, below] {$$} (a5);
                 
                 \path (a5) edge [->, >=latex, line width=.5pt, shorten <= 2pt, shorten >= 2pt, right] node[sloped, below] {$$} (a6);
                 
                 \path (a6) edge [->, >=latex, line width=.5pt, shorten <= 2pt, shorten >= 2pt, right] node[sloped, below] {$$} (a7);

             \node [shape=circle,minimum size=1.5em] (d8) at (1,0) {};
              \node [shape=circle,minimum size=1.5em] (d9) at (2,0) {};
              
            \path (d8) -- (d9) node [midway, sloped] {$\dots$};

            \node [shape=circle,minimum size=1.5em] (d10) at (4,0) {};
              \node [shape=circle,minimum size=1.5em] (d11) at (5,0) {};
              
            \path (d10) -- (d11) node [midway, sloped] {$\dots$};
         
   \end{tikzpicture}

\caption{Ultragraph $\mathcal{G}$: The dashed gray arrows illustrate the ordering mechanism for $\mathcal{G}^{1}$.}
\label{im4}
\end{example}
\end{figure}

\begin{center}
\begin{figure}[h!]
\begin{tikzpicture}[scale=0.9]

\tikzset{vertex/.style = {shape=circle, draw=black!100,fill=black!100, thick, inner sep=0pt, minimum size=1 mm}}
\tikzset{edge/.style = {->, line width=1pt}}
\tikzset{v/.style = {shape=rectangle, dashed, draw, inner sep=0pt, minimum size=2em, minimum width=3em}}
    \node[vertex] (v0) at (-8,0) [label=below:{$v_{v_{0}}$}]{};
    \node[vertex] (v1) at (-6,2) [label=left:$v_{v_{1}}$]{};
    \node[vertex] (v2) at (-4,2) [label=left:$v_{v_{2}}$]{};
    \node[vertex] (v3) at (-2,2) [label=left:$v_{v_{3}}$]{};
    \node[vertex] (v4) at (0,2) [label=left:$v_{v_{4}}$]{};   
    \node[vertex] (vk) at (6,2) [label=right:$v_{v_{k}}$]{}; 
    
      \node[vertex] (1) at (-6,0) [label=below:$v_{(1)}$]{};
    \node[vertex] (10) at (-4,0) [label=below:$v_{(10)}$]{};
    \node[vertex] (100) at (-2,0) [label=below:$v_{(100)}$]{};
    \node[vertex] (1000) at (0,0) [label=below:$v_{(1000)}$]{};   
    \node[vertex] (10k) at (6,0) [label=below:$v_{(1,0^{k-1})}$]{};

    \node[vertex] (w1) at (-6,4) [label=left:$v_{w_{1}}$]{}; 
    \node[vertex] (w3) at (-6,6) [label=left:$v_{w_{3}}$]{}; 
    \node[vertex] (w4) at (-6,8) [label=left:$v_{w_{4}}$]{};
     \node[vertex] (w10) at (-6,10) [label=left:$v_{w_{10}}$]{};

      \node[vertex] (w2) at (-4,4) [label=left:$v_{w_{2}}$]{}; 
    \node[vertex] (w5) at (-4,6) [label=left:$v_{w_{5}}$]{}; 
    \node[vertex] (w9) at (-4,8) [label=left:$v_{w_{9}}$]{};
     \node[vertex] (w11) at (-4,10) [label=left:$v_{w_{11}}$]{};   
     
      \node[vertex] (w6) at (-2,4) [label=left:$v_{w_{6}}$]{}; 
    \node[vertex] (w8) at (-2,6) [label=left:$v_{w_{8}}$]{}; 
    \node[vertex] (w12) at (-2,8) [label=left:$v_{w_{12}}$]{};
     \node[vertex] (a1) at (-2,10) {} ; 
     
      \node[vertex] (w7) at (0,4) [label=left:$v_{w_{7}}$]{}; 
    \node[vertex] (w13) at (0,6) [label=left:$v_{w_{13}}$]{}; 
    \node[vertex] (a2) at (0,8) {};
     \node[vertex] (a3) at (0,10) {} ; 
     
      \node[vertex] (a4) at (6,4) {}; 
    \node[vertex] (a5) at (6,6) {}; 
    \node[vertex] (a6) at (6,8) {};
     \node[vertex] (a7) at (6,10) {} ;

      \node [shape=circle,minimum size=1.5em] (d1) at (2,4) {};

       \path (v4) to node {\dots} (vk);
       
       \node [shape=circle,minimum size=1.5em] (d2) at (10,2) {};
       \path (vk) to node {\dots} (d2);

       \node [shape=circle,minimum size=1.5em] (d3) at (-6,12) {};
       \path (w10) -- (d3) node [midway, sloped] {$\dots$};

        \node [shape=circle,minimum size=1.5em] (d4) at (-4,12) {};
        \path (w11) -- (d4) node [midway, sloped] {$\dots$};

          \node [shape=circle,minimum size=1.5em] (d5) at (-2,12) {};
          \path (a1) -- (d5) node [midway, sloped] {$\dots$};

          \node [shape=circle,minimum size=1.5em] (d6) at (0,12) {};
          \path (a3) -- (d6) node [midway, sloped] {$\dots$};

          \node [shape=circle,minimum size=1.5em] (d7) at (6,12) {};
           \path (a7) -- (d7) node [midway, sloped] {$\dots$};

                \path (v1) edge [->, >=latex, line width=.5pt, shorten <= 2pt, shorten >= 2pt, right] node[left] {\tiny{$(e_{2},w_{2})$}} (w1);
                
                 \path (w1) edge [->, >=latex, line width=.5pt, shorten <= 2pt, shorten >= 2pt, right] node[left] {\tiny{$(e_{4},w_{4})$}} (w3);
                 
                 \path (w3) edge [->, >=latex, line width=.5pt, shorten <= 2pt, shorten >= 2pt, right] node[left] {\tiny{$(e_{5},w_{5})$}} (w4);
                 
                 \path (w4) edge [->, >=latex, line width=.5pt, shorten <= 2pt, shorten >= 2pt, right] node[left] {\tiny{$(e_{11},w_{11})$}} (w10);

                 \path (v2) edge [->, >=latex, line width=.5pt, shorten <= 2pt, shorten >= 2pt, right] node[left] {\tiny{$(e_{3},w_{3})$}} (w2);
                 
                 \path (w2) edge [->, >=latex, line width=.5pt, shorten <= 2pt, shorten >= 2pt, right] node[left] {\tiny{$(e_{6},w_{6})$}} (w5);
                 
                 \path (w5) edge [->, >=latex, line width=.5pt, shorten <= 2pt, shorten >= 2pt, right] node[left] {\tiny{$(e_{10},w_{10})$}} (w9);
                 
                 \path (w9) edge [->, >=latex, line width=.5pt, shorten <= 2pt, shorten >= 2pt, right] node[left] {\tiny{$(e_{12},v_{12})$}} (w11);

                 \path (v3) edge [->, >=latex, line width=.5pt, shorten <= 2pt, shorten >= 2pt, right] node[left] {\tiny{$(e_{7},w_{7})$}} (w6);
                 
                 \path (w6) edge [->, >=latex, line width=.5pt, shorten <= 2pt, shorten >= 2pt, right] node[left] {\tiny{$(e_{9},w_{9})$}} (w8);
                 
                 \path (w8) edge [->, >=latex, line width=.5pt, shorten <= 2pt, shorten >= 2pt, right] node[left] {\tiny{$(e_{13},w_{13})$}} (w12);
                 
                 \path (w12) edge [->, >=latex, line width=.5pt, shorten <= 2pt, shorten >= 2pt, right] node[sloped, below] {$$} (a1);

                 \path (v4) edge [->, >=latex, line width=.5pt, shorten <= 2pt, shorten >= 2pt, right] node[left] {\tiny{$(e_{8},w_{8})$}} (w7);
                 
                 \path (w7) edge [->, >=latex, line width=.5pt, shorten <= 2pt, shorten >= 2pt, right] node[left] {\tiny{$(e_{14},w_{14})$}} (w13);
                 
                 \path (w13) edge [->, >=latex, line width=.5pt, shorten <= 2pt, shorten >= 2pt, right] node[sloped, below] {$$} (a2);
                 
                 \path (a2) edge [->, >=latex, line width=.5pt, shorten <= 2pt, shorten >= 2pt, right] node[sloped, below] {$$} (a3);

                 \path (vk) edge [->, >=latex, line width=.5pt, shorten <= 2pt, shorten >= 2pt, right] node[sloped, below] {$$} (a4);
                 
                 \path (a4) edge [->, >=latex, line width=.5pt, shorten <= 2pt, shorten >= 2pt, right] node[sloped, below] {$$} (a5);
                 
                 \path (a5) edge [->, >=latex, line width=.5pt, shorten <= 2pt, shorten >= 2pt, right] node[sloped, below] {$$} (a6);
                 
                 \path (a6) edge [->, >=latex, line width=.5pt, shorten <= 2pt, shorten >= 2pt, right] node[sloped, below] {$$} (a7);

                  \path (1000) to node {\dots} (10k);
                  
                   \node [shape=circle,minimum size=1.5em] (d8) at (10,0) {};
       \path (10k) to node {\dots} (d8);

        \path (v0) edge [->, >=latex, line width=.5pt, shorten <= 2pt, shorten >= 2pt, right] node[sloped, above] {\tiny{$(e_{1},(1))$}} (1);
        
         \path (1) edge [->, >=latex, line width=.5pt, shorten <= 2pt, shorten >= 2pt, right] node[sloped, above] {\tiny{$e_{(10)}$}} (10);
         
          \path (10) edge [->, >=latex, line width=.5pt, shorten <= 2pt, shorten >= 2pt, right] node[sloped, above] {\tiny{$e_{(100)}$}} (100);
          
           \path (100) edge [->, >=latex, line width=.5pt, shorten <= 2pt, shorten >= 2pt, right] node[sloped, above] {\tiny{$e_{(1000)}$}} (1000);
           
            \node [shape=circle,minimum size=1.5em] (d9) at (2,0) {};
            
             \node [shape=circle,minimum size=1.5em] (d10) at (4,0) {};
             
             \path (1000) edge [->, >=latex, line width=.5pt, shorten <= 2pt, shorten >= 2pt, right] node[sloped, below] {$$} (d9);
             
              \path (d10) edge [->, >=latex, line width=.5pt, shorten <= 2pt, shorten >= 2pt, right] node[sloped, above] {\tiny{$e_{(1,0^{k-1})}$}} (10k);

               \path (1) edge [->, >=latex, line width=.5pt, shorten <= 2pt, shorten >= 2pt, right] node[left] {\tiny{$e_{v_{1}}$}} (v1);
               
              \path (10) edge [->, >=latex, line width=.5pt, shorten <= 2pt, shorten >= 2pt, right] node[left] {\tiny{$e_{v_{2}}$}} (v2);
                
               \path (100) edge [->, >=latex, line width=.5pt, shorten <= 2pt, shorten >= 2pt, right] node[left] {\tiny{$e_{v_{3}}$}} (v3);
                 
                \path (1000) edge [->, >=latex, line width=.5pt, shorten <= 2pt, shorten >= 2pt, right] node[left] {\tiny{$e_{v_{4}}$}} (v4);

                \path (10k) edge [->, >=latex, line width=.5pt, shorten <= 2pt, shorten >= 2pt, right] node[left] {\tiny{$e_{v_{k}}$}} (vk);

   \end{tikzpicture}
\caption{Graph $E_{\mathcal{G}}$: $\sigma: W_{+}\to\Delta$ is given by $v_{i}\mapsto(1,0^{i-1})$.}
\label{im5}
\end{figure}
\end{center}


\clearpage


\section{Leavitt Path Algebras}

Before we begin with the meat of this section, we would like to let our reader know algebras are taken to be over unital commutative rings for the remainder of this thesis. 

\begin{definition}\label{Ralg}
Let $R$ be a commutative unital ring. An \textit{$R$-algebra} $\mathcal{A}$ is a ring (not necessarily unital) equipped with a map
$$\cdot:R\times\mathcal{A}\to\mathcal{A}$$
such that:\\ 
1) $r\cdot(x+y)=r\cdot x+ r\cdot y$, and $r\cdot(xy)=(r\cdot x)y=x(r\cdot y)$, for all $r\in R$ and all $x,y\in \mathcal{A}$,\\
2) $(r_{1}+r_{2})\cdot x=r_{1}\cdot x+r_{2}\cdot x$ for all $r_{1},r_{2}\in R$ and for all $x\in \mathcal{A}$,\\
3) $(r_{1}r_{2})\cdot x=r_{1}\cdot(r_{2}\cdot x)$ for all $r_{1},r_{2}\in R$ and for all $x\in \mathcal{A}$,\\
4) $1\cdot x=x$ for all $x\in \mathcal{A}$.\\
Moving forward, we will simply write ``$rx$'' to denote $r\cdot x$. Also, if $\mathcal{A}$ happens to be a unital ring, we say $\mathcal{A}$ is a \textit{unital $R$-algebra}.
\end{definition} 
 
\begin{definition}\label{ralghomo}
Let $\mathcal{A}$ and $\mathcal{B}$ be $R$-algebras. A map
$$f:\mathcal{A}\to\mathcal{B}$$
is an \textit{$R$-algebra homomorphism} if 
$$f(r_{1}x+r_{2}y)=r_{1}f(x)+r_{2}f(y)\text{ and } f(xy)=f(x)f(y)$$
for all $r_{1},r_{2}\in R$ and $x,y \in\mathcal{A}$. If $\mathcal{A}$ and $\mathcal{B}$ are unital $R$-algebras, with units $1_{A}$ and $1_{B}$ respectively, we say $f$ is a \textit{unital $R$-algebra homomorphism} if it also satisfies the condition $f(1_{A})=1_{B}$. 
\end{definition}

\subsection{Graph Leavitt Path Algebras}
Given a graph $E=(E^{0},E^{1}, r_{E},s_{E})$, we define a set of ``ghost edges'' by associating an element $e^{*}$ to each edge $e$,
$$(E^{1})^{*}:=\{e^{*}\}_{e\in E^{1}}.$$
And, for each $\alpha=e_{1}e_{2}\dots e_{n}\in E^{*}$, we define an associated ``ghost path'' by 
$$\alpha^{*}:=e_{n^{*}}e_{(n-1)^{*}}\dots e_{1^{*}}.$$

\begin{definition}\label{glgf}
Let $E$ be a graph.  A \textit{Leavitt $E$-family} in an $R$-algebra $\mathcal{A}$ is a set $\{Q_{v}, T_{e}, T_{e^{*}}\}_{v\in E^{0}, e\in E^{1}}\subseteq \mathcal{A}$ such that:\\
(\textbf{LP1}) $Q_{v}Q_{w}=\delta_{v,w}Q_{v}$ for all $v,w\in E^{0}$,\\
(\textbf{LP2}) $Q_{s_{E}(e)}T_{e}=T_{e}Q_{r_{E}(e)}=T_{e},\ Q_{r_{E}(e)}T_{e^{*}}=T_{e^{*}}Q_{s_{E}(e)}=T_{e^{*}}$ for all $e\in E^{1}$,\\
(\textbf{LP3}) $T_{e^{*}}T_{f}=\delta_{e,f}Q_{r_{E}(e)}$ for all $e,f\in E^{1}$,\\
(\textbf{LP4}) $Q_{v}=\sum\limits_{e\in s_{E}^{-1}(v)}T_{e}T_{e^{*}}$ whenever $0<|s_{E}^{-1}(v)|<\infty$.\\
For the sake of avoiding cluttered notation, we will write ``$\{Q_{v}, T_{e}, T_{e^{*}}\}$'' to denote $\{Q_{v}, T_{e}, T_{e^{*}}\}_{v\in E^{0}, e\in E^{1}}$. 
\end{definition}

\begin{definition}\label{glpa}\cite{Tomforde:2011aa, Gene-Abrams-and-Gonzalo-Aranda-Pino:2005aa,Pere-Ara-M.A.-Gonzalez-Barroso-K.-R.-Goodearl--Enrique-Pardo:2003aa}
A \textit{Leavitt path algebra} of a graph $E$ over $R$, $L_{R}(E)$, is an $R$ algebra generated by a Leavitt $E$- family $\{q_{v}, t_{e}, t_{e^{*}}\}\subseteq L_{R}(E)$ having the following universal mapping property: given an $R$-algebra $\mathcal{A}$ and a Leavitt $E$-family $\{Q_{v}, T_{e}, T_{e^{*}}\}\subseteq \mathcal{A}$, there exists an $R$-algebra homomorphism
$$\phi:L_{R}(E)\to \mathcal{A}$$
with $\phi(q_{v})=Q_{v},\ \phi(t_{e})=T_{e},\ \text{and}\ \phi(t_{e^{*}})=T_{e^{*}}$. We will write ``$L_{R}(\{q,t\})$,'' or ``$L_{R}(E)$,'' to denote such an algebra.
\end{definition}

We will briefly mention how to construct such an algebra later on in this section. It's worth noting that, due to its universal mapping property, it's unique up to isomorphism. For this reason, once the existence of such an algebra is established, we will say ``the'' Leavitt path algebra of a graph. The following construction (see proof of  \cite[Proposition 3.4]{Tomforde:2011aa}) is of important utility in determining some properties of $L_{R}(E)$. For the reader unfamiliar with $R$-modules, please see Definitions \ref{rmod} and \ref{rmodhom} before proceeding. Let $R$ be a unital commutative ring and $E$ a graph. Set $Z:=\bigoplus\limits_{n\in\mathbb{N}}R$; then, for each $e\in E^{1}$, let $Z_{e}:=Z.$ For each $v\in E^{0}$, let

\begin{center}
$Z_{v}:=
\begin{cases}
\bigoplus\limits_{e\in s_{E}^{-1}(v)} Z_{e} & \text{if } 0<|s_{E}^{-1}(v)|<\infty, \\
  Z\oplus\bigg(\bigoplus\limits_{e\in s_{E}^{-1}(v)} Z_{e} \bigg) & \text{if } |s_{E}^{-1}(v)|=\infty,\\
  Z & \text{if } |s_{E}^{-1}(v)|=0.
\end{cases}$
\end{center}
Lastly, let $\mathcal{X}:=\bigoplus\limits_{v\in E^{0}}Z_{v}$.

Now, for each $v\in E^{0}$, take the identity map $\text{Id}_{Z_{v}}:Z_{v}\to Z_{v}$. We extend $\text{Id}_{Z_{v}}$ to an $R$-module map $Q_{v}:\mathcal{X}\to\mathcal{X}$, where $Q_{v}$ is such that, given the inclusion map
$$\bigoplus\limits_{v\neq w\in E^{0}}Z_{w}\xhookrightarrow{i}\mathcal{X},$$
$Q_{v}\circ i=0$ (and $Q_{v}\circ\text{Id}_{Z_{v}}:Z_{v}\to\mathcal{X}$ is the inclusion map of $Z_{v}$ into $\mathcal{X}$). For each $e\in E^{1}$, note that $Z_{e}$ and $Z_{r_{E}(e)}$ are both free $R$-modules on a countable set. Thus, by the universal mapping property of free $R$-modules, we can define an isomorphism $T'_{e}: Z_{r_{E}(e)}\to Z_{e}$---note that $Z_{e}$ is a summand of $Z_{s_{E}(e)}$, and so a summand of $\mathcal{X}$ as well. We then extend $T'_{e}$ to a map $T_{e}:\mathcal{X}\to\mathcal{X}$, where, identifying $Z_{e}$ with its inclusion in $\mathcal{X}$, $\text{im}T_{e}\subseteq Z_{e}$, and given the inclusion map
$$\bigoplus\limits_{r_{E}(e)\neq w\in E^{0}}Z_{w}\xhookrightarrow{i}\mathcal{X},$$
$T_{e}\circ i=0$, and for the inclusion map $i_{e}:Z_{r_{E}(e)}\to\mathcal{X}$ and the projection map $p_{e}:\mathcal{X}\to Z_{e}$, we have $i_{e}\circ T_{e}\circ p_{e}=T'_{e}.$ We can similarly extend $(T'_{e})^{-1}$ to a map $T_{e^{*}}:\mathcal{X}\to\mathcal{X}$. It is easy to check the set $\{Q_{v},T_{e},T_{e^{*}}\}$ forms a Leavitt $E$-family in $\text{End}_{R}(\mathcal{X})$ (the ring of  $R$-module endomorphisms of $\mathcal{X}$). 

What is particularly useful about the above construction is the fact $rQ_{v}=0\iff r=0$, and that $T_{e}\circ T_{f}$ (similarly, $T_{f^{*}}\circ T_{e^{*}}$) is non-zero if and only if $ef\in E^{*}$. Thus, by the universal property of $L_{R}(E)$, we have 
$$rq_{v}=0\iff r=0,$$
and 
$$t_{e}t_{f}\neq0\ (\text{similarly, }t_{f^{*}}t_{e^{*}}\neq0)\iff ef\in E^{*},$$
in $L_{R}(E)$. These facts are useful when figuring out how multiplication works in $L_{R}(E)$. Even more important, they play a crucial role in establishing some isomorphism theorems for $L_{R}(E)$.

Given $\alpha=e_{1}e_{2}\dots\ e_{n}\in E^{*}$, we set $t_{\alpha}:=t_{e_{1}}t_{e_{2}}\cdots\ t_{e_{n}}$; similarly, $t_{\alpha^{*}}:=t_{e_{n}^{*}}t_{e_{(n-1)}^{*}}\cdots t_{e_{1}^{*}}$. With this in mind, we will now give a description of multiplication in $L_{R}(E)$. For $\alpha, \beta\in E^{*}$, we have

\begin{center}
$t_{\alpha}t_{\beta}=
\begin{cases}
t_{\alpha\beta} & \text{if } \alpha\beta\in E^{*}, \\
0 & \text{otherwise},\hspace{1cm}
\end{cases}$
$t_{\beta^{*}}t_{\alpha^{*}}=
\begin{cases}
t_{(\alpha\beta)^{*}} & \text{if } \alpha\beta\in E^{*}, \\
0 & \text{otherwise},
\end{cases}$
\end{center}
and
$t_{\alpha}t_{\beta^{*}}\neq0\iff r_{E}(\alpha)=r_{E}(\beta)$---this follows from \textbf{LP2} and, as witnessed by the construction above, the universal property of $L_{R}(E)$; further, by \cite[Equation 2.1]{Tomforde:2011aa}, we have

\begin{center}
$t_{\beta^{*}}t_{\alpha}=
\begin{cases}
t_{\gamma} & \text{if } \alpha=\beta\gamma\text{ for } \gamma\in E^{*}, \\
t_{\gamma^{*}} & \text{if } \beta=\alpha\gamma\text{ for } \gamma\in E^{*}, \\
q_{r_{E}(\alpha)} & \text{if } \beta=\alpha, \\
0 & \text{otherwise}.
\end{cases}$
\end{center}
To put all of this more concisely, we have the following lemma.

\begin{lemma}\label{gspan}
For a given graph $E$, 
$$L_{R}(E)=\text{span}_{R}\{t_{\alpha}t_{\beta^{*}}: \alpha,\ \beta\in E^{*}\text{ and } r_{E}(\alpha)=r_{E}(\beta).$$
\end{lemma}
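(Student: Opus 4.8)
The plan is to exploit the fact that $L_{R}(E)$ is, by definition, generated as an $R$-algebra by the Leavitt $E$-family $\{q_{v}, t_{e}, t_{e^{*}}\}$, so that $L_{R}(E)$ coincides with the $R$-span of all finite products of these generators. Writing $S:=\{t_{\alpha}t_{\beta^{*}}: \alpha,\beta\in E^{*},\ r_{E}(\alpha)=r_{E}(\beta)\}$, the inclusion $\mathrm{span}_{R}S\subseteq L_{R}(E)$ is immediate, since each $t_{\alpha}t_{\beta^{*}}$ is itself a product of generators. For the reverse inclusion I would show that $\mathrm{span}_{R}S$ is an $R$-subalgebra of $L_{R}(E)$ containing every generator; as $L_{R}(E)$ is the smallest such subalgebra, equality then follows.

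First I would verify that the generators themselves lie in $S$, using only \textbf{LP2} and the length-zero path conventions. Viewing a vertex $v$ as a path of length $0$ (so that $t_{v}=q_{v}$), one has $q_{v}=t_{v}t_{v^{*}}$; moreover $t_{e}=t_{e}t_{r_{E}(e)^{*}}$ (take $\alpha=e$, $\beta=r_{E}(e)$) and $t_{e^{*}}=t_{r_{E}(e)}t_{e^{*}}$ (take $\alpha=r_{E}(e)$, $\beta=e$), and in each case the required equality $r_{E}(\alpha)=r_{E}(\beta)$ holds trivially. Closure under $R$-linear combinations is built into the definition of span, so the only substantive point is closure of $\mathrm{span}_{R}S$ under multiplication.

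The heart of the argument, and the step I expect to demand the most care, is showing $S\cdot S\subseteq\mathrm{span}_{R}S$. Given $t_{\alpha}t_{\beta^{*}}$ and $t_{\alpha'}t_{\beta'^{*}}$ in $S$, I would form the product $t_{\alpha}t_{\beta^{*}}t_{\alpha'}t_{\beta'^{*}}$ and ``straighten'' the inner factor $t_{\beta^{*}}t_{\alpha'}$ using the four-case reduction formula recorded just before the lemma, whereby $t_{\beta^{*}}t_{\alpha'}$ equals $t_{\gamma}$, $t_{\gamma^{*}}$, $q_{r_{E}(\alpha')}$, or $0$. In the first case the product collapses to $t_{\alpha\gamma}t_{\beta'^{*}}$, in the second to $t_{\alpha}t_{(\beta'\gamma)^{*}}$, in the third (using \textbf{LP2} to absorb the resulting vertex idempotent on either side) to $t_{\alpha}t_{\beta'^{*}}$, and in the last to $0$. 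The bookkeeping obstacle is to confirm, in each nonzero case, that the concatenations $\alpha\gamma$ and $\beta'\gamma$ are genuine paths in $E^{*}$ and that the surviving monomial again satisfies the range condition defining $S$; this is obtained by chasing the hypotheses $r_{E}(\alpha)=r_{E}(\beta)$ and $r_{E}(\alpha')=r_{E}(\beta')$ together with the relation that the straightening rule imposes between $\gamma$ and the paths $\beta,\alpha',\beta'$. Since in every case $t_{\alpha}t_{\beta^{*}}t_{\alpha'}t_{\beta'^{*}}\in S\cup\{0\}\subseteq\mathrm{span}_{R}S$, closure under multiplication holds, so any product of generators lies in $\mathrm{span}_{R}S$; this yields $L_{R}(E)\subseteq\mathrm{span}_{R}S$ and completes the argument.
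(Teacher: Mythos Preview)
Your argument is correct: showing that $\mathrm{span}_{R}S$ is an $R$-subalgebra containing the generators, with closure under multiplication handled by the four-case straightening formula for $t_{\beta^{*}}t_{\alpha'}$, is exactly the standard proof, and your range/source bookkeeping in each case is accurate. The paper itself does not give an argument here but simply cites \cite[Proposition~3.4]{Tomforde:2011aa}, whose proof is essentially the one you have written out.
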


\begin{proof}
\cite[Proposition 3.4]{Tomforde:2011aa}
\end{proof}

Before we proceed to give examples of Leavitt path algebras, and continue our discussion of them, we will need the following remark regarding free $R$-algebras.

\begin{remark}\label{freealg}
Let $R$ be a unital ring. For a set $X$, let $w(X)$ denote the set of nonempty words in $X$, i.e.,
$$w(X):=\{w=x_{1}x_{2}\dots x_{n}: n\in\mathbb{N},\ \ x_{i}\in X\}.$$
Let $\mathbb{F}_{R}(w(X))$ denote the free $R$-module over $w(X)$; we may think of the elements of $\mathbb{F}_{R}(w(X))$ as formal sums of the form
$$\sum\limits_{i=1}^{n}r_{i}\cdot w_{i},\ \text{with}\ r_{i}\in R,\ w_{i}\in w(X).$$
Note that since $R$ is unital, we can embed $X$ in $\mathbb{F}_{R}(w(X))$ by the map $i:X\to \mathbb{F}_{R}(w(X))$ where $i(x)=1\cdot x$. Further, we can define multiplication on $\mathbb{F}_{R}(w(X))$ by
$$\big(\sum\limits_{i=1}^{n}r_{i}\cdot w_{i}\big)\big(\sum\limits_{j=1}^{m}r_{j}\cdot w_{j}\big):=\sum\limits_{i=1}^{n}\sum\limits_{j=1}^{m}r_{i}r_{j}\cdot w_{i}w_{j},$$
where $w_{i}w_{j}$ is given by concatenation. Then, with the multiplication defined above, $\mathbb{F}_{R}(w(X))$ is the \textit{associative free $R$-algebra on $X$}. Further, it has the following universal property (see \cite[Proposition 2.6]{Gonzalo-Aranda-Pino-John-Clark-Astrid-an-Huef--Iain-Raeburn:2013aa}): let $\mathcal{A}$ be any $R$-algebra and $f:X\to \mathcal{A}$ a set map, then, there exists a unique $R$-algebra homomorphism $\phi:\mathbb{F}_{R}(w(X))\to\mathcal{A}$ making the following diagram commute:

\begin{figure}[h!]
\begin{center}
\begin{tikzpicture}
  \matrix (m) [matrix of math nodes,row sep=4em,column sep=2em,minimum width=2em]
  {
      & \mathbb{F}_{R}(w(X)) \\
     X & \mathcal{A} \\};
  \path[-stealth]
  
    (m-2-1) edge node [below] {$f$}
            node [above] {} (m-2-2)
             edge node [above] {$i$} (m-1-2)
    (m-1-2) edge [dashed,->] node [right] {$\phi$} (m-2-2);
\end{tikzpicture}
\end{center}
\end{figure}
\noindent Using the same construction as above, if we allow for the empty word $\emptyset$ in $w(X)$ (with $\emptyset w:=w$ for all $w\in w(X)$), we get the \textit{unital associative free $R$-algebra}, where $1\cdot\emptyset$ is the unit, which we will denote by ${F}_{R}(w(X))$. So, if $\mathcal{A}$ in the above diagram is a unital $R$-algebra, ${F}_{R}(w(X))$ has the universal property that $\phi$ is the unique unital $R$-algebra homomorphism making the diagram commute.
\end{remark}

Recall from Chapter 1 the Leavitt algebra $L_{K}(1,n)$, with $n\geq 2$. We will give its construction and show, as our first example, it is a Leavitt path algebra. Given the set $X:=\{x_{1},\dots, x_{n},y_{1},\dots,y_{n}\}$, let $F_{K}(w(X))$ be the unital associative free $K$-algebra as in Remark \ref{freealg}. Further, let $I\lhd F_{K}(w(X))$ be the ideal generated by the set
$$\{1-\sum\limits_{i=1}^{n}x_{i}y_{i},\ \delta_{i,j}1-y_{i}x_{j}: i,j\in\{1,\dots,n\}\}.$$
Then, 
$$L_{K}(1,n):=F_{K}(w(X))/I.$$

\begin{example}\label{grex1}
Let $K$ be field, and $E$ the graph in Example \ref{gr1}. Then, $L_{K}(E)\cong L_{K}(1,n).$ To see this, first note that $q_{v}$ is the identity of $L_{K}(E)$; this fact follows from Lemma \ref{gspan}. Now, take the set $\{Q_{v},T_{e},T_{e^{*}}\}\subseteq L_{K}(1,n)$ to be
$$Q_{v}=\overline{1}, T_{e_{i}}=\overline{x_{i}}, \text{ and } T_{e_{i}^{*}}=\overline{y_{i}}.$$ 
One can quickly check $\{Q_{v},T_{e},T_{e^{*}}\}$ is a Leavitt $E$-family in $L_{K}(1,n)$. And so, by the universal mapping property of  $L_{K}(E)$, we have a $K$-algebra homomorphism $\phi:L_{K}(E)\to L_{K}(1,n)$ such that $\phi(q_{v})=\overline{1},\ \phi(t_{e_{i}})=\overline{x_{i}}, \text{ and }\phi(t_{e_{i}^{*}})=\overline{y_{i}}$. On the other hand, since $L_{K}(E)$ is a unital $K$-algebra, by the universal mapping property of $F_{K}(w(X))$, there exists a unital $K$-algebra homomorphism $\varphi:F_{K}(w(X))\to L_{K}(E)$ such that $\varphi(1)=q_{v},\ \varphi(x_{i})=t_{e_{i}}, \text{ and } \varphi(y_{i})=t_{e_{i}^{*}}$. Moreover, by the properties of a Leavitt $E$-family, we have $I\subseteq \text{ker}\varphi$. Thus, there exists a unital $K$-algebra homomorphism $\overline{\varphi}:L_{K}(1,n)\to L_{K}(E)$ where $\overline{\varphi}=\varphi\circ\pi$, meaning $\overline{\varphi}(\overline{1})=q_{v},\ \overline{\varphi}(\overline{x_{i}})=t_{e_{i}}, \text{ and } \overline{\varphi}(\overline{y_{i}})=t_{e_{i}^{*}}$. It is clear $\phi$ and $\varphi$ are inverses of each other, hence $L_{K}(E)\cong L_{K}(1,n)$. In the case where $E$ consists of a single vertex and a single looped edge, note a similar argument as above shows $L_{K}(E)\cong K[x,x^{-1}]$---the Laurent polynomials over $K$. 
\end{example}

\begin{example}\label{grex2}
Let $R$ be a unital commutative ring, and $E$ the graph in Example \ref{gr2}. We will show $L_{R}(E)\cong M_{n}(R).$ Let $E_{i,j}$ denote the $n\times n$ matrix whose $(ij)$-th entry is  1, and all other entries are $0$. Let $\{Q_{v},T_{e}, T_{e^{*}}\}\subseteq M_{n}(R)$ be given by
 $$Q_{v_{i}}=E_{i,i},\ T_{e_{i}}=E_{i,i+1},\text{ and } T_{e_{i}^{*}}=E_{i+1,i}.$$
One can quickly verify $\{Q_{v},T_{e}, T_{e^{*}}\}$ is a Leavitt $E$-family in $M_{n}(R)$; thus, by the universal mapping property of $L_{R}(E)$, there exists an $R$-algebra homomorphism $\phi:L_{R}(E)\to M_{n}(R).$ Given $E_{i,j}$, suppose w.l.o.g. that $i<j$. Then, we can easily work out that
$$E_{i,j}=E_{i,i+1}E_{i+1,i+2}\cdots E_{j-1,j}.$$
A similar argument shows $E_{i,j}$ can be written as the product of elements in $\{Q_{v},T_{e}, T_{e^{*}}\}$ when $i>j$; since $M_{n}(R)=\text{span}_{R}\{E_{i,j}: i,j\in \{1,\dots, n\}\},$ $\phi$ must be surjective. The fact each $E_{i,j}$ can be written as the product of elements in $\{Q_{v},T_{e}, T_{e^{*}}\}$ means that $E_{i,j}=\phi(t_{\alpha_{i}}t_{\beta_{j}^{*}})$ for some $\alpha_{i},\beta_{j}\in E^{*}$ such that $r_{E}(\alpha_{i})=r_{E}(\beta_{j}).$ And so, 
$$M_{n}(R)=\text{span}_{R}\{\phi(t_{\alpha_{i}}t_{\beta_{j}^{*}}): i,j\in \{1,\dots, n\},\ \alpha_{i},\beta_{j}\in E^{*}\ \text{such that} \ r_{E}(\alpha_{i})=r_{E}(\beta_{j}) \},$$
which means $\phi$ is injective as well since
$$0=\phi\bigg(\sum\limits_{i,j}r_{i,j}t_{\alpha_{i}}t_{\beta_{j}^{*}}\bigg)\iff r_{i,j}=0\text{ for all } i,j.$$
Thus, $L_{R}(E)\cong M_{n}(R)$.
\end{example}

One of the main endeavors within the study of Leavitt path algebras is to see how certain properties of $E$ translate to properties of $L_{R}(E)$. While this pursuit has resulted in some important theorems, it's worth noting that very different looking graphs can produce the same Leavitt path algebra, as Example \ref{grex2} and the following example show.

\begin{example}\label{grex3}
Let $R$ be a unital commutative ring, and $E$ the graph illustrated below. Then, $L_{R}(E)\cong M_{n}(R).$
\end{example}

\vspace{.5cm}

\begin{figure}[h!]
\begin{center}
\begin{tikzpicture}
\tikzset{vertex/.style = {shape=circle, draw=black!100,fill=black!100, thick, inner sep=0pt, minimum size=2 mm}}
\tikzset{edge/.style = {->, line width=1pt}}
\tikzset{v/.style = {shape=rectangle, dashed, draw, inner sep=0pt, minimum size=2em, minimum width=3em}}
    \node[vertex] (a) at (0,0) [label=left:$v$]{};
    \node[vertex] (b) at (3,0) [label=right:$w$]{};
      
    \path (a) edge [->, >=latex, bend left=90, line width=1pt] node[above]{$e_{1}$} (b);
   \path (a) edge [->, >=latex, bend left=20, line width=1pt] node[above]{$e_{2}$} (b);
   \path (a) edge [->, >=latex, bend right=90, line width=1pt] node[below]{$e_{n-1}$} (b);
   
    \node (c) at (1.5,.2) {};
    \node (d) at (1.5,-.8) {};
     \path (c) edge [loosely dotted, line width=1.5pt] node[pos=0.2]{} (d);
     
       \end{tikzpicture}
       \caption{}
       \label{im6}
\end{center}
\end{figure}
Let $E_{i,j}\in M_{n}(R)$ be as in Example \ref{grex2}, let $\text{Id}_{n-1}\in M_{n}(R)$ denote the matrix whose first $n-1$ diagonal entries are 1's and all other entries are 0's (i.e., the top left $(n-1)\times (n-1)$ sub-matrix is the identity on $M_{n-1}(R)$ and the $n$-th row and column are 0's). Let $\{Q_{v},T_{e}, T_{e^{*}}\}\subseteq M_{n}(R)$ be given by
$$Q_{v}=\text{Id}_{n-1},\ Q_{w}=E_{n,n},\ T_{e_{i}}=E_{i,n},\text{ and } T_{e_{i}^{*}}=E_{n,i}.$$
Then, $\{Q_{v},T_{e}, T_{e^{*}}\}$ is a Leavitt $E$-family in $M_{n}(R)$, and so there exists an $R$-algebra homomorphism
$$\phi:L_{R}(E)\to M_{n}(R).$$
Again, an argument similar to the one used in Example \ref{grex2} shows $\phi$ is an isomorphism; thus, $L_{R}(E)\cong M_{n}(R)$.

While we will not go into great detail here, since we will see this more explicitly when constructing Exel-Laca algebras, but the usual method of constructing $L_{R}(E)$ is to take $\mathbb{F}_{R}(w(X))$ (see Remark \ref{freealg}), where $X=\{v\}_{v\in E^{0}}\cup\{e, e^{*}\}_{e\in E^{1}}$, and quotient by $I\lhd\mathbb{F}_{R}(w(X))$, where $I$ is the ideal generated by the union of sets of the form:\\
$\bullet$ $\{vw-\delta_{v,w}v:v,w\in E^{0}\}$,\\
$\bullet$ $\{s_{E}(e)e-e,\ er_{E}(e)-e,\ e^{*}s_{E}(e)-e^{*},\ r_{E}(e)e^{*}-e^{*}:e\in E^{1}\}$,\\
$\bullet$ $\{e^{*}f-\delta_{e,f}r_{E}(e):e,f\in E^{1}\}$,\\
$\bullet$ $\{\sum\limits_{e\in s_{E}^{-1}(v)}ee^{*}-v:v\in E^{0}\text{ such that }0<| s_{E}^{-1}(v)|<\infty\}$.\\ 
The reason we take $\mathbb{F}_{R}(w(X))$, and not $F_{R}(w(X))$, is due to the fact $L_{R}(E)$ need not be unital. Lastly, the Leavitt $E$-family $\{q_{v},t_{e},t_{e^{*}}\}$ is given by $q_{v}:=\pi(v)$, $t_{e}:=\pi(e)$, and $t_{e^{*}}:=\pi(e^{*})$, where
$$\pi:\mathbb{F}_{R}(w(X))\to\mathbb{F}_{R}(w(X))/I$$
is the projection map. The following lemma is mentioned in the paragraph after \cite[Definition 4.2]{Tomforde:2011aa}.

\begin{lemma}\label{glpalgunital}
For a given graph $E$, $L_{R}(E)$ is unital if and only if $E^{0}$ is finite.
\end{lemma}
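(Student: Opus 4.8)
The plan is to prove the biconditional in Lemma~\ref{glpalgunital} by establishing each direction separately, using the span description from Lemma~\ref{gspan} and the nondegeneracy fact $rq_v = 0 \iff r = 0$ established via the $\mathcal{X}$-construction. For the forward direction I would actually prove the contrapositive of the ``only if'': assuming $E^0$ is infinite, I would show $L_R(E)$ has no unit. For the reverse (``if''), assuming $E^0 = \{v_1, \dots, v_m\}$ is finite, the natural candidate for a unit is $u := \sum_{i=1}^m q_{v_i}$, and I would verify directly that $u$ acts as a two-sided identity on a spanning set.

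First I would handle the easier reverse direction. Set $u = \sum_{v \in E^0} q_v$ (a finite sum since $E^0$ is finite). By Lemma~\ref{gspan}, every element of $L_R(E)$ is an $R$-linear combination of monomials $t_\alpha t_{\beta^*}$ with $r_E(\alpha) = r_E(\beta)$, so it suffices to check $u \cdot t_\alpha t_{\beta^*} = t_\alpha t_{\beta^*} = t_\alpha t_{\beta^*} \cdot u$. Using \textbf{LP2} repeatedly, $q_v t_\alpha = t_\alpha$ when $v = s_E(\alpha)$ and $q_v t_\alpha = 0$ otherwise (this uses \textbf{LP1} to kill the cross terms), so $u \cdot t_\alpha t_{\beta^*} = q_{s_E(\alpha)} t_\alpha t_{\beta^*} = t_\alpha t_{\beta^*}$; symmetrically, $t_\alpha t_{\beta^*} \cdot u = t_\alpha t_{\beta^*} q_{s_E(\beta)} = t_\alpha t_{\beta^*}$ (here one uses that $t_{\beta^*} q_{s_E(\beta)} = t_{\beta^*}$ from \textbf{LP2}). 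This shows $u$ is a unit, so $L_R(E)$ is unital.

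For the forward direction, suppose toward a contradiction that $E^0$ is infinite but $L_R(E)$ has a unit $u$. Any $u \in L_R(E)$ is a \emph{finite} $R$-linear combination of monomials $t_\alpha t_{\beta^*}$, hence only finitely many vertices appear among the sources/ranges of the paths occurring in $u$. Since $E^0$ is infinite, I can choose a vertex $w \in E^0$ not among them. The key computation is then to show $u q_w \neq q_w$, contradicting that $u$ is a unit. Concretely, for each monomial $t_\alpha t_{\beta^*}$ appearing in $u$, the product $t_\alpha t_{\beta^*} q_w$ vanishes because $s_E(\beta) \neq w$ (using \textbf{LP2} and \textbf{LP1}), so $u q_w = 0$; but $q_w \neq 0$ since $1 \cdot q_w = 0 \iff 1 = 0$ in $R$ by the nondegeneracy fact, and $R$ is a nonzero ring. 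Thus $u q_w = 0 \neq q_w$, the desired contradiction.

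The main obstacle will be the forward direction, specifically making rigorous the claim that a unit would have to be a finite combination whose interaction with a ``far away'' vertex projection is forced to vanish. The delicate point is justifying $t_\alpha t_{\beta^*} q_w = 0$ cleanly: one must argue via \textbf{LP2} that $t_{\beta^*} q_w = \delta_{s_E(\beta), w} t_{\beta^*}$, which requires care when $\beta$ has length $0$ (i.e.\ $\beta \in E^0$, so $t_\beta = q_\beta$) versus positive length, and one must ensure the chosen $w$ differs from \emph{every} source vertex appearing, not merely one. Handling the length-zero edge cases and confirming that finitely many monomials involve only finitely many vertices (so that an avoiding $w$ genuinely exists) are the steps I expect to require the most attention.
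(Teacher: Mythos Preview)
Your proof is correct. The ``if'' direction matches the paper's approach exactly: both verify that $\sum_{v\in E^0} q_v$ is a two-sided identity on the spanning set from Lemma~\ref{gspan} using \textbf{LP1} and \textbf{LP2}.

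For the ``only if'' direction, your argument and the paper's diverge. You fix a finite expression for the putative unit $u$, choose a vertex $w$ distinct from every $s_E(\beta)$ appearing in that expression, and compute $uq_w = 0 \neq q_w$ directly. The paper instead introduces the partial sums $u_k = \sum_{i=1}^k q_{v_i}$, shows $L_R(E) = \bigcup_k u_k L_R(E) u_k$ with each $u_k$ idempotent and serving as the identity of its corner, and then argues that a global unit would be trapped in some $u_{k_0} L_R(E) u_{k_0}$, forcing $1 = u_{k_0}$ and hence $q_{v_i} = q_{v_i} u_{k_0} = 0$ for $i > k_0$. Your route is shorter and more elementary for the lemma at hand; the paper's route is longer but has the side benefit of exhibiting the $\sigma$-unit $\{u_k\}$ explicitly, which the paper relies on later (see Definition~\ref{ring:def9} and the proof of Corollary~\ref{meqcor}). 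The concern you raise about length-zero $\beta$ is handled uniformly by your own computation: if $\beta = v \in E^0$ then $t_{\beta^*} = q_v$ and $q_v q_w = 0$ by \textbf{LP1}, so no separate case analysis is needed.
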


\begin{proof}
Suppose $E^{0}$ is finite. Then, using the properties of a Leavitt $E$-family and Lemma \ref{gspan}, we can show $\sum\limits_{v\in E^{0}}q_{v}$ is a unit for $L_{R}(K)$. On the other hand, suppose $E^{0}$ is infinite and $L_{R}(E)$ is unital. Let $\{v_{1},\dots,v_{n},\dots\}$ be an enumeration of $E^{0}$. Set
$$u_{k}:=\sum\limits_{i=1}^{k}q_{v_{i}}.$$
By applying Lemma \ref{gspan} and \textbf{LP2}, we can show
$$L_{R}(E)=\bigcup\limits_{k\in\mathbb{N}}u_{k}L_{R}(E)u_{k}.$$
Further, it is easy to check $u_{k}$ is an idempotent element, which means $u_{k}$ is the identity for $u_{k}L_{R}(E)u_{k}$; and, $u_{k}u_{k+1}=u_{k+1}u_{k}=u_{k}$, which means $u_{k}L_{R}(E)u_{k}\subseteq\ u_{k+1}L_{R}(E)u_{k+1}$. And so for $1\in L_{R}(E)$, there exists $k_{0}\in\mathbb{N}$ such that $1\in u_{k_{0}}L_{R}(E)u_{k_{0}}$; but, since $u_{k_{0}}$ is the identity for $u_{k_{0}}L_{R}(E)u_{k_{0}}$, it must mean $1=u_{k_{0}}$. However, taking $v_{i}$ such that $i>k_{0}$, \textbf{LP1} implies $q_{v_{i}}=q_{v_{i}}1=q_{v_{i}}u_{k_{0}}=0. \Rightarrow\Leftarrow$---to see why, refer to the paragraph after Definition \ref{glpa}. Thus, $E^{0}$ infinite $\implies$ $L_{R}(E)$ is nonunital; all together, we have $L_{R}(E)$ is unital $\iff$ $E^{0}$ is finite.   
\end{proof}

While there is a lot more which can be said regarding Leavitt path algebras (there's an entire book on the topic, see \cite{Gene-Abrams-Pere-Ara-Mercedes-Siles-Molina:2017aa}), we will close our discussion of Leavitt path algebras by stating two important results. But before we do so, we will quickly discuss $\mathbb{Z}$-graded rings.

\begin{definition}\label{dgr}
A ring $R$ is said to be a \textit{$\mathbb{Z}$-graded ring} if there are subgroups $R_{i}\subseteq R$ such that  
$$R=\bigoplus\limits_{i\in\mathbb{Z}}R_{i}$$
as an internal direct sum, and $R_{n}R_{m}\subseteq R_{n+m}$ for each $n,m \in\mathbb{Z}$. The elements of $R_{i}$ are called \textit{homogeneous elements of degree $i$}. If we want to say an element belongs to one of the factor subgroups, we will call it a \textit{homogeneous element}.
\end{definition}

\begin{definition}\label{dgrh}
Let $R=\bigoplus\limits_{i\in\mathbb{Z}}R_{i}$ and $S=\bigoplus\limits_{i\in\mathbb{Z}}S_{i}$ be two $\mathbb{Z}$-graded rings. A ring homomorphism 
$$f:R\to S$$
is called a \textit{graded homomorphism} if $f(R_{i})\subseteq S_{i}$ for each $i$. 
\end{definition}

\begin{definition}\label{dgi}
Given a $\mathbb{Z}$-graded $R$, an ideal $I\subseteq R$ is a \textit{homogeneous (or graded) ideal} if it is generated by its homogeneous elements. 
\end{definition}

The following remark will be of use to us in later sections.

\begin{remark}\label{rgi}
An ideal $I$ being homogeneous is equivalent to saying $I=\bigoplus\limits_{i\in\mathbb{Z}}(I\cap R_{i}).$ Further, if $I$ is a homogeneous ideal, then $R/I$ is a $\mathbb{Z}$-graded ring with
$$R/I=\bigoplus\limits_{i\in\mathbb{Z}}(R_{i}+I)/I.$$
For specific details, see \cite[Section 1.1.5.]{Hazrat:2016aa}.
\end{remark}

\begin{prop}\label{gzgrade}
For any graph $E$, $L_{R}(E)$ is a $\mathbb{Z}$-graded ring with
$$L_{R}(E)_{i}=\text{span}_{R}\{t_{\alpha}t_{\beta^{*}}: \alpha,\ \beta\in E^{*},\ r_{E}(\alpha)=r_{E}(\beta), \text{ and } |\alpha|-|\beta|=i\}.$$
\end{prop}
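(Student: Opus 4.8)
The plan is to induce the grading from the standard quotient presentation of $L_R(E)$. Recall that $L_R(E) = \mathbb{F}_R(w(X))/I$, where $X = \{v\}_{v \in E^0} \cup \{e, e^*\}_{e \in E^1}$ and $I$ is the ideal generated by the four families of relations listed after Example \ref{grex3}. I would first grade the free algebra by assigning degrees to the letters: $\deg(v) = 0$ for $v \in E^0$, and $\deg(e) = 1$, $\deg(e^*) = -1$ for $e \in E^1$, then setting $\deg(x_1 x_2 \cdots x_n) = \sum_{k=1}^n \deg(x_k)$ for a word. Letting $\mathbb{F}_R(w(X))_i := \mathrm{span}_R\{w \in w(X) : \deg(w) = i\}$, the partition of $w(X)$ into degree classes makes this an internal direct sum $\mathbb{F}_R(w(X)) = \bigoplus_{i \in \mathbb{Z}} \mathbb{F}_R(w(X))_i$, and since concatenation adds degrees we get $\mathbb{F}_R(w(X))_n \mathbb{F}_R(w(X))_m \subseteq \mathbb{F}_R(w(X))_{n+m}$, so the free algebra is $\mathbb{Z}$-graded.

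The key step is to show $I$ is a homogeneous ideal, for which it suffices to check that each of its generators is homogeneous. Running through the four families: $vw - \delta_{v,w} v$ is homogeneous of degree $0$; the relations $s_E(e)e - e$ and $e r_E(e) - e$ are homogeneous of degree $1$, while $e^* s_E(e) - e^*$ and $r_E(e) e^* - e^*$ are homogeneous of degree $-1$; the relation $e^* f - \delta_{e,f} r_E(e)$ is homogeneous of degree $0$, since $e^* f$ and $r_E(e)$ both have degree $0$; and the Cuntz--Krieger relation $\sum_{e \in s_E^{-1}(v)} e e^* - v$ is homogeneous of degree $0$, as every $ee^*$ has degree $1 + (-1) = 0$, matching the degree of $v$. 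Hence $I$ is generated by homogeneous elements, so by Remark \ref{rgi} the quotient inherits a $\mathbb{Z}$-grading with $L_R(E)_i = (\mathbb{F}_R(w(X))_i + I)/I = \pi(\mathbb{F}_R(w(X))_i)$, where $\pi$ is the projection map.

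Finally, I would match these graded pieces to the claimed spans. Writing $t_\alpha t_{\beta^*} = \pi(\alpha \beta^*)$ for $\alpha, \beta \in E^*$, the word $\alpha \beta^*$ has degree $|\alpha| - |\beta|$, so $t_\alpha t_{\beta^*} \in L_R(E)_{|\alpha| - |\beta|}$, giving the inclusion $\supseteq$. For the reverse inclusion, take $x \in L_R(E)_i$; by Lemma \ref{gspan} we may write $x = \sum_k r_k\, t_{\alpha_k} t_{\beta_k^*}$ with $r_E(\alpha_k) = r_E(\beta_k)$, and grouping the summands by the value of $|\alpha_k| - |\beta_k|$ expresses $x$ as a sum of homogeneous elements of distinct degrees; since the grading is an internal direct sum, only the degree-$i$ part can survive, and it lands in the claimed span. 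The step requiring the most care is the homogeneity check for $I$ --- in particular the two degree-$0$ relations, where the whole scheme hinges on a ghost edge cancelling the $+1$ of a genuine edge; once $I$ is known to be homogeneous, the directness of the final decomposition, which is the usual sticking point in arguments of this kind, is supplied automatically by Remark \ref{rgi} rather than requiring a separate linear-independence argument.
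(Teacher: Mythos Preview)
Your proof is correct and follows the standard quotient-grading argument. The paper does not prove this proposition directly---it simply cites \cite[Proposition~4.7]{Tomforde:2011aa}---but your argument is precisely the one the paper itself later runs in full for the analogous statement about $\mathcal{EL}_R(\mathcal{G})$ (Lemma~\ref{lex2}): grade the free algebra by letter-count, verify each defining relator is homogeneous (all of degree $0$ there, and of degrees $0,\pm 1$ here), invoke Remark~\ref{rgi}, and then identify the graded pieces via the spanning set.
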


\begin{proof}
\cite[Proposition 4.7]{Tomforde:2011aa}
\end{proof}

\begin{theorem}\label{ggrdhomom}
Let $E$ be a graph, and $S$ a $\mathbb{Z}$-graded ring. Suppose there exists a graded ring homomorphism 
$$\pi:L_{R}(E)\to S$$
such that $\pi(rq_{v})\neq 0$ for all $v\in E^{0}$ and $r\in R\setminus\{0\}$. Then, $\pi$ is injective.
\end{theorem}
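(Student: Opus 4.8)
The plan is to show that $\ker\pi$ is the zero ideal. First I would observe that, since $\pi$ is a graded homomorphism, its kernel is a graded (homogeneous) ideal: if $a=\sum_i a_i\in\ker\pi$ with $a_i\in L_R(E)_i$, then applying $\pi$ and comparing homogeneous components of $S$ forces each $\pi(a_i)=0$, so in the sense of Remark \ref{rgi} we have $\ker\pi=\bigoplus_{i}(\ker\pi\cap L_R(E)_i)$; in particular, a nonzero kernel would contain a nonzero \emph{homogeneous} element. The whole theorem therefore reduces to the following structural claim: every nonzero graded ideal $I$ of $L_R(E)$ contains an element of the form $rq_v$ with $v\in E^0$ and $r\in R\setminus\{0\}$. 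Granting this, taking $I=\ker\pi$ would produce $rq_v\in\ker\pi$ with $\pi(rq_v)=0$ and $rq_v\neq 0$ (by the fact $rq_v=0\iff r=0$ recorded after Definition \ref{glpa}), contradicting the hypothesis $\pi(rq_v)\neq 0$; hence $\ker\pi=0$ and $\pi$ is injective.

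To prove the claim I would use only that $I$ is an ideal, so $axb\in I$ for all $a,b$, together with the product formulas for $t_{\beta^*}t_\alpha$, $t_\alpha t_\beta$, and $t_\alpha t_{\beta^*}$ listed before Lemma \ref{gspan}. Start with a nonzero homogeneous $x=\sum_{i} r_i\,t_{\alpha_i}t_{\beta_i^*}\in I$ of degree $n$, so that $|\alpha_i|-|\beta_i|=n$ for each $i$ by Proposition \ref{gzgrade}. The first move is to land in degree $0$: the compression $z:=t_{\alpha_1^*}\,x\,t_{\beta_1}$ lies in $I$ and is homogeneous of degree $-|\alpha_1|+n+|\beta_1|=0$, and its $i=1$ contribution is $r_1\,t_{\alpha_1^*}t_{\alpha_1}t_{\beta_1^*}t_{\beta_1}=r_1\,q_{r_E(\alpha_1)}q_{r_E(\beta_1)}=r_1 q_{v}$ with $v=r_E(\alpha_1)=r_E(\beta_1)$ (these ranges agree because $t_{\alpha_1}t_{\beta_1^*}\neq 0$). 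Thus $z$ is a degree-$0$ element of $I$ carrying an explicit $r_1q_v$ term, modulo further degree-$0$ terms $t_\mu t_{\nu^*}$ produced by the remaining $i$'s.

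The second move extracts a clean $rq_v$ from a nonzero degree-$0$ element $y\in I$. The structural input is that $L_R(E)_0=\mathrm{span}_R\{t_\alpha t_{\beta^*}:|\alpha|=|\beta|,\ r_E(\alpha)=r_E(\beta)\}$ (Proposition \ref{gzgrade} with $i=0$) is a direct limit of direct sums of matrix algebras over $R$: for fixed length $k$ and fixed $v$, the elements $t_\alpha t_{\beta^*}$ with $|\alpha|=|\beta|=k$ and $r_E(\alpha)=r_E(\beta)=v$ behave as matrix units $E_{\alpha,\beta}$, since $(t_\alpha t_{\beta^*})(t_{\alpha'}t_{\beta'^*})=\delta_{\beta,\alpha'}\,t_\alpha t_{\beta'^*}$ by the product rules. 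After normalizing all paths in $y$ to a common length $k$ via the relation $q_w=\sum_{e\in s_E^{-1}(w)}t_e t_{e^*}$ at regular vertices (\textbf{LP4}), $y$ lives in a finite direct sum of such matrix blocks; picking a surviving term $r\,t_\mu t_{\nu^*}$ with $r\neq 0$ and compressing by $t_{\mu^*}(\cdot)t_\nu$ kills every other length-$k$ term (as $t_{\mu^*}t_\alpha=\delta_{\mu,\alpha}q_{r_E(\mu)}$ and $t_{\beta^*}t_\nu=\delta_{\beta,\nu}q_{r_E(\nu)}$), leaving exactly $r\,q_{r_E(\mu)}$. This produces the desired $rq_v\in I$ with $r\neq 0$ and proves the claim.

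I expect the main obstacle to be the non-degeneracy bookkeeping in the degree-$0$ reduction, namely ensuring that the various compressions do not collapse to zero through cancellation among the off-term contributions. This rests on the matrix-unit (equivalently, basis/normal-form) description of $L_R(E)_0$, which must be justified from the faithful construction following Definition \ref{glpa} (the facts $rq_v=0\iff r=0$ and $t_e t_f\neq 0\iff ef\in E^*$); these are precisely what rules out accidental cancellation of the surviving coefficient $r$. The genuinely delicate case is that of singular vertices (sinks and infinite emitters), where the length-normalization via \textbf{LP4} is unavailable; there I would argue directly at the block of the vertex reached by $\mu$, choosing $\mu$ of maximal length among the terms so that no shorter term can be lengthened into it, rather than normalizing all lengths uniformly.
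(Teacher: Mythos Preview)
The paper does not give its own proof of this theorem; it simply cites \cite[Theorem 5.3]{Tomforde:2011aa}. Your proposal is essentially the standard argument for the Graded Uniqueness Theorem and is presumably close to what appears in that reference: reduce to showing that every nonzero graded ideal of $L_R(E)$ contains some $rq_v$ with $r\neq 0$, then obtain such an element by compressing a nonzero homogeneous element into degree $0$ and exploiting the matrix-unit structure of $L_R(E)_0$.

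One point to tighten: your ``first move'' $z=t_{\alpha_1^*}\,x\,t_{\beta_1}$ is not guaranteed to be nonzero, since the cross-terms $r_i\,t_{\alpha_1^*}t_{\alpha_i}t_{\beta_i^*}t_{\beta_1}$ can cancel the $r_1q_v$ contribution; as written, you have not secured a \emph{nonzero} degree-$0$ element of $I$ before invoking the second move. The standard remedy is exactly the one you propose at the end for singular vertices: rather than compressing by an arbitrary term, choose from the outset a term $t_{\alpha_j}t_{\beta_j^*}$ with $|\alpha_j|$ (hence $|\beta_j|$) maximal among the summands. Then for every other $i$, either $t_{\alpha_j^*}t_{\alpha_i}=0$ or $\alpha_j=\alpha_i\gamma$, and one checks that the resulting cross-terms cannot produce a pure $q_v$ to cancel $r_jq_{r_E(\alpha_j)}$. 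Organized this way, the argument handles regular and singular vertices uniformly and the \textbf{LP4} length-normalization step becomes unnecessary. With that adjustment your sketch is correct.
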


\begin{proof}
\cite[Theorem 5.3]{Tomforde:2011aa}
\end{proof}

\subsection{Ultragraph Leavitt Path Algebras}

Similar to graphs, given an ultragraph $\mathcal{G}=(G^{0},\mathcal{G}^{1}, r,s)$, we define a set of ``ghost edges'' by associating an element $e^{*}$ to each edge $e$,
$$(\mathcal{G}^{1})^{*}:=\{e^{*}\}_{e\in \mathcal{G}^{1}}.$$
And, for each $\alpha=e_{1}e_{2}\dots e_{n}\in \mathcal{G}^{*}$, we define an associated ``ghost path'' by 
$$\alpha^{*}:=e_{n^{*}}e_{(n-1)^{*}}\dots e_{1^{*}}.$$

\begin{definition}\label{dulfg}
Let $\mathcal{G}$ be an ultragraph. A \textit{Leavitt $\mathcal{G}$-family} in an $R$-algebra $\mathcal{A}$ is a set $\{P_{A},\ S_{e},\ S_{{e}^{*}}\}_{A\in\mathcal{G}^{0},e\in\mathcal{G}^{1}}\subseteq \mathcal{A}$ such that:\\
(\textbf{uLP1}) $P_{\emptyset}=0,\ P_{A\cap B}=P_{A}P_{B},\ P_{A\cup B}=P_{A}+P_{B}-P_{A\cap B}$ for all $A,B\in\mathcal{G}^{0}$,\\
(\textbf{uLP2}) $P_{s(e)}S_{e}=S_{e}P_{r(e)}=S_{e}$ and $P_{r(e)}S_{e^{*}}=S_{e^{*}}P_{s(e)}=S_{e^{*}}$ for all $e\in\mathcal{G}^{1}$,\\
(\textbf{uLP3}) $S_{e^{*}}S_{f}=\delta_{e,f}P_{r(e)}$ for all $e,f\in\mathcal{G}^{1}$,\\
(\textbf{uLP4}) $P_{v}=\sum\limits_{e\in s^{-1}(v)}S_{e}S_{e^{*}}$ for $v\in G^{0}$ such that $0<|s^{-1}(v)|<\infty$.\\
For $v\in G^{0}$, we take $P_{v}:=P_{\{v\}}$.
\end{definition}

Notice that, for each $A\in\mathcal{G}^{0}$, \textbf{uLP1} implies $P_{A}$ is idempotent: $P_{A}=P_{A\cap A}=P_{A}P_{A}$. Also, the following lemma expands on \textbf{uLP2}. Its statement and proof is exactly that of \cite[Lemma 2.8]{Tomforde:2003aa}.

\begin{lemma}\label{ulve}
Given a Leavitt $\mathcal{G}$-family $\{P_{A},\ S_{e},\ S_{e^{*}}\}\subseteq \mathcal{A}$, for $A\in\mathcal{G}^{0}$ and $e\in\mathcal{G}^{1}$:

\begin{center}
$P_{A}S_{e}=
\begin{cases}
S_{e} & \text{if } s(e)\in A, \\
0 & \text{otherwise},
\end{cases}$
\end{center}

and

\begin{center}
$S_{e^{*}}P_{A}=
\begin{cases}
S_{e^{*}} & \text{if } s(e)\in A, \\
0 & \text{otherwise}.
\end{cases}$
\end{center}
\end{lemma}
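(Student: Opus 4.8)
The plan is to reduce everything to relation (\textbf{uLP2}) together with the intersection rule in (\textbf{uLP1}). First I would record the elementary but crucial observation that the projections $P_{A}$ commute with one another: since $A\cap B=B\cap A$, relation (\textbf{uLP1}) gives $P_{A}P_{B}=P_{A\cap B}=P_{B\cap A}=P_{B}P_{A}$ for all $A,B\in\mathcal{G}^{0}$. I would also note that $\{s(e)\}\in\mathcal{G}^{0}$ for every $e\in\mathcal{G}^{1}$ (as $\mathcal{G}^{0}$ contains every singleton), so that $P_{s(e)}$ and all the relevant intersections make sense within the family.

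The key step is to rewrite $P_{A}S_{e}$ using (\textbf{uLP2}), which tells us $S_{e}=P_{s(e)}S_{e}$. Multiplying on the left by $P_{A}$ and using the commutativity noted above together with the intersection rule gives
$$P_{A}S_{e}=P_{A}P_{s(e)}S_{e}=P_{A\cap\{s(e)\}}S_{e}.$$
Now the whole statement follows by examining the single set $A\cap\{s(e)\}$, which is either $\{s(e)\}$ or $\emptyset$ according to whether $s(e)\in A$. In the first case $P_{A\cap\{s(e)\}}=P_{s(e)}$, and $P_{s(e)}S_{e}=S_{e}$ again by (\textbf{uLP2}); in the second case $P_{A\cap\{s(e)\}}=P_{\emptyset}=0$ by the first clause of (\textbf{uLP1}), so $P_{A}S_{e}=0$.

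The ghost-edge identity is entirely symmetric. Starting from $S_{e^{*}}=S_{e^{*}}P_{s(e)}$ (the right-hand clause of (\textbf{uLP2})), I would multiply on the right by $P_{A}$ to obtain
$$S_{e^{*}}P_{A}=S_{e^{*}}P_{s(e)}P_{A}=S_{e^{*}}P_{\{s(e)\}\cap A},$$
and then split into the same two cases, yielding $S_{e^{*}}$ when $s(e)\in A$ and $0$ otherwise.

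There is no genuinely hard step here; the only points requiring a moment's care are (i) confirming that $P_{A}$ and $P_{s(e)}$ commute so that the multiplication can be funneled through the intersection rule, and (ii) keeping the sides straight—left multiplication for $P_{A}S_{e}$ and right multiplication for $S_{e^{*}}P_{A}$—so that the correct clause of (\textbf{uLP2}) is invoked in each case.
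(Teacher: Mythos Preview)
Your proof is correct and follows essentially the same route as the paper: insert $P_{s(e)}$ via (\textbf{uLP2}), collapse $P_{A}P_{s(e)}$ to $P_{A\cap\{s(e)\}}$ via (\textbf{uLP1}), and split on whether $s(e)\in A$. The only minor remark is that your commutativity observation, while true, is not actually needed---(\textbf{uLP1}) gives $P_{A}P_{s(e)}=P_{A\cap\{s(e)\}}$ directly, without any appeal to $P_{s(e)}P_{A}$.
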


\begin{proof}
Let $A\in\mathcal{G}^{0}$ and $e\in\mathcal{G}^{1}$. By \textbf{uLP2}, we have $P_{s(e)}S_{e}=S_{e}$, and \textbf{uLP1} implies $P_{A}P_{s(e)}=P_{A\cap s(e)}$. And so

\begin{center}
$P_{A}S_{e}=P_{A}P_{s(e)}S_{e}=P_{A\cap s(e)}S_{e}=
\begin{cases}
P_{s(e)}S_{e}=S_{e} & \text{if } s(e)\in A, \\
P_{\emptyset}=0 & \text{otherwise}.
\end{cases}$
\end{center}
A similar argument shows
\begin{center}
$S_{e^{*}}P_{A}=
\begin{cases}
S_{e^{*}} & \text{if } s(e)\in A, \\
0 & \text{otherwise}.
\end{cases}$
\end{center}

\end{proof}

\begin{definition}\label{uglpa}
A \textit{Leavitt path algebra} of an ultragraph $\mathcal{G}$ over $R$, $L_{R}(\mathcal{G})$, is an $R$ algebra generated by a Leavitt $\mathcal{G}$- family $\{p_{A}, s_{e}, s_{e^{*}}\}\subseteq L_{R}(\mathcal{G})$ having the following universal property: given an $R$-algebra $\mathcal{A}$ and a Leavitt $\mathcal{G}$-family $\{P_{A}, S_{e}, S_{e^{*}}\}\subseteq \mathcal{A}$, there exists an $R$-algebra homomorphism
$$\phi:L_{R}(\mathcal{G})\to \mathcal{A}$$
with $\phi(p_{A})=P_{A},\ \phi(s_{e})=S_{e},\ \text{and}\ \phi(s_{e^{*}})=S_{e^{*}}$. We will sometimes denote the Leavitt path algebra of $\mathcal{G}$ by $L_{R}(\{p,s\})$.
\end{definition}

As with the Leavitt path algebra of a graph, the universal mapping property guarantees $L_{R}(\mathcal{G})$ is unique up to isomorphism. We construct $L_{R}(\mathcal{G})$ by taking the associative free algebra $\mathbb{F}_{R}(w(X))$, where $X=\{A\}_{A\in\mathcal{G}^{0}}\cup\{e,e^{*}\}_{e\in\mathcal{G}^{1}}$, and taking its quotient by the ideal $I\lhd\mathbb{F}_{R}(w(X))$, where $I$ is the ideal generated by the union of sets of the form:\\
$\bullet$ $\{A\cap B-AB, A\cup B-(A+B)+A\cap B:A,B\in\mathcal{G}^{0}\text{ such that } A\neq\emptyset,B\neq\emptyset,A\cap B\neq\emptyset\}$,\\
$\bullet$ $\{s(e)e-e,\ er(e)-e,\ r(e)e^{*}-e^{*},\ e^{*}s(e)-e^{*}:e\in\mathcal{G}^{1}\}$,\\
$\bullet$ $\{e^{*}f-\delta_{e,f}r(e):e,f\in\mathcal{G}^{1}\}$,\\
$\bullet$ $\{\sum\limits_{e\in s^{-1}(v)}ee^{*}-v:v\in G^{0}\text{ such that } 0<|s^{-1}(v)|<\infty\}$;\\
given the projection map
$$\pi:\mathbb{F}_{R}(w(X))\to\mathbb{F}_{R}(w(X))/I,$$
the Leavitt $\mathcal{G}$-family $\{p_{A},s_{e},s_{e^{*}}\}$ is given by $s_{e}:=\pi(e)$, $s_{e^{*}}:=\pi(e^{*})$, $p_{A}:=\pi(A)$ (when $A\neq\emptyset$), and $p_{\emptyset}:=0$. 

Suppose $\mathcal{A}$ is an $R$-algebra generated by a Leavitt $\mathcal{G}$ and satisfies the hypothesis of Definition \ref{uglpa}. We can use the universal mapping properties of $L_{R}(\mathcal{G})$ and $\mathcal{A}$ to define maps 
$$\phi:L_{R}(\mathcal{G})\to\mathcal{A} \text{ and }\varphi:\mathcal{A}\to L_{R}(\mathcal{G})$$
which are inverses of each other. Thus, as with $L_{R}(E)$, $L_{R}(\mathcal{G})$ is unique up to isomorphism. Much like the construction described in the paragraph after Definition \ref{glpa}, there is an analogous construction (see \cite[Theorem 2.6]{M.-Imanfar:2017aa}) which shows
$$rp_{A}\neq0 \text{ for all } A\in\mathcal{G}^{0}\setminus\{\emptyset\} \text{ and } r\in R\setminus\{0\};$$
and
$$s_{\alpha}s_{\beta}\neq0,\ s_{\beta^{*}}s_{\alpha^{*}}\neq0\iff \alpha\beta\in\mathcal{G}^{*}.$$

We will now give a description of how multiplication works in $L_{R}(\mathcal{G})$. Unsurprisingly, it is similar to that of $L_{R}(E)$. Let $\alpha,\beta\in\mathcal{G}$, then:
\vspace{.5cm}
if $|\alpha|,|\beta|>0$,
\begin{center}
$s_{\alpha}s_{\beta}=
\begin{cases}
s_{\alpha\beta} & \text{if } \alpha\beta\in E^{*}, \\
0 & \text{otherwise},\hspace{1cm}
\end{cases}$
and \hspace{.5cm} $s_{\beta^{*}}s_{\alpha^{*}}=
\begin{cases}
s_{(\alpha\beta)^{*}} & \text{if } \alpha\beta\in E^{*}, \\
0 & \text{otherwise};
\end{cases}$
\end{center}
\vspace{.5cm}
if $|\alpha|>0, |\beta|=0$, then for $\beta=A\in\mathcal{G}^{0}$, \textbf{uLP1} and \textbf{uLP2} imply $s_{\alpha}p_{A}=s_{\alpha}p_{r(\alpha)\cap A}$, which is $0$ if $r(\alpha)\cap A=\emptyset$, similarly, $p_{A}s_{\alpha^{*}}=p_{r(\alpha)\cap A}s_{\alpha^{*}}$; the case where $|\alpha|=|\beta|=0$ is addressed in \textbf{uLP1}, and the case where $|\alpha|=0$ is Lemma \ref{ulve}; note,
$$s_{\alpha}s_{\beta^{*}}=s_{\alpha}(p_{r(\alpha)\cap r(\beta)})s_{\beta^{*}}\neq0\iff r(\alpha)\cap r(\beta)\neq\emptyset;$$
finally, by \cite[Lemma 2.4]{M.-Imanfar:2017aa}, we have

\begin{center}
$s_{\beta^{*}}s_{\alpha}=
\begin{cases}
s_{\gamma} & \text{if } \alpha=\beta\gamma\text{ for } \gamma\in \mathcal{G}^{*}, \\
s_{\gamma^{*}} & \text{if } \beta=\alpha\gamma\text{ for } \gamma\in \mathcal{G}^{*}, \\
p_{r(\alpha)} & \text{if } \beta=\alpha, \\
0 & \text{otherwise}.
\end{cases}$
\end{center}
Putting it all together, we have the following theorem which is analogous to the combination of Lemma \ref{gspan} and Proposition \ref{gzgrade}.

\begin{theorem}\label{uggrade}
 For any ultragraph $\mathcal{G}$:\\
 1) $L_{R}(\mathcal{G})=\text{span}_{R}\{s_{\alpha}p_{A}s_{\beta^{*}}:\alpha,\beta\in\mathcal{G}^{*},\ A\in\mathcal{G}^{0}, \text{ and } r(\alpha)\cap\ A\cap r(\beta)\neq\emptyset\}$,\\
 2) $L_{R}(\mathcal{G})$ is a $\mathbb{Z}$-graded ring with\\
 $L_{R}(\mathcal{G})_{i}=\text{span}_{R}\{s_{\alpha}p_{A}s_{\beta^{*}}:\alpha,\beta\in\mathcal{G}^{*},\ A\in\mathcal{G}^{0},\ r(\alpha)\cap\ A\cap r(\beta)\neq\emptyset \text{ and } |\alpha|-|\beta|=i\}.$
\end{theorem}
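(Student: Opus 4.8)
Theorem \ref{uggrade} has two parts. Part (1) is a spanning statement: $L_R(\mathcal{G}) = \text{span}_R\{s_\alpha p_A s_{\beta^*}\}$ subject to the range-intersection condition. Part (2) upgrades this to a $\mathbb{Z}$-grading with the $i$-graded piece spanned by those monomials with $|\alpha| - |\beta| = i$. This is explicitly the ultragraph analog of combining Lemma \ref{gspan} and Proposition \ref{gzgrade}, and the excerpt has already laid out essentially all the multiplication rules I'll need as displayed computations immediately preceding the theorem.

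Let me think about the plan.

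**For part (1):** The strategy is to show the proposed spanning set is closed under multiplication (and under the $*$ and scalar operations), so that its $R$-span is a subalgebra containing all generators $p_A, s_e, s_{e^*}$; since $L_R(\mathcal{G})$ is generated by these, the span must be everything. So I need to verify that a product $(s_\alpha p_A s_{\beta^*})(s_\gamma p_B s_{\delta^*})$ can be rewritten as an $R$-combination of elements of the same form. The key reduction is the middle product $p_A s_{\beta^*} s_\gamma p_B$. Using $s_{\beta^*} s_\gamma$ — which by the Imanfar formula collapses to $s_{\gamma'}$, $s_{\gamma'^*}$, $p_{r(\beta)}$, or $0$ depending on whether $\beta, \gamma$ extend one another — together with uLP1/uLP2 to absorb the $p_A, p_B$ vertex projections (via $s_\alpha p_A = s_\alpha p_{r(\alpha) \cap A}$ and Lemma \ref{ulve}), one sees each case reassembles into a single term $s_{\alpha'} p_C s_{\delta'^*}$ of the required shape. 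The range condition $r(\alpha) \cap A \cap r(\beta) \neq \emptyset$ is exactly what guarantees the term is nonzero; when the intersection is empty the term is $0$ and may be discarded.

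**For part (2):** I'd define $L_R(\mathcal{G})_i$ by the given formula and check three things: that the $R_i$ are subgroups (clear, being $R$-spans), that $L_R(\mathcal{G}) = \bigoplus_i L_R(\mathcal{G})_i$ as an internal direct sum, and that $L_R(\mathcal{G})_n L_R(\mathcal{G})_m \subseteq L_R(\mathcal{G})_{n+m}$. The degree-additivity is the bookkeeping part of the same product computation from part (1): a generator $s_e$ gets degree $+1$, $s_{e^*}$ degree $-1$, $p_A$ degree $0$, so $s_\alpha p_A s_{\beta^*}$ has degree $|\alpha| - |\beta|$, and I must confirm that every reduction step in the multiplication preserves the total $|\alpha| - |\beta|$ count. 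That the sum is direct — i.e. that distinct degrees only overlap in $0$ — is the subtle point and cannot be read off the spanning set alone.

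**The main obstacle and how I'd handle it:** The genuinely hard step is establishing that the decomposition is a \emph{direct} sum, equivalently that a nonzero homogeneous combination cannot vanish across degrees. I would obtain this by constructing a $\mathbb{Z}$-grading at the level of the free algebra $\mathbb{F}_R(w(X))$ — assign each generator its degree and extend additively to words — then verify the defining ideal $I$ of $L_R(\mathcal{G})$ is generated by homogeneous elements (each relation in the four bullet lists is degree-homogeneous: vertex relations are degree $0$, the uLP4 relations $\sum_{e} ee^* - v$ are degree $0$, the range relations $e^*f - \delta_{e,f} r(e)$ are degree $0$, etc.). By Remark \ref{rgi}, a homogeneous ideal yields a graded quotient, giving the direct sum for free; the spanning description of each $L_R(\mathcal{G})_i$ then follows by intersecting the part-(1) spanning set with each degree. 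This mirrors exactly the proof of Proposition \ref{gzgrade} for graphs, so I would invoke that template and verify only that the ultragraph relations (especially the uLP1 set relations $A \cap B - AB$ and $A \cup B - (A+B) + A\cap B$, which are degree $0$) are all homogeneous.

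\begin{remark}
In short: part (1) is a closure-under-multiplication argument reducing every product of two basic monomials to a single basic monomial via the displayed multiplication rules, and part (2) follows by grading the free algebra, checking the defining relations are homogeneous, and appealing to Remark \ref{rgi}; the only delicate point is directness of the sum, which the homogeneity of the ideal supplies.
\end{remark}
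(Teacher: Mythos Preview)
Your proposal is correct and is the standard argument; the paper itself does not prove this theorem but simply cites \cite[Theorem 2.5]{M.-Imanfar:2017aa}. Your approach---grading the free algebra $\mathbb{F}_R(w(X))$, checking that the defining relations are all homogeneous of degree $0$, and invoking Remark~\ref{rgi}---is exactly the method the paper later uses to prove the analogous result for Exel--Laca algebras in Lemma~\ref{lex2}, so you are in good company.
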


\begin{proof}
\cite[Theorem 2.5]{M.-Imanfar:2017aa}
\end{proof}
We also have the following theorem which is analogous to Theorem \ref{ggrdhomom}.
\begin{theorem}\label{uggrdhomom}
For a given ultragraph $\mathcal{G}$, and a $\mathbb{Z}$-graded ring $S$, suppose
$$\pi:L_{R}(\mathcal{G})\to S$$
is a graded ring homomorphism such that $\pi(rp_{A})\neq0$ for all $A\in\mathcal{G}^{0}\setminus\{\emptyset\}$ and $r\in R\setminus\{0\}$. Then, $\pi$ is injective.
\end{theorem}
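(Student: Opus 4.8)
The plan is to adapt the proof of the graded uniqueness theorem for graphs (Theorem \ref{ggrdhomom}) to the ultragraph setting, the whole argument resting on the grading together with the compression identities packaged in Theorem \ref{uggrade}. First I would observe that $\ker\pi$ is a graded ideal: if $x=\sum_{i}x_{i}$ with $x_{i}\in L_{R}(\mathcal{G})_{i}$ and $\pi(x)=0$, then since $\pi$ is graded each $\pi(x_{i})\in S_{i}$, and the directness of $S=\bigoplus_{i}S_{i}$ forces $\pi(x_{i})=0$ for every $i$; hence $x_{i}\in\ker\pi$. Consequently it suffices to prove that every nonzero homogeneous element $x\in L_{R}(\mathcal{G})_{n}$ satisfies $\pi(x)\neq 0$.

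The heart of the argument is a reduction lemma: \emph{for every nonzero homogeneous $x$ there exist $\mu,\nu\in\mathcal{G}^{*}$, a set $B\in\mathcal{G}^{0}\setminus\{\emptyset\}$, and $r\in R\setminus\{0\}$ with $s_{\mu^{*}}\,x\,s_{\nu}=r\,p_{B}$.} Granting this, the theorem follows at once: if $\pi(x)=0$ then $\pi(rp_{B})=\pi(s_{\mu^{*}})\pi(x)\pi(s_{\nu})=0$, contradicting the standing hypothesis that $\pi(rp_{A})\neq0$ for $A\neq\emptyset$ and $r\neq0$. To prove the lemma I would first write $x=\sum_{i=1}^{m}r_{i}\,s_{\alpha_{i}}p_{A_{i}}s_{\beta_{i}^{*}}$ in reduced form via Theorem \ref{uggrade}, with $|\alpha_{i}|-|\beta_{i}|=n$ and $r(\alpha_{i})\cap A_{i}\cap r(\beta_{i})\neq\emptyset$. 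I would then isolate a single summand and compress it to a set projection: choosing an index $j$ for which the pair $(|\alpha_{j}|,|\beta_{j}|)$ is extremal and forming the two-sided compression $s_{\alpha_{j}^{*}}\,x\,s_{\beta_{j}}$, the diagonal term collapses, using $s_{\alpha_{j}^{*}}s_{\alpha_{j}}=p_{r(\alpha_{j})}$, $s_{\beta_{j}^{*}}s_{\beta_{j}}=p_{r(\beta_{j})}$, and (\textbf{uLP1}), to $r_{j}\,p_{B}$ with $B=r(\alpha_{j})\cap A_{j}\cap r(\beta_{j})\neq\emptyset$, while the orthogonality relation (\textbf{uLP3}) and the product formulas of Theorem \ref{uggrade} are meant to annihilate the competing summands.

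The main obstacle is exactly this isolation step: a single extremal choice need not kill every cross term, because some summand's path $\alpha_{i}$ or $\beta_{i}$ may be a proper prefix of $\alpha_{j}$ or $\beta_{j}$, so that $s_{\alpha_{j}^{*}}s_{\alpha_{i}}$ or $s_{\beta_{i}^{*}}s_{\beta_{j}}$ survives as a shorter (ghost) path rather than vanishing. I would resolve this by an induction on the number of summands $m$, peeling off one term at a time and at each stage multiplying by a path long enough to separate the remaining terms by their source/range data, together with the construction cited after Definition \ref{uglpa} guaranteeing $rp_{A}\neq0$, which certifies that the surviving compression is genuinely nonzero and not an accidental cancellation. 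A further ultragraph-specific wrinkle, absent in the graph case, is that the compressions naturally land on projections $p_{B}$ indexed by \emph{sets} $B\in\mathcal{G}^{0}$ rather than single vertices; keeping $B$ nonempty throughout (by intersecting with suitable $p_{A}$ and invoking $r(\alpha_{j})\cap A_{j}\cap r(\beta_{j})\neq\emptyset$) is what makes the final appeal to the faithfulness hypothesis legitimate.
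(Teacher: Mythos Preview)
The paper does not prove this theorem itself; it simply cites \cite[Corollary~3.4]{M.-Imanfar:2017aa}. Your proposal follows exactly the standard strategy used there (and in Tomforde's graph version, Theorem~\ref{ggrdhomom}): use the grading to reduce to homogeneous elements, then show that any nonzero homogeneous element can be compressed on both sides by path elements down to a nonzero $r\,p_{B}$ with $B\neq\emptyset$, whence the hypothesis on $\pi$ gives the contradiction.

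Your outline is sound, and you have correctly located the only genuinely delicate point: a single extremal compression $s_{\alpha_{j}^{*}}(\cdot)s_{\beta_{j}}$ need not kill every competing summand, because some $\alpha_{i}$ (resp.\ $\beta_{i}$) may be a proper prefix of $\alpha_{j}$ (resp.\ $\beta_{j}$). The induction on $m$ does work, but what you have written stops short of saying \emph{why} the inductive step terminates. After the first compression the surviving cross terms are again homogeneous of degree~$0$, expressed as a sum of monomials $s_{\gamma}p_{C}s_{\delta^{*}}$ with strictly shorter paths; one then checks---using the nondegeneracy fact $rp_{A}\neq 0$ recorded after Definition~\ref{uglpa}---that the compression is still nonzero, so the induction hypothesis applies with a strictly smaller complexity (total path length, or number of summands after first multiplying by a vertex projection $p_{v}$ with $v\in r(\alpha_{j})\cap A_{j}\cap r(\beta_{j})$ to disjointify the set projections). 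Spelling this out is exactly what the cited paper does; with that refinement your argument is complete and matches the literature.
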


\begin{proof}
\cite[Corollary 3.4]{M.-Imanfar:2017aa}
\end{proof}

Just like the Leavitt path algebras of graphs, the Leavitt path algebras of ultragraphs need not be unital. The following result is a restatement of \cite[Lemma 6.11]{M.-Imanfar:2017aa}.

\begin{lemma}\label{ugnunit}
For a given ultragraph $\mathcal{G}$, $L_{R}(\mathcal{G})$ is unital $\iff$ $G^{0}\in\mathcal{G}^{0}$.
\end{lemma}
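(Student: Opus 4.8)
The plan is to prove both implications, guided by the guess that whenever $L_{R}(\mathcal{G})$ is unital its identity is exactly $p_{G^{0}}$; this already explains the role of the condition $G^{0}\in\mathcal{G}^{0}$, since $p_{G^{0}}$ only makes sense as an element of the algebra when $G^{0}$ is an element of $\mathcal{G}^{0}$. I would present the easy direction first and reserve the work for the converse.

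For the direction ($\Leftarrow$), assume $G^{0}\in\mathcal{G}^{0}$ and verify that $p_{G^{0}}$ is a two-sided identity. Since $L_{R}(\mathcal{G})$ is generated by $\{p_{A},s_{e},s_{e^{*}}\}$ and an element that fixes each generator on the left automatically fixes every product (hence, by $R$-linearity, everything), it suffices to check $p_{G^{0}}g=gp_{G^{0}}=g$ on generators. Every $A\in\mathcal{G}^{0}$ satisfies $A\subseteq G^{0}$, so \textbf{uLP1} gives $p_{G^{0}}p_{A}=p_{G^{0}\cap A}=p_{A}$; since $s(e)\in G^{0}$ and $r(e)\subseteq G^{0}$, Lemma~\ref{ulve} together with \textbf{uLP2} gives $p_{G^{0}}s_{e}=s_{e}$ and $s_{e}p_{G^{0}}=s_{e}p_{r(e)}=s_{e}$, and symmetrically for $s_{e^{*}}$. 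This is routine.

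The substance is in ($\Rightarrow$), which I expect to be the main obstacle. Assume $1\in L_{R}(\mathcal{G})$ is the identity and write it, using Theorem~\ref{uggrade}(1), as a finite sum $1=\sum_{i=1}^{n}r_{i}\,s_{\alpha_{i}}p_{A_{i}}s_{\beta_{i}^{*}}$, discarding any zero terms. The key move is to produce from each summand $x_{i}:=s_{\alpha_{i}}p_{A_{i}}s_{\beta_{i}^{*}}$ a set $B_{i}\in\mathcal{G}^{0}$ with $B_{i}\subseteq G^{0}$ and $p_{B_{i}}x_{i}=x_{i}$: if $|\alpha_{i}|>0$ take $B_{i}=\{s(\alpha_{i})\}$ and use $p_{s(\alpha_{i})}s_{\alpha_{i}}=s_{\alpha_{i}}$ (\textbf{uLP2}); if $|\alpha_{i}|=0$ the leading factor $s_{\alpha_{i}}$ is itself some $p_{C_{i}}$, which absorbs into a single idempotent $p_{B_{i}}$ with $B_{i}=C_{i}\cap A_{i}$ via \textbf{uLP1}. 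Setting $A_{0}:=\bigcup_{i=1}^{n}B_{i}$, closure of $\mathcal{G}^{0}$ under finite unions gives $A_{0}\in\mathcal{G}^{0}$, and $B_{i}\subseteq A_{0}$ yields $p_{A_{0}}x_{i}=p_{A_{0}\cap B_{i}}x_{i}=p_{B_{i}}x_{i}=x_{i}$. Hence $p_{A_{0}}\cdot 1=\sum_{i}r_{i}\,p_{A_{0}}x_{i}=1$; but $1$ is a right identity, so $p_{A_{0}}\cdot 1=p_{A_{0}}$, forcing $p_{A_{0}}=1$.

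It then remains to identify $A_{0}$ with $G^{0}$. For any $w\in G^{0}\setminus A_{0}$ we would have $p_{w}=1\cdot p_{w}=p_{A_{0}}p_{w}=p_{A_{0}\cap\{w\}}=p_{\emptyset}=0$, contradicting the fact, from the construction following Definition~\ref{uglpa}, that $p_{w}\neq 0$ for every $w\in G^{0}$. Therefore $G^{0}\subseteq A_{0}$, and since $A_{0}\subseteq G^{0}$ automatically, $G^{0}=A_{0}\in\mathcal{G}^{0}$. I expect the only delicate points to be the extraction of $B_{i}$ in the $|\alpha_{i}|=0$ case and the bookkeeping that $A_{0}$ is a genuine single element of $\mathcal{G}^{0}$. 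Note that this argument deliberately avoids the counting used in the graph analogue (Lemma~\ref{glpalgunital}): here $G^{0}$ may well be infinite while still lying in $\mathcal{G}^{0}$, so one must lean on the closure properties of $\mathcal{G}^{0}$ rather than on finiteness of the vertex set.
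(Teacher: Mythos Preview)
Your proof is correct and follows essentially the same route as the paper's: write the identity as a finite sum $\sum r_i s_{\alpha_i}p_{A_i}s_{\beta_i^*}$, collect the sources into a single $A_0\in\mathcal{G}^0$, and derive $p_v=0$ for any $v\in G^0\setminus A_0$. Your treatment is in fact a bit more careful than the paper's in the $|\alpha_i|=0$ case (where $s(\alpha_i)$ need not be a singleton) and in isolating the intermediate fact $p_{A_0}=1$, but the argument is the same in substance.
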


\begin{proof}
Suppose $G^{0}\in\mathcal{G}^{0}$. Then, by part 1) of Theorem \ref{uggrade}, $p_{G^{0}}$ is a unit for $L_{R}(\mathcal{G})$. On the other hand, suppose $L_{R}(\mathcal{G})$ is unital. Again, Theorem \ref{uggrade} 1) implies
$$1=\sum\limits_{i=1}^{n}r_{i}s_{\alpha_{i}}p_{A_{i}}s_{\beta_{i}^{*}}.$$
Let $A=\bigcup\limits_{i=1}^{n}s(\alpha_{i})$. Since $A$ is the finite union of elements in $G^{0}$, we have $A\in\mathcal{G}^{0}$. If $G^{0}\notin\mathcal{G}^{0}$, it must mean there exists a $v\in G^{0}\setminus A$. But then, by \text{uLP1},
$$p_{v}=p_{v}1=p_{v}\Bigg(\sum\limits_{i=1}^{n}r_{i}s_{\alpha_{i}}p_{A_{i}}s_{\beta_{i}^{*}}\Bigg)=\sum\limits_{i=1}^{n}r_{i}p_{v}s_{\alpha_{i}}p_{A_{i}}s_{\beta_{i}^{*}}=0 \ \ \ \Rightarrow\Leftarrow.$$
Thus, $L_{R}(\mathcal{G})$ is unital $\iff$ $G^{0}\in\mathcal{G}^{0}$.
\end{proof}

\begin{example}\label{glugl}
Every graph Leavitt path algebra is an ultragraph Leavitt path algebra. This follows from the fact that a graph is an ultragraph. To See this, for $E=\mathcal{G}$, note that $\{p_{v},s_{e},s_{e^{*}}\}$ is a Leavitt $E$-family in $L_{R}(\mathcal{G})$. Thus, there exists an $R$-algebra homomorphism
$$\phi:L_{R}(E)\to L_{R}(\mathcal{G}).$$
On the other hand, for each $A\in\mathcal{G}^{0}=\mathcal{G}^{0}$, define:
$$P_{A}:=\sum\limits_{v\in A}q_{v},\ S_{e}:=t_{e}, \text{ and } S_{e^{*}}:=t_{e^{*}}.$$
One can quickly show $\{P_{A},S_{e},S_{e^{*}}\}$ is a Leavitt $\mathcal{G}$-family in $L_{R}(E)$. And so there exists an $R$-algebra homomorphism 
$$\varphi:L_{R}(\mathcal{G})\to L_{R}(E);$$
it is easily verified $\phi$ and $\varphi$ are inverses of each other, hence, $L_{R}(\mathcal{G})\cong L_{R}(E)$.
\end{example}

\begin{remark}\label{uglbgl}
Let $\mathcal{G}$ be the ultragraph from Example \ref{ugr1}. It turns out $L_{\mathbb{Z}_{2}}(\mathcal{G})$ is not isomorphic to the Leavitt path algebra of any graph. So, just like the $C^{*}$-algebra case, the class of ultragraph Leavitt path algebras properly contains the class of graph Leavitt path algebras. Showing this fact is a bit involved and would require further discussion of Leavitt path algebras; in particular, we would need to know a lot more about their ideal structure. See \cite[Example 6.12]{M.-Imanfar:2017aa} for further details.
\end{remark}

While the class of ultragraph Leavitt path algebras is strictly larger, we do have the following:

\begin{prop}\label{gegiso}
For an ultragraph $\mathcal{G}$, let $E_{\mathcal{G}}$ be the graph from Definition \ref{gfromug}. Suppose $|E_{\mathcal{G}}^{0}|<\infty$, then,
$$L_{R}(\mathcal{G})\cong L_{R}(E_{\mathcal{G}}).$$
\end{prop}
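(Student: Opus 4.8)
The first thing I would do is unwind what the hypothesis $|E_{\mathcal{G}}^{0}|<\infty$ really says. Since $E_{\mathcal{G}}^{0}=\{v_{\iota}\}_{\iota\in G^{0}\sqcup\Delta}$, finiteness of $E_{\mathcal{G}}^{0}$ forces $G^{0}$ to be finite; but then $|r(\omega)|\le|G^{0}|<\infty$ for every $\omega$, so $\Delta=\emptyset$, whence also $\Gamma_{0}=\Gamma_{+}=\emptyset$, $W_{+}=\emptyset$, $W_{0}=G^{0}$, and $X(e_{n})=r(e_{n})$ for every $n$. Thus $E_{\mathcal{G}}$ collapses to the ``edge-splitting'' graph with vertices $\{v_{w}\}_{w\in G^{0}}$ and, for each $e\in\mathcal{G}^{1}$ and each $x\in r(e)$, a single edge $(e,x)$ with $s_{E}((e,x))=v_{s(e)}$ and $r_{E}((e,x))=v_{x}$. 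I would also record that both algebras are unital in this setting: $G^{0}$ finite gives $G^{0}\in\mathcal{G}^{0}$, so $L_{R}(\mathcal{G})$ is unital by Lemma \ref{ugnunit}, and $E_{\mathcal{G}}^{0}$ finite gives unitality of $L_{R}(E_{\mathcal{G}})$ by Lemma \ref{glpalgunital}; this is reassuring, though the argument below does not strictly need it.

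Next I would build a Leavitt $E_{\mathcal{G}}$-family inside $L_{R}(\mathcal{G})$ by setting $Q_{v_{w}}:=p_{w}$, $T_{(e,x)}:=s_{e}p_{x}$, and $T_{(e,x)^{*}}:=p_{x}s_{e^{*}}$, and verify \textbf{LP1}--\textbf{LP4} from the ultragraph relations. \textbf{LP1} is immediate from \textbf{uLP1}, \textbf{LP2} from \textbf{uLP2}, and \textbf{LP3} from \textbf{uLP3} together with $p_{x}p_{r(e)}=p_{x}$ (as $x\in r(e)$) and $p_{x}p_{y}=\delta_{x,y}p_{x}$. The relation needing real care is \textbf{LP4}: for $w\in G^{0}$ one computes $\sum_{(e,x)\in s_{E}^{-1}(v_{w})}T_{(e,x)}T_{(e,x)^{*}}=\sum_{e\in s^{-1}(w)}s_{e}\big(\sum_{x\in r(e)}p_{x}\big)s_{e^{*}}=\sum_{e\in s^{-1}(w)}s_{e}s_{e^{*}}$, using $\sum_{x\in r(e)}p_{x}=p_{r(e)}$ for the finite set $r(e)$ (an easy induction from \textbf{uLP1}) and $s_{e}p_{r(e)}=s_{e}$. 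The key observation is that $0<|s_{E}^{-1}(v_{w})|<\infty$ holds exactly when $0<|s^{-1}(w)|<\infty$ (ranges are finite), so in precisely those cases \textbf{uLP4} yields $\sum_{e\in s^{-1}(w)}s_{e}s_{e^{*}}=p_{w}=Q_{v_{w}}$. The universal property of $L_{R}(E_{\mathcal{G}})$ then produces an $R$-algebra homomorphism $\phi\colon L_{R}(E_{\mathcal{G}})\to L_{R}(\mathcal{G})$ determined on generators.

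To finish I would produce an explicit inverse. Define a Leavitt $\mathcal{G}$-family in $L_{R}(E_{\mathcal{G}})$ by $P_{A}:=\sum_{w\in A}q_{v_{w}}$, $S_{e}:=\sum_{x\in r(e)}t_{(e,x)}$, and $S_{e^{*}}:=\sum_{x\in r(e)}t_{(e,x)^{*}}$; checking \textbf{uLP1}--\textbf{uLP4} is the mirror image of the previous step, the only nontrivial point again being that the finite-emitter conditions in $\mathcal{G}$ and $E_{\mathcal{G}}$ correspond, so that \textbf{LP4} for $E_{\mathcal{G}}$ delivers \textbf{uLP4}. This gives $\varphi\colon L_{R}(\mathcal{G})\to L_{R}(E_{\mathcal{G}})$, and I would then verify $\varphi\circ\phi=\mathrm{id}$ and $\phi\circ\varphi=\mathrm{id}$ on generators; the governing identities are $\varphi(s_{e})q_{v_{x}}=t_{(e,x)}$, $\phi\big(\sum_{x}t_{(e,x)}\big)=s_{e}\sum_{x}p_{x}=s_{e}p_{r(e)}=s_{e}$, and $\sum_{w\in A}p_{w}=p_{A}$, all routine. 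Alternatively one could bypass $\varphi$: the map $\phi$ is graded with $\phi(rq_{v_{w}})=rp_{w}\neq0$ for $r\neq0$, hence injective by Theorem \ref{ggrdhomom}, and surjective since $p_{A}$, $s_{e}=s_{e}p_{r(e)}=\sum_{x}\phi(t_{(e,x)})$, and $s_{e^{*}}$ all lie in the image. The main obstacle throughout is pure bookkeeping: matching the singular-vertex (finite-emitter) conditions across the two graphs so that the Cuntz--Krieger-type relations \textbf{LP4} and \textbf{uLP4} transfer correctly, together with the repeated use of $\sum_{x\in r(e)}p_{x}=p_{r(e)}$, which is exactly where finiteness of the ranges (equivalently of $G^{0}$) is indispensable.
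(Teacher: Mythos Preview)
Your proof is correct and follows essentially the same strategy as the paper: both unwind the hypothesis to conclude $\Delta=\emptyset$ and $X(e_n)=r(e_n)$, then define the same Leavitt $E_{\mathcal{G}}$-family $Q_{v_w}=p_w$, $T_{(e,x)}=s_e p_x$, $T_{(e,x)^*}=p_x s_{e^*}$ in $L_R(\mathcal{G})$. The paper concludes via the route you list as your alternative---invoking the graded uniqueness theorem (Theorem~\ref{ggrdhomom}) for injectivity and then checking surjectivity via $p_A=\sum_{v\in A}Q_v$ and $s_{e_n}=\sum_{v\in r(e_n)}T_{(e_n,v)}$---rather than by constructing the explicit inverse $\varphi$ you build first; either route works, and your verification of \textbf{LP4} is more detailed than the paper's.
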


\begin{proof}
 Note that $|E_{\mathcal{G}}^{0}|<\infty \implies |G^{0}|<\infty$. But, by the construction of $E_{\mathcal{G}}$, $|G^{0}|<\infty\implies\Delta=\emptyset$. And so we can identify $E_{\mathcal{G}}^{0}$ with $G^{0}$, and $E_{\mathcal{G}}^{1}$ with the set 
 $$\{(e_{n},v):e_{n}\in\mathcal{G}^{1}, v\in r(e_{n})\}.$$
Define a set $\{Q_{v},T_{e},T_{e^{*}}\}\subseteq L_{R}(\mathcal{G})$ by 
$$Q_{v}:=p_{v},\ T_{(e_{n},v)}:=s_{e_{n}}p_{v} \text { and } T_{(e_{n},v)^{*}}:=p_{v}s_{e_{n}^{*}}.$$
It can quickly  be verified that $\{Q_{v},T_{e},T_{e^{*}}\}$ is a Leavitt $E_{\mathcal{G}}$-family in $L_{R}(\mathcal{G})$; thus, there exists an $R$-algebra homomorphism
$$\phi: L_{R}(E_{\mathcal{G}})\to L_{R}(\mathcal{G})$$
such that $\phi(q_{v})=Q_{v},\ \phi(t_{(e_{n},v)})=T_{(e_{n},v)}, \text{ and } \phi(t_{(e_{n},v)^{*}})=T_{(e_{n},v)^{*}}$. By Proposition \ref{gzgrade}, Theorem \ref{uggrade}, and Theorem \ref{uggrdhomom}, $\phi$ is an injective, $\mathbb{Z}$-graded, homomorphism. Note that $|G^{0}|<\infty$ implies
$|A|<\infty$ for each $A\in\mathcal{G}^{0}$ and $|r(e_{n})|<\infty$ for each $e_{n}\in\mathcal{G}^{1}$. Further, for each $A\in\mathcal{G}^{0}$ and each $e_{n}\in\mathcal{G}^{1}$,
$$p_{A}=\sum\limits_{v\in A}Q_{v},\ s_{e_{n}}=\sum\limits_{v\in r(e_{n})} T_{(e_{n},v)}, \text{ and } s_{e_{n}^{*}}=\sum\limits_{v\in r(e_{n})} T_{(e_{n},v)^{*}};$$
since $L_{R}(\mathcal{G})$ is generated by $\{p_{A},s_{e},s_{e^{*}}\}$, this means $\phi$ is surjective as well. Thus, $L_{R}(\mathcal{G})\cong L_{R}(E_{\mathcal{G}})$.
 \end{proof}
 
Because Morita equivalence is strictly weaker than being isomorphic, Proposition \ref{gegiso} implies $L_{R}(\mathcal{G})$ is Morita equivalent to $L_{R}(E_{\mathcal{G}})$ when $|E_{\mathcal{G}}^{0}|<\infty$. And so it only remains to establish Morita equivalence when $|E_{\mathcal{G}}^{0}|=\infty$; notice that, by Lemma \ref{glpalgunital}, this necessarily means $L_{R}(E_{\mathcal{G}})$ is nonunital. 

While Morita equivalence of rings has many formulations, it is at heart an equivalence of categories. To clearly understand what this means and how the different formulations of Morita equivalence arise, it is useful to review some foundational category theory. 

\section{Category Theory}

\begin{definition}\label{cat}
A \textit{category}, $\mathcal{C}$, consists of:\\
1) a class of \textit{objects}, $\text{ob }\mathcal{C}$;\\
2) for each pair of objects, $(A,B)\in \text{ob }\mathcal{C}\times\text{ob }\mathcal{C}$, a set of \textit{morphisms}, $\text{Hom}_{\mathcal{C}}(A,B)$, we can visualize $f\in\text{Hom}_{\mathcal{C}}(A,B)$ as an arrow $A \xrightarrow{f} B$ (indeed, morphisms are sometimes called ``arrows''), $A$ is the \textit{domain} of $f$, and $B$ is the \textit{codomain};\\
3) further, for each $(A,B), (B,C)\in \text{ob } \mathcal{C}\times\text{ob }\mathcal{C}$, there is a set map (the composition map),  
$$\text{Hom}_{\mathcal{C}}(A,B)\times\text{Hom}_{\mathcal{C}}(B,C)\to\text{Hom}_{\mathcal{C}}(A,C);$$
the image of $(f,g)\in\text{Hom}_{\mathcal{C}}(A,B)\times\text{Hom}_{\mathcal{C}}(B,C)$ is denoted by ``$g\circ f$;''\\
 finally,  the objects and morphisms must satisfy the conditions that,\\
  (\textbf{C1}) for $(A,B),(C,D)\in \text{ob }\mathcal{C}\times\text{ob }\mathcal{C}$,
  $$(A,B)\neq(C,D)\implies \text{Hom}_{\mathcal{C}}(A,B)\cap\text{Hom}_{\mathcal{C}}(C,D)=\emptyset,$$
  (\textbf{C2}) given $f\in\text{Hom}_{\mathcal{C}}(A,B)$, $g\in\text{Hom}_{\mathcal{C}}(B,C)$, and $h\in\text{Hom}_{\mathcal{C}}(C,D)$,
  $$h\circ(g\circ f)=(h\circ g)\circ f,$$
(\textbf{C3}) for each $A\in \text{ob }\mathcal{C}$, there exists a unique element $1_{A}\in \text{Hom}_{\mathcal{C}}(A,A)$ such that,\\
i) for each $B\in\text{ob }\mathcal{C},$ and each $f\in\text{Hom}_{\mathcal{C}}(A,B),$ $f\circ1_{A}=f,$\\
ii) for each $B\in\text{ob }\mathcal{C},$ and each $g\in\text{Hom}_{\mathcal{C}}(B,A),$ $1_{A}\circ g=g$.
\end{definition}

\begin{example}[Category of Groups, $\textbf{Grp}$]\label{group}
Here, $\text{ob }\textbf{Grp}$ is the class of groups. For $(A,B)\in\text{ob }\textbf{Grp}\times\text{ob }\textbf{Grp}$, $\text{Hom}_{\textbf{Grp}}(A,B)$ is the set of group homomorphisms from $A$ to $B$. For $(f,g)\in \text{Hom}_{\textbf{Grp}}(A,B)\times\text{Hom}_{\textbf{Grp}}(B,C)$, $g\circ f\in\text{Hom}_{\textbf{Grp}}(A,C)$ is given by the usual composition of group homomorphisms.
\end{example}

\begin{example}[Category of Rings, $\textbf{Rng}$]\label{ring}
In this case, $\text{ob }\textbf{Rng}$ is the class of rings.  $\text{Hom}_{\textbf{Rng}}(A,B)$ is the set of ring homomorphisms from $A$ to $B$.\\
 For $(f,g)\in \text{Hom}_{\textbf{Rng}}(A,B)\times\text{Hom}_{\textbf{Rng}}(B,C)$, $g\circ f\in\text{Hom}_{\textbf{Rng}}(A,C)$ is given by composition of ring homomorphisms. 
\end{example}

\begin{example}[Category of Pointed Topological Spaces, $\textbf{Top}^{*}$]\label{ptop}
Here, $\text{ob }\textbf{Top}^{*}$ consists of pointed topological spaces. Given $\big((X,x_{0}),(Y,y_{0})\big)\in\text{ob }\textbf{Top}^{*}\times\text{ob }\textbf{Top}^{*}$, $\text{Hom}_{\textbf{Top}^{*}}\big((X,x_{0}),(Y,y_{0})\big)$ consists of continuous maps $f:X\to Y$ such that $f(x_{0})=y_{0}$.\\ For $(f,g)\in \text{Hom}_{\textbf{Top}^{*}}\big((X,x_{0}),(Y,y_{0})\big)\times\text{Hom}_{\textbf{Top}^{*}}\big((Y,y_{0}),(Z,z_{0})\big)$,\\
 $g\circ f\in\text{Hom}_{\textbf{Top}^{*}}\big((X,x_{0}),(Z,z_{0})\big)$ is given by the usual composition of continuous maps. 
\end{example}

To get some idea of how general categories can be, consider the following example.

\begin{example}\label{top}
Let $X$ be a topological space. We can define a category $\mathcal{C}$ as follows:\\
Let $\text{ob }\mathcal{C}$ consist of the points of $X$; that is, $\text{ob }\mathcal{C}:=\{x:x\in X\}$. For $x,y\in \text{ob }\mathcal{C}$, let $\text{Hom}_{\mathcal{C}}(x,y)$ be the set of equivalence classes of paths from $x$ to $y$. For $([f],[g])\in \text{Hom}_{\mathcal{C}}(x,y)\times\text{Hom}_{\mathcal{C}}(y,z)$, $[g]\circ [f]\in\text{Hom}_{\mathcal{C}}(x,z)$ is given by $[g]\circ [f]:=[fg]$, where $[fg]$ is the equivalence class of the concatenation of $f$ by $g$. Finally, for $x\in \text{ob }\mathcal{C}$, $1_{x}$ is the equivalence class of the constant path at $x$. 
\end{example}

\begin{definition}\label{subcat}
Let $\mathcal{C}$ be a category. A \textit{subcategory} of $\mathcal{C}$ is a category, $\mathcal{D}$, such that:\\
1) $\text{ob }\mathcal{D}$ is a subclass of $\text{ob }\mathcal{C}$.\\
2) For each $(A,B)\in \text{ob }\mathcal{D}\times \text{ob }\mathcal{D}$, $\text{Hom}_{\mathcal{D}}(A,B)\subseteq\text{Hom}_{\mathcal{C}}(A,B)$.\\
3) For each $A\in \text{ob }\mathcal{D}$, $1_{A}\in \text{Hom}_{\mathcal{D}}(A,A)$ is the same as $1_{A}\in\text{Hom}_{\mathcal{C}}(A,A)$; and, the composition map 
$$\text{Hom}_{\mathcal{D}}(A,B)\times\text{Hom}_{\mathcal{D}}(B,C)\to\text{Hom}_{\mathcal{D}}(A,C)$$
is the restriction of the composition map
$$\text{Hom}_{\mathcal{C}}(A,B)\times\text{Hom}_{\mathcal{C}}(B,C)\to\text{Hom}_{\mathcal{C}}(A,C).$$
\end{definition}

\begin{example}[Category of Abelian Groups, $\textbf{Ab}$]\label{ab}
$\text{ob }\textbf{Ab}$ is the class of abelian groups, and the morphisms are group homomorphisms. It is straightforward to check $\textbf{Ab}$ is a subcategory of $\textbf{Grp}$.
\end{example}

Recall that every ring has an abelian group structure. Moreover, every ring homomorphism is a group homomorphism as well. Thus, $\textbf{Rng}$ is a subcategory of $\textbf{Ab}$. As a result, \textbf{Rng} is subcategory of $\textbf{Grp}$ as well. However, since not every group homomorphism is a ring homomorphism, $\text{Hom}_{\textbf{Rng}}(A,B)$ is a strict subset of $\text{Hom}_{\textbf{Ab}}(A,B)$. If $\mathcal{D}$ is a subcategory of $\mathcal{C}$ such that, for every $A,B\in\text{ob }\mathcal{D}$, $\text{Hom}_{\mathcal{D}}(A,B)=\text{Hom}_{\mathcal{C}}(A,B)$, we say $\mathcal{D}$ is a \textit{full} subcategory. For example, $\textbf{Ab}$ is a full subcategory of $\textbf{Grp}$.

Much like maps between sets, we have a notion of mapping between categories.

\begin{definition}\label{func}
 Let $\mathcal{C}$ and $\mathcal{D}$ be categories. A \textit{(covariant) functor} from $\mathcal{C}$ to $\mathcal{D}$, 
 $$F:\mathcal{C}\to\mathcal{D},$$
 consists of:\\
 1) a mapping, $A\mapsto F(A)$, from $\text{ob }\mathcal{C}$ to $\text{ob }\mathcal{D}$,\\
 2) a mapping, $f\mapsto F(f)$, from $\text{Hom}_{\mathcal{C}}(A,B)$ to $\text{Hom}_{\mathcal{D}}(F(A), F(B))$.\\
 Further, the following conditions must be satisfied.\\
 \textbf{F1:} For every $A\in\text{ob }\mathcal{C}$, 
 $$F(1_{A})=1_{F(A)}\in \text{Hom}_{\mathcal{D}}(F(A), F(A)).$$\
 \textbf{F2:} For every $(f,g)\in\text{Hom}_{\mathcal{C}}(A,B)\times\text{Hom}_{\mathcal{C}}(B,C)$, and $g\circ f\in\text{Hom}_{\mathcal{C}}(A,C)$,
 $$F(g\circ f)=F(g)\circ F(f)\in\text{Hom}_{\mathcal{D}}(F(A),F(C)).$$
\end{definition}

We can also compose functors. Let $\mathcal{C}$, $\mathcal{D}$, and $\mathcal{E}$ be categories with functors $F:\mathcal{C}\to\mathcal{D}$ and $G:\mathcal{D}\to\mathcal{E}$. Then, we get a functor, $G\circ F:\mathcal{C}\to\mathcal{E}$, where $G\circ F(A)=G(F(A))$ for each $A\in\text{ob }\mathcal{C}$, and $G\circ F(f)=G(F(f))$ for each $f\in\text{Hom}_{\mathcal{C}}(A,B)$. 

\begin{example}
For any category $\mathcal{C}$, the \textit{indentity functor}, $1_{\mathcal{C}}:\mathcal{C}\to\mathcal{C}$, consists of: the identity mapping on $\text{ob }\mathcal{C}$; the identity mapping on $\text{Hom}_{\mathcal{C}}(A,B)$ for each $A,B\in\text{ob }\mathcal{C}$.
\end{example}

\begin{example}
Given a pointed topological space $(X,x_{0})$, let $\pi_{1}(X,x_{0})$ denote the fundamental group of $X$ with base point $x_{0}$. There is a functor $F:\textbf{Top}^{*}\to\textbf{Grp}$ consisting of a map from $\text{ob }\textbf{Top}^{*}$ to $\text{ob }\textbf{Grp}$, given by $(X,x_{0})\mapsto\pi_{1}(X,x_{0})$, and a map from $\text{Hom}_{\textbf{Top}^{*}}\big((X,x_{0}),(Y,y_{0})\big)$ to $\text{Hom}_{\textbf{Grp}}\big(\pi_{1}(X,x_{0}),\pi_{1}(Y,y_{0})\big)$, given by $f\mapsto F(f)$ where $F(f):\pi_{1}(X,x_{0})\to\pi_{1}(Y,y_{0})$ is the group homomorphism given by $[\gamma]\mapsto[f\circ\gamma]$.
\end{example}

As we stated earlier, Morita equivalence is a statement regarding the equivalence of categories. We need the following definition in order to precisely say what an equivalence of categories is.

\begin{definition}\label{nattran}
Let $\mathcal{C}$ and $\mathcal{D}$ be categories, and $F,G:\mathcal{C}\to\mathcal{D}$ functors. A \textit{natural transformation}, $\eta$, from $F$ to $G$ is a map which assigns a morphism, $\eta_{A}\in \text{Hom}_{\mathcal{D}}(F(A),G(A))$, for each $A\in\text{ob }\mathcal{C}$ such that, given any two $A,B\in\text{ob }\mathcal{C}$ and $f\in\text{Hom}_{\mathcal{C}}(A,B)$, the following diagram commutes:

\begin{figure}[h!]
\begin{center}
\begin{tikzpicture}

 \node [shape=circle,minimum size=1.5em] (d1) at (0,1) {$F(A)$};
 \node [shape=circle,minimum size=1.5em] (d2) at (4,1) {$G(A)$};
 \node [shape=circle,minimum size=1.5em] (d3) at (0,-1) {$F(B)$};
 \node [shape=circle,minimum size=1.5em] (d4) at (4,-1) {$G(B)$};
 
\path (d1) edge [->, >=latex, shorten <= 2pt, shorten >= 2pt, right] node[above]{$\eta_{A}$} (d2);

\path (d3) edge [->, >=latex, shorten <= 2pt, shorten >= 2pt, right] node[above]{$\eta_{B}$} (d4);

 \node [shape=circle,minimum size=1.5em] (d6) at (0,-1) {};

 \path (d1) edge [->, >=latex, shorten <= -10pt, right] node[pos=0.2]{$F(f)$} (d6);
 
 \node [shape=circle,minimum size=1.5em] (d7) at (4,-1) {};
 
  \path (d2) edge [->, >=latex, shorten <= -10pt, right] node[pos=0.2]{$G(f)$} (d7);

\end{tikzpicture}
\end{center}
\end{figure}
\end{definition}

Given a morphism $f\in\text{Hom}_{\mathcal{C}}(A,B)$, $f$ is an \textit{isomorphism} if there exists a morphism $g\in\text{Hom}_{\mathcal{C}}(B,A)$ such that $g\circ f=1_{A}$ and $f\circ g=1_{B}$; note this means $g$ is an isomorphism as well. If $\eta_{A}$ in Definition \ref{nattran} is an isomorphism for each $A\in\text{ob} \mathcal{C}$, we say $\eta$ is a \textit{natural isomorphism}. If $F,G:\mathcal{C}\to\mathcal{D}$  are functors for which there is a natural isomorphism, we say they are \textit{naturally isomorphic}, which we will denote this by writing ``$F\simeq G$.''

\begin{definition}\label{catequiv}
Two categories, $\mathcal{C}$ and $\mathcal{D}$, are $\textit{equivalent}$ if there exist functors\\
 $F:\mathcal{C}\to\mathcal{D}$ and $G:\mathcal{D}\to\mathcal{C}$ such that $G\circ F\simeq 1_{\mathcal{C}} \text{ and } F\circ G\simeq 1_{\mathcal{D}}$.
\end{definition}

\section{Morita Equivalence of Nonunital Rings}

Morita equivalence of rings is the equivalence of module categories. The theory was first developed within the context of unital rings. If the reader wishes to learn more about Morita equivalence for unital rings, section 3.12 of Nathan Jacobson's \textit{Basic Algebra II}, \cite{Jacobson:2009aa}, is an excellent source. The theory was then extended to certain nonunital rings by numerous mathematicians over several decades. In our discussion of Morita equivalence, we will solely rely on Jacobson, \cite{Abrams:1983aa}, \cite{P.N.-Anh:1987aa}, and \cite{Jose-Luis-Garcia-and-Juan-Jacobo-Simon:1991aa}. We will start our brief foray into this vast topic by laying the basic foundation.

\begin{definition}\label{rmod}
Let $R$ be a ring. An abelian group, $M$, is a \textit{left $R$-module} if there exists a map $\cdot:R\times M\to M$ such that:\\
1) $r\cdot(x+y)=r\cdot x+ r\cdot y$ for all $r\in R$ and all $x,y\in M$,\\
2) $(r_{1}+r_{2})\cdot x=r_{1}\cdot x+r_{2}\cdot x$ for all $r_{1},r_{2}\in R$ and for all $x\in M$,\\
3) $(r_{1}r_{2})\cdot x=r_{1}\cdot(r_{2}\cdot x)$ for all $r_{1},r_{2}\in R$ and for all $x\in M$.\\
If $R$ is unital, then we also require\\
4) $1\cdot x=x$ for all $x\in M$.
\end{definition}

\begin{definition}\label{rmodhom}
Let $M$ and $N$ be left $R$-modules. A map, $f:M\to N$, is a \textit{left $R$-module homomorphism} if
$$f(r_{1}\cdot x+r_{2}\cdot y)=r_{1}\cdot f(x)+r_{2}\cdot f(y)$$
for all $r_{1},r_{2}\in R$ and for all $x,y\in M.$
\end{definition}

We denote the set of $R$-module homomorphisms from $M$ to $N$ by ``$\text{Hom}_{R}(M,N)$,'' and ``$\text{End}_{R}(M)$'' denotes the set of $R$-modules endomorphisms of $M$. One can check $\text{End}_{R}(M)$ is a ring under pointwise addition and with multiplication given by composition. Given a ring $R$, the category of left $R$-modules, \textbf{R-Mod}, consists of left $R$-modules as objects and $R$-module homomorphisms as morphisms. 

We similarly define a \textit{right $R$-module} to be an abelian group $M$ equipped with a map 
$$\cdot:M\times R\to M$$
satisfying similar axioms to those in Definition \ref{rmod}, of course, the difference being elements of $R$ are now acting on the right. A \textit{right $R$-module homomorphism} is then a map, $f:M\to N$, where $M$ and $N$ are right $R$-modules such that 
$$f(x\cdot r_{1}+y\cdot r_{2})=f(x)\cdot r_{1}+f(y)\cdot r_{2} \text{ for all } r_{1},r_{2}\in R \text{ and for all } x,y\in M.$$
Much like \textbf{R-Mod}, the category of right $R$-modules, \textbf{Mod-R}, consists of right $R$-modules as objects and right $R$-module homomorphisms as morphisms. 

Throughout this paper, however, we will take $R$-modules to be left $R$-modules unless specified otherwise. Further, given an $R$-module $M$, we will write ``$rx$'' to denote $r\cdot x$.

\begin{example}
Let $R$ be a ring. Then, $R$ is a left, and right, $R$-module over itself where $\cdot: R\times R\to R$ is given by the ring multiplication. 
\end{example}

\begin{example}
A vector space, $V$, over a field, $K$, is just a $K$-module. In fact, modules are a generalization of vector spaces.
\end{example}

\begin{example}
Let $R=M_{n}(\mathbb{Z})$, the ring of $n\times n$ matrices over $\mathbb{Z}$, and let $M=M_{n\times 1}(\mathbb{Z})$, the group of $n\times1$ matrices over $\mathbb{Z}$. Then, $M$ is a left $R$-module with $\cdot: R\times M\to M$ given by matrix multiplication. Note that $M$ is not a right $R$-module under matrix multiplication. In general, having a left $R$-module structure doesn't automatically give a right $R$-module structure. If $R$ is a commutative ring, a left $R$-module $M$ can be made into a right $R$-module but setting $m\cdot r:=rm$; this fails when $R$ is noncommutative since 3) of Definition \ref{rmod} won't hold. However, taking $m\cdot r:=rm$ does give $M$ a right $R^{op}$-module structure (see Definition \ref{rop} for an explanation of ``$R^{op}$'').
\end{example}

If $R$ is a unital ring, then 4) of Definition \ref{rmod} implies
$$RM:=\{rx:r\in R,\ x\in M\}=M.$$
If $R$ is a nonunital ring, then it isn't always the case $RM=M$. The following simple example illustrates this point.

\begin{example}\label{rmodex}
Let $R=2\mathbb{Z}$. Viewed as a $2\mathbb{Z}$-module over itself,
$$(2\mathbb{Z})(2\mathbb{Z})=4\mathbb{Z}\neq2\mathbb{Z}.$$
\end{example}

Moreover, 4) of Definition \ref{rmod} again implies that, in the unital case,
$$Rx:=\{rx:r\in R\}=\{0\} \implies x=0;$$
this also need not hold when $R$ is nonunital (e.g., take $R=2\mathbb{Z}$, $M=\mathbb{Z}/2\mathbb{Z}$, and $x=\overline{1}$).

For a nonunital ring $R$, an $R$-module $M$ is said to be \textit{unital} if $RM=M$; it is said to be \textit{nondegenerate} if for any $x\in M$, $Rx=\{0\}\implies x=0$. Throughout this paper, $R-MOD$ denotes the full subcategory of \textbf{$R$-Mod} whose objects are unital nondegenrate left $R$-modules. Note if $R$ is unital, then $R-MOD$ is the same category as \textbf{$R$-Mod}. We define $MOD-R$ in the same way. Now, finally,

\begin{definition}\label{Moequiv}
Two rings, $R$ and $S$, are \textit{Morita equivalent} if $R-MOD$ is equivalent to $S-MOD$. It can be shown, certainly in the cases we are interested in, $R-MOD$ is equivalent to $S-MOD$ if and only if $MOD-R$ is equivalent to $MOD-S$, \cite[Corollary 2.3]{P.N.-Anh:1987aa}
\end{definition}

There are several ways Morita equivalence can be characterized for certain types of rings. We are particularly interested in the case where $R$ is a ring with local units.

\begin{definition}\label{ring:def2}
$R$ is a \textit{ring with local units} if, for each finite subset $S\subseteq R$, there exists an idempotent $e\in R$ such that $S\subseteq eRe$. Note that $eRe$ is a subring of $R$. 
\end{definition}

It's worth clarifying that the results in \cite{Jose-Luis-Garcia-and-Juan-Jacobo-Simon:1991aa} are proved for idempotent rings; a ring is \textit{idempotent} if 
$$R^{2}:=\{rs: r,s\in R\}=R.$$
It is straightforward to see that a ring with local units is also an idempotent ring. And so the results in \cite{Jose-Luis-Garcia-and-Juan-Jacobo-Simon:1991aa} apply to rings with local units as well. With that, we work toward the different characterizations of Morita equivalence. 

\begin{definition}\label{balprod}
Let $R$ be a ring, $M$ a right $R$-module, and $N$ a left $R$-module. An \textit{$R$-balanced product of $M$ and $N$} is a pair, $(P,f)$, where $P$ is an abelian group, and $f:M\times N\to P$ a map such that:\\
1) $f(x_{1}+x_{2},y)=f(x_{1},y)+f(x_{2},y)$ for all $x_{1},x_{2}\in M$ and $y\in N$,\\
2) $f(x,y_{1}+y_{2})=f(x,y_{1})+f(x,y_{2})$ for all $x\in M$ and $y_{1},y_{2}\in N$,\\
3) $f(xr,y)=f(x,ry)$ for all $x\in M$, $y\in N$, and $r\in R$.
\end{definition}

\begin{definition}\label{tensprod}
Let $R$ be a ring, $M$ a right $R$-module, and $N$ a left $R$-module. The \textit{tensor product of $M$ and $N$ over $R$} is an  $R$-balanced product of $M$ and $N$, 
$$(M\otimes_{R} N, \otimes),$$
such that, given any other $R$-balanced product, $(P,f)$, there exists a unique group homomorphism, $\phi:M\otimes_{R} N\to P$, making the following diagram commute:

\begin{figure}[h!]
\begin{center}
\begin{tikzpicture}
  \matrix (m) [matrix of math nodes,row sep=4em,column sep=2em,minimum width=2em]
  {
      & M\otimes_{R} N \\
     M\times N & P \\};
  \path[-stealth]
  
    (m-2-1) edge node [below] {$f$}
            node [above] {} (m-2-2)
             edge node [above] {$\otimes$} (m-1-2)
    (m-1-2) edge [dashed,->] node [right] {$\phi$} (m-2-2);
\end{tikzpicture}
\end{center}
\end{figure}
\end{definition}

To construct $M\otimes_{R} N$, we start with the \textit{free abelian group on $M\times N$}, which we will denote by ``$F_{ab}(M\times N)$.'' We can think of the elements of $F_{ab}(M\times N)$ as commuting formal sums of the form
$$\sum\limits_{i=1}^{n}(x_{i},y_{i})$$
where $(x_{i},y_{i})\in M\times N$ for each $i$. Further, $F_{ab}(M\times N)$ has the universal property that, given any abelian group, $P$, and a set map, $f:M\times N\to P$, there exists a unique group homomorphism $\widetilde{\phi}: F_{ab}(M\times N)\to P$ making the following diagram commute:

\clearpage

\begin{figure}[h!]
\begin{center}
\begin{tikzpicture}
  \matrix (m) [matrix of math nodes,row sep=4em,column sep=2em,minimum width=2em]
  {
      & F_{ab}(M\times N) \\
     M\times N & P \\};
  \path[-stealth]
  
    (m-2-1) edge node [below] {$f$}
            node [above] {} (m-2-2)
             edge node [above] {$i$} (m-1-2)
    (m-1-2) edge [dashed,->] node [right] {$\widetilde{\phi}$} (m-2-2);
\end{tikzpicture}
\end{center}
\end{figure}
In the diagram above, $i:M\times N\to F_{ab}(M\times N)$ is the inclusion of $M\times N$ into $F_{ab}(M\times N)$. Taking $G$ to be the subgroup of $F_{ab}(M\times N)$ generated by elements of the form $(x_{1}+x_{2},y)-(x_{1},y)-(x_{2},y)$, $(x,y_{1}+y_{2})-(x,y_{1})-(x,y_{2})$, and $(xr,y)-(x,ry)$, where $x_{1},x_{2}\in M$, $y_{1},y_{2}\in N$, and $r\in R$, we set $M\otimes_{R} N:=F_{ab}(M\times N)/G,$ and, given the projection $p:F_{ab}(M\times N)\to M\otimes_{R} N$, we set $\otimes:M\times N\to M\otimes_{R} N$ to be the group homomorphism $p\circ i$. For $(x,y)\in M\times N$, we write ``$x\otimes y$'' to denote $\otimes(x,y)$. Elements of the form $x\otimes y$ are referred to as \textit{simple tensors}. Each element of $M\otimes_{R} N$ can be expressed as the sum of simple tensors; inconveniently, this expression need not be unique. From here on out, we will simply write ``$M\otimes_{R} N$'' instead of ``$(M\otimes_{R} N,\otimes)$.'' If it's clear which ring we are taking the tensor product over, we will simply write ``$M\otimes N$.''

While $M\otimes_{R} N$ is an abelian group, it can be endowed with a module structure under certain circumstances.

\begin{definition}\label{bimod}
Let $R$ and $S$ be rings. An \textit{$R$-$S$-bimodule} is an abelian group $M$ such that:\\
1) $M$ is a left $R$-module,\\
2) $M$ is a right $S$-module,\\
3) $r(xs)=(rx)s$ for all $r\in R$, $s\in S$, and $x\in M$.
\end{definition}

$M$ is \textit{unital} if $RM=MS=M$. Further, given two $R$-$S$-bimodules $M$ and $N$, a map $f:M\to N$ is an \textit{$R$-$S$-bimodule homomorphism} if it is simultaneously a left $R$-module homomorphism and a right $S$-module homomorphism. We have the following standard proposition relating tensor products, modules, and bimodules:

\begin{prop}\label{tensmod}
Let $R$, $S$, and $T$ be rings. Then:
1) $M\otimes_{R} N$ is a left $S$-module, with $s\cdot(x\otimes y)=(sx)\otimes y$ on simple tensors, if $M$ is an $S$-$R$-bimodule,\\
2) $M\otimes_{R} N$ is a right $T$-module, with $(x\otimes y)\cdot t=x\otimes (yt)$ on simple tensors, if $N$ is an $R$-$T$-bimodule. 
\end{prop}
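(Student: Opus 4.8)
The plan is to use the universal property of the tensor product (Definition \ref{tensprod}) to produce the module action, since the naive ``definition on simple tensors'' is threatened by the fact, noted after Definition \ref{tensprod}, that an element of $M\otimes_{R} N$ need not have a unique expression as a sum of simple tensors. I will prove part 1) in detail; part 2) is entirely symmetric, interchanging the roles of the left and right factors.

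First I would fix $s\in S$ and define a set map $f_{s}:M\times N\to M\otimes_{R} N$ by $f_{s}(x,y)=(sx)\otimes y$. The heart of the argument is to check that $f_{s}$ is an $R$-balanced product in the sense of Definition \ref{balprod}. Bi-additivity (conditions 1 and 2) follows from the fact that $x\mapsto sx$ is additive (a left-module axiom for $M$) together with the bi-additivity of $\otimes$. The balancing condition (condition 3) is where the bimodule hypothesis is essential: for $r\in R$ one computes $f_{s}(xr,y)=(s(xr))\otimes y=((sx)r)\otimes y=(sx)\otimes(ry)=f_{s}(x,ry)$, where the middle equality is exactly axiom 3 of Definition \ref{bimod}. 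Hence by the universal property there is a unique group homomorphism $\lambda_{s}:M\otimes_{R} N\to M\otimes_{R} N$ with $\lambda_{s}(x\otimes y)=(sx)\otimes y$, and we set $s\cdot z:=\lambda_{s}(z)$.

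Next I would verify the left $S$-module axioms of Definition \ref{rmod}. Because each $\lambda_{s}$ is a group homomorphism, axiom 1 is immediate. For axioms 2 and 3 (and axiom 4 when $S$ is unital) it suffices to check the identities on simple tensors, since both sides are group homomorphisms in $z$ and every element of $M\otimes_{R} N$ is a finite sum of simple tensors. On a simple tensor these reduce to $((s_{1}+s_{2})x)\otimes y=(s_{1}x)\otimes y+(s_{2}x)\otimes y$, $((s_{1}s_{2})x)\otimes y=(s_{1}(s_{2}x))\otimes y$, and $(1x)\otimes y=x\otimes y$, each of which follows directly from the corresponding left-module axiom for $M$. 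This establishes part 1).

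For part 2), I would run the mirror-image argument: for fixed $t\in T$ define $g_{t}(x,y)=x\otimes(yt)$ and check it is $R$-balanced, the balancing step $g_{t}(xr,y)=x\otimes((ry)t)=x\otimes(r(yt))=g_{t}(x,ry)$ now invoking the bimodule axiom for $N$. The universal property yields a homomorphism $\rho_{t}$, and the right $T$-module axioms are verified on simple tensors exactly as before. The only genuine subtlety throughout is well-definedness, which the universal property dispatches cleanly; once the action is known to be well defined, every remaining step is a routine verification on simple tensors extended by additivity.
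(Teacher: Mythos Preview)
Your argument is correct and is the standard way to establish this result. The paper itself does not actually prove Proposition~\ref{tensmod}; it is stated as a ``standard proposition'' and left without proof, so there is nothing to compare against beyond noting that your use of the universal property to handle well-definedness, followed by verification of the module axioms on simple tensors, is exactly the expected approach.
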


The first different formulation of Morita equivalence we will give relies on the existence of a \textit{Morita context}.

\begin{definition}\label{mocont}
A \textit{Morita context} is a sextuple, $(R,S,M,N,\tau,\mu)$, where $R$ and $S$ are rings, $M$ is a unital $S$-$R$-bimodule, $N$ is a unital $R$-$S$-bimodule, $\tau:N\otimes_{S} M\to R$ is an $R$-$R$-bimodule homomorphism, and $\mu:M\otimes_{R} N\to S$ is an $S$-$S$-bimodule homomorphism such that:\\
1) $\mu(x\otimes y)x'=x\tau(y\otimes x')$ for all $x,x'\in M$ and $y\in N$,\\
2) $y\mu(x\otimes y')=\tau(y\otimes x)y'$ for all $x\in M$ and $y,y'\in N$.\\
\end{definition}

A Morita context $(R,S,M,N,\tau,\mu)$ is of particular interest when $\tau$ and $\mu$ are surjective; in fact, it isn't uncommon to find the surjectivity condition included in the definition of a Morita context. 

\begin{theorem}\label{Mor1}
Let $R$ and $S$ be idempotent rings. Then, $R$ is Morita equivalent to $S$ if and only if there exists a Morita context, $(R,S,M,N,\tau,\mu)$, with $\tau$ and $\mu$ surjective. 
\end{theorem}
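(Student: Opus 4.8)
The plan is to prove the two implications separately, doing the construction of an equivalence from a surjective Morita context first, since it is the concrete direction, and then the extraction of a context from an abstract equivalence.

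For sufficiency ($\Leftarrow$), suppose we are given a surjective Morita context $(R,S,M,N,\tau,\mu)$. Using the bimodule structures and Proposition \ref{tensmod}, I would define functors
$$F:=M\otimes_{R}(-):R\text{-}MOD\to S\text{-}MOD,\qquad G:=N\otimes_{S}(-):S\text{-}MOD\to R\text{-}MOD,$$
acting on morphisms by $F(f)=1_{M}\otimes f$ and $G(g)=1_{N}\otimes g$. First I would check these land in the correct subcategories: because $M$ is a \emph{unital} $S$-$R$-bimodule, $SM=M$ forces $S(M\otimes_{R}X)=M\otimes_{R}X$, so $F(X)$ is a unital left $S$-module, with nondegeneracy of $F(X)$ a direct verification from the context data; symmetrically for $G$. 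The heart of this direction is a natural isomorphism $GF\simeq 1_{R\text{-}MOD}$. For $X\in R\text{-}MOD$ I would assemble its component as
$$GF(X)=N\otimes_{S}(M\otimes_{R}X)\;\cong\;(N\otimes_{S}M)\otimes_{R}X\;\xrightarrow{\ \tau\otimes 1_{X}\ }\;R\otimes_{R}X\;\xrightarrow{\ \cong\ }\;X,$$
where the first map is the associativity isomorphism and the last is the action map $r\otimes x\mapsto rx$. A key lemma I would isolate and prove separately is that, for an idempotent ring $R$ and any $X\in R\text{-}MOD$, this action map $R\otimes_{R}X\to X$ is an isomorphism (surjectivity from $RX=X$, injectivity from idempotence together with nondegeneracy). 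Surjectivity of $\tau$ then makes $\tau\otimes 1_{X}$ surjective, and the two Morita identities of Definition \ref{mocont} are exactly what is needed to build an inverse, so every component is an isomorphism; naturality is routine. The symmetric argument with $\mu$ gives $FG\simeq 1_{S\text{-}MOD}$, and Definition \ref{catequiv} then yields the equivalence, i.e. Morita equivalence by Definition \ref{Moequiv}.

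For necessity ($\Rightarrow$), suppose $F:R\text{-}MOD\to S\text{-}MOD$ and $G:S\text{-}MOD\to R\text{-}MOD$ implement an equivalence. The goal is to realize these functors as tensoring against bimodules and then read the context data off the given natural isomorphisms. Because each of $F,G$ belongs to an equivalence it is simultaneously a left and a right adjoint, hence preserves all colimits; this cocontinuity drives an Eilenberg--Watts--type representation theorem producing a unital $S$-$R$-bimodule $M$ with $F\cong M\otimes_{R}(-)$ and a unital $R$-$S$-bimodule $N$ with $G\cong N\otimes_{S}(-)$ (the ring acting on each representing bimodule comes from applying the functor to the multiplication action of $R$ on itself, which lives in $R\text{-}MOD$ because $R$ is idempotent). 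Transporting the natural isomorphisms $GF\simeq 1$ and $FG\simeq 1$ across these representations and evaluating, via a Yoneda/adjunction argument, yields bimodule homomorphisms $\tau:N\otimes_{S}M\to R$ and $\mu:M\otimes_{R}N\to S$; that $GF\simeq 1$ and $FG\simeq 1$ are \emph{iso}morphisms forces $\tau,\mu$ to be surjective, and the compatibility (associativity) of the transported isomorphisms translates precisely into the two Morita identities.

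The main obstacle is this necessity direction, and specifically the Eilenberg--Watts representation in the nonunital setting. Over unital rings the statement that a cocontinuous functor $R\text{-Mod}\to S\text{-Mod}$ is isomorphic to $F(R)\otimes_{R}(-)$ is classical, with $M=F(R)$; here, however, $R\text{-}MOD$ consists only of unital nondegenerate modules and $R$ must be handled through its idempotent structure rather than a global identity, so the delicate points are verifying that the representing bimodule is genuinely unital and that the comparison morphism is a natural isomorphism. For rings with local units one can carry this out by approximating objects of $R\text{-}MOD$ by a directed system of principal submodules $Re$ attached to idempotents and invoking continuity of $F$; in the general idempotent case I would lean on the representation theorem of \cite{Jose-Luis-Garcia-and-Juan-Jacobo-Simon:1991aa} for the precise statement and for the construction of $\tau$ and $\mu$.
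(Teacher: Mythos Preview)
The paper does not supply its own argument for this theorem: its entire proof is the citation ``\cite[Propositions 2.5 and 2.7]{Jose-Luis-Garcia-and-Juan-Jacobo-Simon:1991aa}.'' Your proposal is therefore strictly more detailed than what the paper provides; you sketch the sufficiency direction explicitly and, for the necessity direction, correctly isolate the Eilenberg--Watts representation step as the crux and then defer to the same Garc\'ia--Sim\'on reference. In that sense you and the paper end up in the same place, relying on the same source, but you have unpacked more of the road leading there.

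One point in your sufficiency sketch deserves a caveat. The lemma you isolate---that for an idempotent ring $R$ and $X\in R\text{-}MOD$ the action map $R\otimes_{R}X\to X$ is an isomorphism---is not a triviality for general idempotent rings. Surjectivity is immediate from $RX=X$, but injectivity is exactly the ``firmness'' condition, and establishing it for the category $R\text{-}MOD$ as defined here (unital and nondegenerate) is part of the machinery Garc\'ia and Sim\'on develop; it is not something you can read off from idempotence plus nondegeneracy by a one-line argument. For rings with local units---which is all the thesis ever applies this to---the lemma is easy, since one can pick a local unit $e$ with $\sum r_i\otimes x_i=\sum er_i\otimes x_i=\sum e\otimes r_ix_i=e\otimes\sum r_ix_i$. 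So your outline is sound for the intended application, but at the stated generality of idempotent rings your sufficiency direction also implicitly leans on \cite{Jose-Luis-Garcia-and-Juan-Jacobo-Simon:1991aa}, just as the paper does.
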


\begin{proof}
\cite[Propositions 2.5 and 2.7]{Jose-Luis-Garcia-and-Juan-Jacobo-Simon:1991aa}
\end{proof}

Consider the following example, \cite{Jacobson:2009aa}.

\begin{example}\label{moreqex}
Consider a unital commutative ring $R$ and $M_{n}(R)$. Under the usual rules of matrix multiplication, and multiplying matrices by scalars, we can see $M_{n\times 1}(R)$ is an $M_{n}(R)$-$R$-bimodule; likewise, $M_{1\times n}(R)$ is an $R$-$M_{n}(R)$-bimodule. Now, define maps $\mu: M_{n\times 1}(R)\times M_{1\times n}(R) \to M_{n}(R)$ and $\tau: M_{1\times n}(R)\times M_{n\times 1}(R) \to R$ by

\begin{equation*}
(\begin{bmatrix}x_{1}\\
\vdots\\
 x_{n} \end{bmatrix},
 \begin{bmatrix}y_{1} & \ldots & y_{n} \end{bmatrix})\mapsto
 \begin{bmatrix} x_{1}y_{1} & \ldots & x_{1}y_{n}\\
  \vdots & \ddots &\vdots \\
  x_{n}y_{1} & \ldots & x_{n}y_{n} \end{bmatrix},
\end{equation*}

and

\begin{equation*}
(\begin{bmatrix}y_{1} & \ldots & y_{n} \end{bmatrix},
\begin{bmatrix}x_{1}\\
\vdots\\
 x_{n} \end{bmatrix})\mapsto \sum\limits_{i=1}^{n}x_{i}y_{i}.
\end{equation*}

One can easily check $(M_{n}(R),\mu)$ and $(R,\tau)$ are $R$-balanced products with each map being surjective. So, by exploiting the universal mapping property of tensor products, we have surjective group homomorphisms 
$$\mu: M_{n\times 1}(R)\otimes_{R} M_{1\times n}(R)\to M_{n}(R) \text{ and }\tau: M_{1\times n}(R)\otimes_{M_{n}(R)} M_{n\times 1}(R) \to R.$$
In fact, $\mu$ and $\tau$ are, respectively, $M_{n}(R)$-$M_{n}(R)$ and $R$-$R$-bimodule homomorphisms satisfying the conditions listed in Definition \ref{mocont}. Thus,\\ 
$\big(R,M_{n}(R),M_{n\times 1}(R),M_{1 \times n}(R),\tau,\mu\big)$ is a Morita context with $\mu$ and $\tau$ surjective; by Theorem \ref{Mor1}, this means $R$ and $M_{n}(R)$ are Morita equivalent.  
\end{example}

It may be tedious but it isn't difficult to show being isomorphic implies Morita equivalence---this can be done working directly from definitions, without having to establish a Morita context. On the other hand, Example \ref{moreqex} illustrates Morita equivalence is strictly weaker than being isomorphic. Still, Morita equivalence is of interest in part because it preserves certain ideal structures.

\begin{definition}\label{poset}
A \textit{partially ordered set}, $(P,\leq)$, is a set, $P$, together with a binary relation, $\leq$, such that:\\
1) $x\leq x$ for all $x\in P$,\\
2) for all $x,y\in P$, $x\leq y\text{ and }y\leq x\implies x=y$,\\
3) for all $x,y,z\in P$, $x\leq y \text{ and } y\leq z\implies x\leq z$.
\end{definition}

Given two partially ordered sets, $(P,\leq_{p})$ and $(Q,\leq_{q})$, a map $f:P\to Q$ is \textit{order preserving}  if $x\leq_{p}y\implies f(x)\leq_{q} f(y)$ for all $x,y\in P$. Let $(P,\leq)$ be a partially ordered set and $S\subseteq P$. Then, $x\in P$ is a \textit{lower bound} of $S$ if $x\leq y$ for all $y\in S$; $x$ is the \textit{greatest lower bound} if, for any other lower bound $z$ of $S$, $z\leq x$. Similarly, $x\in P$ is an \textit{upper bound} of $S$ if $y\leq x$ for all $y\in S$; it is the \textit{least upper bound} if, for any other upper bound $z$ of $S$, $x\leq z$. Note that 2) of Definition \ref{poset} implies least upper bounds, and greatest lower bounds, are unique should they exist. 

\begin{definition}\label{lattice}
A \textit{lattice} is a partially ordered set, $(P,\leq)$, such that every two-element subset, $\{x,y\}\subseteq P$, has a greatest lower bound and a least upper bound. A \textit{lattice homomorphism} is an order preserving map between two lattices.   
\end{definition}

Let $R$ be a ring and $\mathcal{I}:=\{I: I\text{ is an ideal of }R\}$. Under inclusion, $(\mathcal{I},\subseteq)$ is a partially order set. Moreover, for any two $I,J\in\mathcal{I}$, $I\cap J$ is the greatest lower bound, and $I+J$ is the least upper bound, of $\{I,J\}$; $\mathcal{I}$ is the \textit{lattice of ideals} of $R$. 

\begin{prop}\label{Morid}
Let $R$ and $S$ be two Morita equivalent rings, and let\\
$(R,S,M,N,\tau,\mu)$ be a Morita context. Set $\mathcal{I}_{R}:=\{I\lhd R: RIR=I\}$, and $\mathcal{I}_{S}:=\{I\lhd S: SIS=I\}$. Then, the map  $I\mapsto \mu(MI\otimes_{R} N)$ defines a lattice isomorphism from $\mathcal{I}_{R}$ to $\mathcal{I}_{S}$ with the inverse given by $I\mapsto \tau(NI\otimes_{S} M)$.
\end{prop}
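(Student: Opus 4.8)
The plan is to set $\Phi(I):=\mu(MI\otimes_{R}N)$ and $\Psi(J):=\tau(NJ\otimes_{S}M)$, where $MI\otimes_{R}N$ denotes the subgroup of $M\otimes_{R}N$ generated by the simple tensors $(xi)\otimes y$ with $x\in M$, $i\in I$, $y\in N$ (so $\Phi(I)$ is the subgroup of $S$ generated by the elements $\mu((xi)\otimes y)$), and analogously for $\Psi$. I would then prove that $\Phi$ and $\Psi$ are mutually inverse, order-preserving maps between $\mathcal{I}_{R}$ and $\mathcal{I}_{S}$: an order-preserving bijection whose inverse is order-preserving is an order isomorphism, and an order isomorphism between lattices automatically preserves meets and joins. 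Since $R$ and $S$ are Morita equivalent, Theorem \ref{Mor1} lets me assume $\tau$ and $\mu$ are surjective, and since the rings in play are idempotent ($RR=R$, $SS=S$) the posets $\mathcal{I}_{R}$ and $\mathcal{I}_{S}$ are themselves lattices, with join $I_{1}+I_{2}$ and meet $R(I_{1}\cap I_{2})R$, both readily checked to lie in $\mathcal{I}_{R}$ and to be the least/greatest bounds within $\mathcal{I}_{R}$.

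First I would verify $\Phi(I)\in\mathcal{I}_{S}$. That $\Phi(I)$ is a two-sided ideal follows from the bimodule-linearity of $\mu$: for $s\in S$ one has $s\,\mu((xi)\otimes y)=\mu(((sx)i)\otimes y)$ (since $sx\in M$ and $s(xi)=(sx)i$) and $\mu((xi)\otimes y)\,s=\mu((xi)\otimes(ys))$ (since $ys\in N$), both again generators of $\Phi(I)$. The identity $S\Phi(I)S=\Phi(I)$ then comes from unitality of the bimodules: writing a generator's $x\in M=SM$ and $y\in N=NS$ as $x=\sum s_{k}x_{k}$ and $y=\sum y_{l}s_{l}'$ and pushing the scalars through $\mu$ shows each generator lies in $S\Phi(I)S$, the reverse inclusion being immediate. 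The same argument with the two bimodules and $\tau,\mu$ interchanged shows $\Psi(J)\in\mathcal{I}_{R}$.

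The heart of the proof is that $\Psi\circ\Phi=\mathrm{id}_{\mathcal{I}_{R}}$ (and symmetrically $\Phi\circ\Psi=\mathrm{id}_{\mathcal{I}_{S}}$), and the key move is to feed the two Morita-context relations through the linearity of $\tau$ and $\mu$. Starting from a generator $n\,\mu((xi)\otimes y)$ of $N\Phi(I)$, relation 2 of Definition \ref{mocont} together with right $R$-linearity of $\tau$ gives $n\,\mu((xi)\otimes y)=\tau(n\otimes xi)\,y=\tau(n\otimes x)\,i\,y$; applying $\tau(-\otimes x')$ and using left $R$-linearity then produces the generic generator of $\Psi(\Phi(I))$ in the form $\tau(n\otimes x)\,i\,\tau(y\otimes x')\in RIR$. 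Since $I\in\mathcal{I}_{R}$ means $RIR=I$, this yields $\Psi(\Phi(I))\subseteq I$. For the reverse inclusion I would use surjectivity of $\tau$: every element of $I=RIR$ is a sum of terms $rir'$ with $r=\sum\tau(n_{a}\otimes x_{a})$ and $r'=\sum\tau(y_{b}\otimes x_{b}')$, and each such term is exactly one of the generators just produced, so $I\subseteq\Psi(\Phi(I))$. The identity $\Phi(\Psi(J))=J$ is proved identically, now invoking relation 1 and the surjectivity of $\mu$. I expect this threading of the relations—keeping straight which tensor factor each ring element acts on, and matching the two context identities to the two composites—to be the main obstacle; the remainder is bookkeeping.

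Finally, $\Phi$ and $\Psi$ are visibly order-preserving: $I_{1}\subseteq I_{2}$ forces $MI_{1}\subseteq MI_{2}$, hence $\Phi(I_{1})\subseteq\Phi(I_{2})$, and similarly for $\Psi$. Being mutually inverse order-preserving maps, $\Phi$ and $\Psi$ form an order isomorphism between the lattices $\mathcal{I}_{R}$ and $\mathcal{I}_{S}$, which therefore preserves meets and joins; thus $\Phi$ is the desired lattice isomorphism with inverse $\Psi$.
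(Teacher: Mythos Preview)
The paper does not actually prove this proposition; its entire proof is the citation ``\cite[Proposition 4.12]{Tomforde:2011aa}.'' Your proposal supplies a direct argument in place of that reference, and the argument is correct: you verify that $\Phi(I)$ and $\Psi(J)$ land in the right ideal lattices using bimodule linearity and unitality, you thread the two context identities of Definition~\ref{mocont} to reduce $\Psi(\Phi(I))$ to $RIR=I$ (and symmetrically), and you close by observing that an order isomorphism of posets that happen to be lattices is automatically a lattice isomorphism. One small point worth making explicit in a final write-up: the statement as written does not say $\tau,\mu$ are surjective or that $R,S$ are idempotent, so you are right to invoke Theorem~\ref{Mor1} to secure surjectivity, but that theorem is stated for idempotent rings---so you should flag that you are reading the hypothesis ``Morita equivalent'' together with the standing context of idempotent rings (as the paper does throughout), since otherwise the trivial context $M=N=0$ would be a counterexample.
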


\begin{proof}
\cite[Proposition 4.12]{Tomforde:2011aa}
\end{proof}

In Proposition \ref{Morid}, $MI\otimes_{R} N:=\text{span}_{S}\{xr\otimes_{R} y: r\in I\lhd R, x\in M, \text{ and } y\in N\}$; similarly $NI\otimes_{S} M:=\text{span}_{R}\{ys\otimes_{S} x: s\in I\lhd S, x\in M, \text{ and } y\in N\}$. Also, note that if $R$ is a ring with local units, every ideal $I$ in $R$ satisfies the condition $RIR=I$. While the study of Morita equivalence is extensive in its own right, we will conclude our discussion of Morita equivalence by setting up and stating one last characterization of it. A crucial component of our last characterization is the categorical notion of direct limits. 

\begin{definition}\label{directsys}
Let $(I,\leq)$ be a partially ordered set, and let $\mathcal{C}$ be a category. A \textit{direct system in $\mathcal{C}$ over $I$} is a set of objects, $\{A_{\alpha}\}_{\alpha\in I}\subseteq \text{ob }\mathcal{C}$, and a set of morphisms, 
$$\{\varphi_{\alpha,\beta}:A_{\alpha}\to A_{\beta}\}_{\alpha,\beta\in I: \alpha\leq\beta},$$
such that:\\
1) $\varphi_{\alpha,\gamma}=\varphi_{\beta,\gamma}\circ\varphi_{\alpha,\beta}$ for all $\alpha\leq\beta\leq\gamma$,\\
2) $\varphi_{\alpha,\alpha}=1_{A_{\alpha}}$ for each $\alpha$.\\
We will write ``$\big\langle A_{\alpha}, \varphi_{\alpha,\beta}\big\rangle$'' to denote direct systems.
\end{definition}

\begin{definition}\label{directlimit}
Let $(I,\leq)$ be a partially ordered set, and let $\mathcal{C}$ be a category. Given a direct system, $\big\langle A_{\alpha}, \varphi_{\alpha,\beta}\big\rangle$, its \textit{direct limit}, should it exist, is an object, $\varinjlim\limits_{\alpha\in I}A_{\alpha}\in \text{ob }\mathcal{C}$, together with a set of morphisms, 
$$\{\eta_{\alpha}:A_{\alpha}\to\varinjlim\limits_{\alpha\in I}A_{\alpha}\}_{\alpha\in I},$$
such that:\\
1) $\eta_{\alpha}=\eta_{\beta}\circ\varphi_{\alpha,\beta}$ for all $\alpha\leq\beta$,\\
2) Given any object $B\in\text{ob }\mathcal{C}$ and a set of morphisms $\{\zeta_{\alpha}:A_{\alpha}\to B\}_{\alpha\in I}$ satisfying $\zeta_{\alpha}=\zeta_{\beta}\circ\varphi_{\alpha,\beta}$ for all $\alpha\leq\beta$, there exists a unique morphism
$$\theta:\varinjlim\limits_{\alpha\in I}A_{\alpha}\to B$$
such that $\zeta_{\alpha}=\theta\circ\eta_{\alpha}$ for all $\alpha$. 
\end{definition}
Part 2) of Definition \ref{directlimit} ensures that, when it exists, $\varinjlim\limits_{\alpha\in I}A_{\alpha}$ is unique up to isomorphism. Fortunately for us, the direct limit always exists in the categories we will consider.

\begin{definition}\label{compset}
Let $(I,\leq)$ be a partially ordered set, and let $\mathcal{C}$ be a category. A \textit{compatible set in $\mathcal{C}$ over $I$}, $\{A_{\alpha},\varphi_{\alpha,\beta},\psi_{\beta,\alpha},I\}$, consists of a set of objects, $\{A_{\alpha}\}_{\alpha\in I}$, and a set of morphisms, 
$$\{\varphi_{\alpha,\beta}:A_{\alpha}\to A_{\beta},\ \psi_{\beta,\alpha}:A_{\beta}\to A_{\alpha}\}_{\alpha,\beta\in I},$$
such that:\\
1) $\varphi_{\alpha,\alpha}=\psi_{\alpha,\alpha}=1_{A_{\alpha}}$ for all $\alpha$,\\
2) $\varphi_{\beta,\gamma}\circ\varphi_{\alpha,\beta}=\varphi_{\alpha,\gamma}$ and $\psi_{\beta,\alpha}\circ\psi_{\gamma,\beta}=\psi_{\gamma,\alpha}$ for all $\alpha\leq\beta\leq\gamma$,\\
3) $\psi_{\beta,\alpha}\circ\varphi_{\alpha,\beta}=1_{A_{\alpha}}$ for all $\alpha\leq\beta$,\\
4) For all $\alpha,\beta,\gamma\in I$ such that $\alpha,\beta\leq\gamma$,
 $$\varphi_{\beta,\gamma}\circ\psi_{\gamma,\beta}\circ\varphi_{\alpha,\gamma}\circ\psi_{\gamma,\alpha}=\varphi_{\alpha,\gamma}\circ\psi_{\gamma,\alpha}\circ\varphi_{\beta,\gamma}\circ\psi_{\gamma,\beta}.$$
\end{definition}

Note that, given a compatible set $\{A_{\alpha},\varphi_{\alpha,\beta},\psi_{\beta,\alpha},I\}$, $\big\langle A_{\alpha}, \varphi_{\alpha,\beta}\big\rangle$ forms a direct system. Our next, and last, formulation of Morita equivalence relies on the existence of certain direct limits and compatible sets.  

\begin{definition}\label{fingen}
Let $R$ be a ring. An $R$-module, $M$, is a \textit{finitely generated} module if there exists a finite set, $X\subseteq M$, such that 
 $$M=\text{span}\{rx:r\in R,\ x\in X\}.$$
 \end{definition}

\begin{definition}\label{projmod}
An $R$-module, $P$, is \textit{projective} if for every surjective $R$-module homomorphism, $f:N\to M$, and any $R$-module homomorphism, $g:P\to M$, there exists an $R$-module homomorphism, $h:P\to N$ making the diagram 

\begin{center}
\begin{tikzpicture}
  \matrix (m) [matrix of math nodes,row sep=4em,column sep=2em,minimum width=2em]
  {
      & N \\
     P & M \\};
  \path[-stealth]
  
    (m-2-1) edge node [below] {$g$}
            node [above] {} (m-2-2)
             edge [dashed,->] node [left] {$h$} (m-1-2)
    (m-1-2) edge  node [right] {$f$} (m-2-2);
\end{tikzpicture}
\end{center}
commute.
\end{definition}

\begin{definition}\label{rmodgen}
An $R$-module, $M$, is a \textit{generator of $R-MOD$}, if for any $R$-module, $N$, there exists a set $I$ and a surjective $R$-module homomorphism
$$\rho:\bigoplus\limits_{i\in I}M\to N.$$
\end{definition}

The notion of a generator is actually categorical. Given a category $\mathcal{C}$, we say $G\in\text{ob }\mathcal{C}$ is a \textit{generator of $\mathcal{C}$} if, for any $f,g\in\text{Hom}_{\mathcal{C}}(A,B)$ with $f\neq g$, there exists $h\in\text{Hom}_{\mathcal{C}}(G,A)$ such that $f\circ h\neq g\circ h$. Generators may not exist within a category in general. However, for \textit{abelian} categories (see Definition 6.2 in Jacobson's \textit{Basic Algebra II}), which admit \textit{products} and \textit{coproducts}, not only do generators exist, but one finds the following equivalent definition: let $\mathcal{C}$ be an abelian category, then $G\in\text{ob }\mathcal{C}$ is a \textit{generator of $\mathcal{C}$} if, given any $A\in\text{ob }\mathcal{C}$, there exists a set $I$ such that the coproduct, $\bigoplus\limits_{i\in I}G$, admits an \textit{epimorphism}
$$\rho:\bigoplus\limits_{i\in I}G\to A;$$
which is to say, the morphism $\rho\in\text{Hom}_{\mathcal{C}}\Big(\bigoplus\limits_{i\in I}G,A\Big)$ is such that, given any $f,g\in\text{Hom}_{\mathcal{C}}(A,B)$, 
$$f\circ\rho=g\circ\rho\implies f=g.$$
Since, as a \textit{Grothendieck} category, $R-MOD$ is abelian \cite{Jose-Luis-Garcia-and-Juan-Jacobo-Simon:1991aa}, Definition \ref{rmodgen} is appropriate. Because we will make use of the equivalence of the two definitions only once in this thesis, we will not go through the trouble of giving the rather laborious definition of an abelian category. We will, however, give a standard proof showing the equivalence of the two definitions. The following proof is from a lecture note posted online which the author cannot seem to relocate for proper citation.

\begin{claim}\label{defabequiv}
Let $\mathcal{C}$ be an abelian category. For $G\in\text{ob }\mathcal{C}$, the following definitions are equivalent:\\
 1) for any $f,g\in\text{Hom}_{\mathcal{C}}(A,B)$ with $f\neq g$, there exists $h\in\text{Hom}_{\mathcal{C}}(G,A)$ such that $f\circ h\neq g\circ h$;\\
 2) given any $A\in\text{ob }\mathcal{C}$, there exists a set $I$ with the coproduct, $\bigoplus\limits_{i\in I}G$, admitting an \textit{epimorphism} $\rho:\bigoplus\limits_{i\in I}G\to A$.
\end{claim}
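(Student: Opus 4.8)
The plan is to prove the two implications separately, drawing on the basic machinery of an abelian category that is available because $R-MOD$ is a Grothendieck category: a zero object, cokernels, the fact that a morphism is an epimorphism precisely when its cokernel vanishes, that every cokernel map is itself an epimorphism, and the universal property of the coproduct $\bigoplus_{i\in I}G$ (namely that a morphism out of a coproduct is determined by its composites with the canonical inclusions $\iota_{i}$).

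First I would dispatch the direction (2) $\implies$ (1), which is the routine one. Given $f,g\in\text{Hom}_{\mathcal{C}}(A,B)$ with $f\neq g$, take the epimorphism $\rho:\bigoplus_{i\in I}G\to A$ supplied by (2), with canonical inclusions $\iota_{i}:G\to\bigoplus_{i\in I}G$. If we had $f\circ h=g\circ h$ for \emph{every} $h\in\text{Hom}_{\mathcal{C}}(G,A)$, then in particular $f\circ(\rho\circ\iota_{i})=g\circ(\rho\circ\iota_{i})$ for all $i$; since a morphism out of a coproduct is determined by its composites with the inclusions, this forces $f\circ\rho=g\circ\rho$, and then $f=g$ because $\rho$ is epi---a contradiction. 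Hence some $h$ with $f\circ h\neq g\circ h$ must exist.

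The substance is in (1) $\implies$ (2). Given $A\in\text{ob }\mathcal{C}$, the idea is to index the coproduct by \emph{all} morphisms $G\to A$: set $I:=\text{Hom}_{\mathcal{C}}(G,A)$, form $\bigoplus_{h\in I}G$, and let $\rho:\bigoplus_{h\in I}G\to A$ be the unique morphism furnished by the universal property of the coproduct with $\rho\circ\iota_{h}=h$ for each $h\in I$. To see $\rho$ is an epimorphism, I would pass to its cokernel $c:A\to C$, so that $c\circ\rho=0$. For each $h\in I$ this gives $c\circ h=c\circ\rho\circ\iota_{h}=0=0\circ h$, so $c$ and the zero map $0:A\to C$ agree after precomposition with every $h\in\text{Hom}_{\mathcal{C}}(G,A)$. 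The contrapositive of definition (1), applied to the pair $c,0:A\to C$, then yields $c=0$. But $c$, being a cokernel map, is itself an epimorphism; a zero morphism that is also epi forces $1_{C}=0_{C}$, so $C$ is a zero object and $\text{coker}(\rho)=0$. Therefore $\rho$ is an epimorphism, establishing (2).

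The main obstacle is the (1) $\implies$ (2) direction, specifically translating the cancellation formulation of a generator into the existence of an epimorphism. The two key moves are choosing the large index set $I=\text{Hom}_{\mathcal{C}}(G,A)$ so that every map $G\to A$ factors through $\rho$, and then invoking the abelian-category facts that a morphism is epi $\iff$ its cokernel is zero and that cokernel maps are epi. I would state these ingredients explicitly, since the paper deliberately omits the full definition of an abelian category; with them in hand, the remaining argument is a direct diagram chase.
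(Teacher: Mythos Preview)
Your proof is correct and follows essentially the same approach as the paper: both directions use the same key construction, taking $I=\text{Hom}_{\mathcal{C}}(G,A)$ and defining $\rho$ via the universal property so that $\rho\circ\iota_{h}=h$. The only difference is in verifying that $\rho$ is epi in the $(1)\Rightarrow(2)$ direction: the paper checks the cancellation property directly (if $f\circ\rho=g\circ\rho$, then $f\circ h=g\circ h$ for every $h=\rho\circ\iota_{h}$, so $f=g$ by (1)), whereas you route through the cokernel; your argument is fine but invokes more abelian machinery than the paper needs.
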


\begin{proof}
$2) \Rightarrow 1)$ Suppose $G\in\text{ob }\mathcal{C}$ is such that, for any $A\in\text{ob }\mathcal{C}$, there exists a set $I$ and an epimorphism $\rho\in\text{Hom}_{\mathcal{C}}\Big(\bigoplus\limits_{i\in I}G,A\Big)$. Let $f,g\in\text{Hom}_{\mathcal{C}}(A,B)$ be such that $f\neq g$. Suppose $f\circ\rho=g\circ\rho$. Since we are in an abelian category, we can carry out the following calculations: $f\circ\rho=g\circ\rho\implies (f-g)\circ\rho=0\circ\rho;$ but, because $\rho$ is an epimorphism, $(f-g)\circ\rho=0\circ\rho\implies f-g=0\implies f=g. \Rightarrow\!\Leftarrow$ Thus,  $f\circ\rho\neq g\circ\rho$.

$1) \Rightarrow 2)$ On the other hand, suppose $G\in\text{ob }\mathcal{C}$ is such that, for any $f,g\in\text{Hom}_{\mathcal{C}}(A,B)$, with $f\neq g$, there exists $h\in\text{Hom}_{\mathcal{C}}(G,A)$ satisfying $f\circ h\neq g\circ h$. Let $I:=\text{Hom}_{\mathcal{C}}(G,A).$ Since coproducts exist in abelian categories, we have a family of morphisms, $\{\phi_{j}:G\to \bigoplus\limits_{i\in I}G\}_{j\in I},$ and a morphism, $\rho:\bigoplus\limits_{i\in I}G\to A,$ such that $\rho\circ\phi_{i}=i$ for each $i\in I$. Now, suppose $f,g\in\text{Hom}_{\mathcal{C}}(A,B)$ such that $f\circ\rho=g\circ\rho$ but $f\neq g$. By our assumption, there exists $h\in\text{Hom}_{\mathcal{C}}(G,A)$ with $f\circ h\neq g\circ h$. But, $h=\rho\circ\phi_{h}$, and so $f\circ\rho=g\circ\rho\implies f\circ\rho\circ\phi_{h}= g\circ\rho\circ\phi_{h}\implies f\circ h=g\circ h.\Rightarrow\!\Leftarrow$ Thus, $f\circ\rho=g\circ\rho\implies f=g$, meaning $\rho$ is an epimorphism. 
\end{proof}

\begin{definition}\label{ring:def5}
An $R$-module, $M$, is \textit{locally projective} if there exists a compatible set $\{M_{\alpha},\varphi_{\alpha,\beta},\psi_{\beta,\alpha},I\}$ in $R-MOD$ such that each $M_{\alpha}$ is a finitely generated, projective, $R$-module, and, for the direct system $\big\langle M_{\alpha}, \varphi_{\alpha,\beta}\big\rangle$, $M\cong\varinjlim\limits_{\alpha\in I}M_{\alpha}$.
\end{definition}

\begin{prop}\label{ring:prop2}
Let $M$ be a locally projective $R$-module. Define
$$\overline{\varphi}_{\alpha,\beta}:\text{End}_{R}M_{\alpha}\to \text{End}_{R}M_{\beta}$$
by $\overline{\varphi}_{\alpha,\beta}(\phi)=\varphi_{\alpha,\beta}\circ\phi\circ\psi_{\beta,\alpha}$. Then, $\Big\langle \text{End}_{R}M_{\alpha},\overline{\varphi}_{\alpha,\beta}\Big\rangle$ is a direct system over $I$ in the category of rings.
\end{prop}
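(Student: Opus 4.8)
The plan is to unpack what it means for $\big\langle \text{End}_{R}M_{\alpha},\overline{\varphi}_{\alpha,\beta}\big\rangle$ to be a direct system in the category of rings (Definition \ref{directsys}): each $\overline{\varphi}_{\alpha,\beta}$ must first be a genuine morphism of that category, i.e.\ a ring homomorphism $\text{End}_{R}M_{\alpha}\to\text{End}_{R}M_{\beta}$, and then the two functoriality axioms must hold. Recall that $\text{End}_{R}M_{\alpha}$ carries pointwise addition together with composition as its multiplication, so I would verify these structural properties against that ring structure.

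First I would check additivity, which is routine: since composition of $R$-module homomorphisms distributes over pointwise addition on both sides, $\overline{\varphi}_{\alpha,\beta}(\phi+\psi)=\varphi_{\alpha,\beta}\circ(\phi+\psi)\circ\psi_{\beta,\alpha}$ splits as $\overline{\varphi}_{\alpha,\beta}(\phi)+\overline{\varphi}_{\alpha,\beta}(\psi)$. The substantive step is multiplicativity, and this is where the compatible-set hypotheses are used. Expanding the product in $\text{End}_{R}M_{\beta}$ gives
$$\overline{\varphi}_{\alpha,\beta}(\phi)\circ\overline{\varphi}_{\alpha,\beta}(\psi)=\varphi_{\alpha,\beta}\circ\phi\circ(\psi_{\beta,\alpha}\circ\varphi_{\alpha,\beta})\circ\psi\circ\psi_{\beta,\alpha},$$
and I would invoke condition 3 of Definition \ref{compset}, namely $\psi_{\beta,\alpha}\circ\varphi_{\alpha,\beta}=1_{M_{\alpha}}$, to collapse the middle factor and obtain $\varphi_{\alpha,\beta}\circ\phi\circ\psi\circ\psi_{\beta,\alpha}=\overline{\varphi}_{\alpha,\beta}(\phi\circ\psi)$. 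Thus the identity $\psi_{\beta,\alpha}\circ\varphi_{\alpha,\beta}=1_{M_{\alpha}}$ is precisely what forces this conjugation-style map to respect composition.

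With each $\overline{\varphi}_{\alpha,\beta}$ known to be a ring homomorphism, I would close by verifying the direct-system axioms. For the identity axiom, $\overline{\varphi}_{\alpha,\alpha}(\phi)=\varphi_{\alpha,\alpha}\circ\phi\circ\psi_{\alpha,\alpha}=\phi$ by condition 1 of Definition \ref{compset}, so $\overline{\varphi}_{\alpha,\alpha}=1_{\text{End}_{R}M_{\alpha}}$. For the composition axiom, with $\alpha\leq\beta\leq\gamma$, I would evaluate $\overline{\varphi}_{\beta,\gamma}\circ\overline{\varphi}_{\alpha,\beta}$ on an arbitrary $\phi$, regroup the composition, and apply condition 2 of Definition \ref{compset}, namely $\varphi_{\beta,\gamma}\circ\varphi_{\alpha,\beta}=\varphi_{\alpha,\gamma}$ and $\psi_{\beta,\alpha}\circ\psi_{\gamma,\beta}=\psi_{\gamma,\alpha}$, to recognize the outcome as $\overline{\varphi}_{\alpha,\gamma}(\phi)$.

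I do not anticipate a genuine obstacle, but the one point warranting care is that $\overline{\varphi}_{\alpha,\beta}$ need not preserve a unit: $\overline{\varphi}_{\alpha,\beta}(1_{M_{\alpha}})=\varphi_{\alpha,\beta}\circ\psi_{\beta,\alpha}$ is an idempotent of $\text{End}_{R}M_{\beta}$ which is typically a proper idempotent rather than $1_{M_{\beta}}$. This is consistent with our working in the category of (not necessarily unital) rings, where morphisms are not required to be unital, so no contradiction arises; I would flag it explicitly so the reader does not expect a unital homomorphism.
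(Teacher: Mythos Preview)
Your proof is correct and complete: you verify additivity, use condition~3 of Definition~\ref{compset} for multiplicativity, and conditions~1 and~2 for the direct-system axioms, which is exactly the argument required. The paper itself does not give a proof but simply cites \cite[Proposition~2.5]{Haefner:1994aa}, so your direct verification supplies the details the paper omits; the content is the same as what that reference would contain, and your remark about non-unitality is a helpful clarification.
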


\begin{proof}
\cite[Proposition 2.5]{Haefner:1994aa}.   
\end{proof}

\begin{definition}\label{rop}
Let $R$ be a ring. Its \textit{opposite ring}, $R^{op}$, is defined to be the ring where $R$ and $R^{op}$ are the same as abelian groups, with multiplication, $\cdot\ $, in $R^{op}$ is given by $r_{1}\cdot r_{2}:=r_{2}r_{1} \text{ for all } r_{1},r_{2}\in R$.
\end{definition}

In both \cite{Abrams:1983aa} and \cite{P.N.-Anh:1987aa}, ``$\text{End}_{R}(M)$'' should be interpreted as ``$\Big(\text{End}_{R}(M)\Big)^{op}$.'' The reason for this is originally, in the case of two unital rings $R$ and $S$, Morita equivalence is characterized by the existence of an ``anti-isomorphism'' from $S$ onto $\text{End}_{R}P$, where $P$ is a projective generator of $R-MOD$; but this is precisely equivalent to saying there is an isomorpshim from $S$ onto $(\text{End}_{R}P)^{op}$. One of the main goals of both \cite{Abrams:1983aa} and \cite{P.N.-Anh:1987aa} is to generalize this characterization. For reasons unclear to the author, instead of defining the opposite ring, elements of $\text{End}_{R}(M)$ are thought of as ``right operators'' in the aforementioned papers. Further, it's worth noting Proposition \ref{ring:prop2} still holds if we replace $\Big\langle \text{End}_{R}M_{\alpha},\overline{\varphi}_{\alpha,\beta}\Big\rangle$ with $\Big\langle \Big(\text{End}_{R}M_{\alpha}\Big)^{op},\ \overline{\varphi}_{\alpha,\beta}\Big\rangle$.

\begin{theorem}\label{ring:theorem1}
Two rings with local units, $R$, $S$, are Morita equivalent if and only if there exists a locally projective $R$-module, $M$, such that $M$ is a generator of $R-MOD$ and 
$$S\cong\varinjlim\limits_{\alpha\in I}\Big(\mathrm{End}_{R}M_{\alpha}\Big)^{op}.$$
\end{theorem}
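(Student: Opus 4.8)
The plan is to prove the equivalence by passing through the Morita-context characterization of Theorem \ref{Mor1}, which already settles Morita equivalence for idempotent rings and hence for rings with local units. The bridge between the two formulations is the observation that, in a Morita context $(R,S,M,N,\tau,\mu)$ as in Definition \ref{mocont}, the bimodule $N$ is a unital $R$-$S$-bimodule and therefore a left $R$-module lying in $R-MOD$; this $N$ plays the role of the module the theorem calls $M$, and the whole argument reduces to showing that ``$N$ is a locally projective generator with $S\cong\varinjlim(\mathrm{End}_R N_\alpha)^{op}$'' is equivalent to ``the context has $\tau$ and $\mu$ surjective.''

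For the forward direction I would assume $R$ and $S$ are Morita equivalent and use Theorem \ref{Mor1} to fix a surjective Morita context $(R,S,M,N,\tau,\mu)$. Surjectivity of $\tau:N\otimes_S M\to R$ lets me lift, for any object of $R-MOD$, a surjection from a direct sum of copies of $N$, so $N$ is a generator in the sense of Definition \ref{rmodgen} (invoking Claim \ref{defabequiv} to read this categorically). Surjectivity of $\mu:M\otimes_R N\to S$ supplies, for each local unit $u$ of $S$, a finite expression $u=\sum_{i}\mu(x_i\otimes y_i)$; this is a dual basis exhibiting a finitely generated projective summand $N_u$ of $N$. Organizing these summands over the directed set of local units of $S$ produces a compatible set $\{N_\alpha,\varphi_{\alpha,\beta},\psi_{\beta,\alpha}\}$ as in Definition \ref{compset} with $N\cong\varinjlim N_\alpha$, so $N$ is locally projective. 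Finally, the right $S$-action on $N$ together with condition 1 of Definition \ref{mocont} identifies the action of $S$ at each stage with $\mathrm{End}_R N_\alpha$, and passing to the limit via Proposition \ref{ring:prop2} gives $S\cong\varinjlim(\mathrm{End}_R N_\alpha)^{op}$, the $^{op}$ reflecting that $S$ acts on the right (as discussed with Definition \ref{rop}).

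For the reverse direction I would start from a locally projective generator $M$ with compatible set $\{M_\alpha\}$ and $S\cong\varinjlim(\mathrm{End}_R M_\alpha)^{op}$, and manufacture a surjective Morita context whose $R$-$S$-bimodule is $M$ itself. For the complementary $S$-$R$-bimodule I would take the dual $M^{\dagger}:=\varinjlim\mathrm{Hom}_R(M_\alpha,R)$, and define $\tau$ and $\mu$ from the evaluation pairing $f\mapsto f(x)$ and the rank-one maps $z\mapsto x\,f(z)$, with the two sides arranged to match Definition \ref{mocont}; the map $\mu$ is defined stagewise and shown to land in $S$ using the limit presentation. Surjectivity of $\tau$ is exactly the generator hypothesis, while surjectivity of $\mu$ follows from the dual-basis data attached to each finitely generated projective $M_\alpha$, and conditions 1 and 2 of Definition \ref{mocont} are direct evaluations. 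Theorem \ref{Mor1} then yields that $R$ and $S$ are Morita equivalent.

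The step I expect to be the main obstacle is the coherent passage to the limit in both directions: verifying that the stagewise dual-basis data, endomorphism rings, and evaluation pairings assemble into well-defined maps on the direct limits, and that surjectivity established at each finite stage survives the colimit. The compatibility relations 3 and 4 of Definition \ref{compset}, namely $\psi_{\beta,\alpha}\circ\varphi_{\alpha,\beta}=1_{M_\alpha}$ and the commuting-idempotent identity, are precisely what make the maps $\overline{\varphi}_{\alpha,\beta}$ of Proposition \ref{ring:prop2} a compatible system of ring homomorphisms and what force $\varinjlim(\mathrm{End}_R M_\alpha)^{op}$ to carry local units matching those of $S$; confirming these bookkeeping facts, rather than any single clever device, is where the bulk of the effort lies.
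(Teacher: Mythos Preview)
The paper does not actually prove this theorem: its entire proof consists of the citation \cite[Theorem 2.5]{P.N.-Anh:1987aa}. So there is no in-paper argument to compare your proposal against.

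That said, your outline is a reasonable sketch of how such a proof is typically built, and it correctly routes through Theorem \ref{Mor1}. A few cautions if you intend to flesh it out. In the forward direction, the claim that surjectivity of $\mu$ at a local unit $u$ of $S$ produces a \emph{finitely generated projective summand} $N_u$ of $N$ is the delicate point: you have to show that $N u$ is finitely generated projective as an $R$-module (not just that a dual basis exists for some submodule), and that the inclusion/projection maps $\varphi,\psi$ really satisfy condition 4 of Definition \ref{compset}. In the reverse direction, defining $M^\dagger$ as a direct limit of $\mathrm{Hom}_R(M_\alpha,R)$ requires you to specify the transition maps carefully (they go via $\psi_{\beta,\alpha}$, not $\varphi_{\alpha,\beta}$), and showing that $\mu$ lands in $S$ rather than in some completion of $S$ genuinely uses that $S$ has local units. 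Finally, your claim that ``surjectivity of $\tau$ is exactly the generator hypothesis'' is not quite literal: being a generator gives you that any object is a quotient of copies of $M$, but translating this into surjectivity of the evaluation pairing $\tau: M^\dagger\otimes_S M\to R$ takes an additional argument using the projectivity of the pieces $M_\alpha$. None of these are fatal, but they are where the work is, and the paper sidesteps all of it by citing \'Anh--M\'arki.
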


\begin{proof}
 \cite[Theorem 2.5]{P.N.-Anh:1987aa}. 
\end{proof}

\section{Morita Equivalence and Corner Rings}

Recall, given an ultragraph, $\mathcal{G}$, $E_{\mathcal{G}}$ denotes the graph constructed from $\mathcal{G}$ as in Definition \ref{gfromug}. In the case where $\mathcal{G}$ doesn't have any singular vertices, the contents of this section are essential to establishing the Morita equivalence of $L_{R}(\mathcal{G})$ and $L_{R}(E_{\mathcal{G}})$. The result we establish here need not hold in general, but it does hold for rings which share a certain property with Leavitt path algebras. As we have shown, Leavitt path algebras need not be unital. However, they do have ``the next best thing.''

\begin{definition}\label{ring:def9}
 A ring, $R$, is \textit{$\sigma$-unital} if there exists a collections of elements $\{u_{k}\}_{k\in\mathbb{N}}\subseteq R$ such that
 \begin{center}
$R=\bigcup\limits_{k\in\mathbb{N}}u_{k}Ru_{k}$ \hspace{.5cm} and \hspace{.5cm} $u_{k}=u_{k}u_{k+1}=u_{k+1}u_{k}$ for all $k\geq1$.
 \end{center}
The collection $\{u_{k}\}_{k\in\mathbb{N}}$ is called a \textit{$\sigma$-unit}.
\end{definition}

For any graph $E$ with $|E^{0}|=\infty$, the set $\{u_{k}\}_{k\in\mathbb{N}}$ from the proof of Lemma \ref{glpalgunital} is a $\sigma$-unit for $L_{R}(E)$. Although it requires a bit more work, one can show that, for $\mathcal{G}$ with $|G^{0}|=\infty$, $L_{R}(\mathcal{G})$ is $\sigma$-unital; we will prove it later on. In \cite{Takeshi-Katsura-Paul-Muhly-Aidan-Sims--Mark-Tomforde:2010aa}, the authors make use of multiplier algebras in order to show ultragraph $C^{*}$-algebras are Morita Equivalent to graph $C^{*}$-algebras. We will need to do something similar. But first, we need an algebraic analogue to the multiplier algebra found in \cite{Takeshi-Katsura-Paul-Muhly-Aidan-Sims--Mark-Tomforde:2010aa}.

\begin{definition}\label{ring:def8}
For a ring, $R$, let $\mathcal{M}(R)$ denote the set of pairs $(T,S)$ where $T,S:R\to R$ are additive homomorphisms such that:
\begin{center}
\textit{(i) $xT(y)=S(x)y$, \hspace{.5cm}  (ii) $T(xy)=T(x)y$, \hspace{.5cm}   (iii) $S(xy)=xS(y)$}
\end{center}
for all $x,y\in R$. $\mathcal{M}(R)$ is a ring with respect to the operations
\begin{center}
\textit{$(T_{1},S_{1})+(T_{2},S_{2}):=(T_{1}+T_{2},S_{1}+S_{2}),$ and $(T_{1},S_{1})(T_{2},S_{2}):=(T_{1}\circ T_{2}, S_{2}\circ S_{1});$}
\end{center}
it is called the \textit{multiplier ring} of $R$.
\end{definition}

Given a ring $R$, for each $x\in R$, we can define additive homomorphisms, $L_{x}$, $R_{x}$, by left and right multiplication by $x$, respectively; one can quickly verify $(L_{x}, R_{x})\in\mathcal{M}(R)$. Further, it is straightforward to check the map given by $x\mapsto (L_{x},R_{x})$ defines a ring homomorphism from $R$ into $\mathcal{M}(R)$, the kernel of which is 
 $$\text{ann}(R):=\{x\in R: xR=Rx=0\}.$$
 
 \begin{definition}\label{ring:def7}
Let $R$ be a ring. $I \lhd R$ is an \textit{essential ideal} if, for all $x\in R$, 
$$xI=Ix=0 \implies x=0.$$
$R$ is an \textit{essential ring} if $R$ is an essential ideal of itself.
\end{definition}

If $R$ is an essential ring, the ring homomorphism $x\mapsto (L_{x},R_{x})$ gives an embedding of $R$ into $\mathcal{M}(R)$; in this case, we identify $R$ with its image in $\mathcal{M}(R)$. How $R$ behaves in $\mathcal{M}(R)$ is of interest to us.  

\begin{prop}\label{ring:prop4}
Let $R$ be a $\sigma$-unital ring, and let $i:R\to\mathcal{M}(R)$ be the ring homomorphism given by $x\mapsto (L_{x},R_{x})$. Then,\\
1) $i(R)$ is  an essential ideal in $\mathcal{M}(R)$,\\
2) given a ring, $S$, and an injective ring homomorphism, $\phi:R\to S$, such that $\phi(R)$ is as an ideal in $S$, there exists a ring homomorphism, $\theta:S\to\mathcal{M}(R)$, such that the following diagram commutes:
\begin{center}
\begin{tikzpicture}
  \matrix (m) [matrix of math nodes,row sep=3em,column sep=4em,minimum width=3em]
  {
     \mathcal{M}(R) \\
     R & S; \\};
  \path[-stealth]
    (m-2-1) edge node [left] {$i$} (m-1-1)
            
        (m-2-1) edge node [above] {$\phi$} (m-2-2)
           
     (m-2-2) edge node [above] {$\theta$} (m-1-1);      

\end{tikzpicture}
\end{center}
further, if $\phi(R)$ is an essential ideal in $S$, $\theta$ is injective.
\end{prop}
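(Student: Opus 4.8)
The plan is to treat the two claims separately, but both rest on one observation about $\sigma$-unital rings: the annihilator $\mathrm{ann}(R)=\{x:xR=Rx=0\}$ is trivial, so that $i$ is injective and $R$ is essential. I would prove this first. Given $x\in\mathrm{ann}(R)$, the identity $R=\bigcup_{k}u_{k}Ru_{k}$ lets me write $x=u_{k}ru_{k}$ for some $k$ and $r\in R$; then, using $u_{k+1}u_{k}=u_{k}$, I get $u_{k+1}x=(u_{k+1}u_{k})ru_{k}=u_{k}ru_{k}=x$, while $x\in\mathrm{ann}(R)$ forces $u_{k+1}x=0$, whence $x=0$. This is the one place where the $\sigma$-unit axioms are genuinely used, and it is the conceptual crux of part (1).

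For part (1), I would first verify that $i(R)$ is a two-sided ideal of $\mathcal{M}(R)$ by direct computation. For $(T,S)\in\mathcal{M}(R)$ and $x\in R$, unwinding the multiplier multiplication $(T_{1},S_{1})(T_{2},S_{2})=(T_{1}\circ T_{2},S_{2}\circ S_{1})$ and applying axioms (i)--(iii) yields $(T,S)\,i(x)=i(T(x))$ and $i(x)\,(T,S)=i(S(x))$. Here axiom (i), $xT(y)=S(x)y$, is exactly what identifies the right-multiplier coordinate of $(T,S)\,i(x)$ as $R_{T(x)}$, while (ii) and (iii) handle the remaining coordinates. Essentiality is then immediate: if $(T,S)\,i(R)=i(R)\,(T,S)=0$, these two identities give $i(T(x))=i(S(x))=0$ for all $x$, and injectivity of $i$ (from the annihilator computation) forces $T=S=0$.

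For part (2), I would build $\theta$ explicitly. Since $\phi(R)$ is an ideal and $\phi$ is injective, for each $s\in S$ the elements $s\phi(r)$ and $\phi(r)s$ lie in $\phi(R)$, so I may define additive maps $T_{s},S_{s}:R\to R$ by $\phi(T_{s}(r))=s\phi(r)$ and $\phi(S_{s}(r))=\phi(r)s$; injectivity of $\phi$ makes these well defined. Applying $\phi$ to both sides and cancelling shows $(T_{s},S_{s})$ satisfies (i)--(iii), so $\theta(s):=(T_{s},S_{s})$ lands in $\mathcal{M}(R)$. Verifying that $\theta$ is a ring homomorphism requires care with the reversed composition in the second coordinate of the multiplier product: I would establish $T_{s_{1}s_{2}}=T_{s_{1}}\circ T_{s_{2}}$ together with the correspondingly reversed $S_{s_{1}s_{2}}=S_{s_{2}}\circ S_{s_{1}}$. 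Commutativity of the triangle, $\theta\circ\phi=i$, follows since $\phi(T_{\phi(r)}(x))=\phi(r)\phi(x)=\phi(rx)$ gives $T_{\phi(r)}=L_{r}$, and likewise $S_{\phi(r)}=R_{r}$. Finally, if $\phi(R)$ is essential, then $\theta(s)=0$ forces $s\phi(r)=\phi(r)s=0$ for all $r$, i.e.\ $s\phi(R)=\phi(R)s=0$, so $s=0$ and $\theta$ is injective.

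The main obstacle I anticipate is not any single hard computation but keeping the bookkeeping honest: pinning down exactly which multiplier axiom licenses each identity in part (1), and respecting the order-reversal in the second coordinate of the multiplier multiplication when proving $\theta$ is multiplicative. The genuinely essential use of the hypothesis is confined to the annihilator-triviality lemma, which is what supplies injectivity of $i$ and hence the essentiality asserted in part (1).
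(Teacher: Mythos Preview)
Your argument is correct and complete. The paper does not actually supply its own proof of this proposition: it simply cites \cite[Theorem 7.1.4]{Loring:1997aa} and moves on. So there is no ``paper's approach'' to compare against in any substantive sense; you have written out the standard direct verification that the cited reference presumably contains. Your identification of the one genuinely non-formal step---using the $\sigma$-unit to kill the annihilator---is exactly right, and your bookkeeping for the ideal computations $(T,S)\,i(x)=i(T(x))$ and $i(x)\,(T,S)=i(S(x))$, as well as the order-reversal in the second coordinate when checking multiplicativity of $\theta$, is accurate.
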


\begin{proof}
\cite[Theorem 7.1.4]{Loring:1997aa}
\end{proof}

By 1) of Proposition \ref{ring:prop4}, since $L_{R}(E_{\mathcal{G}})$ is $\sigma$-unital, we can think of $L_{R}(E_{\mathcal{G}})$ as an essential ideal in $\mathcal{M}(L_{R}(E_{\mathcal{G}}))$. Following the blueprint set in \cite{Takeshi-Katsura-Paul-Muhly-Aidan-Sims--Mark-Tomforde:2010aa}, our result rests on showing the existence of a \textit{full idempotent} element, $Q\in \mathcal{M}(L_{R}(E_{\mathcal{G}}))$, such that $QL_{R}(E_{\mathcal{G}})Q$ is Morita Equivalent to $L_{R}(E_{\mathcal{G}})$. From there, in later sections, we will show $L_{R}(\mathcal{G})\cong QL_{R}(E_{\mathcal{G}})Q$, thereby establishing the Morita equivalence of $L_{R}(\mathcal{G})$ and $L_{R}(E_{\mathcal{G}})$. To that end:

\begin{definition}\label{ring:def10}
Let $R$ be a ring and $Q\in \mathcal{M}(R)$ an idempotent element. $Q$ is a \textit{full idempotent} if $RQR=R$.
\end{definition}

\noindent The technique used in proving the following theorem is modeled on \cite[Corollary 4.3]{Abrams:1983aa}.

\begin{theorem}\label{ring:theorem2}
Let $\{u_{k}\}_{k\in\mathbb{N}}$ be a $\sigma$-unit for $R$, and let $Q\in\mathcal{M}(R)$ be a full idempotent element such that $u_{k}Q=Qu_{k}$ for each $k$. Then, $QRQ$ and $R$ are Morita equivalent.
\end{theorem}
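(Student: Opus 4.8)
The plan is to produce an explicit Morita context $(R, QRQ, M, N, \tau, \mu)$ with $\tau$ and $\mu$ surjective, and then invoke Theorem \ref{Mor1}. First I would record two consequences of the $\sigma$-unit hypothesis. The ring $R$ is idempotent, $R^2 = R$, immediately from $R = \bigcup_k u_k R u_k$. Moreover $R$ is essential in the sense of Definition \ref{ring:def7}: if $x\in R$ satisfies $xR = Rx = 0$, choose $k$ with $x = u_k r u_k \in u_k R u_k$; then $u_{k+1}x = (u_{k+1}u_k)r u_k = u_k r u_k = x$, while $u_{k+1}x \in Rx = 0$, forcing $x = 0$. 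Hence $\text{ann}(R) = 0$, the canonical map $i : R \to \mathcal{M}(R)$ is injective, and by Proposition \ref{ring:prop4} I may identify $R$ with the essential ideal $i(R)\lhd\mathcal{M}(R)$. Because $Q = (T,S)$ with $T,S : R \to R$, the products $Qr = T(r)$, $rQ = S(r)$, and $QrQ$ all lie in $R$; thus $M := QR$, $N := RQ$, and $S := QRQ$ are honest subsets of $R$.

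Next I would check that $M = QR$ is a $QRQ$-$R$-bimodule and $N = RQ$ an $R$-$QRQ$-bimodule under the multiplication inherited from $\mathcal{M}(R)$. Unitality is where idempotence and fullness enter: $MR = QR\,R = QR = M$ and $QRQ\cdot M = QRQR = Q(RQR) = QR = M$, using $R^2 = R$ and the fullness relation $RQR = R$ of Definition \ref{ring:def10}; symmetrically for $N$. I define $\mu : M \otimes_R N \to QRQ$ and $\tau : N \otimes_{QRQ} M \to R$ on simple tensors by multiplication, $\mu(Qr \otimes sQ) = QrsQ$ and $\tau(rQ \otimes Qs) = rQs$; one verifies these are balanced (hence well-defined group homomorphisms), that $\mu$ is an $S$-$S$-bimodule map and $\tau$ an $R$-$R$-bimodule map. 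The two Morita identities of Definition \ref{mocont} are just associativity in $R$: $\mu(x\otimes y)x' = xyx' = x\tau(y\otimes x')$ and $y\mu(x\otimes y') = yxy' = \tau(y\otimes x)y'$. Both $R$ and $QRQ$ are idempotent rings, the latter since $(QRQ)^2 = Q(RQR)Q = QRQ$, as Theorem \ref{Mor1} requires.

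The crux is surjectivity of the two structure maps, and this is exactly where the hypotheses pay off. The image of $\tau$ is the additive span of the elements $rQs$, that is $RQR$, which equals $R$ precisely because $Q$ is full; so $\tau$ is onto. For $\mu$, given $QwQ \in QRQ$, write $w = \sum_i a_i b_i$ (possible since $R^2 = R$); then $QwQ = \sum_i Qa_i b_i Q = \sum_i \mu(Qa_i \otimes b_i Q)$, so $\mu$ is onto as well. Having a Morita context with both maps surjective, Theorem \ref{Mor1} yields that $R$ and $QRQ$ are Morita equivalent.

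The main obstacle is not a single deep step but the multiplier bookkeeping: arranging that every product genuinely lands in $R$ and that the candidate bimodules are unital, after which surjectivity of $\tau$ and $\mu$ collapses to $RQR = R$ and $R^2 = R$. I note that this route draws on the $\sigma$-unit only through idempotence and essentiality of $R$ and does not obviously require the commutation $u_k Q = Q u_k$. I expect that relation to be used either in a parallel argument through Theorem \ref{ring:theorem1} (realizing a locally projective generator as the direct limit of the finitely generated projectives $u_k R Q$ and identifying $\varinjlim(\text{End}_R M_\alpha)^{op}$), or, more to the point for the surrounding program, to guarantee that the corner $QRQ$ is itself $\sigma$-unital with $\{Q u_k Q\}$ as a $\sigma$-unit — a fact the later sections will need. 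I would pin down exactly where that commutation is invoked before finalizing.
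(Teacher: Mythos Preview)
Your proof is correct and complete: the Morita context $(R,\,QRQ,\,QR,\,RQ,\,\tau,\,\mu)$ with multiplication maps does the job, and your surjectivity arguments reduce cleanly to fullness $RQR=R$ and idempotence $R^{2}=R$, both of which you derive from the $\sigma$-unit. Your essentiality argument via $u_{k+1}x=x$ is also fine and does not require the $u_{k}$ to be idempotent.

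This is, however, a genuinely different route from the paper. The paper proves the theorem through Theorem~\ref{ring:theorem1} rather than Theorem~\ref{Mor1}: it sets $M=RQ$, $Q_{k}=u_{k}Q$, $M_{k}=RQ_{k}$, verifies that each $M_{k}$ is a finitely generated projective $R$-module (via Claim~\ref{ring:remark2}), assembles a compatible set $\{M_{k},\varphi_{i,k},\psi_{k,i},\mathbb{N}\}$ using inclusions and right-multiplication by $u_{i}$, identifies $\varinjlim M_{k}\cong M$, shows $M$ is a generator using fullness, and finally proves $\varinjlim(\mathrm{End}_{R}M_{k})^{op}\cong\varinjlim Q_{k}RQ_{k}\cong QRQ$. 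The commutation $u_{k}Q=Qu_{k}$ is used there exactly where you guessed: to make $Q_{k}$ idempotent and to get the inclusions $M_{i}\subseteq M_{k}$ that drive the direct system. Your approach is shorter and more elementary, and it makes transparent that the commutation hypothesis is not needed for the bare Morita-equivalence conclusion. What the paper's approach buys is that the same locally-projective-generator machinery is redeployed almost verbatim in the desingularization argument of Theorem~\ref{final:theorem1}, so the longer proof here doubles as a template; your context argument would not transfer as directly to that setting.
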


Before proving Theorem \ref{ring:theorem2}, we would like to first prove the following minor, but important, claim.

\begin{claim}\label{ring:remark2}
Let $R$ be a ring and $e\in R$ an idempotent. Then, $Re:=\{re: r\in R\}$ is a projective $R$-module. 
\end{claim}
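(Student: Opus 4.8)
The statement to verify is that $Re$ satisfies the lifting property of Definition~\ref{projmod}, so the plan is to produce, for an arbitrary surjection $f\colon N\to M$ and an arbitrary $g\colon Re\to M$ of $R$-modules, a map $h\colon Re\to N$ with $f\circ h=g$. The two structural facts I would isolate first are that $e\in Re$ (since $e=ee$) and that $e$ acts as a \emph{right} identity on all of $Re$: for $x=se\in Re$ we have $xe=se^{2}=se=x$. These two facts let the single idempotent $e$ play the role that a unit would play in the classical argument, even though $R$ itself is not assumed to be unital.

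With those in hand the construction is short. Given $f$ and $g$ as above, I would use surjectivity of $f$ to choose $n_{0}\in N$ with $f(n_{0})=g(e)$, and then \emph{define} $h\colon Re\to N$ by $h(x):=x\cdot n_{0}$, where the dot is the action of $x\in R$ (note $Re\subseteq R$) on $n_{0}\in N$. That $h$ is a left $R$-module homomorphism is immediate from the module axioms of $N$, since $h(r\cdot x)=(rx)\cdot n_{0}=r\cdot(x\cdot n_{0})=r\cdot h(x)$ and $h$ is visibly additive. The crux is the identity $f\circ h=g$, which I would read off the chain $f(h(x))=f(x\cdot n_{0})=x\cdot f(n_{0})=x\cdot g(e)=g(x\cdot e)=g(x)$; the successive equalities use that $f$ is a module homomorphism, the defining property $f(n_{0})=g(e)$, that $g$ is a module homomorphism, and finally the identity $xe=x$ established above.

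The step I expect to be the genuine obstacle is resisting the ``obvious'' approach, which is exactly where one can go wrong in the nonunital setting. The naive idea is to treat $Re$ as cyclic on the generator $e$ and send $e\mapsto n_{0}$, extending by $re\mapsto r\cdot n_{0}$; but this requires the implication $re=0\Rightarrow rn_{0}=0$, which is unavailable, since from $re=0$ one only deduces $rn_{0}\in\ker f$ rather than $rn_{0}=0$. Equally, the textbook route of writing $R=Re\oplus R(1-e)$ and invoking projectivity of the ``free'' module $R$ is awkward here, because $R$ carries no unit and need not be free, or even projective, over itself. The point of the plan is that defining $h$ through the ambient action of $R$ on $N$, rather than through a choice of generators of $Re$, makes well-definedness automatic and bypasses any appeal to a unit or to a free cover; the idempotent identity $xe=x$ does all the work.
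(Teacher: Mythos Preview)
Your proof is correct and is essentially identical to the paper's: choose a preimage $n_{0}$ of $g(e)$ under the surjection, define $h(x)=x\cdot n_{0}$, and verify $f\circ h=g$ using $xe=x$. The only difference is cosmetic---the paper writes $h(re)=re\cdot n$ and swaps the names of $f$ and $g$---while your phrasing $h(x)=x\cdot n_{0}$ makes well-definedness a bit more transparently automatic.
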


\begin{proof}
Suppose $f:Re\to M$  and $g:N\twoheadrightarrow M$ are $R$-module homomorphisms. Since $g$ is surjective, we can fix an element $n\in N$ be such that $g(n)=f(e)$. Now, define $h:Re\to N$ by $h(re)=re\cdot n$. It's easy enough to check $h$ is an $R$-module map; moreover, $g(h(re))=g(re\cdot n)=re\cdot g(n)=re\cdot f(e)=f(re^{2})=f(re)$; meaning the diagram below commutes: 
\begin{center}
\begin{tikzpicture}
  \matrix (m) [matrix of math nodes,row sep=4em,column sep=2em,minimum width=2em]
  {
      & N \\
     Re & M \\};
  \path[-stealth]
  
    (m-2-1) edge node [below] {$f$}
            node [above] {} (m-2-2)
             edge [dashed,->] node [left] {$h$} (m-1-2)
    (m-1-2) edge  node [right] {$g$} (m-2-2);
\end{tikzpicture}
\end{center}
Hence, $Re$ is projective.
\end{proof}

\begin{proof}[Proof of Theorem \ref{ring:theorem2}] Set $M:=RQ$, $Q_{k}:=u_{k}Q$($=Qu_{k}$), and $M_{k}:=RQ_{k}$. We will prove our result by showing $M$ is a locally projective generator of $R-MOD$ such that $\varinjlim\limits\text{End}_{R}(M_{k})\cong QRQ$. To that end, recall from Proposition \ref{ring:prop4} that   
$$R \text{ a } \sigma\text{-unital ring} \implies R \text{ sits in $\mathcal{M}(R)$ as an essential ideal};$$
since $Q_{k}$ is an idempotent element in $R$ for each $k$, by Claim \ref{ring:remark2}, $M_{k}$ is a finitely generated, projective, $R$-module for each $k$. By the property of a $\sigma$-unit, for $i\leq k$, we have $u_{i}u_{k}=u_{i}$; this, along with the fact the elements of the $\sigma$-unit of $R$ commute with $Q$, implies $M_{i}\subseteq M_{k}$. Define $\varphi_{i,k}:M_{i}\to M_{k}$ to be the inclusion map. One can quickly check $\langle M_{i},\varphi_{i,k}\rangle$ is a direct system of $R$-modules over $\mathbb{N}$, let $\varinjlim\limits M_{i}$ be its direct limit.

Consider now the $R$-module $M$ together with the inclusion maps $\{\varPhi_{i}:M_{i}\to M\}_{i\in\mathbb{N}}$. Since $\varPhi_{i}=\varPhi_{k}\circ\varphi_{i,k}$, by the universal property of $\varinjlim\limits M_{i}$, there exists a unique $R$-module homomorphism $\theta: \varinjlim\limits M_{i}\to M$ such that $\varPhi_{i}=\theta\circ\eta_{i}$ for each $i$. For $\{u_{k}\}_{k\in\mathbb{N}}$ a $\sigma$-unit of $R$, we have $R=\bigcup Ru_{k}$, which in  turn means $M=\bigcup M_{k}$. From this, along with the fact each $\varPhi_{k}$ is an inclusion map, we can conclude $\theta$ is surjective, by \cite[24.3, 1) and 2)]{Wisbauer:1991aa}, $\theta$ is injective as well. Thus, $\varinjlim\limits M{i}\cong M.$ Finally, for each $i\leq k$, define $\psi_{k,i}:M_{k}\to M_{i}$ to be right multiplication by $u_{i}$. One can easily check $\{M_{i},\varphi_{i,k},\psi_{k,i},\mathbb{N}\}$ is a compatible set, allowing us to conclude $M$ is a locally projective $R$-module.

To see $M$ is a generator of $R-MOD$, let $N$ by any $R$-module. For each $n\in N$, define $\rho_{n}:M\to N$ to be right multiplication by $n$; then, extend $\rho_{n}$ to homomorphism from $\bigoplus\limits_{n\in N}M$ to $N$ in the usual way. After which, set 
$$\rho:=\bigoplus\limits_{n\in N}\rho_{n}:\bigoplus\limits_{n\in N}M\to N,$$
where the restriction of $\rho$ to the $n$-coordinate is $\rho_{n}$. Since $M=RQ$, note $\text{im}\rho=RQN$. Moreover, because $RN=N$, recall we are assuming all modules to be unital, and $Q$ is full, we have 
$$\text{im}\rho=RQN=RQRN=RN=N.$$
Thus, $\rho$ is surjective, implying $M$ is a generator of $R-Mod$.

Now, for a fixed $i$, consider the $R$-module $M_{i}$ and let $\phi\in\text{End}_{R}M_{i}^{op}$. For $rQ_{i}\in M_{i}$, note $\phi(rQ_{i})=\phi(rQ_{i}^{2})=rQ_{i}\phi(Q_{i}),$ and so $\phi$ is just right multiplication by $\phi(Q_{i})$; moreover, $\phi(Q_{i})=\phi(Q_{i}^{2})=Q_{i}\phi(Q_{i})$ implies $\phi(Q_{i})\in Q_{i}M_{i}:=Q_{i}RQ_{i}\subseteq M_{i}$. Let $\text{ev}_{Q_{i}}:\text{End}_{R}M_{i}^{op}\to Q_{i}RQ_{i};$ be the evaluation map at $Q_{i}$ (i.e., $\phi\mapsto \phi(Q_{i})$), it's straight forward to work out $\text{ev}_{Q_{i}}$ is a ring homomorphism. Since right multiplication by any element of $Q_{i}RQ_{i}$ defines an element of $\text{End}_{R}M_{i}$, $\text{ev}_{Q_{i}}$ is surjective. Further, if $\phi(Q_{i})=0$, $\text{im}\phi=\phi(RQ_{i})=R\phi(Q_{i})=\{0\}$; this means $\phi=0$, and so $\text{ev}_{Q_{i}}$ is injective as well. Hence, we have $\text{End}_{R}M_{i}^{op}\cong Q_{i}RQ_{i}.$ Lastly, the family of maps $\{\text{ev}_{Q_{i}}\}_{i\in\mathbb{N}}$ are such that the following diagram commutes for each $i\leq k\in\mathbb{N}$:

\begin{figure}[h!]
\begin{center}
\begin{tikzpicture}

 \node [shape=circle,minimum size=1.5em] (d1) at (0,1) {$\text{End}_{R}M_{i}^{op}$};
 \node [shape=circle,minimum size=1.5em] (d2) at (4,1) {$Q_{i}RQ_{i}$};
 \node [shape=circle,minimum size=1.5em] (d3) at (0,-1) {$\text{End}_{R}M_{k}^{op}$};
 \node [shape=circle,minimum size=1.5em] (d4) at (4,-1) {$Q_{k}RQ_{k}$};
 
\path (d1) edge [->, >=latex, shorten <= 2pt, shorten >= 2pt, right] node[above]{$\text{ev}_{Q_{i}}$} (d2);

\path (d3) edge [->, >=latex, shorten <= 2pt, shorten >= 2pt, right] node[above]{$\text{ev}_{Q_{k}}$} (d4);

 \node [shape=circle,minimum size=1.5em] (d6) at (0,-1) {};

 \path (d1) edge [->, >=latex, shorten <= -17pt, right] node[pos=-0.1]{$\overline{\varphi}_{i,k}$} (d6);
 
 \node [shape=circle,minimum size=1.5em] (d7) at (4,-1) {};
 
  \path (d2) edge [->, >=latex, shorten <= -10pt, right] node[pos=0.2]{$\varphi_{i,k}$} (d7);

\end{tikzpicture}
\end{center}
\end{figure}
\noindent where the set $\{\varphi_{i,k}:Q_{i}RQ_{i}\to Q_{k}RQ_{k}\}_{i,k\in\mathbb{N}}$ consists of inclusion maps, and the set
$$\{\overline{\varphi}_{i,k}:\text{End}_{R}M_{i}^{op}\to \text{End}_{R}M_{k}^{op}\}_{i,k\in\mathbb{N}}$$
consists of maps as in Proposition \ref{ring:prop2}. By \cite[24.4]{Wisbauer:1991aa}, the family of maps $\{\text{ev}_{Q_{i}}\}_{i\in\mathbb{N}}$ induce a ring isomorphism $\text{ev}_{Q}:\varinjlim\limits\text{End}_{R}M_{i}^{op}\to \varinjlim\limits Q_{i}RQ_{i},$ thus, $\varinjlim\limits\text{End}_{R}M_{i}^{op}\cong\varinjlim\limits Q_{i}RQ_{i}.$ Using the same argument as the one we used to show $\varinjlim\limits M{i}\cong M$, we can show $\varinjlim\limits Q_{i}RQ_{i}\cong QRQ,$ meaning $\varinjlim\limits\text{End}_{R}M_{i}^{op}\cong QRQ,$ and so, by\\
\noindent Theorem \ref{ring:theorem1}, $R$ and $QRQ$ are Morita equivalent. 
\end{proof}

At this juncture, we encourage the reader to see the paragraph preceding\\
\noindent Lemma \ref{sig} for the definitions of $W_{0}$ and $\Gamma_{0}$. With that, we will proceed with using the machinery we have built up to establish an important result regarding graph Leavitt path algebras---the existence of an algebraic analog to the projection constructed just before \cite[Proposition 5.20]{Takeshi-Katsura-Paul-Muhly-Aidan-Sims--Mark-Tomforde:2010aa}.

\begin{lemma}\label{ring:lemma1}
There exists a full idempotent $Q\in\mathcal{M}(L_{R}(E_{\mathcal{G}}))$ such that, for any $t_{\alpha}t_{\beta^{*}}\in L_{R}(E_{\mathcal{G}})$,

 \[
    Qt_{\alpha}t_{\beta^{*}}=\left\{
                \begin{array}{ll}
                  t_{\alpha}t_{\beta^{*}}\ \ \ if\ \  s(\alpha)\in W_{0}\sqcup\Gamma_{0},\\
                0\ \ \ \ \ \  \ otherwise.
                \end{array}
              \right.
  \]
  
  \end{lemma}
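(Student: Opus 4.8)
The plan is to realise $Q$ as the multiplier given by left and right multiplication by the formal sum of the vertex idempotents indexed by $W_{0}\sqcup\Gamma_{0}$. Write $P:=W_{0}\sqcup\Gamma_{0}\subseteq E_{\mathcal{G}}^{0}$ and define additive maps $T,S:\leg\to\leg$ by
$$T(x):=\sum_{\iota\in P}q_{v_{\iota}}\,x,\qquad S(x):=\sum_{\iota\in P}x\,q_{v_{\iota}}.$$
These are well defined because, expressing $x=\sum_{i}r_{i}t_{\alpha_{i}}t_{\beta_{i}^{*}}$ via Lemma \ref{gspan} and using $q_{v_{\iota}}t_{\alpha_{i}}t_{\beta_{i}^{*}}=\delta_{\iota,s(\alpha_{i})}t_{\alpha_{i}}t_{\beta_{i}^{*}}$ (from \textbf{LP1}--\textbf{LP2}), only the finitely many $\iota\in P\cap\{s(\alpha_{i})\}$ contribute; in particular each sum is a finite, representation-independent element of $\leg$, and both maps are additive. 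I then set $Q:=(T,S)$.

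Verifying $Q\in\mathcal{M}(\leg)$ (Definition \ref{ring:def8}) would reduce to sliding ring elements through these finite sums: $xT(y)=\sum_{\iota\in P}xq_{v_{\iota}}y=S(x)y$ gives (i), while $T(xy)=\sum_{\iota\in P}q_{v_{\iota}}(xy)=\big(\sum_{\iota\in P}q_{v_{\iota}}x\big)y=T(x)y$, together with its mirror computation, gives (ii) and (iii). Idempotency is immediate from $q_{v_{\kappa}}q_{v_{\iota}}=\delta_{\kappa,\iota}q_{v_{\iota}}$ (\textbf{LP1}), which collapses the double sum in $T(T(x))$ back to $T(x)$ and likewise for $S$, so $Q^{2}=(T\circ T,\,S\circ S)=Q$. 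Under the embedding $\leg\hookrightarrow\mathcal{M}(\leg)$ one checks $(T\circ L_{x},\,R_{x}\circ S)=(L_{T(x)},\,R_{T(x)})$ using (i) and (ii), so the product $Q\,\ta$ is identified with $T(\ta)$; evaluating and using $q_{v_{\iota}}\ta=\delta_{\iota,s(\alpha)}\ta$ delivers exactly the displayed formula.

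The main work, and the step I expect to be the real obstacle, is fullness: $\leg\,Q\,\leg=\leg$. Since $\leg\,Q\,\leg$ is a two-sided ideal and $\leg=\operatorname{span}_{R}\{\ta\}$ with $\ta=q_{v_{s(\alpha)}}\ta$, it suffices to show $q_{v_{\iota}}\in\leg\,Q\,\leg$ for every $\iota\in E_{\mathcal{G}}^{0}=G^{0}\sqcup\Delta$. For $\iota\in P$ this holds because $Q\,q_{v_{\iota}}=q_{v_{\iota}}$, whence $q_{v_{\iota}}=q_{v_{\iota}}\,Q\,q_{v_{\iota}}$. For $\iota\in W_{+}\sqcup\Gamma_{+}$ I would produce a path $\mu$ in $E_{\mathcal{G}}$ from a vertex $v_{\kappa}$ with $\kappa\in\Gamma_{0}$ to $v_{\iota}$; then $s(\mu)=\kappa\in P$ gives $Q\,t_{\mu}=t_{\mu}$, and hence
$$q_{v_{\iota}}=t_{\mu^{*}}t_{\mu}=t_{\mu^{*}}\,Q\,t_{\mu}\in\leg\,Q\,\leg,$$
using $t_{\mu^{*}}t_{\mu}=q_{r_{E}(\mu)}=q_{v_{\iota}}$ (\textbf{LP3}). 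This forces $\leg\subseteq\leg\,Q\,\leg$, i.e. fullness.

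Producing that path is the delicate point, and I would handle it by walking the edges of the subgraph $F$ (Remark \ref{FsubE}) backward. For $\omega\in\Gamma_{+}$ with $|\omega|=n$, the edge $e_{\omega}$ enters $v_{\omega}$ from $v_{\omega|_{n-1}}$; since $r(\omega)\subseteq r(\omega|_{n-1})$ one has $|r(\omega|_{n-1})|=\infty$, and $\omega\in\Gamma_{+}$ forbids $\omega|_{n-1}=0^{n-1}$ (that would force $\omega=(0^{n-1},1)\in\Gamma_{0}$), so $\omega|_{n-1}\in\Delta$. Iterating, the lengths strictly decrease while the prefixes remain in $\Delta=\Gamma_{0}\sqcup\Gamma_{+}$, so the walk must reach $\Gamma_{0}$---at the latest at the length-one word $(1)=(0^{0},1)$---yielding an $F$-path from a $\Gamma_{0}$ vertex to $v_{\omega}$. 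For $v\in W_{+}$, the edge $e_{v}$ enters $v_{v}$ from $v_{\sigma(v)}$ with $\sigma(v)\in\Delta$ (Lemma \ref{sig}); prepending the $\Gamma_{+}$-walk just described gives the required $\mu$. The genuinely careful part is confirming that each truncation stays in $\Delta$ and that the recursion actually terminates in $\Gamma_{0}$ rather than stalling; the multiplier and idempotency computations above are routine by comparison.
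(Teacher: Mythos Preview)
Your proposal is correct and follows essentially the same route as the paper. The paper defines $Q=({}_{X}P,P_{X})$ as the pair of projections coming from the abelian-group decomposition $\leg\cong\bigl(\bigoplus_{v\in X}q_{v}\leg\bigr)\oplus\bigl(\bigoplus_{v\notin X}q_{v}\leg\bigr)$ (and its right-hand analogue), which is exactly your $(T,S)$ written as a formal sum $\sum_{\iota\in P}q_{v_{\iota}}(-)$; for fullness the paper simply invokes \cite[Lemma~4.6]{Takeshi-Katsura-Paul-Muhly-Aidan-Sims--Mark-Tomforde:2010aa} to produce a path $\alpha'$ in $F$ from $W_{0}\sqcup\Gamma_{0}$ to $s(\alpha)$, whereas you reprove the needed special case of that lemma by hand via the truncation walk in $\Gamma_{+}$---so your argument is slightly more self-contained but otherwise identical in substance.
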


\begin{proof}
Let $X=W_{0}\sqcup\Gamma_{0}$. From \cite[Proposition 3.4]{Tomforde:2011aa}, we can deduce
\begin{align*}
L_{R}(E_{\mathcal{G}}) & \cong\Big(\bigoplus_{v\in X} L_{R}(E_{\mathcal{G}})q_{v}\Big)\bigoplus\Big( \bigoplus_{v\in E^{0}_{\mathcal{G}}\setminus X} L_{R}(E_{\mathcal{G}})q_{v} \Big)\\
&\cong \Big(\bigoplus_{v\in X} q_{v}L_{R}(E_{\mathcal{G}})\Big)\bigoplus\Big( \bigoplus_{v\in E^{0}_{\mathcal{G}}\setminus X} q_{v}L_{R}(E_{\mathcal{G}}) \Big)
\end{align*}
as abelian groups. Let ${}_{X}P$ be the projection of $\leg$ onto $\bigoplus_{v\in X} q_{v}\leg$, and let $P_{X}$ be the projection onto $\bigoplus_{v\in X} \leg q_{v}$, then set $Q:=({}_{X}P,P_{X})$.

Fix $t_{\alpha}t_{\beta^{*}}\in L_{R}(E_{\mathcal{G}})$. Since $L_{\ta}$ is left multiplication by $\ta$, and ${}_{X}P$ is the projection of $L_{R}(E_{\mathcal{G}})$ onto $\bigoplus_{v\in X} q_{v}\leg,$ we have 
    \[
    {}_{X}P\circ L_{\ta}=\left\{
                \begin{array}{ll}
                  L_{\ta}\ \ \ \text{if}\ \  s(\alpha)\in X,\\
                0\ \ \ \ \ \ \ \ \text{otherwise.}
                \end{array}
              \right.
  \]
On the other hand, by \textbf{LP1}, $R_{\ta}=0$ on 
 $$\bigoplus_{\{v\in E^{0}_{\mathcal{G}}:v\neq s(\alpha)\}} \leg q_{v},$$
 and since $P_{X}$ is the projection of $L_{R}(E_{\mathcal{G}})$ onto $\bigoplus_{v\in X} \leg q_{v}$, 
\[
    R_{\ta}\circ P_{X}=\left\{
                \begin{array}{ll}
                  R_{\ta}\ \ \ \text{if}\ \  s(\alpha)\in X,\\
                0\ \ \ \ \ \ \ \ \text{otherwise.}
                \end{array}
              \right.
  \]
In identifying $L_{R}(E_{\mathcal{G}})$ with $i(L_{R}(E_{\mathcal{G}}))$, ``$t_{\alpha}t_{\beta^{*}}$'' should be understood to mean 
 $$(L_{t_{\alpha}t_{\beta^{*}}},R_{t_{\alpha}t_{\beta^{*}}})\in\mathcal{M}(L_{R}(E_{\mathcal{G}})).$$
 In which case, ``$Qt_{\alpha}t_{\beta^{*}}$'' should be taken to mean
 $$({}_{X}P,P_{X})(L_{t_{\alpha}t_{\beta^{*}}},R_{t_{\alpha}t_{\beta^{*}}}):=({}_{X}P\circ L_{t_{\alpha}t_{\beta^{*}}},R_{t_{\alpha}t_{\beta^{*}}}\circ P_{X}),$$
which we have just shown is equal to $(L_{\ta},R_{\ta})$. And so, by notational abuse and all, we have
 \[
    Qt_{\alpha}t_{\beta^{*}}=\left\{
                \begin{array}{ll}
                  t_{\alpha}t_{\beta^{*}}\ \ \ \text{if}\ \  s(\alpha)\in X,\\
                 0\ \ \ \ \ \   \ \text{otherwise.}
                \end{array}
              \right.
  \]
The fact $Q$ is idempotent follows directly from the fact ${}_{X}P$ and $P_{X}$ are projections. To see $Q$ is full, fix $\ta\in\leg$. By \cite[Lemma 4.6]{Takeshi-Katsura-Paul-Muhly-Aidan-Sims--Mark-Tomforde:2010aa}, there exists a path, $\alpha'$, such that $s(\alpha')\in X$ and $r(\alpha')=s(\alpha)$. Note then $t_{\alpha'^{*}}t_{\alpha'\alpha}t_{\beta}=\ta$, moreover, since $s(\alpha')\in X$, $Qt_{\alpha'\alpha}t_{\beta}=t_{\alpha'\alpha}t_{\beta}$. And so we have $\ta=t_{\alpha'^{*}}t_{\alpha'\alpha}t_{\beta}=t_{\alpha'^{*}}Qt_{\alpha'\alpha}t_{\beta}$. Since $\leg$ is the span of elements of the form $\ta$, we can conclude $Q$ is full.
\end{proof}

\begin{corr}\label{meqcor}
Let $\mathcal{G}$ be an ultragraph, $E_{\mathcal{G}}$ the graph as in Definition \ref{gfromug}, and $Q\in\mathcal{M}(\leg)$ as in Lemma \ref{ring:lemma1}. Then, $\leg$ is Morita equivalent to $Q\leg Q$.
\end{corr}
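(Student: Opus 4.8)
The plan is to apply Theorem~\ref{ring:theorem2} directly, since it already packages exactly the Morita-equivalence conclusion we want. That theorem says: if $R$ is a ring with a $\sigma$-unit $\{u_k\}_{k\in\mathbb{N}}$, and $Q\in\mathcal{M}(R)$ is a full idempotent commuting with each $u_k$, then $QRQ$ and $R$ are Morita equivalent. Here we take $R=L_R(E_{\mathcal{G}})$ and let $Q$ be the full idempotent produced by Lemma~\ref{ring:lemma1}. So the corollary will follow immediately once we verify the hypotheses of Theorem~\ref{ring:theorem2} are met.

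First I would check that $L_R(E_{\mathcal{G}})$ is $\sigma$-unital. In the regime of interest $|E_{\mathcal{G}}^0|=\infty$ (the finite case is already handled by Proposition~\ref{gegiso}), so $L_R(E_{\mathcal{G}})$ is nonunital by Lemma~\ref{glpalgunital}. As remarked in the paragraph following Definition~\ref{ring:def9}, for any graph $E$ with $|E^0|=\infty$ the elements $u_k:=\sum_{i=1}^{k}q_{v_i}$ from the proof of Lemma~\ref{glpalgunital} form a $\sigma$-unit: they satisfy $L_R(E_{\mathcal{G}})=\bigcup_k u_k L_R(E_{\mathcal{G}})u_k$ and $u_k=u_ku_{k+1}=u_{k+1}u_k$. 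Second, Lemma~\ref{ring:lemma1} already gives a full idempotent $Q\in\mathcal{M}(L_R(E_{\mathcal{G}}))$. So the only hypothesis of Theorem~\ref{ring:theorem2} that is not literally handed to us is the commutation relation $u_kQ=Qu_k$ for each $k$.

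Hence the crux of the argument is verifying $u_kQ=Qu_k$. I would do this by computing both sides on the spanning elements $t_\alpha t_{\beta^*}$ of $L_R(E_{\mathcal{G}})$ using the explicit description of $Q$ from Lemma~\ref{ring:lemma1}. Writing $X=W_0\sqcup\Gamma_0$, note $u_k=\sum_{i=1}^{k}q_{v_i}$ acts by left multiplication, which by \textbf{LP2} picks out exactly those $t_\alpha t_{\beta^*}$ with $s(\alpha)\in\{v_1,\dots,v_k\}$. Meanwhile $Q$ retains $t_\alpha t_{\beta^*}$ precisely when $s(\alpha)\in X$ and kills it otherwise. Since both operations act as projections keyed off the single vertex $s(\alpha)$---one testing membership in $\{v_1,\dots,v_k\}$, the other in $X$---they commute on each spanning element: $Qu_k(t_\alpha t_{\beta^*})$ and $u_kQ(t_\alpha t_{\beta^*})$ both equal $t_\alpha t_{\beta^*}$ when $s(\alpha)\in X\cap\{v_1,\dots,v_k\}$ and vanish otherwise. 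By $R$-linearity this extends to all of $L_R(E_{\mathcal{G}})$, giving $u_kQ=Qu_k$ as elements of $\mathcal{M}(L_R(E_{\mathcal{G}}))$.

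With all three hypotheses in hand, Theorem~\ref{ring:theorem2} yields that $QL_R(E_{\mathcal{G}})Q$ and $L_R(E_{\mathcal{G}})$ are Morita equivalent, which is the statement of the corollary. The only genuinely delicate point I anticipate is being careful about the multiplier-ring bookkeeping in the commutation check: since $Q=({}_XP,P_X)$ and $u_k$ is identified with $(L_{u_k},R_{u_k})$, the equality $u_kQ=Qu_k$ must be read as equality of pairs in $\mathcal{M}(L_R(E_{\mathcal{G}}))$, so strictly one verifies ${}_XP\circ L_{u_k}=L_{u_k}\circ{}_XP$ in the first coordinate and $R_{u_k}\circ P_X=P_X\circ R_{u_k}$ in the second; both reduce to the same vertex-projection commutation described above. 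Everything else is a direct appeal to the already-established theorems.
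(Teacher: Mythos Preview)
Your proposal is correct and follows essentially the same approach as the paper: verify the hypotheses of Theorem~\ref{ring:theorem2}, with the only nontrivial check being $u_kQ=Qu_k$, and then invoke that theorem. The paper carries out the commutation check slightly more economically by computing $Qq_v$ and $q_vQ$ for each individual vertex $v$ (since $u_k=\sum_{i\le k}q_{v_i}$, this suffices), whereas you compute on general spanning elements $t_\alpha t_{\beta^*}$; but the underlying observation---that both $Q$ and $u_k$ act as projections determined solely by the source vertex---is identical, and your multiplier-ring bookkeeping in the final paragraph correctly notes that the second coordinate requires the analogous check keyed off $s(\beta)$.
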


\begin{proof}
We have $\{u_{k}\}_{k\in\mathbb{N}}$, as defined in Lemma \ref{glpalgunital}, is a $\sigma$-unit for $\leg$. Let $Q\in\mathcal{M}(\leg)$ be as in Lemma \ref{ring:lemma1}. For a given $v\in E_{\mathcal{G}}^{0}$, we have, by \ref{ring:lemma1}, 
\[
    Qq_{v}=\left\{
                \begin{array}{ll}
                  q_{v}\ \ \ \text{if}\ \  v\in X,\\
                0\ \ \ \   \ \text{otherwise.}
                \end{array}
              \right.
  \]
Further, a straight forward calculation shows 
 \[
   (L_{q_{v}},R_{q_{v}})({}_{X}P,P_{X}):=(L_{q_{v}}\circ{}_{X}P,P_{X}\circ R_{q_{v}})=\left\{
                \begin{array}{ll}
                  (L_{q_{v}},R_{q_{v}})\ \ \ \text{if}\ \  v\in X,\\
                0\ \ \ \   \ \text{otherwise.}
                \end{array}
              \right.
  \]
Which is to say, 
 \[
    q_{v}Q=\left\{
                \begin{array}{ll}
                  q_{v}\ \ \ \text{if}\ \  v\in X,\\
                0\ \ \ \   \ \text{otherwise.}
                \end{array}
              \right.
  \]
Thus, it must be $u_{k}Q=Qu_{k}$ for each $k\in\mathbb{N}$; and so, by Theorem \ref{ring:theorem2}, $\leg$ is Morita equivalent to $Q\leg Q$.
\end{proof}

Let $\mathcal{G}$ be an ultragraph without any singular vertices and $E_{\mathcal{G}}$ its associated graph. The first major result of this chapter will be showing $L_{R}(\mathcal{G})$ and $\leg$ are Morita equivalent. We will prove this fact by showing 
$$L_{R}(\mathcal{G})\cong Q\leg Q,$$
where $Q$ is as in Lemma \ref{ring:lemma1}. We will later show that even in the case where $\mathcal{G}$ contains singular veritces, $L_{R}(\mathcal{G})$ is still Morita equivalent to a graph Leavitt path algebra; in particular, $L_{R}(\mathcal{G})$ is Morita equivalent to $L_{R}(E_{\mathcal{F}})$, where $\mathcal{F}$ is the desingularization of $\mathcal{G}$.

An important component of our endeavor is to define an algebraic analog to the Exel-Laca algebras defined in \cite[Definition 3.3]{Takeshi-Katsura-Paul-Muhly-Aidan-Sims--Mark-Tomforde:2008aa}.
 
\section{Algebraic Exel-Laca Algebras}
 
For $\lambda,\ \mu$, finite subsets of $\mathcal{G}^{1}$, let 
$$r(\lambda,\mu):=\bigcap\limits_{e\in\lambda}r(e)\setminus\bigcup\limits_{f\in\mu}r(f)$$

\begin{definition}\label{dcel}
Let $\mathcal{G}$ be an ultragraph and $\mathcal{A}$ an $R$-algebra. A collection of idempotent elements $\{\mathsf{P}_{v},\ \mathsf{Q}_{e}:v\in G^{0},\ e\in\mathcal{G}^{1}\}$ in $\mathcal{A}$ satisfy \textit{condition (EL)} if:\\
(\textbf{EL1}) the elements of $\{\mathsf{P}_{v}\}$ are mutually orthogonal,\\
(\textbf{EL2}) the elements of $\{\mathsf{Q}_{e}\}$ pairwise commute,\\
(\textbf{EL3}) $\mathsf{P}_{v}\mathsf{Q}_{e} =\mathsf{Q_{e}}\mathsf{P}_{v}=
\begin{cases}
\mathsf{P}_{v} & \text{if }v\in r(e) \\
0 & \text{if }v\notin r(e)
\end{cases}$ \\
(\textbf{EL4}) $\prod\limits_{e\in\lambda}\mathsf{Q}_{e}\prod\limits_{f\in\mu}(1-\mathsf{Q}_{f})=\sum\limits_{v\in r(\lambda,\mu)}\mathsf{P}_{v}$ for all $\lambda,\ \mu$, finite subsets of $\mathcal{G}^{1}$ such that $\lambda\cap\mu=\emptyset$, $\lambda\neq\emptyset$, and $r(\lambda, \mu)$ is finite.
\end{definition}

If $\mathcal{A}$ is nonunital, we can take the identity in \textbf{EL4} to be the identity in $\mathcal{A}^{*}$. Here, $\mathcal{A}^{*}$ is the \textit{unitization} of $\mathcal{A}$. As an abelian group, $\mathcal{A}^{*}=R\oplus\mathcal{A}$; multiplication is given by $(r_{1},a_{1})(r_{2},a_{2})=(r_{1}r_{2},r_{1}a_{2}+r_{2}a_{1}+a_{1}a_{2})$; scalar multiplication is given by $r(r_{1},a_{1})=(rr_{1},ra_{1})$. It's easy enough to check $A^{*}$ is an $R$-algebra. More importantly, since $R$ is unital, $\mathcal{A}^{*}$ is unital with unit $(1,0)$. It's also easy to check $a\mapsto(0,a)$ embeds $\mathcal{A}$ into $\mathcal{A}^{*}$ as an ideal. 

\begin{definition}\label{dexg}
Let $\mathcal{G}$ be an ultragraph and $\mathcal{A}$ an $R$-algebra. An \textit{Exel-Laca $\mathcal{G}$-family} in $\mathcal{A}$ is a collection of idempotent elements $\{\mathsf{P}_{v}\}_{v\in G^{0}}$, and elements $\{\mathsf{S}_{e},\ \mathsf{S}_{e^{*}}\}_{e\in\mathcal{G}^{1}}$ such that:\\
(\textbf{ExL1}) the collection of elements $\{\mathsf{P}_{v}\}_{v\in G^{0}} \cup \{\mathsf{S}_{e^{*}}\mathsf{S}_{e}\}_{e\in\mathcal{G}^{1}}$ satisfy condition (EL),\\
(\textbf{ExL2}) $\mathsf{P}_{s(e)}\mathsf{S}_{e}=\mathsf{S}_{e}\mathsf{S}_{e^{*}}\mathsf{S}_{e}=\mathsf{S}_{e}$ and $\mathsf{S}_{e^{*}}\mathsf{S}_{e}\mathsf{S}_{e^{*}}=\mathsf{S}_{e^{*}}\mathsf{P}_{s(e)}=\mathsf{S}_{e^{*}}$ for all $e\in\mathcal{G}^{1}$,\\
(\textbf{ExL3}) $\mathsf{S}_{f^{*}}\mathsf{S}_{e}=0$ when $e\neq f$,\\
(\textbf{ExL4}) $\mathsf{P}_{v}=\sum\limits_{e\in s^{-1}(v)}\mathsf{S}_{e}\mathsf{S}_{e^{*}}$ for each $v\in G^{0}$ such that $0<|s^{-1}(v)|<\infty$.
\end{definition}

\begin{lemma}\label{llgex}
Let $\mathcal{A}$ be an $R$-algebra. Given a Leavitt $\mathcal{G}$-family $\{P_{A},\ S_{e},\ S_{e}^{*}\}$ in $\mathcal{A}$, the set $\{P_{v},\ S_{e},\ S_{e}^{*}\}$ forms an Exel-Laca $\mathcal{G}$-family in $\mathcal{A}$.
\end{lemma}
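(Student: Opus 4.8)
The plan is to take the obvious candidate, setting $\mathsf{P}_v := P_v = P_{\{v\}}$, $\mathsf{S}_e := S_e$, and $\mathsf{S}_{e^*} := S_{e^*}$, and to verify each axiom of an Exel-Laca $\mathcal{G}$-family directly from the Leavitt relations \textbf{uLP1}--\textbf{uLP4}. The first observation, which organizes everything, is that the ``$\mathsf{Q}$-generators'' appearing in \textbf{ExL1} simplify: by \textbf{uLP3} we have $\mathsf{S}_{e^*}\mathsf{S}_e = S_{e^*}S_e = P_{r(e)}$, so the family $\{\mathsf{P}_v\}\cup\{\mathsf{S}_{e^*}\mathsf{S}_e\}$ is really $\{P_v\}\cup\{P_{r(e)}\}$. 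Since every $P_A$ is idempotent (noted right after Definition \ref{dulfg}), all these elements are idempotent, so condition (EL) is eligible to be checked in the first place.

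First I would dispatch the three ``local'' axioms. \textbf{ExL2} is immediate: $\mathsf{P}_{s(e)}\mathsf{S}_e = P_{s(e)}S_e = S_e$ and $\mathsf{S}_{e^*}\mathsf{P}_{s(e)} = S_{e^*}$ come straight from \textbf{uLP2}, while $\mathsf{S}_e\mathsf{S}_{e^*}\mathsf{S}_e = S_e(S_{e^*}S_e) = S_e P_{r(e)} = S_e$ and $\mathsf{S}_{e^*}\mathsf{S}_e\mathsf{S}_{e^*} = P_{r(e)}S_{e^*} = S_{e^*}$ follow from \textbf{uLP3} and then \textbf{uLP2}. \textbf{ExL3} is exactly the $e\neq f$ case of \textbf{uLP3}, and \textbf{ExL4} is verbatim \textbf{uLP4}. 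For condition (EL), axioms (EL1), (EL2), (EL3) reduce to \textbf{uLP1}: orthogonality is $P_vP_w = P_{\{v\}\cap\{w\}} = P_\emptyset = 0$ for $v\neq w$; commutativity of the $P_{r(e)}$ follows from $P_{r(e)}P_{r(f)} = P_{r(e)\cap r(f)} = P_{r(f)}P_{r(e)}$; and (EL3) is $P_vP_{r(e)} = P_{\{v\}\cap r(e)}$, which equals $P_v$ when $v\in r(e)$ and $P_\emptyset = 0$ otherwise.

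The only substantive step is (EL4). The key lemma I would isolate first is the relative-complement identity $P_{A\setminus B} = P_A - P_{A\cap B}$, obtained by applying \textbf{uLP1} to the disjoint decomposition $A = (A\setminus B)\sqcup(A\cap B)$ (the intersection term vanishes since $(A\setminus B)\cap(A\cap B)=\emptyset$). Using $P_AP_B = P_{A\cap B}$ repeatedly gives $\prod_{e\in\lambda}\mathsf{Q}_e = P_{\bigcap_{e\in\lambda} r(e)}$; then I would induct on $|\mu|$, at each step multiplying by $(1-P_{r(f)})$ and invoking the relative-complement identity, to obtain $\prod_{e\in\lambda}\mathsf{Q}_e\prod_{f\in\mu}(1-\mathsf{Q}_f) = P_{r(\lambda,\mu)}$. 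Here I would note that all these products land in $\mathcal{A}$ even though the $1$'s live in the unitization $\mathcal{A}^*$, and that $r(\lambda,\mu)\in\mathcal{G}^0$ by closure of $\mathcal{G}^0$ under finite intersections, unions, and relative complements (the hypothesis $\lambda\neq\emptyset$ keeps the intersection $\bigcap_{e\in\lambda}r(e)$ inside $\mathcal{G}^0$). Finally, when $r(\lambda,\mu)$ is finite, a short induction on its cardinality using \textbf{uLP1} on disjoint singletons gives $P_{r(\lambda,\mu)} = \sum_{v\in r(\lambda,\mu)} P_v$, matching the right-hand side of (EL4). I expect this bookkeeping with relative complements and the unitization to be the main obstacle; the remaining axioms are a direct translation of the Leavitt relations.
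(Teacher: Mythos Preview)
Your proposal is correct and follows essentially the same approach as the paper: both verify \textbf{ExL2}--\textbf{ExL4} and \textbf{EL1}--\textbf{EL3} directly from \textbf{uLP1}--\textbf{uLP4}, then reduce \textbf{EL4} to the identity $\prod_{e\in\lambda}P_{r(e)}\prod_{f\in\mu}(1-P_{r(f)}) = P_{r(\lambda,\mu)}$ and decompose the right-hand side as $\sum_{v\in r(\lambda,\mu)}P_v$ when finite. The only cosmetic difference is that the paper collapses $\prod_{f\in\mu}(1-P_{r(f)})$ to $1 - P_{\bigcup_{f\in\mu}r(f)}$ in one step (using $(1-P_A)(1-P_B)=1-P_{A\cup B}$, a consequence of \textbf{uLP1}), whereas you peel off one factor at a time via the relative-complement identity; both arguments are equivalent.
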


\begin{proof}
The proof given here is the same as the first part of the proof of\\
\noindent \cite[Lemma 2.10]{Takeshi-Katsura-Paul-Muhly-Aidan-Sims--Mark-Tomforde:2008aa}. Let $\{P_{A},\ S_{e},\ S_{e}^{*}\}$ be a Leavitt $\mathcal{G}$-family in $\mathcal{A}$. The fact $\{P_{v},\ S_{e},\ S_{e^{*}}\}$ satisfies \textbf{ExL2}, \textbf{ExL3}, and \textbf{ExL4}, as well as the fact $\{P_{v}, S_{e^{*}}S_{e}\}=\{P_{v},P_{r(e)}\}$ satisfies \textbf{EL1}, \textbf{EL2}, and \textbf{EL3}, follows directly from the properties of a Leavitt $\mathcal{G}$-family. By \textbf{uLP4}, we have
\begin{align*}
\prod\limits_{e\in r(\lambda)}P_{r(e)}\prod\limits_{f\in\mu}(1-P_{r(f)})&=P_{\scriptstyle\bigcap\limits_{e\in r(\lambda)}r(e)}-P_{\scriptstyle\bigcap\limits_{e\in r(\lambda)}r(e)}\ P_{\scriptstyle\bigcup\limits_{f\in r(\mu)}r(f)}\\
&=P_{\scriptstyle\bigcap\limits_{e\in r(\lambda)}\setminus\bigcup\limits_{f\in r(\mu)}}=\sum\limits_{v\in r(\lambda,\mu)}P_{v};
\end{align*}
meaning \textbf{EL4} is satisfied, which in turn means \textbf{ExL1} is satisfied. Thus, the set $\{P_{v},\ S_{e},\ S_{e}^{*}\}$ forms an Exel-Laca $\mathcal{G}$-family in $\mathcal{A}$.
\end{proof}

\begin{definition}\label{dexalg}
Given an ultragraph $\mathcal{G}$, we define the \textit{Exel-Laca algebra of $\mathcal{G}$}, $\mathcal{EL}_{R}(\mathcal{G})$, to be the $R$-algebra generated by an Exel-Laca $\mathcal{G}$-family $\{\mathsf{p}_{v},\ \mathsf{s}_{e},\ \mathsf{s}_{e^{*}}\}\subseteq \mathcal{EL}_{R}(\mathcal{G})$ such that, for any $R$-algebra $\mathcal{A}$ and an Exel-Laca $\mathcal{G}$-family $\{\mathsf{P}_{v},\ \mathsf{S}_{e},\ \mathsf{S}_{e^{*}}\}\subseteq\mathcal{A}$, there exists an $R$-algebra homomorphism 
$$\phi:\mathcal{EL}_{R}(\mathcal{G})\to \mathcal{A}$$
with $\phi(\mathsf{p}_{v})=\mathsf{P}_{v},\ \phi_{\mathsf{s}_{e}}=\mathsf{S}_{e},\ \phi(\mathsf{s}_{e^{*}})=\mathsf{S}_{e^{*}}$.
\end{definition}

\subsection{Constructing $\mathcal{EL}_{R}(\mathcal{G})$}

For $X=G^{0}\cup\mathcal{G}^{1}\cup(\mathcal{G}^{1})^{*}$, let $\mathbb{F}_{R}(w(X))$ be the nonunital, associative, free $R$-algebra as in Remark \ref{freealg}, and let $I\lhd \mathbb{F}_{R}(w(X))$ be the ideal generated by the union of the sets:\\
$\bullet$ $\{vw-\delta_{v,w}v:v,w\in G^{0}\}$, $\{e^{*}ef^{*}f-f^{*}fe^{*}e:e,f\in\mathcal{G}^{1}\}$,\\
 $\bullet$ $\{ve^{*}e-v, e^{*}ev-v:v\in G^{0},e\in\mathcal{G}^{1}\text{ such that }v\in r(e)\}$, $\{ve^{*}e, e^{*}ev:v\in G^{0},\\
 \hspace*{.5cm}e\in\mathcal{G}^{1}\text{ such that }v\notin r(e)\}$,\\
$\bullet$ $\{\prod\limits_{e\in\lambda}e^{*}e\prod\limits_{f\in\mu}(1-f^{*}f)-\sum\limits_{v\in r(\lambda,\mu)}v: \ \lambda,\ \mu\ \text{finite subsets of} \ \mathcal{G}^{1}$\\
\hspace*{.5cm}$\text{with}\ \lambda\cap\mu=\emptyset, \lambda\neq\emptyset,\ \text{and}\ r(\lambda, \mu)\ \text{is finite}\}$.\\
$\bullet$ $\{e^{*}f: e,f\in\mathcal{G}^{1}\text{ such that }e\neq f\}$, $\{e-s(e)e, ee^{*}e-e, e^{*}-e^{*}s(e), e^{*}ee^{*}-e^{*}:e\in\mathcal{G}^{1}\}$,\\
$\bullet$ $\{v-\sum\limits_{e\in s^{-1}(v)}ee^{*}:\  v\in G^{0}\ \text{such that}\ 0<|s^{-1}(v)|<\infty\}$.
Then, 
$$\mathcal{EL}_{R}(\mathcal{G}):=\mathbb{F}_{R}(w(X))/I;$$
given the projection 
$$\pi: \mathbb{F}_{R}(w(X))\to \mathbb{F}_{R}(w(X))/I,$$
the Exel-Laca $\mathcal{G}$-family, $\{\mathsf{s}_{e},\mathsf{s}_{e^{*}}, \mathsf{p}_{v}\}$, is taken to be the family $\{\pi(e),\pi(e^{*}),\pi(v)\}$. To see that $\mathcal{EL}_{R}(\mathcal{G})$ and $\{\mathsf{s}_{e},\mathsf{s}_{e^{*}}, \mathsf{p}_{v}\}$ have the desired univeral propery, let $\mathcal{A}$ be an $R$-algebra and $\{\mathsf{P}_{v},\mathsf{S}_{e},\mathsf{S}_{e^{*}}\}$ an Exel-Laca $\mathcal{G}$-family in $\mathcal{A}$. By the univerisal mapping property of $\mathbb{F}_{R}(w(X))$, we have an $R$-algebra homomorphism 
$$\phi: \mathbb{F}_{R}(w(X)) \to \mathcal{A}$$
such that $\phi(v)=\mathsf{P}_{v},\ \phi(e)=\mathsf{S}_{e},\ \phi(e^{*})=\mathsf{S}_{e^{*}}$. Further, since $\{\mathsf{P}_{v},\mathsf{S}_{e},\mathsf{S}_{e^{*}}\}$ is an Exel-Laca $\mathcal{G}$-family, $I \subset \text{ker}\phi$. Thus, there exists an $R$-algebra homomorphism
$$\overline{\phi}: \mathbb{F}_{R}(w(X))/I \to \mathcal{A}$$
such that $\phi=\overline{\phi}\circ\pi$. Put more desirably, there exists an $R$-algebra homomorphism 
$$\overline{\phi}:\mathcal{EL}_{R}(\mathcal{G})\to\mathcal{A}$$
such that $\overline{\phi}(\mathsf{p}_{v})=\mathsf{P}_{v},\ \overline{\phi}(\mathsf{s}_{e})=\mathsf{S}_{e},\ \overline{\phi}(\mathsf{s}_{e^{*}})=\mathsf{S}_{e^{*}}$.

Let $\mathcal{A}$ be an $R$-algebra with an Exel-Laca $\mathcal{G}$-family satisfying the properties of Definition \ref{dexalg}. Then, there exist $R$-homomorphims
$$\varphi:\mathcal{A}\to\mathcal{EL}_{R}(\mathcal{G}) \text{ and } \phi: \mathcal{EL}_{R}(\mathcal{G})\to\mathcal{A}$$
which are inverses of each other. Thus, $\mathcal{EL}_{R}(\mathcal{G})$ is unique up to isomorphism.

\section{Properties of $\mathcal{EL}_{R}(\mathcal{G})$}

\begin{lemma}\label{llgex2}
There exists a surjective $R$-algebra homomorphism 
$$\varphi:\mathcal{EL}_{R}(\mathcal{G})\to L_{R}(\mathcal{G})$$
such that $\varphi(\mathsf{p}_{v})=p_{v},\ \varphi(\mathsf{s}_{e})=s_{e},\ \varphi(\mathsf{s}_{e^{*}})=s_{e^{*}}$.
\end{lemma}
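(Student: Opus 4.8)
The plan is to exploit the universal property of $\mathcal{EL}_{R}(\mathcal{G})$ (Definition \ref{dexalg}). Concretely, I would exhibit an Exel-Laca $\mathcal{G}$-family living inside the target algebra $L_{R}(\mathcal{G})$ whose generators map correctly; the universal property then hands me the desired $R$-algebra homomorphism $\varphi$ for free, and surjectivity will follow because the image contains a generating set of $L_{R}(\mathcal{G})$.

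First I would take the canonical generating Leavitt $\mathcal{G}$-family $\{p_{A},\ s_{e},\ s_{e^{*}}\}$ of $L_{R}(\mathcal{G})$ and restrict attention to the subcollection $\{p_{v},\ s_{e},\ s_{e^{*}}\}_{v\in G^{0},\ e\in\mathcal{G}^{1}}$, where $p_{v}:=p_{\{v\}}$. The key observation is that this subcollection is already an Exel-Laca $\mathcal{G}$-family in $L_{R}(\mathcal{G})$; this is precisely the content of Lemma \ref{llgex}, which shows that any Leavitt $\mathcal{G}$-family yields an Exel-Laca $\mathcal{G}$-family upon passing to the singleton idempotents $\{p_{v}\}$ together with $\{s_{e},\ s_{e^{*}}\}$. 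So I would simply invoke Lemma \ref{llgex} to certify that $\{p_{v},\ s_{e},\ s_{e^{*}}\}$ satisfies \textbf{ExL1} through \textbf{ExL4}. By the universal mapping property in Definition \ref{dexalg}, there then exists an $R$-algebra homomorphism $\varphi:\mathcal{EL}_{R}(\mathcal{G})\to L_{R}(\mathcal{G})$ with $\varphi(\mathsf{p}_{v})=p_{v}$, $\varphi(\mathsf{s}_{e})=s_{e}$, and $\varphi(\mathsf{s}_{e^{*}})=s_{e^{*}}$, exactly as required.

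The remaining task is surjectivity, and the main subtlety lies here rather than in the existence of $\varphi$. By part 1) of Theorem \ref{uggrade}, $L_{R}(\mathcal{G})$ is spanned over $R$ by elements of the form $s_{\alpha}p_{A}s_{\beta^{*}}$, so it suffices to show every such element lies in the image of $\varphi$. The paths $s_{\alpha}=s_{e_{1}}\cdots s_{e_{n}}$ and $s_{\beta^{*}}$ are products of the generators $s_{e}$, $s_{e^{*}}$, so these already sit in $\mathrm{im}\,\varphi$; the one genuinely nontrivial point is that each idempotent $p_{A}$ for a general $A\in\mathcal{G}^{0}$ must be recovered from the singleton images $\{p_{v}\}$, even though $\mathcal{EL}_{R}(\mathcal{G})$ only provides generators $\mathsf{p}_{v}$ for single vertices $v\in G^{0}$. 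I expect this to be the one step that needs care: since $\mathcal{G}^{0}$ is built from the singletons $\{v\}$ and the range sets $r(e)$ under finite unions, intersections, and relative complements, I would argue that every $p_{A}$ is an $R$-linear combination (or finite sum) of elements expressible through the $p_{v}$, $s_{e}$, $s_{e^{*}}$, using the inclusion--exclusion relation \textbf{uLP1} ($p_{A\cup B}=p_{A}+p_{B}-p_{A\cap B}$, $p_{A\cap B}=p_{A}p_{B}$) together with $s_{e^{*}}s_{e}=p_{r(e)}$ from \textbf{uLP3} to reach the range sets $r(e)$ themselves. Once I establish that every $p_{A}$ is so expressible, the image of $\varphi$ contains the full spanning set $\{s_{\alpha}p_{A}s_{\beta^{*}}\}$, whence $\varphi$ is surjective. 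I would likely handle the reduction of arbitrary $A\in\mathcal{G}^{0}$ by an induction on the construction of $\mathcal{G}^{0}$ as the smallest set closed under the three operations, taking the singletons and the $r(e)$ as base cases.
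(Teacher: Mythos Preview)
Your proposal is correct and follows essentially the same route as the paper: invoke Lemma~\ref{llgex} to certify that $\{p_{v},s_{e},s_{e^{*}}\}$ is an Exel--Laca $\mathcal{G}$-family, apply the universal property of $\mathcal{EL}_{R}(\mathcal{G})$ to obtain $\varphi$, and then argue surjectivity by recovering every $p_{A}$ from the singletons $p_{v}$ and the range idempotents $p_{r(e)}=s_{e^{*}}s_{e}$ via \textbf{uLP1}. In fact your surjectivity argument is more explicit than the paper's, which simply notes $p_{r(e)}\in\mathrm{im}\,\varphi$ and then gestures at Theorem~\ref{uggrade} to conclude that every $p_{A}$ is in the image.
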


\begin{proof}
By Lemma \ref{llgex}, the set $\{p_{v},s_{e},s_{e^{*}}\}$ forms an Exel-Laca $\mathcal{G}$-family in $L_{R}(\mathcal{G})$. And so by the universal property of $\mathcal{EL}_{R}(\mathcal{G})$, there exists an $R$-algebra homomorphism $\varphi$ such that $\varphi(\mathsf{p}_{v})=p_{v},\ \varphi(\mathsf{s}_{e})=s_{e},\ \varphi(\mathsf{s}_{e^{*}})=s_{e^{*}}$. Further, for each $e\in\mathcal{G}^{1}$, $p_{r(e)}\in\text{im}\varphi$ since $p_{r(e)}=s_{e^{*}}s_{e}=\varphi(\mathsf{s}_{e^{*}}\mathsf{s}_{e})$; which, by Theorem \ref{uggrade}, means $p_{A}\in \text{im}\varphi$ for each $A\in\mathcal{G}^{0}$. Since $L_{R}(\mathcal{G})$ is the $R$-linear span of elements of the form $s_{\alpha}p_{A}s_\beta^{*}$, with $r(\alpha)\cap A\cap r(\beta)\neq \emptyset$ \cite[Theorem 2.5]{M.-Imanfar:2017aa}, it follows $\varphi$ is surjective. 
\end{proof}

We now need to establish the injectivity of $\varphi$. This endeavor is a bit more involved. We will start by giving a useful characterization of $\mathcal{EL}_{R}(\mathcal{G})$. Firstly, for $e,f\in\mathcal{G}^{1}$, \textbf{Exl2} implies $\mathsf{s}_{e}\mathsf{s}_{f}=(\mathsf{s}_{e}\mathsf{s}_{e^{*}}\mathsf{s}_{e})(\mathsf{p}_{s(f)}\mathsf{s}_{f})$. But, by \textbf{EL3},  this means $\mathsf{s}_{e}\mathsf{s}_{f}=\mathsf{s}_{e}(\mathsf{s}_{e^{*}}\mathsf{s}_{e}\mathsf{p}_{s(f)})\mathsf{s}_{f}=0$ if $s(f)\notin r(e)$. Which is to say $\mathsf{s}_{e}\mathsf{s}_{f}=0$ unless $ef$ is a path in $\mathcal{G}$. A similar argument also  shows $\mathsf{s}_{f^{*}}\mathsf{s}_{e^{*}}=0$ unless $ef$ is a path. On the other hand, from the construction of $\mathcal{EL}_{R}(\mathcal{G})$, we can see 
$$ef\ \in \mathcal{G}^{*} \implies \mathsf{s}_{e}\mathsf{s}_{f}\neq0,\ \mathsf{s}_{f^{*}}\mathsf{s}_{e^{*}}\neq0.$$
Thus, just as with $L_{R}(\mathcal{G})$, 
$$\mathsf{s}_{e_{1}}\cdots \mathsf{s}_{e_{n}}\neq 0 \iff e_{1}\cdots e_{n} \in \mathcal{G}^{*}.$$
In which case, for $\alpha:=e_{1}\cdots e_{n}$, we take $\mathsf{s}_{\alpha}:=\mathsf{s}_{e_{1}}\cdots \mathsf{s}_{e_{n}}$ and $\mathsf{s}_{\alpha^{*}}:=\mathsf{s}_{e_{n}^{*}}\cdots \mathsf{s}_{e_{1}^{*}}$. Recall that each vertex $v$ is considered a path of length 0; and so for $\alpha=v$, we take $\mathsf{s}_{\alpha}=\mathsf{s}_{\alpha^{*}}=\mathsf{p}_{v}$.

The following useful lemma shows results of the form Theorem \ref{uggrade} and Proposition \ref{gspan} hold for $\mathcal{EL}_{R}(\mathcal{G})$; its method of proof is also similar to how one obtains the aforementioned results. 

\begin{lemma}\label{lex}
Let $\mathcal{G}$ be an ultragraph. Then,
\begin{align}
\mathcal{EL}_{R}(\mathcal{G}) =\text{span}_{R}\Big\{& \{\mathsf{s}_{\alpha},\ \mathsf{s}_{\beta^{*}},\ \mathsf{s}_{\alpha}\mathsf{p}_{v},\ \mathsf{p}_{v}\mathsf{s}_{\beta^{*}}:\ v\in r(\alpha),\  v\in r(\beta)\}\ \cup \notag \\
& \{\mathsf{s}_{\alpha}\mathsf{p}_{v}\mathsf{s}_{\beta^{*}}:\ v\in r(\alpha)\cap r(\beta)\}\ \cup \notag \\
& \{\prod\limits_{e\in S}\mathsf{s}_{e^{*}}\mathsf{s}_{e},\  ,\ \mathsf{s}_{\alpha}\big(\prod\limits_{e\in S}\mathsf{s}_{e^{*}}\mathsf{s}_{e}\big)\mathsf{s}_{\beta^{*}}:\ S\ \text{is a finite subset of } \mathcal{G}^{1}, \notag \\ 
&r(\alpha)\cap r(\beta)\cap \big(\bigcap\limits_{e\in S}r(e)\big)\neq\emptyset\} \Big\}.\notag
\end{align}
\end{lemma}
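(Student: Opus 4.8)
The plan is to show that the $R$-span $V$ of the displayed set of elements is all of $\mathcal{EL}_{R}(\mathcal{G})$. Since $\mathcal{EL}_{R}(\mathcal{G})$ is a quotient of the free $R$-algebra on $G^{0}\cup\mathcal{G}^{1}\cup(\mathcal{G}^{1})^{*}$, it is the $R$-linear span of words in the generators $\{\mathsf{p}_{v},\mathsf{s}_{e},\mathsf{s}_{e^{*}}\}$, and each of these three generators already lies in the displayed set: taking $\alpha=v$ a path of length $0$ gives $\mathsf{s}_{\alpha}=\mathsf{p}_{v}$, while $\mathsf{s}_{e}$ and $\mathsf{s}_{e^{*}}$ are the cases $\alpha=e$ and $\beta=e$. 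The inclusion $V\subseteq\mathcal{EL}_{R}(\mathcal{G})$ is clear. For the reverse inclusion it suffices to prove that $V$ is closed under left multiplication by each generator: peeling generators off the left of a word $g_{1}\cdots g_{n}=g_{1}\cdot(g_{2}\cdots g_{n})$, an induction on word length then yields $\mathcal{EL}_{R}(\mathcal{G})\subseteq V$. This mirrors the proofs of Lemma \ref{gspan} and Theorem \ref{uggrade}.

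First I would record the elementary multiplication rules forced by conditions (EL) and (ExL), writing $\mathsf{q}_{e}:=\mathsf{s}_{e^{*}}\mathsf{s}_{e}$. From \textbf{ExL2} and \textbf{ExL3} one gets $\mathsf{s}_{e}\mathsf{s}_{f}=\mathsf{s}_{ef}$ when $ef\in\mathcal{G}^{*}$ and $0$ otherwise, $\mathsf{s}_{e^{*}}\mathsf{s}_{f}=\delta_{e,f}\mathsf{q}_{e}$, and $\mathsf{p}_{v}\mathsf{s}_{e}=\delta_{v,s(e)}\mathsf{s}_{e}$; from \textbf{EL1}--\textbf{EL3} the diagonal idempotents $\{\mathsf{p}_{v}\}\cup\{\mathsf{q}_{e}\}$ commute, satisfy $\mathsf{p}_{v}\mathsf{q}_{e}=\mathsf{p}_{v}$ when $v\in r(e)$ and $0$ otherwise, and (as in Lemma \ref{ulve}) $\mathsf{q}_{e}\mathsf{s}_{f}=\mathsf{s}_{f}$ whenever $s(f)\in r(e)$. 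Each listed element has the shape $\mathsf{s}_{\alpha}\,D\,\mathsf{s}_{\beta^{*}}$ with $D$ one of $\mathsf{p}_{v}$, $\prod_{e\in S}\mathsf{q}_{e}$, or absent, and with $\alpha,\beta$ possibly of length $0$. Left multiplication by $\mathsf{p}_{v}$ either annihilates the element or returns it; by $\mathsf{s}_{f}$ it prepends an edge, replacing $\alpha$ by $f\alpha$ (or gives $0$ if $f\alpha\notin\mathcal{G}^{*}$); by $\mathsf{s}_{f^{*}}$ it either cancels against the leading edge $e_{1}$ of $\alpha$ via $\mathsf{s}_{f^{*}}\mathsf{s}_{e_{1}}=\delta_{f,e_{1}}\mathsf{q}_{e_{1}}$ (shortening $\alpha$ and depositing a range projection into the middle), or, when $\alpha$ is trivial, extends the ghost path $\beta$, or else gives $0$. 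In every branch the output is again of the admissible form, or a finite $R$-combination of such, hence lies in $V$.

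The main obstacle is controlling the diagonal middle factor $D$ and the accompanying range conditions so that everything stays inside the span. The key point is the trichotomy for a product $\prod_{e\in S}\mathsf{q}_{e}$ of range projections: applying \textbf{EL4} with $\lambda=S$ and $\mu=\emptyset$, if $\bigcap_{e\in S}r(e)$ is finite then $\prod_{e\in S}\mathsf{q}_{e}=\sum_{v\in\bigcap_{e\in S}r(e)}\mathsf{p}_{v}$ — in particular this product is \emph{provably} $0$, not merely zero in $L_{R}(\mathcal{G})$, when the intersection is empty — while if the intersection is infinite the product survives as a listed generator. Combined with $\mathsf{p}_{v}\prod_{e\in S}\mathsf{q}_{e}\in\{\mathsf{p}_{v},0\}$, this shows that every diagonal word collapses either to a single $\mathsf{p}_{v}$ or to a single surviving product $\prod_{e\in S}\mathsf{q}_{e}$, and it is exactly what forces the nonemptiness conditions $v\in r(\alpha)\cap r(\beta)$ and $r(\alpha)\cap r(\beta)\cap\bigcap_{e\in S}r(e)\neq\emptyset$ in the statement: when they fail, the corresponding term is zero. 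Verifying that these nonemptiness and finiteness conditions are preserved under each of the three multiplications is the bulk of the work; the remaining bookkeeping is routine and parallels the cited arguments for $L_{R}(E)$ and $L_{R}(\mathcal{G})$.
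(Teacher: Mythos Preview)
Your approach is essentially the same as the paper's: both argue that an arbitrary word in the generators $\{\mathsf{p}_v,\mathsf{s}_e,\mathsf{s}_{e^*}\}$ reduces to an $R$-combination of the listed monomials, using \textbf{ExL2}, \textbf{ExL3}, and the (EL) conditions. The paper organizes this as a case analysis on adjacent pairs $\mathsf{s}_\alpha\mathsf{s}_{\alpha'}$, $\mathsf{s}_\alpha\mathsf{s}_{\beta^*}$, $\mathsf{s}_{\beta^*}\mathsf{s}_\alpha$ and says ``apply repeatedly,'' while you frame the same reduction as closure of the span under left multiplication by a single generator; these are equivalent inductive schemes, and your explicit invocation of \textbf{EL4} for the finite-intersection collapse is exactly what the paper's Case~2 is using (somewhat implicitly) when it asserts that $r(e_n)\cap r(f_m)=\emptyset$ forces the product to vanish.
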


\begin{proof}
We can see from its construction that $\mathcal{EL}_{R}(\mathcal{G})$ is generated as an $R$-algebra by the set $\{\mathsf{s}_{\alpha},\ \mathsf{s}_{\beta^{*}}\}_{\alpha,\beta\in\mathcal{G}^{*}}$; and so to prove our lemma, we only need to argue a non-zero product of elements in $\{\mathsf{s}_{\alpha},\ \mathsf{s}_{\beta^{*}}\}_{\alpha,\beta\in\mathcal{G}^{*}}$ can be reduced to one of the prescribed forms. To that end, suppose $0\neq x\in\mathcal{EL}_{R}(\mathcal{G})$ is such an element.

1) Suppose there is a term of the form $\mathsf{s}_{\alpha}\mathsf{s}_{\alpha'}$ (similarly $\mathsf{s}_{\beta^{*}}\mathsf{s}_{\beta'^{*}}$) in the expression of $x$. If both $\alpha$ and $\alpha'$ have lengths larger than 0, then by our assumption $x\neq0$, it must be $\alpha\alpha'\in\mathcal{G}^{*}$ and $\mathsf{s}_{\alpha}\mathsf{s}_{\alpha'}=\mathsf{s}_{\alpha\alpha'}$. If $|\alpha|=0$, then it must be that $\alpha=s(\alpha')$ and $\mathsf{s}_{\alpha}\mathsf{s}_{\alpha'}=\mathsf{p}_{s(\alpha')}\mathsf{s}_{\alpha'}=\mathsf{s}_{\alpha'}$. Lastly, if $|\alpha'|=0$, then we can conclude $\alpha'=v\in r(\alpha)$ and $\mathsf{s}_{\alpha}\mathsf{s}_{\alpha'}=\mathsf{s}_{\alpha}\mathsf{p}_{v}$. We can similarly deduce that $\mathsf{s}_{\beta^{*}}\mathsf{s}_{\beta'^{*}}$ will be in one of the following forms: $\mathsf{s}_{(\beta'\beta)^{*}}$, $\mathsf{s}_{\beta*}$, or $\mathsf{p}_{v}\mathsf{s}_{\beta'^{*}}$ with $v\in r(\beta')$. Note that the result stated here comes as a direct consequence of \textbf{EL3} and \textbf{ExL2}.

2) Suppose there is a term of the form $\mathsf{s}_{\alpha}\mathsf{s}_{\beta^{*}}$ in the expression of $x$. If $|\alpha|=0$, \textbf{EL3} and \textbf{ExL2} give us $\mathsf{s}_{\alpha}\mathsf{s}_{\beta^{*}}=\mathsf{p}_{v}\mathsf{s}_{\beta^{*}}$, where $\alpha=v\in r(\beta)$; similarly, if $|\beta|=0$, we get $\mathsf{s}_{\alpha}\mathsf{s}_{\beta^{*}}=\mathsf{s}_{\alpha}\mathsf{p}_{v}$ where $\beta=v\in r(\alpha)$. Alternatively, if $\alpha=e_{1}\cdots e_{n}$ and $\beta=f_{1}\cdots f_{m}$, with $n,m>0$, \textbf{ExL2} implies 
$$\mathsf{s}_{\alpha}\mathsf{s}_{\beta^{*}}=\mathsf{s}_{\alpha}\big((\mathsf{s}_{e_{n}^{*}}\mathsf{s}_{e_{n}})(\mathsf{s}_{f_{m}^{*}}\mathsf{s}_{f_{m}}) \big)\mathsf{s}_{\beta^{*}}.$$
The fact $x\neq 0$, along with \textbf{EL3} and \textbf{ExL2}, implies $r(e_{n})\cap r(f_{m})\neq\emptyset$.

3) Finally, suppose $\alpha=e_{1}\cdots e_{n},\ \beta=f_{1}\cdots f_{m}\in \mathcal{G}^{*}$. Then, 
$$\mathsf{s}_{\beta^{*}}\mathsf{s}_{\alpha}:=(\mathsf{s}_{f_{m}^{*}}\cdots \mathsf{s}_{f_{1}^{*}})(\mathsf{s}_{e_{1}}\cdots \mathsf{s}_{e_{n}})=\mathsf{s}_{f_{m}^{*}}\cdots \mathsf{s}_{f_{2}^{*}}(\mathsf{s}_{f_{1}^{*}}\mathsf{s}_{e_{1}})\mathsf{s}_{e_{2}}\cdots \mathsf{s}_{e_{n}}.$$
By \textbf{ExL3} , $\mathsf{s}_{f_{1}^{*}}\mathsf{s}_{e_{1}}=0$ if $f_{1}\neq e_{1}$; otherwise, $\mathsf{s}_{f_{1}^{*}}\mathsf{s}_{e_{1}}=\mathsf{s}_{e_{1}^{*}}\mathsf{s}_{e_{1}}$. Consequently, \textbf{EL3} and \textbf{Exl2} imply $\mathsf{s}_{e_{1}^{*}}\mathsf{s}_{e_{1}}\mathsf{s}_{e_{2}}=\mathsf{s}_{e_{1}^{*}}\mathsf{s}_{e_{1}}\mathsf{p}_{s(e_{2})}\mathsf{s}_{e_{2}}=\mathsf{s}_{e_{2}}$. This means $\mathsf{s}_{f_{m}^{*}}\cdots \mathsf{s}_{f_{2}^{*}}(\mathsf{s}_{f_{1}^{*}}\mathsf{s}_{e_{1}})\mathsf{s}_{e_{2}}\cdots \mathsf{s}_{e_{n}}=0$ if $f_{1}\neq e_{1}$, otherwise, 
$$\mathsf{s}_{f_{m}^{*}}\cdots \mathsf{s}_{f_{2}^{*}}(\mathsf{s}_{f_{1}^{*}}\mathsf{s}_{e_{1}})\mathsf{s}_{e_{2}}\cdots \mathsf{s}_{e_{n}}=\mathsf{s}_{f_{m}^{*}}\cdots \mathsf{s}_{f_{2}^{*}}\mathsf{s}_{e_{2}}\cdots \mathsf{s}_{e_{n}}.$$
Continuing in this manner, we can see that:
\begin{center}
$\mathsf{s}_{\beta^{*}}\mathsf{s}_{\alpha}=
\begin{cases}
\mathsf{s}_{e_{n}^{*}}\mathsf{s}_{e_{n}} & \text{if } \alpha=\beta, \\
\mathsf{s}_{\beta'^{*}} & \text{if } \beta=\alpha\beta', \text{ where } \beta'=f_{n+1}\cdots f_{m},\\
\mathsf{s}_{\alpha'} & \text{if } \alpha=\beta\alpha', \text{ where } \alpha'=e_{m+1}\cdots e_{n},\\
0 & \text{otherwise}.
\end{cases}$ 
\end{center}
Should $|\beta|=0$, or $|\alpha|=0$, note that \textbf{Exl2} implies the above formula still holds.

By using 1), 2), and 3), as needed, we can reduce $x$ to one of the desired forms. 
\end{proof}

An important component to showing the injectivity of $\varphi$ from Lemma \ref{llgex2} rests on the fact $\mathcal{EL}_{R}(\mathcal{G})$ is a $\mathbb{Z}$-graded ring. 

\begin{lemma}\label{lex2}
$\mathcal{EL}_{R}(\mathcal{G})$ is a $\mathbb{Z}$-graded ring with $\mathcal{EL}_{R}(\mathcal{G})\cong\bigoplus\limits_{i\in\mathbb{Z}}\mathcal{EL}_{R}(\mathcal{G})_{i}$, where
\begin{align}
\mathcal{EL}_{R}(\mathcal{G})_{i} =\text{span}_{R}\Big\{& \{\mathsf{s}_{\alpha},\ \mathsf{s}_{\beta^{*}},\ \mathsf{s}_{\alpha}\mathsf{p}_{v},\ \mathsf{p}_{v}\mathsf{s}_{\beta^{*}}:\ v\in r(\alpha),\  v\in r(\beta),\ |\alpha|=|\beta|=i\}\ \cup \notag \\
& \{\mathsf{s}_{\alpha}\mathsf{p}_{v}\mathsf{s}_{\beta^{*}}:\ v\in r(\alpha)\cap r(\beta),\ |\alpha|-|\beta|=i\}\ \cup \notag \\
& \{\prod\limits_{e\in S}\mathsf{s}_{e^{*}}\mathsf{s}_{e},\  ,\ \mathsf{s}_{\alpha}\big(\prod\limits_{e\in S}\mathsf{s}_{e^{*}}\mathsf{s}_{e}\big)\mathsf{s}_{\beta^{*}}:\ S\ \text{is a finite subset of } \mathcal{G}^{1}, \notag \\ 
&\bigcap\limits_{e\in S}r(e)\neq\emptyset,\  r(\alpha)\cap r(\beta)\cap \big(\bigcap\limits_{e\in S}r(e)\big)\neq\emptyset,\  |\alpha|-|\beta|=i\} \Big\}.\notag
\end{align}
\end{lemma}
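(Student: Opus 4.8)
The plan is to realize the $\mathbb{Z}$-grading on $\mathcal{EL}_{R}(\mathcal{G})$ by transporting a grading down from the free algebra through the defining quotient. First I would equip the associative free algebra $\mathbb{F}_{R}(w(X))$, with $X=G^{0}\cup\mathcal{G}^{1}\cup(\mathcal{G}^{1})^{*}$, with the $\mathbb{Z}$-grading determined by declaring $\deg(v)=0$ for $v\in G^{0}$, $\deg(e)=1$ for $e\in\mathcal{G}^{1}$, and $\deg(e^{*})=-1$ for $e^{*}\in(\mathcal{G}^{1})^{*}$. Extending additively along words, a word $w=x_{1}\cdots x_{n}$ has degree $\sum_{i}\deg(x_{i})$, and since concatenation adds degrees, setting $\mathbb{F}_{R}(w(X))_{i}:=\text{span}_{R}\{w\in w(X):\deg(w)=i\}$ makes $\mathbb{F}_{R}(w(X))$ a $\mathbb{Z}$-graded ring. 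The goal is then to show that the ideal $I$ defining $\mathcal{EL}_{R}(\mathcal{G})$ is homogeneous, for then Remark \ref{rgi} immediately yields that $\mathcal{EL}_{R}(\mathcal{G})=\mathbb{F}_{R}(w(X))/I$ is $\mathbb{Z}$-graded with $\mathcal{EL}_{R}(\mathcal{G})_{i}=(\mathbb{F}_{R}(w(X))_{i}+I)/I$, i.e. the image under $\pi$ of the degree-$i$ homogeneous part of the free algebra.

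The core of the argument is checking that every generator of $I$ listed in the construction of $\mathcal{EL}_{R}(\mathcal{G})$ is homogeneous, which is a direct degree count: the relations $vw-\delta_{v,w}v$, $e^{*}ef^{*}f-f^{*}fe^{*}e$, $ve^{*}e-v$ (and its variants), $e^{*}f$ for $e\neq f$, and $v-\sum_{e\in s^{-1}(v)}ee^{*}$ are each homogeneous of degree $0$; the relations $e-s(e)e$ and $ee^{*}e-e$ are homogeneous of degree $1$; and $e^{*}-e^{*}s(e)$ and $e^{*}ee^{*}-e^{*}$ are homogeneous of degree $-1$. The one relation requiring care — and the step I expect to be the main obstacle — is the condition-(EL4) generator $\prod_{e\in\lambda}e^{*}e\prod_{f\in\mu}(1-f^{*}f)-\sum_{v\in r(\lambda,\mu)}v$, because of the formal unit appearing in $(1-f^{*}f)$ inside a nonunital algebra. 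Here I would expand $\prod_{f\in\mu}(1-f^{*}f)=\sum_{\nu\subseteq\mu}(-1)^{|\nu|}\prod_{f\in\nu}f^{*}f$; since $\lambda\neq\emptyset$, every resulting summand is multiplied on the left by the nonempty genuine word $\prod_{e\in\lambda}e^{*}e$, so the whole product is an honest element of $\mathbb{F}_{R}(w(X))$, and each summand $\prod_{e\in\lambda}e^{*}e\prod_{f\in\nu}f^{*}f$ has degree $0$ (each $e^{*}e$ contributes $-1+1=0$). As $\sum_{v}v$ is also of degree $0$, this generator is homogeneous of degree $0$, and consequently $I$ is generated by homogeneous elements, hence is a homogeneous ideal.

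It remains to identify $\mathcal{EL}_{R}(\mathcal{G})_{i}$ explicitly. By Lemma \ref{lex}, $\mathcal{EL}_{R}(\mathcal{G})$ is the $R$-span of the listed families of elements, and under the induced grading each of these is homogeneous: $\mathsf{s}_{\alpha}$ and $\mathsf{s}_{\alpha}\mathsf{p}_{v}$ have degree $|\alpha|$, $\mathsf{s}_{\beta^{*}}$ and $\mathsf{p}_{v}\mathsf{s}_{\beta^{*}}$ have degree $-|\beta|$, the element $\mathsf{s}_{\alpha}\mathsf{p}_{v}\mathsf{s}_{\beta^{*}}$ has degree $|\alpha|-|\beta|$, the element $\prod_{e\in S}\mathsf{s}_{e^{*}}\mathsf{s}_{e}$ has degree $0$, and $\mathsf{s}_{\alpha}(\prod_{e\in S}\mathsf{s}_{e^{*}}\mathsf{s}_{e})\mathsf{s}_{\beta^{*}}$ has degree $|\alpha|-|\beta|$. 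Given $x\in\mathcal{EL}_{R}(\mathcal{G})_{i}$, writing $x$ as an $R$-combination of these spanning elements via Lemma \ref{lex} and projecting onto the degree-$i$ component — which fixes each homogeneous generator of degree $i$ and annihilates the rest, by the directness of the decomposition $\mathcal{EL}_{R}(\mathcal{G})=\bigoplus_{j}\mathcal{EL}_{R}(\mathcal{G})_{j}$ — shows that $x$ already lies in the span of the generators of degree $i$. Conversely every such generator is homogeneous of degree $i$, hence lies in $\mathcal{EL}_{R}(\mathcal{G})_{i}$. Collecting precisely the spanning elements whose degree equals $i$ then reproduces the displayed description of $\mathcal{EL}_{R}(\mathcal{G})_{i}$, completing the proof.
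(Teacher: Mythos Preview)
Your proof is correct and follows essentially the same route as the paper: grade the free algebra $\mathbb{F}_{R}(w(X))$ by edge-count minus ghost-edge-count, verify that the defining ideal $I$ is homogeneous so that Remark~\ref{rgi} transports the grading to the quotient, and then invoke Lemma~\ref{lex} to read off the degree-$i$ spanning set. Your degree bookkeeping is in fact slightly more careful than the paper's --- you correctly note that the generators $e-s(e)e$, $ee^{*}e-e$, $e^{*}-e^{*}s(e)$, $e^{*}ee^{*}-e^{*}$ are homogeneous of degree $\pm 1$ rather than $0$ (the paper states all generators have degree $0$), and you explicitly handle the formal unit in the (EL4) relation by expanding the product, a point the paper leaves implicit.
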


\begin{proof}
Let $X=G^{0}\cup \mathcal{G}^{1}\cup (\mathcal{G}^{1})^{*}$ and let $\mathbb{F}_{R}(w(X))$ be the free $R$-algebra on $X$. For each $w\in w(X)$, take 
\begin{align*}
|w|&:=\{the\ number\ of\ elements\ in\ \mathcal{G}^{1}\ which\ appear\ in\ w\}\\
&-\{the\ number\ of\ elements\ in\ (\mathcal{G}^{1})^{*}\ which\ appear\ in\ w\}.
\end{align*}
Since $\mathbb{F}_{R}(w(X))$ is the free $R$-module on $w(X)$, we have
$$\mathbb{F}_{R}(w(X))=\bigoplus\limits_{i\in\mathbb{Z}}\mathbb{F}_{R}(w(X))_{i},$$
where $\mathbb{F}_{R}(w(X))_{i}:=\text{span}_{R}\{w\in w(X): |w|=i\}.$ Moreover, for $w_{1}, w_{2} \in w(X)$, we have $|w_{1}w_{2}|=|w_{1}|+|w_{2}|$; which means 
$$\mathbb{F}_{R}(w(X))_{n}\cdot\mathbb{F}_{R}(w(X))_{m}\subseteq\mathbb{F}_{R}(w(X))_{n+m}.$$ 
Thus, $\mathbb{F}_{R}(w(X))$ is a $\mathbb{Z}$-graded ring with respect to the grading given above. 

Now, recall $\mathcal{EL}_{R}(\mathcal{G}):=\mathbb{F}_{R}(w(X))/I$, where $I$ is generated by the union of the sets:\\
$\bullet$ $\{vw,\ f^{*}e: v\neq w,\ e\neq f\}$, $\{e^{*}ef^{*}f-f^{*}fe^{*}e\}$,\\
 $\bullet$ $\{ve^{*}e-v, e^{*}ev-v:\ v\in r(e)\}$, $\{ve^{*}e, e^{*}ev:\ v\notin r(e)\}$,\\
$\bullet$ $\{\prod\limits_{e\in\lambda}e^{*}e\prod\limits_{f\in\mu}(1-f^{*}f)-\sum\limits_{v\in r(\lambda,\mu)}v: \ \lambda,\ \mu\ \text{finite subsets of} \ \mathcal{G}^{1}$\\
\hspace*{.5cm} $\text{with}\ \lambda\cap\mu=\emptyset, \lambda\neq\emptyset,\ \text{and}\ r(\lambda, \mu)\ \text{is finite}\}$,\\
$\bullet$ $\{f^{*}e: e\neq f\}$, $\{e-s(e)e, ee^{*}e-e, e^{*}-e^{*}s(e), e^{*}ee^{*}-e^{*}\}$,\\
$\bullet$ $\{v-\sum\limits_{e\in s^{-1}(v)}ee^{*}:\  v\in G^{0}\ \text{with}\ 0<|s^{-1}(v)|<\infty\}$.\\
Note that $I$ is generated by homogeneous elements of degree 0; which means $\mathcal{EL}_{R}(\mathcal{G}):=\mathbb{F}_{R}(w(X))/I$ is a $\mathbb{Z}$-graded ring (see Remark \ref{rgi}). In particular, 
$$\mathbb{F}_{R}(w(X))/I=\bigoplus\limits_{i\in\mathbb{Z}}(\mathbb{F}_{R}(w(X))_{i}+I)/I$$
 as an internal direct sum. Since
$$(\mathbb{F}_{R}(w(X))_{i}+I)/I=\text{span}_{R}\{\overline{w}:w\in(\mathbb{F}_{R}(w(X))_{i}\},$$
Lemma \ref{lex} implies
\begin{align}
\mathcal{EL}_{R}(\mathcal{G})_{i} &:=(\mathbb{F}_{R}(w(X))_{i}+I)/I \notag \\ 
&=\text{span}_{R}\Big\{\{\mathsf{s}_{\alpha},\ \mathsf{s}_{\beta^{*}},\ \mathsf{s}_{\alpha}\mathsf{p}_{v},\ \mathsf{p}_{v}\mathsf{s}_{\beta^{*}}:\ v\in r(\alpha),\  v\in r(\beta),\ |\alpha|=|\beta|=i\}\ \cup \notag \\
& \{\mathsf{s}_{\alpha}\mathsf{p}_{v}\mathsf{s}_{\beta^{*}}:\ v\in r(\alpha)\cap r(\beta),\ |\alpha|-|\beta|=i\}\ \cup \notag \\
& \{\prod\limits_{e\in S}\mathsf{s}_{e^{*}}\mathsf{s}_{e},\  ,\ \mathsf{s}_{\alpha}\big(\prod\limits_{e\in S}\mathsf{s}_{e^{*}}\mathsf{s}_{e}\big)\mathsf{s}_{\beta^{*}}:\ S\ \text{is a finite subset of } \mathcal{G}^{1}, \notag \\ 
&\bigcap\limits_{e\in S}r(e)\neq\emptyset,\  r(\alpha)\cap r(\beta)\cap \big(\bigcap\limits_{e\in S}r(e)\big)\neq\emptyset,\  |\alpha|-|\beta|=i\} \Big\}.\notag
\end{align}
\end{proof}

The following theorem is analogous to Theorems \ref{ggrdhomom} and \ref{uggrdhomom}. 

\begin{theorem}\label{thm:keythm}
Suppose $\mathcal{G}$ is an ultragraph with no singular vertices and $S$ a $\mathbb{Z}$-graded ring. Let 
$$\pi:\mathcal{EL}_{R}(\mathcal{G})\to S$$
be a graded ring homomorphism such that $\pi(rp_{v})\neq 0$ and $\pi(rs_{e^{*}}s_{e})\neq 0$ for all $v\in G^{0}$, $e\in\mathcal{G}^{1}$, and $r\in R\setminus\{0\}$; then, $\pi$ is injective. 
\end{theorem}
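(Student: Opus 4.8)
The plan is to prove this graded-uniqueness theorem by the three-step template used for Theorems~\ref{ggrdhomom} and~\ref{uggrdhomom}, adapted to the spanning and grading descriptions of Lemmas~\ref{lex} and~\ref{lex2}. First, since $\pi$ is a graded homomorphism, $\ker\pi$ is a graded ideal: if $\pi(\sum_i x_i)=0$ with $x_i\in\mathcal{EL}_R(\mathcal{G})_i$, then $\sum_i\pi(x_i)=0$ in $S=\bigoplus_i S_i$ forces each $\pi(x_i)=0$. Hence it suffices to show that every homogeneous $x$ with $\pi(x)=0$ vanishes, and I would handle the degree-zero component first and then reduce every other degree to it.

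For the degree-zero component I would study the commutative \emph{diagonal} subalgebra $D_0$ generated by the orthogonal idempotents $\{\mathsf{p}_v\}$ and the commuting idempotents $\{\mathsf{s}_{e^*}\mathsf{s}_e\}$; by \textbf{EL3} its idempotents are indexed by $\mathcal{G}^0$, and I write $\mathsf{p}_A$ for the one attached to $A$ (so $\mathsf{p}_{\{v\}}=\mathsf{p}_v$, $\mathsf{p}_{r(e)}=\mathsf{s}_{e^*}\mathsf{s}_e$, and $\mathsf{p}_{A\cap B}=\mathsf{p}_A\mathsf{p}_B$). The clean crux is that $\pi$ is injective on $D_0$, and the observation that makes the stated hypotheses sufficient is that any nonempty $A\in\mathcal{G}^0$ contains a vertex $v$, whence $\mathsf{p}_v\mathsf{p}_A=\mathsf{p}_v$ and $\pi(r\mathsf{p}_A)\,\pi(\mathsf{p}_v)=\pi(r\mathsf{p}_v)\neq0$; thus $\pi(r\mathsf{p}_A)\neq0$ for all nonempty $A$ and $r\neq0$ (taking $A=r(e)$ even shows the second hypothesis follows from the first). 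Given $\sum_i r_i\mathsf{p}_{A_i}\in D_0$, I would pass to the finite Boolean subalgebra of $\mathcal{G}^0$ generated by the $A_i$, rewrite the element as $\sum_j s_j\mathsf{p}_{B_j}$ over the nonempty atoms $B_j$ (disjoint, so the $\mathsf{p}_{B_j}$ are orthogonal and map under $\pi$ to orthogonal idempotents), and conclude that vanishing forces every $\pi(s_j\mathsf{p}_{B_j})=0$, i.e.\ $s_j=0$. To pass from $D_0$ to all of $\mathcal{EL}_R(\mathcal{G})_0$, I would use that for any paths $\alpha_0,\beta_0$ the element $\mathsf{s}_{\alpha_0^*}x\,\mathsf{s}_{\beta_0}$ collapses by \textbf{ExL3} and \textbf{ExL2} into $D_0$, so $\pi(x)=0$ gives $\mathsf{s}_{\alpha_0^*}x\,\mathsf{s}_{\beta_0}=0$ for all $\alpha_0,\beta_0$, and then recovering $x$ from these ``matrix coefficients'' yields $x=0$.

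The reduction of an arbitrary homogeneous degree to degree zero is where the hypothesis that $\mathcal{G}$ has no singular vertices enters decisively, since it guarantees $0<|s^{-1}(v)|<\infty$ for every $v$ and hence that \textbf{ExL4} holds at every vertex. For $x$ of degree $n>0$, every term $\mathsf{s}_\alpha(\cdots)\mathsf{s}_{\beta^*}$ of Lemma~\ref{lex2} has $|\alpha|\ge n$; letting $P$ be the finite set of length-$n$ paths issuing from the finitely many sources occurring in $x$, each $\mathsf{s}_{\mu^*}x$ with $\mu\in P$ is homogeneous of degree $0$ with $\pi(\mathsf{s}_{\mu^*}x)=0$, hence zero by the degree-zero case. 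Iterating \textbf{ExL4} gives $\big(\sum_{\mu\in P}\mathsf{s}_\mu\mathsf{s}_{\mu^*}\big)x=x$, so $x=\sum_{\mu\in P}\mathsf{s}_\mu(\mathsf{s}_{\mu^*}x)=0$; the case $n<0$ is symmetric, multiplying on the right by length-$(-n)$ paths.

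The main obstacle is the degree-zero step, specifically making the ``matrix coefficient'' reconstruction $x\mapsto(\mathsf{s}_{\alpha_0^*}x\,\mathsf{s}_{\beta_0})$ rigorous. The difficulty is that the terms $\mathsf{s}_\alpha\big(\prod_{e\in S}\mathsf{s}_{e^*}\mathsf{s}_e\big)\mathsf{s}_{\beta^*}$ carrying projections onto \emph{infinite} range sets cannot be refined to a common path length (the resolution of identity $\sum_{|\mu|=N}\mathsf{s}_\mu\mathsf{s}_{\mu^*}$ fails to fix them), so $\mathcal{EL}_R(\mathcal{G})_0$ is not literally a union of finite matrix algebras over $D_0$. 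I would handle this by filtering $\mathcal{EL}_R(\mathcal{G})_0$ by the increasing subalgebras $B_N$ spanned by the degree-zero elements with $|\alpha|=|\beta|\le N$, verifying that each $B_N$ is a finite direct sum of matrix algebras whose diagonal entries lie in $D_0$ (the infinite-range projections being absorbed into those entries), and invoking the standard fact that a homomorphism of such an algebra is injective as soon as it is injective on the diagonal -- which is exactly what the $D_0$ argument supplies.
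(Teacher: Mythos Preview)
Your overall architecture (graded kernel $\Rightarrow$ reduce to degree zero $\Rightarrow$ analyze a diagonal) is reasonable and is the standard template for graded-uniqueness theorems, but the execution has two problems, one minor and one substantive, and the paper takes a different route that sidesteps both.

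\textbf{Minor gap in the degree reduction.} Your set $P$ of \emph{all} length-$n$ paths issuing from the sources of $x$ need not be finite in an ultragraph: after one edge $e$ the range $r(e)$ may be infinite, and each vertex in $r(e)$ emits at least one edge (no sinks), so already the length-$2$ paths from a single vertex can be infinite. The fix is to do the reduction one step at a time: write $x=\big(\sum_{v\in V}\mathsf p_v\big)x=\sum_{e:\,s(e)\in V}\mathsf s_e(\mathsf s_{e^*}x)$, a \emph{finite} sum since there are no infinite emitters, and induct on $n$. This is easy, but what you wrote as stated is not correct.

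\textbf{Substantive gap in degree zero.} You correctly identify the obstacle and then do not overcome it. The assertion that each $B_N$ is ``a finite direct sum of matrix algebras whose diagonal entries lie in $D_0$'' is precisely the structural fact that requires proof, and you give no argument for it. In the graph case this is the familiar core decomposition; for $\mathcal{EL}_R(\mathcal G)$ the presence of projections $\prod_{e\in S}\mathsf s_{e^*}\mathsf s_e$ onto possibly infinite sets means the usual path-length stratification does not produce matrix units in the obvious way, and ``absorbing the infinite-range projections into the diagonal'' is exactly what has to be made precise. Without this, the matrix-coefficient reconstruction $x\mapsto(\mathsf s_{\alpha_0^*}x\,\mathsf s_{\beta_0})$ has no teeth: knowing these coefficients vanish does not by itself force $x=0$.

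\textbf{How the paper proceeds.} The paper avoids analyzing $\mathcal{EL}_R(\mathcal G)_0$ directly. Instead, for each finite $F\subseteq\mathcal G^1$ it constructs a finite graph $G_F$ (vertices $F$ together with certain subsets $X\subseteq F$, edges encoding the transitions) and exhibits a Leavitt $G_F$-family inside $\mathcal{EL}_R(\mathcal G)$ (Lemma~\ref{lfg}), yielding a graded embedding $\pi_F:L_R(G_F)\hookrightarrow\mathcal{EL}_R(\mathcal G)$ (Lemma~\ref{lfg2}). The hypotheses on $\pi$ force $\pi(rP_e),\pi(rP_X)\neq 0$, so Theorem~\ref{ggrdhomom} makes each $\pi\circ\pi_F$ injective. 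Finally one checks that for an increasing exhaustion $F_1\subseteq F_2\subseteq\cdots$ of $\mathcal G^1$, the images $\mathrm{im}\,\pi_{F_n}$ cover $\mathcal{EL}_R(\mathcal G)$ (here the no-singular-vertices hypothesis is used to write each $\mathsf p_v$ via \textbf{ExL4}), whence $\pi$ is injective. In effect, the paper \emph{does} establish a ``locally matricial'' structure, but by importing it from finite graph Leavitt path algebras rather than building it by hand inside $\mathcal{EL}_R(\mathcal G)_0$; that is exactly the step your sketch leaves open.
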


We will prove Theorem \ref{thm:keythm} using similar techniques employed by the aforementioned papers. Namely, we will ``approximate" $\mathcal{EL}_{R}(\mathcal{G})$ by the Leavitt path algebras of finite graphs. To that end, we will show how one defines a finite graph $G_{F}$ from an ultragraph $\mathcal{G}$.

Let $\mathcal{G}$ be an ultragraph with no singular vertices, and let $F\subseteq \mathcal{G}^{1}$ be finite. Recall that for $\lambda,\ \mu$, finite subsets of $\mathcal{G}^{1}$, $r(\lambda,\mu):=\bigcap\limits_{e\in\lambda}r(e)\setminus\bigcup\limits_{f\in\mu}r(f)$. With that, we will define the finite graph $G_{F}$ as follows:
\begin{align}
&G_{F}^{0}:=F\cup\{X:\ \emptyset\neq X\subseteq F\ \text{for which}\ \{e\in\mathcal{G}^{1}:\ s(e)\in r(X,F\setminus X)\}\nsubseteq F\}, \notag \\
&G_{F}^{1}:=\{(e,f)\in F\times F\: s(f)\in r(e)\}\cup\{(e,X): e\in X\}; \notag
\end{align}
with the range and source maps given by 
\begin{align}
&s_{F}((e,f))=e,\ \ s_{F}((e,X))=e, \notag\\
&r_{F}((e,f))=f,\ \ r_{F}((e,X))=X.\notag
\end{align}

In order to prove Theorem \ref{thm:keythm}, we will need the following lemmas. They establish an analogous result to \cite[Lemma 3.1]{M.-Imanfar:2017aa} for $\mathcal{EL}_{R}(\mathcal{G})$. We will also take this time to state that, in general, Leavitt path algebras and Exel-Laca algebras will not be unital, but we will still find ourselves referring to a unit. We can interpret ``1'' to be the unit in their respective unitizations (see the note in Definition \ref{dcel}). 

\begin{lemma}\label{lci}
Let $\{P_{i}\}_{i=1}^{n}$ be a set of commuting idempotent elements in a ring $R$. Then, 
$$\sum\limits_{\emptyset\neq Y\subseteq\{1,\cdots,n\}}\Big(\prod\limits_{i\in Y}P_{i}\prod\limits_{i\notin Y}(1-P_{i})\Big)=1-\prod\limits_{i\in\{1,\cdots,n\}}(1-P_{i}).$$
Note: If $R$ is not unital, we again take 1 to be the identity in its unitization.
\end{lemma}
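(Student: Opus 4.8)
The plan is to recognize the left-hand sum as the expansion of a product that telescopes to $1$. First I would note that, for each $i$, the trivial identity $P_i + (1-P_i) = 1$ holds in $R$ (or in its unitization), so that
$$\prod_{i=1}^{n}\bigl(P_i + (1-P_i)\bigr) = \prod_{i=1}^{n} 1 = 1.$$
The core of the argument is then to expand this product by the distributive law. In the expansion, each term is obtained by choosing, for every index $i$, either the summand $P_i$ or the summand $1-P_i$; recording the set of indices at which $P_i$ is chosen as a subset $Y \subseteq \{1,\dots,n\}$, the corresponding term is $\prod_{i\in Y} P_i \prod_{i\notin Y}(1-P_i)$. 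Summing over all choices gives
$$\sum_{Y \subseteq \{1,\dots,n\}} \prod_{i\in Y} P_i \prod_{i\notin Y}(1-P_i) = 1.$$

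To make the grouping in the previous step legitimate, I would invoke the hypothesis that the $P_i$ pairwise commute: since the unit commutes with everything and $P_iP_j = P_jP_i$, all the factors $P_i$ and $1-P_j$ commute with one another, which is exactly what permits rearranging each monomial of the expansion so that the chosen $P_i$'s and the chosen $(1-P_j)$'s collect into the two prescribed products. Having established the full-subset identity, I would simply isolate the single term indexed by $Y=\emptyset$, namely $\prod_{i=1}^{n}(1-P_i)$, and move it to the right-hand side to obtain
$$\sum_{\emptyset \neq Y \subseteq \{1,\dots,n\}} \prod_{i\in Y} P_i \prod_{i\notin Y}(1-P_i) = 1 - \prod_{i=1}^{n}(1-P_i),$$
which is the claim.

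The only point genuinely requiring care — and the nearest thing to an obstacle — is the justification of the multi-factor distributive expansion, where one must be explicit that commutativity allows the factors in each monomial to be regrouped into the stated product form. Notably, idempotence plays no role in this particular identity; only commutativity is used. If one wishes to sidestep the bookkeeping of the general distributive law, the identity admits an equally clean proof by induction on $n$, peeling off the factor $P_n + (1-P_n)$ and applying the inductive hypothesis to the remaining $n-1$ idempotents.
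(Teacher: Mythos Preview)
Your proof is correct and follows essentially the same route as the paper: establish that the sum over \emph{all} subsets $Y\subseteq\{1,\dots,n\}$ equals $1$, then subtract the $Y=\emptyset$ term. The only difference is that the paper cites this full-sum identity from \cite[Lemma 5.2]{Tomforde:2003aa}, whereas you prove it directly by expanding $\prod_i\bigl(P_i+(1-P_i)\bigr)=1$ via distributivity and commutativity; your observation that idempotence is not actually needed here is also accurate.
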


\begin{proof} \cite[Lemma 5.2]{Tomforde:2003aa}  states
$$\sum\limits_{Y\subseteq\{1,\cdots,n\}}\Big(\prod\limits_{i\in Y}P_{i}\prod\limits_{i\notin Y}(1-P_{i})\Big)=1.$$
Since $\prod\limits_{i\in Y}P_{i}\prod\limits_{i\notin Y}(1-P_{i})=\prod\limits_{i\in\{1,\cdots,n\}}(1-P_{i})$ for $Y=\emptyset$, we have
$$\sum\limits_{\emptyset\neq Y\subseteq\{1,\cdots,n\}}\Big(\prod\limits_{i\in Y}P_{i}\prod\limits_{i\notin Y}(1-P_{i})\Big)=1-\prod\limits_{i\in\{1,\cdots,n\}}(1-P_{i}).$$
\end{proof}

\begin{lemma}\label{lfg}
Let $\mathcal{G}$ be an ultragraph with no singular vertices, $F\subseteq\mathcal{G}^{1}$ finite, and $G_{F}$ the associated finite graph as constructed above. Then,
\begin{align}
&P_{e}:=\mathsf{s}_{e}\mathsf{s}_{e^{*}},\ \ \ \ \ \ P_{X}:=\Big(\prod\limits_{e\in X}\mathsf{s}_{e^{*}}\mathsf{s}_{e}\prod\limits_{e'\in F\setminus X}(1-\mathsf{s}_{e'^{*}}\mathsf{s}_{e'})\Big)\Big(1-\sum\limits_{f\in F}\mathsf{s}_{f}\mathsf{s}_{f^{*}}\Big),  \notag \\
&S_{(e,f)}:=\mathsf{s}_{e}P_{f},\ \ \ S_{(e,X)}:= \mathsf{s}_{e}P_{X},\ \ \ S_{(e,f)^{*}}:=P_{f}\mathsf{s}_{e^{*}},\ \ \ S_{(e,X)^{*}}:=P_{X}\mathsf{s}_{e^{*}}   \notag
\end{align}
forms a Leavitt $G_{F}$-family in $\mathcal{EL}_{R}(\mathcal{G})$.
\end{lemma}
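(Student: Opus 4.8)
The plan is to verify directly that the proposed elements satisfy the four axioms \textbf{LP1}--\textbf{LP4} of a Leavitt $G_F$-family (Definition \ref{glgf}), where the vertex projections are $P_e=\mathsf{s}_e\mathsf{s}_{e^*}$ for $e\in F$ and $P_X$ for the subset-vertices $X$, and the generators are the $S_{(e,f)},S_{(e,X)}$ and their adjoints. The first step I would take is to set up a convenient commutative subalgebra: writing $Q_g:=\mathsf{s}_{g^*}\mathsf{s}_g$ and $P_g:=\mathsf{s}_g\mathsf{s}_{g^*}$ for $g\in F$, I claim these pairwise commute. The $Q_g$ commute by \textbf{EL2}; the $P_g$ are mutually orthogonal, hence commute, because $\mathsf{s}_{g^*}\mathsf{s}_{g'}=0$ for $g\neq g'$ by \textbf{ExL3}; and the mixed relation is the key computation: using the multiplication rules recorded just before Lemma \ref{lex} together with \textbf{ExL2}, one checks that $Q_eP_f=P_fQ_e$, equal to $P_f$ when $s(f)\in r(e)$ (so $ef\in\mathcal{G}^*$) and equal to $0$ otherwise. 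Consequently every expression built from the $Q_g$, $P_g$, and $B:=1-\sum_{h\in F}\mathsf{s}_h\mathsf{s}_{h^*}$ lives in a commutative ring of idempotents, which makes the bookkeeping below routine.

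With this in hand, \textbf{LP1}, \textbf{LP2}, and \textbf{LP3} are short. For \textbf{LP1} I would note that each $P_e$ is idempotent and the $P_e$ are orthogonal (by \textbf{ExL2} and \textbf{ExL3}); that $A_X:=\prod_{e\in X}Q_e\prod_{e'\in F\setminus X}(1-Q_{e'})$ is idempotent with $A_XA_{X'}=0$ for $X\neq X'$ (some factor $Q_e$ meets a factor $1-Q_e$); that $B$ is idempotent; and hence that $P_X=A_XB$ is idempotent with $P_XP_{X'}=\delta_{X,X'}P_X$, the cross terms $P_eP_X$ vanishing since $P_eB=0$ and $P_e$ commutes with $A_X$. \textbf{LP2} follows from $\mathsf{s}_e\mathsf{s}_{e^*}\mathsf{s}_e=\mathsf{s}_e$ (\textbf{ExL2}) and idempotency of the range projections. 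For \textbf{LP3} I would compute $S_{\epsilon^*}S_{\epsilon'}$ in each of the three type-combinations: $\mathsf{s}_{e^*}\mathsf{s}_{e'}=\delta_{e,e'}Q_e$ by \textbf{ExL3}, after which the absorption identities $Q_eP_f=P_f$ (for an edge $(e,f)$, where $s(f)\in r(e)$) and $Q_eP_X=P_X$ (for $e\in X$, since $A_X$ carries the factor $Q_e$) collapse the product to $\delta\cdot P_{r_F(\epsilon)}$; the mixed case $(e,f)$ against $(e',X')$ vanishes because $P_fB=0$.

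The real content is \textbf{LP4}, which I expect to be the main obstacle. Only the vertices $e\in F$ emit edges, so it suffices to prove, for each such $e$, the identity
\[
Q_e=\sum_{\substack{f\in F\\ s(f)\in r(e)}}P_f+\sum_{\substack{X\in G_F^0\\ e\in X}}P_X,
\]
since conjugating it by $\mathsf{s}_e,\mathsf{s}_{e^*}$ (multiplying on the left by $\mathsf{s}_e$ and on the right by $\mathsf{s}_{e^*}$) turns the right-hand side into $\sum_{\epsilon\in s_F^{-1}(e)}S_\epsilon S_{\epsilon^*}$ and the left-hand side into $\mathsf{s}_eQ_e\mathsf{s}_{e^*}=\mathsf{s}_e\mathsf{s}_{e^*}=P_e$. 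To prove this identity I would split $Q_e=Q_e\sum_{h\in F}P_h+Q_eB$; the first summand equals $\sum_{f\in F,\,s(f)\in r(e)}P_f$ by the mixed relation above. For the second, Lemma \ref{lci} applied to the commuting idempotents $\{Q_g\}_{g\in F}$, after multiplication by $Q_e$ (using $Q_eA_Y=A_Y$ if $e\in Y$ and $0$ otherwise, and $Q_e\prod_{g\in F}(1-Q_g)=0$), yields $\sum_{Y\subseteq F,\,e\in Y}A_Y=Q_e$, hence $Q_eB=\sum_{e\in Y\subseteq F}A_YB$.

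It then remains to show $A_YB=0$ whenever $e\in Y$ but $Y\notin G_F^0$, so that only the terms with $X\in G_F^0$ survive. Here I would invoke \textbf{EL4} to write $A_Y=\sum_{v\in r(Y,F\setminus Y)}\mathsf{p}_v$ and the no-singular-vertex hypothesis through \textbf{ExL4} to write $\mathsf{p}_v=\sum_{g\in s^{-1}(v)}P_g$; a short calculation, using $\mathsf{p}_vP_h=\delta_{v,s(h)}P_h$, then gives $A_YB=\sum_{v\in r(Y,F\setminus Y)}\sum_{g\in s^{-1}(v)\setminus F}P_g$, which vanishes exactly when $\{g:s(g)\in r(Y,F\setminus Y)\}\subseteq F$, i.e.\ precisely when $Y\notin G_F^0$. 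The delicate point is the applicability of \textbf{EL4}, which requires $r(Y,F\setminus Y)$ to be finite: this is automatic in the vanishing case, because with no singular vertices and $F$ finite the inclusion $\{g:s(g)\in r(Y,F\setminus Y)\}\subseteq F$ forces $r(Y,F\setminus Y)\subseteq\{s(g):g\in F\}$, a finite set; and for the surviving $X\in G_F^0$ I never need to evaluate $A_X$ (whose associated generalized vertex may be infinite), only to retain the term $P_X$. Assembling the two summands gives the displayed identity and hence \textbf{LP4}, completing the verification that the family is a Leavitt $G_F$-family.
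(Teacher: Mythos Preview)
Your proposal is correct and follows essentially the same approach as the paper: both verify \textbf{LP1}--\textbf{LP4} directly, using the commutativity of the $Q_e=\mathsf{s}_{e^*}\mathsf{s}_e$ and $P_f=\mathsf{s}_f\mathsf{s}_{f^*}$, Lemma~\ref{lci} to handle the sum over subsets, and the combination of \textbf{EL4} with \textbf{ExL4} (no singular vertices) to kill the terms $A_YB$ for $Y\notin G_F^0$. Your organization is slightly cleaner in one respect: in \textbf{LP1} you establish orthogonality and idempotency purely from the commuting-idempotent structure ($A_XA_{X'}=0$, $P_eB=0$), whereas the paper first rewrites $P_X$ via \textbf{EL4} as $\sum_{v\in r(X,F\setminus X)}\mathsf{p}_v-\sum\mathsf{s}_f\mathsf{s}_{f^*}$, which tacitly assumes $r(X,F\setminus X)$ is finite---an assumption that need not hold for the subset-vertices $X\in G_F^0$. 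Your handling of \textbf{LP4} is the same computation as the paper's, just packaged as the single identity $Q_e=\sum P_f+\sum P_X$ and then conjugated; the paper carries the $\mathsf{s}_e,\mathsf{s}_{e^*}$ along throughout.
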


\begin{proof} (\textbf{LP1}). We will first show that $\{P_{e}, P_{X}\}$ forms a set of pairwise orthogonal idempotents. For $e,f\in F$ such that $e\neq f$, we have 
$$P_{e}P_{f}=\mathsf{s}_{e}\mathsf{s}_{e^{*}}\mathsf{s}_{f}\mathsf{s}_{f^{*}}=\mathsf{s}_{e}(\mathsf{s}_{e^{*}}\mathsf{s}_{f})\mathsf{s}_{f^{*}}=0$$
since $\mathsf{s}_{e^{*}}\mathsf{s}_{f}=0$ by \textbf{ExL3}; further, by \textbf{ExL2}, $P_{e}$ is an idempotent for each $e$.
By \textbf{EL4}, we have 
$$\prod\limits_{e\in X}\mathsf{s}_{e^{*}}\mathsf{s}_{e}\prod\limits_{e'\in F\setminus X}(1-\mathsf{s}_{e'^{*}}\mathsf{s}_{e'})=\sum\limits_{v\in r(X,F\setminus X)} \mathsf{p}_{v}.$$
In particular, by applying \textbf{ExL2}, we can see that 
$$P_{X}=\Big(\sum\limits_{v\in r(X,F\setminus X)} \mathsf{p}_{v}\Big)\Big(1-\sum\limits_{f\in F}\mathsf{s}_{f}\mathsf{s}_{f^{*}}\Big)=\Big(\sum\limits_{v\in r(X,F\setminus X)} \mathsf{p}_{v}-\sum_{\substack{f\in F:\\ s(f)\in r(X,F\setminus X)}}\mathsf{s}_{f}\mathsf{s}_{f^{*}}\Big);$$
using this fact, and exploiting \textbf{ExL2} again, we can work out $P_{X}$ is an idempotent for each $X$. 

Of particular utility in showing the pairwise orthogonality of the set $\{P_{e}, P_{X}\}$ is the fact $\mathsf{s}_{e^{*}}\mathsf{p}_{v}=\mathsf{s}_{e^{*}}$ if $v=s(e)$, and $\mathsf{s}_{e^{*}}\mathsf{p}_{v}=0$ otherwise. With this in mind, for $e,X\in G_{F}^{0}$, note that $\mathsf{s}_{e}\mathsf{s}_{e^{*}}\Big(\sum\limits_{v\in r(X,F\setminus X)} \mathsf{p}_{v}\Big)=0$ if $s(e)\notin r(X,F\setminus X)$, meaning $P_{e}P_{X}=0$; if $s(e)\in r(X,F\setminus X)$, then by \textbf{ExL2} and \textbf{ExL3} we have
$$\mathsf{s}_{e}\mathsf{s}_{e^{*}}\Big(\sum\limits_{v\in r(X,F\setminus X)} \mathsf{p}_{v}\Big)=\mathsf{s}_{e}\mathsf{s}_{e^{*}}\text{ and }\mathsf{s}_{e}\mathsf{s}_{e^{*}}\Big(\sum_{\substack{f\in F:\\ s(f)\in r(X,F\setminus X)}}\mathsf{s}_{f}\mathsf{s}_{f^{*}}\Big)=\mathsf{s}_{e}\mathsf{s}_{e^{*}},$$
which again gives us 
$$P_{e}P_{X}=\mathsf{s}_{e}\mathsf{s}_{e^{*}}\Big(\sum\limits_{v\in r(X,F\setminus X)} \mathsf{p}_{v}-\sum_{\substack{f\in F:\\ s(f)\in r(X,F\setminus X)}}\mathsf{s}_{f}\mathsf{s}_{f^{*}}\Big)=0.$$
A similar argument shows $P_{X}P_{e}=0$ for each $e,X\in G_{F}^{0}$; thus, $P_{e}P_{X}=P_{X}P_{e}=0$ for each $e,X\in G_{F}^{0}$.

Now suppose $X,Y\in G_{F}^{0}$ such that $X\neq Y$. WLOG, let $e'\in X\setminus Y$. This means 
$$r(X,F\setminus X):=\bigcap\limits_{e\in X}r(e)\setminus\bigcup\limits_{f\in F\setminus X}r(f)\subseteq r(e')$$
since $e'\in X$; and $r(Y,F\setminus Y)\cap r(e')=\emptyset$ since $e'\in F\setminus Y$. And so we can conclude
$$r(X,F\setminus X)\cap r(Y,F\setminus Y)=\emptyset.$$
As a result, for 
$$P_{X}=\Big(\sum\limits_{v\in r(X,F\setminus X)} \mathsf{p}_{v}-\sum_{\substack{f\in F:\\ s(f)\in r(X,F\setminus X)}}\mathsf{s}_{f}\mathsf{s}_{f^{*}}\Big) \text{ and } P_{Y}=\Big(\sum\limits_{v\in r(Y,F\setminus Y)} \mathsf{p}_{v}-\sum_{\substack{f\in F:\\ s(f)\in r(Y,F\setminus Y)}}\mathsf{s}_{f}\mathsf{s}_{f^{*}}\Big),$$
we have $P_{X}P_{Y}=P_{Y}P_{X}=0$. Thus, $\{P_{e},P_{X}\}$ forms a set of pairwise orthogonal idempotents in $\mathcal{EL}_{R}(\mathcal{G})$.

(\textbf{LP2}). Recall $G_{F}^{1}:=\{(e,f)\in F\times F\: s(f)\in r(e)\}\cup\{(e,X): e\in X\}$. With a straight forward application of \textbf{ExL2}, we can deduce 
{\footnotesize
\begin{align*}
&P_{s_{F}((e,f))}S_{(e,f)}=S_{(e,f)}P_{r_{F}((e,f))}=S_{(e,f)},\ \ \ P_{s_{F}((e,X))}S_{(e,X)}=S_{(e,X)}P_{r_{F}((e,X))}=S_{(e,X)}\\
& P_{r_{F}((e,f))}S_{(e,f)^{*}}=S_{(e,f)^{*}}P_{s_{F}((e,f))}=S_{(e,f)^{*}}, \text{ and }  P_{r_{F}((e,X))}S_{(e,X)^{*}}=S_{(e,X)^{*}}P_{s_{F}((e,X))}=S_{(e,X)^{*}}
\end{align*}\par}

(\textbf{LP3}). By definition, $S_{(e,f)^{*}}S_{(e',f')}=P_{f}\mathsf{s}_{e^{*}}\mathsf{s}_{e'}P_{f'}$. If $e\neq e'$, then $\mathsf{s}_{e^{*}}\mathsf{s}_{e'}=0$ by \textbf{ExL2}, which in turn means $S_{(e,f)^{*}}S_{(e',f')}=0$. If $e=e'$, then $P_{f}\mathsf{s}_{e^{*}}\mathsf{s}_{e'}P_{f'}=P_{f}P_{f'}$  by \textbf{ExL2} and \textbf{EL3}. Moreover, by \textbf{LP1}, $P_{f}P_{f'}=0$ if $f\neq f'$. All in all, this means $S_{(e,f)^{*}}S_{(e',f')}=0$ if $(e,f)\neq (e'f')$, and $S_{(e,f)^{*}}S_{(e',f')}=P_{f}=P_{r_{F}((e,f))}$ if $(e,f)=(e',f')$. Similarly, $S_{(e,X)^{*}}S_{(e',X')}=P_{X}\mathsf{s}_{e^{*}}\mathsf{s}_{e'}P_{X'}$. As before, if $e\neq e'$, then $S_{(e,X)^{*}}S_{(e',X')}=0$. If $e=e'$, then for $e\in X'$, \textbf{EL2} implies 
$$\mathsf{s}_{e^{*}}\mathsf{s}_{e}\Big(\prod\limits_{g\in X'}\mathsf{s}_{g^{*}}\mathsf{s}_{g}\Big)=\prod\limits_{g\in X'}\mathsf{s}_{g^{*}}\mathsf{s}_{g}.$$
And so
$$\mathsf{s}_{e^{*}}\mathsf{s}_{e}P_{X'}=\mathsf{s}_{e^{*}}\mathsf{s}_{e}\Big(\prod\limits_{g\in X'}\mathsf{s}_{g^{*}}\mathsf{s}_{g}\prod\limits_{g'\in F\setminus X'}(1-\mathsf{s}_{g'^{*}}\mathsf{s}_{g'})\Big)\Big(1-\sum\limits_{f\in F}\mathsf{s}_{f}\mathsf{s}_{f^{*}}\Big)=P_{X'}.$$
Therefore, for $e=e'$, we have $S_{(e,X)^{*}}S_{(e',X')}=P_{X}P_{X'}$. Exploiting \textbf{LP1} again, we have $S_{(e,X)^{*}}S_{(e',X')}=0$ if $(e,X)\neq (e'X')$, and $S_{(e,X)^{*}}S_{(e',X')}=P_{X}=P_{r_{F}((e,X))}$ if $(e,X)=(e',X')$. Finally, for $(e,f),(e',X')\in G_{F}^{1}$, we can deduce $S_{(e,f)^{*}}S_{(e',X')}=S_{(e',X')^{*}}S_{(e,f)}=0$ due to the fact $P_{f}P_{X'}=P_{X'}P_{f}=0$.

(\textbf{LP4}). Since $X\in G_{F}^{0}$ is a sink, we only need to consider $e\in F$. To that end, suppose $X \subseteq F$ such that $X\notin G_{F}^{0}$; which is to say $X\subseteq F$ for which 
$$\{e\in\mathcal{G}^{1}:s(e)\in r(X, F\setminus X)\}\subseteq F.$$
Then, by \textbf{ExL4}, we have
{\footnotesize  
\begin{align*}
\Big(\prod\limits_{e\in X}\mathsf{s}_{e^{*}}\mathsf{s}_{e}\prod\limits_{e'\in F\setminus X}(1-\mathsf{s}_{e'^{*}}\mathsf{s}_{e'})\Big)&\Big(1-\sum\limits_{f\in F}\mathsf{s}_{f}\mathsf{s}_{f^{*}}\Big) = \Big(\sum\limits_{v\in r(X,F\setminus X)} \mathsf{p}_{v}-\sum\limits_{\{f\in F:s(f)\in r(X,F\setminus X)\}}\mathsf{s}_{f}\mathsf{s}_{f^{*}}\Big)\\
&=\sum\limits_{v\in r(X,F\setminus X)}\Big(\mathsf{p}_{v}-\sum\limits_{\{f\in F:s(f)=v\}}\mathsf{s}_{f}\mathsf{s}_{f^{*}}\Big)=0.
\end{align*}\par}
On the other hand, for a fixed $e\in F$, and $X\subseteq F$ such that $e\notin X$ (and so $e\in F\setminus X$), \textbf{EL2} implies
\begin{align*}
 &\mathsf{s}_{e^{*}}\mathsf{s}_{e}\Big(\prod\limits_{g\in X}\mathsf{s}_{g^{*}}\mathsf{s}_{g}\prod\limits_{g'\in F\setminus X}(1-\mathsf{s}_{g'^{*}}\mathsf{s}_{g'})\Big)\Big(1-\sum\limits_{f\in F}\mathsf{s}_{f}\mathsf{s}_{f^{*}}\Big)\\
&=\mathsf{s}_{e^{*}}\mathsf{s}_{e}(1-\mathsf{s}_{e^{*}}\mathsf{s}_{e})\Big(\prod\limits_{g\in X}\mathsf{s}_{g^{*}}\mathsf{s}_{g}\prod\limits_{g'\in F\setminus X,\ g'\neq e}(1-\mathsf{s}_{g'^{*}}\mathsf{s}_{g'})\Big)\Big(1-\sum\limits_{f\in F}\mathsf{s}_{f}\mathsf{s}_{f^{*}}\Big)\\
&=0.
\end {align*}
Combining the previous two calculations, we can deduce
\begin{align*}
\mathsf{s}_{e^{*}}\mathsf{s}_{e}\Big(\sum\limits_{\substack{X\in G_{F}^{0}:\\ e\in X}}P_{X}\Big)&=\mathsf{s}_{e^{*}}\mathsf{s}_{e}\Bigg(\bigg(\sum\limits_{\emptyset\neq X\subseteq F}\Big(\prod\limits_{g\in X}\mathsf{s}_{g^{*}}\mathsf{s}_{g}\prod\limits_{g'\in F\setminus X}(1-\mathsf{s}_{g'^{*}}\mathsf{s}_{g'})\Big)\bigg)\Big(1-\sum\limits_{f\in F}\mathsf{s}_{f}\mathsf{s}_{f^{*}}\Big)\Bigg),\\
&\text{which by Lemma \ref{lci},}\\
&=\mathsf{s}_{e^{*}}\mathsf{s}_{e}\Bigg(\bigg(1-\prod\limits_{g'\in F}(1-\mathsf{s}_{g'^{*}}\mathsf{s}_{g'})\bigg)\Big(1-\sum\limits_{f\in F}\mathsf{s}_{f}\mathsf{s}_{f^{*}}\Big)\Bigg)\\
&=\Big(\mathsf{s}_{e^{*}}\mathsf{s}_{e}-\mathsf{s}_{e^{*}}\mathsf{s}_{e}(1-\mathsf{s}_{e^{*}}\mathsf{s}_{e})\prod\limits_{g'\in F, g'\neq e}(1-\mathsf{s}_{g'^{*}}\mathsf{s}_{g'})\Big)\Big(1-\sum\limits_{f\in F}\mathsf{s}_{f}\mathsf{s}_{f^{*}}\Big)\\
&=\mathsf{s}_{e^{*}}\mathsf{s}_{e}\Big(1-\sum\limits_{f\in F}\mathsf{s}_{f}\mathsf{s}_{f^{*}}\Big),\\
&\text{ and since } \mathsf{s}_{e^{*}}\mathsf{s}_{e}\mathsf{s}_{f}\mathsf{s}_{f^{*}}=0 \text{ if } s(f)\notin r(e),\\
 &=\mathsf{s}_{e^{*}}\mathsf{s}_{e}\Big(1-\sum\limits_{\{f:s(f)\in r(e)\}}\mathsf{s}_{f}\mathsf{s}_{f^{*}}\Big)=\mathsf{s}_{e^{*}}\mathsf{s}_{e}\Big(1-\sum\limits_{\{f:s(f)\in r(e)\}}P_{f}\Big).
\end{align*}
Note that, by \textbf{ExL2}, 
\begin{align*}
\mathsf{s}_{e}\Big(\sum\limits_{\{X:e\in X\}}P_{X}\Big) &=\mathsf{s}_{e}\bigg(\mathsf{s}_{e^{*}}\mathsf{s}_{e}\Big(\sum\limits_{\{X:e\in X\}}P_{X}\Big)\bigg)\\ 
&=\mathsf{s}_{e}\bigg(\mathsf{s}_{e^{*}}\mathsf{s}_{e}\Big(1-\sum\limits_{\{f:s(f)\in r(e)\}}P_{f}\Big)\bigg)=\mathsf{s}_{e}\Big(1-\sum\limits_{\{f:s(f)\in r(e)\}}P_{f}\Big)\\
\end{align*}
Thus,
{\footnotesize
\begin{align*}
\sum\limits_{\{f: s(f)\in r(e)\}}S_{(e,f)}S_{(e,f)^{*}}+&\sum\limits_{\{X:e\in X\}}S_{(e,X)}S_{(e,X)^{*}}=\sum\limits_{\{f: s(f)\in r(e)\}}S_{(e,f)}S_{(e,f)^{*}}+\sum\limits_{\{X:e\in X\}}S_{(e,X)}S_{(e,X)^{*}}\\
&=\mathsf{s}_{e}\Big(\sum\limits_{\{f:s(f)\in r(e)\}}P_{f}\Big)\mathsf{s}_{e^{*}} +\mathsf{s}_{e}\Big(\sum\limits_{\{X:e\in X\}}P_{X}\Big)\mathsf{s}_{e^{*}}\\
&=\mathsf{s}_{e}\Big(\sum\limits_{\{f:s(f)\in r(e)\}}P_{f}\Big)\mathsf{s}_{e^{*}}+\mathsf{s}_{e}\Big(1-\sum\limits_{\{f:s(f)\in r(e)\}}P_{f}\Big)\mathsf{s}_{e^{*}}\\
&=\mathsf{s}_{e}\mathsf{s}_{e^{*}}=P_{e}.
\end{align*}\par}
\end{proof} 

\begin{lemma}\label{lfg2}
 Let $\mathcal{G}$ be an ultragraph with no singular vertices, $F\subseteq \mathcal{G}^{1}$ finite, and $G_{F}$ the associated finite graph. Then, there exists a $\mathbb{Z}$-graded injective $R$-algebra homomorphism
$$\pi_{F}: L_{R}(G_{F})\to \mathcal{EL}_{R}(\mathcal{G})$$
such that $\pi_{F}(p_{e})=P_{e},\ \pi_{F}(p_{X})=P_{X},\ \pi_{F}(s_{(e,f)})=S_{(e,f)},\ \text{and},\  \pi_{F}(s_{(e,X)})=S_{(e,X)}$.
\end{lemma}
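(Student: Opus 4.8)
The plan is to produce $\pi_F$ from the universal property of the graph Leavitt path algebra, promote it to a graded homomorphism, and then apply the graded faithfulness criterion (Theorem \ref{ggrdhomom}) to conclude injectivity. First, Lemma \ref{lfg} already tells us that the collection $\{P_e,P_X,S_{(e,f)},S_{(e,X)},S_{(e,f)^*},S_{(e,X)^*}\}$ is a Leavitt $G_F$-family inside $\mathcal{EL}_R(\mathcal{G})$. Hence the universal mapping property in Definition \ref{glpa} hands us, with no further work, an $R$-algebra homomorphism $\pi_F\colon L_R(G_F)\to\mathcal{EL}_R(\mathcal{G})$ carrying the canonical generators $p_e,p_X,s_{(e,f)},s_{(e,X)}$ (and their ghosts) to the capital-letter elements specified in the statement.

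Next I would verify that $\pi_F$ is $\mathbb{Z}$-graded, where $L_R(G_F)$ carries the grading of Proposition \ref{gzgrade} and $\mathcal{EL}_R(\mathcal{G})$ that of Lemma \ref{lex2}. It suffices to check degrees on generators: the vertex idempotents $p_e,p_X$ have degree $0$ and map to $P_e=\mathsf{s}_e\mathsf{s}_{e^*}$ and the degree-$0$ expression for $P_X$; each real edge $s_{(e,f)},s_{(e,X)}$ has degree $1$ and maps to $\mathsf{s}_eP_f,\ \mathsf{s}_eP_X$, which have degree $1$ since $\mathsf{s}_e$ is degree $1$ and $P_f,P_X$ are degree $0$; symmetrically the ghost edges land in degree $-1$. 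Because each $L_R(G_F)_i$ is $R$-spanned by products $t_\alpha t_{\beta^*}$ with $|\alpha|-|\beta|=i$ and $\pi_F$ is multiplicative, it follows that $\pi_F\big(L_R(G_F)_i\big)\subseteq\mathcal{EL}_R(\mathcal{G})_i$.

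The crux is verifying the hypothesis of Theorem \ref{ggrdhomom}: that $\pi_F(rp_w)\neq 0$ for every vertex $w$ of $G_F$ and every $r\in R\setminus\{0\}$, i.e.\ $rP_e\neq 0$ and $rP_X\neq 0$. The key input is faithfulness already available in $L_R(\mathcal{G})$: composing with the surjection $\varphi$ of Lemma \ref{llgex2} gives $\varphi(r\mathsf{p}_v)=rp_v$ and $\varphi(r\mathsf{s}_{e^*}\mathsf{s}_e)=rp_{r(e)}$, both nonzero since $\{v\},r(e)\in\mathcal{G}^0\setminus\{\emptyset\}$ and $rp_A\neq0$ holds in $L_R(\mathcal{G})$; hence $r\mathsf{p}_v\neq 0$ and $r\mathsf{s}_{e^*}\mathsf{s}_e\neq 0$ in $\mathcal{EL}_R(\mathcal{G})$ for all $v,e$. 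For $P_e=\mathsf{s}_e\mathsf{s}_{e^*}$, compressing by the ghost gives $\mathsf{s}_{e^*}(rP_e)\mathsf{s}_e=r\mathsf{s}_{e^*}\mathsf{s}_e\neq0$ by \textbf{ExL2}, so $rP_e\neq0$. For $P_X$, I use the defining property of $G_F^0$: membership $X\in G_F^0$ supplies an edge $g\in\mathcal{G}^1\setminus F$ with $v:=s(g)\in r(X,F\setminus X)$. Starting from the closed form $P_X=\sum_{w\in r(X,F\setminus X)}\mathsf{p}_w-\sum_{\{f\in F:\,s(f)\in r(X,F\setminus X)\}}\mathsf{s}_f\mathsf{s}_{f^*}$ derived in Lemma \ref{lfg}, orthogonality (\textbf{EL1}), Lemma \ref{ulve}-type identities (\textbf{ExL2}), and \textbf{ExL4} (available because $\mathcal{G}$ has no singular vertices, so $0<|s^{-1}(v)|<\infty$) yield $\mathsf{p}_vP_X=\sum_{e\in s^{-1}(v)\setminus F}\mathsf{s}_e\mathsf{s}_{e^*}$, a sum that contains the term for $g$. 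Compressing by $g$ and using \textbf{ExL3} to annihilate every $e\neq g$ gives $\mathsf{s}_{g^*}(r\,\mathsf{p}_vP_X)\mathsf{s}_g=r\mathsf{s}_{g^*}\mathsf{s}_g\neq0$, so $rP_X\neq0$.

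With $\pi_F$ graded and nonvanishing on $r$ times each vertex idempotent of $G_F$, Theorem \ref{ggrdhomom} immediately gives injectivity, completing the proof. The main obstacle is the $P_X$ computation: one must unwind the closed form of $P_X$ and exploit precisely the condition defining $G_F^0$—the existence of an edge outside $F$ emitted from $r(X,F\setminus X)$—together with the no-singular-vertices hypothesis that makes \textbf{ExL4} applicable, in order to isolate a nonzero diagonal term.
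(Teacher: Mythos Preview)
Your proof is correct and follows the same overall route as the paper: existence of $\pi_F$ from the universal property via Lemma \ref{lfg}, gradedness on generators, and injectivity from Theorem \ref{ggrdhomom}. The one place you go beyond the paper is the verification that $rP_X\neq 0$: the paper simply asserts that the nonvanishing of $r\mathsf{p}_v,\ r\mathsf{s}_e,\ r\mathsf{s}_{e^*}$ (obtained via Lemma \ref{llgex2} and the corresponding faithfulness in $L_R(\mathcal{G})$) ``in turn implies'' $rP_e\neq0$ and $rP_X\neq0$, without supplying the computation. Your argument---using the defining condition of $X\in G_F^0$ to select $g\in\mathcal{G}^1\setminus F$ with $s(g)\in r(X,F\setminus X)$, invoking \textbf{ExL4} (this is where the no-singular-vertices hypothesis enters) to rewrite $\mathsf{p}_vP_X$ as $\sum_{e\in s^{-1}(v)\setminus F}\mathsf{s}_e\mathsf{s}_{e^*}$, and compressing by $\mathsf{s}_{g^*}(\,\cdot\,)\mathsf{s}_g$---is exactly what is needed to justify that assertion, and it is correct.
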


\begin{proof}
By \cite[Proposition 3.4]{Tomforde:2011aa}, we have 
$$L_{R}(G_{F}):=L_{R}(s,p)=\text{span}_{R}\{s_{\alpha}s_{\beta^{*}}:r(\alpha)=r(\beta)\};$$
and is a $\mathbb{Z}$-graded ring. Specifically, $L_{R}(G_{F})=\bigoplus\limits_{i\in\mathbb{Z}}L_{R}(G_{F})_{i}$, where
$$L_{R}(G_{F})_{i}=\text{span}_{R}\{s_{\alpha}s_{\beta^{*}}:r(\alpha)=r(\beta) \text{ and } |\alpha|-|\beta|=i\}.$$
By Lemma \ref{lfg}, we have a Leavitt $G_{F}$-family $\{P,S\}\subseteq\mathcal{EL}_{R}(\mathcal{G})$; and so by the universal mapping property of $L_{R}(G_{F})$, we have an $R$-algebra homomorphism
$$\pi_{F}:L_{R}(G_{F})\to \mathcal{EL}_{R}(\mathcal{G})$$
such that $\pi_{F}(p_{e})=P_{e},\ \pi_{F}(p_{X})=P_{X},\ \pi_{F}(s_{(e,f)})=S_{(e,f)},\ \text{and},\  \pi_{F}(s_{(e,X)})=S_{(e,X)}$. Further, using Lemma \ref{lex2} and Tomforde's aforementioned result on the $\mathbb{Z}$-grading of $L_{R}(G_{F})$, we can deduce 
$$\pi_{F}(L_{R}(G_{F})_{i})\subseteq \mathcal{EL}_{R}(\mathcal{G})_{i};$$
thus, $\pi_{F}$ is a graded homomorphism. Finally, Lemma \ref{llgex2} and \cite[Theorem 2.6]{M.-Imanfar:2017aa} imply that, for $r\neq 0$: $r\mathsf{p}_{v}\neq 0$, $r\mathsf{s}_{e}\neq 0$, and $r\mathsf{s}_{e^{*}}\neq 0$. This in turn implies that, for $r\neq 0$,
$$rP_{e}\neq0,\ rP_{X}\neq 0,\ rS_{(e,f)}\neq0, \text{ and } rS_{(e,X)}\neq0.$$
And so, by Theorem \ref{ggrdhomom}, $\pi_{F}$ is injective. 
\end{proof}

We are now ready to prove Theorem \ref{thm:keythm}.

\begin{proof}[Proof of Theorem \ref{thm:keythm}] Let  $F_{1}\subseteq F_{2}\subseteq\cdots\subseteq F_{n} \cdots$ be an increasing sequence of finite subsets of $\mathcal{G}^{1}$ such that  $\bigcup\limits_{n\in\mathbb{N}}F_{n}=\mathcal{G}^{1}$. For each $F_{n}$, let $G_{F_{n}}$ be the associated finite graph. By Lemma \ref{lfg2}, for each $n$, there exists an injective $R$-algebra homomorphism
$$\pi_{F_{n}}:L_{R}(G_{F_{n}})\to\mathcal{EL}_{R}(\mathcal{G}).$$
This in turn gives us a $\mathbb{Z}$-graded ring homomorphism
$$\pi\circ\pi_{F_{n}}:L_{R}(G_{F_{n}})\to S$$
for each $n$. By our assumption on $\pi$, Theorem \ref{ggrdhomom}, and Lemma \ref{lfg2}, $\pi\circ\pi_{F_{n}}$ is injective for each $n$.

Finally, let $V \subseteq G^{0}$ and $F\subseteq\mathcal{G}^{1}$ be finite, and $s^{-1}(v)\subseteq F$ for each $v\in V$. Fix $n$ large enough such that $F\subseteq F_{n}$. Since we are assuming $\mathcal{G}$ doesn't have any singular vertices, we have 
$$\mathsf{p}_{v}=\sum\limits_{\{e:s(e)=v\}}\mathsf{s}_{e}\mathsf{s}_{e^{*}}=\sum\limits_{\{e:s(e)=v\}}\pi_{F_{n}}(p_{e})$$
for each $v\in V$; and so, for each $v\in V$, $\mathsf{p}_{v}\in\text{im}\pi_{F_{n}}$. For $e\in F\subseteq F_{n},$ we have 
\begin{align*}
\sum\limits_{\{f:s(f)\in r(e)\}}\pi_{F_{n}}(s_{(e,f)})&+\sum\limits_{\{X:e\in X\}}\pi_{F_{n}}(s_{(e,X)}) =\\
&\mathsf{s}_{e}\Big(\sum\limits_{\{f:s(f)\in r(e)\}}P_{f} +(1-\sum\limits_{\{f:s(f)\in r(e)\}}P_{f})\Big)=\mathsf{s}_{e};
\end{align*}
we can similarly show
$$\sum\limits_{\{f:s(f)\in r(e)\}}\pi_{F_{n}}(s_{(e,f)^{*}})+\sum\limits_{\{X:e\in X\}}\pi_{F_{n}}(s_{(e,X)^{*}})=\mathsf{s}_{e^{*}}.$$
Thus, for each $v\in V$ and $e\in F$, we have that $\mathsf{p}_{v},\mathsf{s}_{e},\mathsf{s}_{e^{*}}\in\text{im}\pi_{F_{n}}$. That is, the $R$-algebra generated by $\{\mathsf{p}_{v},\mathsf{s}_{e},\mathsf{s}_{e^{*}}\}$ is a subset of $\text{im}\pi_{F_{n}}$. In turn, this implies
$$\mathcal{EL}_{R}(\mathcal{G})=\bigcup\limits_{n\in\mathbb{N}}\text{im}\pi_{F_{n}}.$$
Combining this with the fact $\pi\circ\pi_{F_{n}}$ is injective for each $n$, we can conclude $\pi$ is injective as well.
\end{proof}

\begin{theorem}\label{texlg}
If $\mathcal{G}$ is an ultragraph with no singular vertices, then
$$\mathcal{EL}_{R}(\mathcal{G})\cong L_{R}(\mathcal{G}).$$ 
\end{theorem}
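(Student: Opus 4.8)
The plan is to show that the surjective homomorphism $\varphi:\mathcal{EL}_{R}(\mathcal{G})\to L_{R}(\mathcal{G})$ already produced in Lemma \ref{llgex2} is in fact an isomorphism, by verifying it is injective. Since surjectivity is in hand, it suffices to apply Theorem \ref{thm:keythm} with the target graded ring taken to be $S:=L_{R}(\mathcal{G})$, whose $\mathbb{Z}$-grading is supplied by Theorem \ref{uggrade}. Concretely, I would set $\pi:=\varphi$ and check the three hypotheses of that theorem: that $\varphi$ is graded, and that $\varphi(r\mathsf{p}_{v})\neq 0$ and $\varphi(r\mathsf{s}_{e^{*}}\mathsf{s}_{e})\neq 0$ for all $v\in G^{0}$, $e\in\mathcal{G}^{1}$, and $r\in R\setminus\{0\}$.

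First I would confirm $\varphi$ is a graded homomorphism. Because $\varphi$ sends $\mathsf{s}_{e}\mapsto s_{e}$, $\mathsf{s}_{e^{*}}\mapsto s_{e^{*}}$, and $\mathsf{p}_{v}\mapsto p_{v}$, it carries each spanning element of $\mathcal{EL}_{R}(\mathcal{G})_{i}$ listed in Lemma \ref{lex2} to an element of the appropriate degree in $L_{R}(\mathcal{G})$; for instance $\mathsf{s}_{\alpha}\big(\prod_{e\in S}\mathsf{s}_{e^{*}}\mathsf{s}_{e}\big)\mathsf{s}_{\beta^{*}}$ maps to $s_{\alpha}\,p_{\bigcap_{e\in S}r(e)}\,s_{\beta^{*}}$, an element of $L_{R}(\mathcal{G})_{|\alpha|-|\beta|}$ in the description of Theorem \ref{uggrade}. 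Hence $\varphi(\mathcal{EL}_{R}(\mathcal{G})_{i})\subseteq L_{R}(\mathcal{G})_{i}$ for every $i$, so $\varphi$ is graded.

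Next I would verify the two nonvanishing conditions. For $v\in G^{0}$ and $r\neq 0$ we have $\varphi(r\mathsf{p}_{v})=rp_{v}$; since $\{v\}\in\mathcal{G}^{0}\setminus\{\emptyset\}$, the faithfulness fact recalled after Definition \ref{uglpa} (following \cite[Theorem 2.6]{M.-Imanfar:2017aa}) gives $rp_{v}\neq 0$. For $e\in\mathcal{G}^{1}$ and $r\neq 0$ we have $\varphi(r\mathsf{s}_{e^{*}}\mathsf{s}_{e})=rs_{e^{*}}s_{e}=rp_{r(e)}$ by \textbf{uLP3}; because the range map takes values in $\mathcal{P}(G^{0})\setminus\{\emptyset\}$, the set $r(e)$ is a nonempty element of $\mathcal{G}^{0}$, so again $rp_{r(e)}\neq 0$. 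These are precisely the hypotheses $\pi(r\mathsf{p}_{v})\neq 0$ and $\pi(r\mathsf{s}_{e^{*}}\mathsf{s}_{e})\neq 0$ demanded by Theorem \ref{thm:keythm}.

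With all hypotheses met, Theorem \ref{thm:keythm} yields that $\varphi$ is injective; combined with its surjectivity from Lemma \ref{llgex2}, $\varphi$ is an $R$-algebra isomorphism, giving $\mathcal{EL}_{R}(\mathcal{G})\cong L_{R}(\mathcal{G})$. All the substantive work has been front-loaded into Theorem \ref{thm:keythm} (the approximation by Leavitt path algebras of the finite graphs $G_{F}$), so within this proof there is no genuine obstacle. The only points that require care are the bookkeeping that $\varphi$ respects the two gradings and the reduction, via \textbf{uLP3} and the nonemptiness of $r(e)$, of the condition on $\mathsf{s}_{e^{*}}\mathsf{s}_{e}$ to the already-established statement that $rp_{A}\neq 0$ for nonempty $A\in\mathcal{G}^{0}$.
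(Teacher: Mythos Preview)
Your proposal is correct and follows essentially the same route as the paper: invoke Lemma~\ref{llgex2} for the surjective map $\varphi$, observe via Lemma~\ref{lex2} and Theorem~\ref{uggrade} that it is graded, and then apply Theorem~\ref{thm:keythm} to get injectivity. The paper is terser about checking the nonvanishing hypotheses $\varphi(r\mathsf{p}_v)\neq 0$ and $\varphi(r\mathsf{s}_{e^*}\mathsf{s}_e)\neq 0$, but you have filled those in exactly as intended.
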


\begin{proof} We have, by Theorem \ref{uggrade}, 
$$L_{R}(\mathcal{G})=\text{span}_{R}\{s_{\alpha}p_{A}s_{\beta^{*}}:r(\alpha)\cap A\cap r(\beta)\neq\emptyset\}$$
and is a $\mathbb{Z}$-graded ring. Particularly, $L_{R}(\mathcal{G})=\bigoplus\limits_{i\in\mathbb{Z}}L_{R}(\mathcal{G})_{i}$ where 
$$L_{R}(\mathcal{G})_{i}=\text{span}_{R}\{s_{\alpha}p_{A}s_{\beta^{*}}:r(\alpha)\cap A\cap r(\beta)\neq\emptyset,\ |\alpha|-|\beta|=i\}.$$
By Lemma \ref{llgex2}, there exists a surjective map $\varphi:\mathcal{EL}_{R}(\mathcal{G})\to L_{R}(\mathcal{G})$ such that $\varphi(\mathsf{p}_{v})=p_{v},\ \varphi(\mathsf{s}_{e})=s_{e},\ \varphi(\mathsf{s}_{e^{*}})=s_{e^{*}}$. It is straightforward to see $\varphi(\mathcal{EL}_{R}(\mathcal{G})_{i})\subseteq L_{R}(\mathcal{G})_{i},$ meaning $\varphi$ is a $\mathbb{Z}$-graded $R$-algebra homomorphism. By Theorem \ref{uggrade} and Theorem \ref{thm:keythm}, $\varphi$ is also injective. Thus, $\mathcal{EL}_{R}(\mathcal{G})\cong L_{R}(\mathcal{G})$.
\end{proof}

\section{$L_{R}(\mathcal{G})\cong QL_{R}(E_{\mathcal{G}})Q$}

Our next crucial step is to show $L_{R}(\mathcal{G})\cong QL_{R}(E_{\mathcal{G}})Q$. But, before we do, there are some results we first need to establish analogous to \cite[Lemma 5.11, Proposition 5.15, Lemma 5.19, and Theorem 5.22]{Takeshi-Katsura-Paul-Muhly-Aidan-Sims--Mark-Tomforde:2010aa}. By \cite[Lemma 4.6 and Definition 4,7]{Takeshi-Katsura-Paul-Muhly-Aidan-Sims--Mark-Tomforde:2010aa}, for each $x\in E_{\mathcal{G}}^{0}$, there exists a unique path $\alpha_{x}$ in the subgraph $F$ of $E_{\mathcal{G}}$ such that $r(\alpha_{x})=x$ and $s(\alpha_{x})\in W_{0}\sqcup\Gamma_{0}$ (see Remark \ref{FsubE}). Given the enumeration of $\mathcal{G}^{1}$, and identifying $G^{0}$ as a subset of $E_{\mathcal{G}}^{0}$, let
\begin{align*}
&P_{v}:=t_{\alpha_{v}}t_{\alpha_{v}^{*}}\ \text{for}\ v\in G^{0},\\
&S_{e_{n}}:=\sum\limits_{x\in X(e_{n})}t_{\alpha_{s(e_{n})}}t_{(e_{n},x)}t_{\alpha_{x}^{*}}, \text{and}\\
&S_{e_{n}^{*}}:=\sum\limits_{x\in X(e_{n})}t_{\alpha_{x}}t_{(e_{n},x)^{*}}t_{\alpha_{s(e_{n})}^{*}}\ \text{for}\ e_{n}\in\mathcal{G}^{1}.
\end{align*}
The following quick claim is useful:
\begin{claim}\label{corth}
For $e_{n}\in\mathcal{G}^{1}$, let $x,y\in X(e_{n})$. If $x\neq y$, then 
$$t_{\alpha_{x}^{*}}t_{\alpha_{y}}=0.$$\
\end{claim}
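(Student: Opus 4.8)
The plan is to reduce the statement to a purely combinatorial fact about the subgraph $F$ and then verify that fact by bookkeeping the two index types ($G^0$ versus $\Delta$) of the vertices of $E_{\mathcal{G}}$.

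First I would invoke the multiplication formula for $t_{\beta^*}t_{\alpha}$ recorded in the discussion preceding Lemma \ref{gspan}: the product $t_{\alpha_x^*}t_{\alpha_y}$ is nonzero only if $\alpha_x=\alpha_y$, or $\alpha_y=\alpha_x\gamma$, or $\alpha_x=\alpha_y\gamma$ for some $\gamma\in E_{\mathcal{G}}^{*}$; that is, only if one of $\alpha_x,\alpha_y$ is an initial subpath of the other. Since $\alpha_x=\alpha_y$ would force $x=r_E(\alpha_x)=r_E(\alpha_y)=y$, contrary to $x\neq y$, it suffices to rule out the two proper-prefix cases. So the claim reduces to showing that for distinct $x,y\in X(e_n)$, neither $\alpha_x$ is an initial subpath of $\alpha_y$ nor vice versa.

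The translation step uses the uniqueness of the paths $\alpha_x$ recalled before the claim (from \cite[Lemma 4.6]{Takeshi-Katsura-Paul-Muhly-Aidan-Sims--Mark-Tomforde:2010aa}). Inspecting the source and range maps of Definition \ref{gfromug} restricted to $F^{1}=\{e_\kappa\}_{\kappa\in W_{+}\sqcup\Gamma_{+}}$ shows $r_E|_{F^{1}}$ is injective, so every vertex of $F$ receives at most one $F$-edge; hence $F$ is a disjoint union of in-trees rooted at the vertices indexed by $W_{0}\sqcup\Gamma_{0}$, and $\alpha_x$ is exactly the unique path in this forest from a root up to $x$. Consequently $\alpha_x$ is an initial subpath of $\alpha_y$ if and only if the vertex $x$ lies on the path $\alpha_y$, and I must show that $X(e_n)$ is an \emph{antichain} in this forest. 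To this end I would record the vertices visited by $\alpha_x$ for the two kinds of $x\in X(e_n)$, tracking whether each carries a $G^{0}$-index or a $\Delta$-index (the index sets are disjoint in $E_{\mathcal{G}}^{0}=\{v_\iota\}_{\iota\in G^{0}\sqcup\Delta}$). For $v_\omega$ with $\omega\in\Delta$, following incoming $F$-edges backwards strips the last bit of $\omega$ via the $\Gamma_{+}$-edge source maps until a $\Gamma_{0}$-root is reached, so every vertex of $\alpha_{v_\omega}$ is $\Delta$-indexed by a prefix of $\omega$, of length at most $|\omega|$. For $v_v$ with $v\in G^{0}$, the only incoming $F$-edge is $e_v$ (when $v\in W_{+}$), whose source is the $\Delta$-indexed vertex $v_{\sigma(v)}$; thus apart from the endpoint $v_v$, every vertex of $\alpha_{v_v}$ is $\Delta$-indexed by a prefix of $\sigma(v)$, of length at most $|\sigma(v)|$ (and the path is trivial when $v\in W_{0}$). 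The antichain property then follows by three short cases: two type-$B$ elements $\omega,\omega'\in\Delta_n$ have equal length $n$, so comparability would force $\omega=\omega'$; a type-$A$ vertex $v_v$ is $G^{0}$-indexed while $v_\omega$ and all interior vertices of both paths are $\Delta$-indexed, so $v_v\notin\alpha_{v_\omega}$, while $v_\omega\in\alpha_{v_v}$ would force $\omega$ to be a prefix of $\sigma(v)$ of length $n>|\sigma(v)|$; and two type-$A$ vertices are incomparable because the only $G^{0}$-indexed vertex of $\alpha_{v_v}$ is its endpoint.

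The main obstacle I anticipate is the bookkeeping in the last paragraph: pinning down precisely which vertices $\alpha_x$ visits in $F$ and verifying the length/index bounds. In particular I would need to confirm that the backward-stripping description of $\alpha_{v_\omega}$ terminates exactly at a $\Gamma_{0}$-root, which rests on the observation that $r(\omega|_m)$ is decreasing in $m$, so $|r(\omega|_m)|=\infty$ for all $m\le|\omega|$ and every proper prefix stays in $\Delta$; and I would need to handle the degenerate cases where $x$ is indexed by an element of $W_{0}\sqcup\Gamma_{0}$, giving a length-zero path. Once this structural description is in place, the three incomparability cases are immediate from the $G^{0}$/$\Delta$ dichotomy and the length comparison $|\sigma(v)|<n=|\omega|$ built into the definition of $X(e_n)$.
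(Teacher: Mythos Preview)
Your proposal is correct and follows essentially the same approach as the paper: reduce via the multiplication formula to showing that neither of $\alpha_x,\alpha_y$ is an initial subpath of the other, then split into the three cases according to whether $x,y$ are $G^0$-indexed or $\Delta_n$-indexed. The only difference is that the paper dispatches each case by a direct citation of \cite[Lemma 4.6 (1),(2),(3)]{Takeshi-Katsura-Paul-Muhly-Aidan-Sims--Mark-Tomforde:2010aa}, whereas you unpack the forest structure of $F$ and the content of that lemma from the definitions; your version is more self-contained but longer.
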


\begin{proof}
Note that $t_{\alpha_{x}^{*}}t_{\alpha_{y}}\neq0$ if and only if $\alpha_{x}$ extends $\alpha_{y}$, or vice versa. If $x,y\in\Delta_{n}$, then \cite[Lemma 4.6 (2)]{Takeshi-Katsura-Paul-Muhly-Aidan-Sims--Mark-Tomforde:2010aa} implies $t_{\alpha_{x}^{*}}t_{\alpha_{y}}=0$. If $x,y\in G^{0}$, then \cite[Lemma 4.6 (1)]{Takeshi-Katsura-Paul-Muhly-Aidan-Sims--Mark-Tomforde:2010aa} implies $t_{\alpha_{x}^{*}}t_{\alpha_{y}}=0$.

 Finally, WLOG, suppose $x\in G^{0}$ and $y\in \Delta_{n}$. In this case, \cite[Lemma 4.6 (1)]{Takeshi-Katsura-Paul-Muhly-Aidan-Sims--Mark-Tomforde:2010aa} implies $\alpha_{y}$ can't extend $\alpha_{x}$. And so the only option left is for $\alpha_{y}$ to extend $\alpha_{x}$. But, for 
$$r'(y):=\{v\in G^{0}:|\sigma(v)|\geq n,\ \sigma(v)|_{n}=y\}\ \ \text{and}\ \ x\in G^{0}\cap X(e_{n}),$$
\cite[Lemma 4.6 (3)]{Takeshi-Katsura-Paul-Muhly-Aidan-Sims--Mark-Tomforde:2010aa} implies $t_{\alpha_{x}^{*}}t_{\alpha_{y}}=0$.
\end{proof}

\begin{lemma}\label{lex3}
The set $\{P_{v},\ S_{e_{n}},\ S_{e_{n}^{*}}\}$ forms an Exel-Laca $\mathcal{G}$-family in $L_{R}(E_{\mathcal{G}})=L_{R}(\{t,q\})$.
\end{lemma}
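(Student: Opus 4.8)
The plan is to verify the four defining relations of an Exel-Laca $\mathcal{G}$-family (Definition \ref{dexg}) one at a time, reducing every identity to the multiplication rules in $\leg$ together with two structural inputs. The first input is a projection calculus for the tree paths: writing $e_\iota := t_{\alpha_\iota}t_{\alpha_\iota^*}$ for $\iota\in E_\mathcal{G}^0$, the relation $t_{\alpha_\iota^*}t_{\alpha_\iota}=q_\iota$ shows each $e_\iota$ is idempotent, and the formula for $t_{\beta^*}t_\alpha$ shows that $e_\iota e_{\iota'}=e_{\iota'}e_\iota$ always, equalling the projection attached to the longer path when $\alpha_\iota,\alpha_{\iota'}$ are comparable and $0$ otherwise. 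Combined with Claim \ref{corth}, which gives $t_{\alpha_x^*}t_{\alpha_y}=\delta_{x,y}q_{v_x}$ for $x,y\in X(e_n)$, this yields at once that $P_v=e_v$ and $\mathsf{Q}_{e_n}:=S_{e_n^*}S_{e_n}=\sum_{x\in X(e_n)}e_x$ are idempotent (the second computed directly from the definitions, inserting $t_{\alpha_{s(e_n)}^*}t_{\alpha_{s(e_n)}}=q_{s(e_n)}$ and $t_{(e_n,x)^*}t_{(e_n,y)}=\delta_{x,y}q_{v_x}$). The second input is the structural fact, read off Definition \ref{gfromug}, that every edge of the subgraph $F$ (Remark \ref{FsubE}) has source a $\Delta$-vertex; hence a positive-length $\alpha_x$ starts in $\Gamma_0$ and meets only $\Delta$-vertices before its terminal vertex, while the vertices of $G^0$ occur only as terminal vertices. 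I will use this repeatedly, together with \cite[Lemma 4.6]{Takeshi-Katsura-Paul-Muhly-Aidan-Sims--Mark-Tomforde:2010aa} (uniqueness of $\alpha_x$, incomparability of $\alpha_v,\alpha_w$ for distinct $v,w\in G^0$, and the fact that $\alpha_{\sigma(v)|_k}$ is an initial segment of $\alpha_v$).

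Next I would dispatch the ``edge'' relations \textbf{ExL2}, \textbf{ExL3}, \textbf{ExL4}, which are the most direct. For \textbf{ExL2} one inserts $t_{\alpha_{s(e_n)}^*}t_{\alpha_{s(e_n)}}=q_{s(e_n)}$ and collapses the resulting double sums using Claim \ref{corth}, so that each of $P_{s(e_n)}S_{e_n}=S_{e_n}S_{e_n^*}S_{e_n}=S_{e_n}$ and $S_{e_n^*}S_{e_n}S_{e_n^*}=S_{e_n^*}P_{s(e_n)}=S_{e_n^*}$ telescopes to the defining sum. For \textbf{ExL3} with $m\neq n$, I would write $S_{e_n^*}S_{e_m}$ as a sum of terms $t_{\alpha_x}\,t_{\beta^*}t_\alpha\,t_{\alpha_y^*}$ with $\beta=\alpha_{s(e_n)}(e_n,x)$ and $\alpha=\alpha_{s(e_m)}(e_m,y)$; since the only non-$F$ edge of each of $\alpha,\beta$ is its last one and these differ, neither path can extend the other, so $t_{\beta^*}t_\alpha=0$. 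For \textbf{ExL4}, fix $v\in G^0$ with $0<|s^{-1}(v)|<\infty$; a short computation gives $S_{e_n}S_{e_n^*}=t_{\alpha_v}\big(\sum_{x\in X(e_n)}t_{(e_n,x)}t_{(e_n,x)^*}\big)t_{\alpha_v^*}$ for $e_n\in s^{-1}(v)$, and because $F$-edges never emanate from the $G^0$-vertex $v_v$, the edges $\{(e_n,x):e_n\in s^{-1}(v),\,x\in X(e_n)\}$ are precisely $s_E^{-1}(v_v)$, a finite nonempty set by Lemma \ref{lemxen}; applying \textbf{LP4} in $\leg$ at $v_v$ turns the inner sum into $q_v$ and leaves $\sum_{e_n\in s^{-1}(v)}S_{e_n}S_{e_n^*}=t_{\alpha_v}q_vt_{\alpha_v^*}=P_v$.

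It then remains to check that $\{P_v\}\cup\{\mathsf{Q}_{e_n}\}$ satisfies condition (EL) of Definition \ref{dcel}, which is \textbf{ExL1}. Relations \textbf{EL1} and \textbf{EL2} are immediate from the projection calculus: \textbf{EL1} follows from the incomparability of $\alpha_v,\alpha_w$ for $v\neq w$, and \textbf{EL2} from the fact that all the $e_x$ commute. For \textbf{EL3} I would translate comparability of $\alpha_v$ and $\alpha_x$ (for $x\in X(e_n)$) into membership of $v$ in $r(e_n)$: using that $\alpha_v$ meets $G^0$ only at its end, $\alpha_v$ can never be a proper prefix of some $\alpha_\omega$ with $\omega\in\Delta_n$, so comparability means either $x=v$ (when $|\sigma(v)|<n$) or $\alpha_{\sigma(v)|_n}$ is an initial segment of $\alpha_v$ with $(\sigma(v)|_n)_n=1$; either way the definitions of $\sigma$ and $r(\omega)$ force $v\in r(e_n)$, and at most one such $x$ occurs by Claim \ref{corth}. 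This yields $P_v\mathsf{Q}_{e_n}=\mathsf{Q}_{e_n}P_v=P_v$ when $v\in r(e_n)$ and $0$ otherwise.

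The hard part will be \textbf{EL4}. Granting \textbf{EL3}, left-multiplying the commuting idempotent $Z:=\prod_{e\in\lambda}\mathsf{Q}_e\prod_{f\in\mu}(1-\mathsf{Q}_f)$ by $P_v$ shows $P_vZ=P_v$ exactly when $v\in r(\lambda,\mu)$ and $0$ otherwise, so that $\big(\sum_{v\in r(\lambda,\mu)}P_v\big)Z=\sum_{v\in r(\lambda,\mu)}P_v$; the content is the reverse identity $Z=\big(\sum_{v\in r(\lambda,\mu)}P_v\big)Z$, i.e.\ that the ``infinite-range'' contributions coming from the $\Delta_n$-parts of the $\mathsf{Q}_e$ cancel. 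I would prove this by expanding $Z$ in the orthogonal pieces $e_x$ and showing, via the definitions of $\Delta$, $\sigma$ and $r(\omega)$, that the finiteness of $r(\lambda,\mu)$ forces every surviving $x$ to lie in $G^0\cap r(\lambda,\mu)$ with $|\sigma(x)|$ below the relevant level; the model case $\lambda=\{e_n\},\,\mu=\emptyset$ already exhibits the mechanism, since $|r(e_n)|<\infty$ implies $|\sigma(w)|<n$ for all $w\in r(e_n)$, so no $\Delta_n$-element survives and $\mathsf{Q}_{e_n}=\sum_{v\in r(e_n)}P_v$. Throughout, the symbol $1$ in the factors $1-\mathsf{Q}_f$ is read in the unitization, as permitted by the note after Definition \ref{dcel}. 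This telescoping of the $\Delta$-contributions against the finiteness of $r(\lambda,\mu)$ --- the algebraic shadow of the computation preceding \cite[Proposition 5.20]{Takeshi-Katsura-Paul-Muhly-Aidan-Sims--Mark-Tomforde:2010aa} --- is where essentially all of the difficulty lies; the remaining relations are bookkeeping with the multiplication table of $\leg$.
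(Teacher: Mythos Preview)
Your treatment of \textbf{ExL2}, \textbf{ExL3}, \textbf{ExL4}, and \textbf{EL1}--\textbf{EL3} is correct and essentially identical to the paper's argument: the same insertions of $t_{\alpha_{s(e_n)}^*}t_{\alpha_{s(e_n)}}=q_{s(e_n)}$, the same use of Claim~\ref{corth}, and the same appeal to \cite[Lemma~4.6]{Takeshi-Katsura-Paul-Muhly-Aidan-Sims--Mark-Tomforde:2010aa} for comparability of the tree paths $\alpha_x$. Your observation that $F$-edges never emanate from $G^0$-vertices (so $s_E^{-1}(v_v)=\{(e_n,x):s(e_n)=v,\,x\in X(e_n)\}$) is exactly what drives the paper's \textbf{ExL4} computation.

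The divergence is at \textbf{EL4}. You propose to expand $Z=\prod_{e\in\lambda}\mathsf{Q}_e\prod_{f\in\mu}(1-\mathsf{Q}_f)$ ``in the orthogonal pieces $e_x$'' and argue that finiteness of $r(\lambda,\mu)$ kills the $\Delta$-contributions. This is the right intuition, but the phrase ``orthogonal pieces'' is misleading: the idempotents $e_x$ are orthogonal \emph{within} a single $X(e_n)$ (Claim~\ref{corth}) but only \emph{commuting} across different $n$'s, with $e_xe_{x'}$ equalling whichever corresponds to the longer path when $\alpha_x,\alpha_{x'}$ are comparable. So a direct expansion of $Z$ produces cross terms $e_xe_{x'}$ whose reduction requires careful tracking of the partial order on $\Delta$; your sketch does not indicate how this bookkeeping is organized beyond the base case $\lambda=\{e_n\}$, $\mu=\emptyset$. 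The paper sidesteps this by a clean induction on $|\lambda\cup\mu|$: peel off one factor $S_{e_{n'}^*}S_{e_{n'}}$ or $(1-S_{e_{m'}^*}S_{e_{m'}})$ at a time, apply the inductive hypothesis (or the already-established \textbf{EL3}) to the remaining product, and use that either the smaller product or the peeled factor has finite range. This inductive organization is what converts your correct heuristic into an actual proof, and I would recommend adopting it rather than attempting the global expansion.
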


\begin{proof} (\textbf{ExL2}). Using the fact $t_{\alpha_{v}}=t_{\alpha_{v}}t_{\alpha_{v}^{*}}t_{\alpha_{v}}$ (similarly, $t_{\alpha_{v}^{*}}=t_{\alpha_{v}^{*}}t_{\alpha_{v}}t_{\alpha_{v}^{*}})$, a direct calculation shows
$$P_{s(e_{n})}S_{e_{n}}=S_{e_{n}}\ \ \ \text{and}\ \ \ S_{e_{n}^{*}}=S_{e_{n}^{*}}P_{s(e_{n})}.$$
Further, using the properties of a Leavitt $E_{\mathcal{G}}$-family, we can show
{\footnotesize
\begin{align*}
S_{e_{n}}S_{e_{n}^{*}}S_{e_{n}}&=\Bigg(\sum\limits_{x\in X(e_{n})}t_{\alpha_{s(e_{n})}}t_{(e_{n},x)}t_{\alpha_{x}^{*}}\Bigg) \Bigg(\sum\limits_{y\in X(e_{n})}t_{\alpha_{y}}t_{(e_{n},y)^{*}}t_{\alpha_{s(e_{n})}^{*}} \Bigg) \Bigg( \sum\limits_{z\in X(e_{n})}t_{\alpha_{s(e_{n})}}t_{(e_{n},z)}t_{\alpha_{z}^{*}}\Bigg)\\
&=\Bigg(\sum\limits_{x\in X(e_{n})}t_{\alpha_{s(e_{n})}}t_{(e_{n},x)}t_{\alpha_{x}^{*}}\Bigg) \Bigg(\sum\limits_{y\in X(e_{n})}\ \sum\limits_{z\in X(e_{n})}t_{\alpha_{y}}t_{(e_{n},y)^{*}}t_{\alpha_{s(e_{n})}^{*}} t_{\alpha_{s(e_{n})}}t_{(e_{n},z)}t_{\alpha_{z}^{*}} \Bigg)\\
&=\Bigg(\sum\limits_{x\in X(e_{n})}t_{\alpha_{s(e_{n})}}t_{(e_{n},x)}t_{\alpha_{x}^{*}}\Bigg) \Bigg(\sum\limits_{y\in X(e_{n})}t_{\alpha_{y}}t_{\alpha_{y}^{*}} \Bigg)\\
&=\sum\limits_{x\in X(e_{n})}\sum\limits_{y\in X(e_{n})}t_{\alpha_{s(e_{n})}}t_{(e_{n},x)}t_{\alpha_{x}^{*}}t_{\alpha_{y}}t_{\alpha_{y}^{*}}\\
&=\sum\limits_{x\in X(e_{n})}t_{\alpha_{s(e_{n})}}t_{(e_{n},x)}t_{\alpha_{x}^{*}}t_{\alpha_{x}}t_{\alpha_{x}^{*}} =\sum\limits_{x\in X(e_{n})}t_{\alpha_{s(e_{n})}}t_{(e_{n},x)}t_{\alpha_{x}^{*}}=S_{e_{n}};
\end{align*}\par}
a similar argument shows $S_{e_{n}^{*}}S_{e_{n}}S_{e_{n}^{*}}=S_{e_{n}^{*}}$.\\

(\textbf{ExL3}). First, suppose $n\neq m$. We have,
\begin{center}
$t_{\alpha_{s(e_{n})}^{*}}t_{\alpha_{s(e_{m})}}=
\begin{cases}
t_{\alpha} & \text{if } \alpha_{s(e_{m})}=\alpha_{s(e_{n})}\alpha \\
t_{\alpha'^{*}} & \text{if } \alpha_{s(e_{n})}=\alpha_{s(e_{m})}\alpha' \\
0 & \text{otherwise}.
\end{cases}$ 
\end{center}
If $t_{\alpha_{s(e_{n})}^{*}}t_{\alpha_{s(e_{m})}}=0$, we are done. Otherwise, suppose $t_{\alpha_{s(e_{n})}^{*}}t_{\alpha_{s(e_{m})}}=t_{\alpha}$. Then,
\begin{align*}
S_{e_{n}^{*}}S_{e_{m}}&=\sum\limits_{x\in X(e_{n})}\ \sum\limits_{y\in X(e_{m})}t_{\alpha_{x}}t_{(e_{n},x)^{*}}t_{\alpha_{s(e_{n})}^{*}} t_{\alpha_{s(e_{m})}}t_{(e_{m},y)}t_{\alpha_{y}^{*}}\\
&=\sum\limits_{x\in X(e_{n})}\ \sum\limits_{y\in X(e_{m})}t_{\alpha_{x}}t_{(e_{n},x)^{*}}t_{\alpha}t_{(e_{m},y)}t_{\alpha_{y}^{*}}\\
&=\sum\limits_{x\in X(e_{n})}\ \sum\limits_{y\in X(e_{m})}t_{\alpha_{x}}t_{(e_{n},x)^{*}}q_{s(e_{n})}t_{\alpha}t_{(e_{m},y)}t_{\alpha_{y}^{*}}.
\end{align*}
 Since $s(e_{n})\in G^{0}$, \cite[Lemma 4.6 (1)]{Takeshi-Katsura-Paul-Muhly-Aidan-Sims--Mark-Tomforde:2010aa} implies $|\alpha|=0$ (i.e., $t_{\alpha}=q_{s(e_{n})}$). Thus,
$$S_{e_{n}^{*}}S_{e_{m}}=\sum\limits_{x\in X(e_{n})}\ \sum\limits_{y\in X(e_{m})}t_{\alpha_{x}}t_{(e_{n},x)^{*}}q_{s(e_{n})}t_{(e_{m},y)}t_{\alpha_{y}^{*}}.$$
If $s(e_{n})\neq s(e_{m})$, then $q_{s(e_{n})}q_{s(e_{m})}=0 \implies S_{e_{n}^{*}}S_{e_{m}}=0$. If $s(e_{n})=s(e_{m})$, then
$$S_{e_{n}^{*}}S_{e_{m}}=\sum\limits_{x\in X(e_{n})}\ \sum\limits_{y\in X(e_{m})}t_{\alpha_{x}}t_{(e_{n},x)^{*}}t_{(e_{m},y)}t_{\alpha_{y}^{*}};$$
by \textbf{LP3}, we have $t_{(e_{n},x)^{*}}t_{(e_{m},y)}=0$ and so $S_{e_{n}^{*}}S_{e_{m}}=0$. We can use a similar argument to show this conclusion still holds for the choice $t_{\alpha_{s(e_{n})}^{*}}t_{\alpha_{s(e_{m})}}=t_{\alpha'^{*}}$.\\

(\textbf{ExL4}). For $E_{\mathcal{G}}=(E^{0},E^{1},s_{E},r_{E})$, note that, for each $n$,
$$s_{E}^{-1}(s(e_{n}))=\{(e_{n},x):x\in X(e_{n})\}.$$
And so,
\begin{align*}
\sum\limits_{v=s(e_{n})}S_{e_{n}}S_{e_{n}^{*}}&=\sum\limits_{v=s(e_{n})}\Bigg(\sum\limits_{x\in X(e_{n})}\sum\limits_{y\in X(e_{n})}t_{\alpha_{s(e_{n})}}t_{(e_{n},x)}t_{\alpha_{x}^{*}}t_{\alpha_{y}}t_{(e_{n},y)^{*}}t_{\alpha_{s(e_{n})}^{*}} \Bigg),\\
&\text{by Claim \ref{corth}, } x\neq y \implies \alpha_{x}^{*}\alpha_{y}=0, \text{and so}\\
&=\sum\limits_{v=s(e_{n})}\sum\limits_{x\in X(e_{n})}t_{\alpha_{s(e_{n})}}t_{(e_{n},x)}t_{(e_{n},x)^{*}}t_{\alpha_{s(e_{n})}^{*}}\\
&=t_{\alpha_{s(e_{n})}}\Bigg(\sum\limits_{v=s(e_{n})}\sum\limits_{x\in X(e_{n})}t_{(e_{n},x)}t_{(e_{n},x)^{*}} \Bigg)t_{\alpha_{s(e_{n})}^{*}},\\
&\text{by \textbf{LP2}, for $v=s(e_{n})$,}\\
&=t_{\alpha_{v}}(q_{v})t_{\alpha_{v}^{*}}=t_{\alpha_{v}}t_{\alpha_{v}^{*}}=P_{v}.
\end{align*}

(\textbf{EL1} \& \textbf{EL2}). For $v,w \in G^{0}$, utilizing \cite[Lemma 4.6 (1)]{Takeshi-Katsura-Paul-Muhly-Aidan-Sims--Mark-Tomforde:2010aa}, we have 
\begin{center}
$t_{\alpha_{v}^{*}}t_{\alpha_{w}}=
\begin{cases}
q_{v} & \text{if } v=w\\
0 & \text{if } v\neq w.
\end{cases}$ 
\end{center}
So, for $P_{v}:=t_{\alpha_{v}}t_{\alpha_{v}^{*}}$ and $P_{w}:=t_{\alpha_{w}}t_{\alpha_{w}^{*}}$,
\begin{center}
$P_{v}P_{w}=
\begin{cases}
P_{v} & \text{if } v=w\\
0 & \text{if } v\neq w;
\end{cases}$ 
\end{center}
meaning, the set $\{P_{v}\}_{v\in G^{0}}$ consists of pairwise orthogonal idempotent elements. In fact, elements of the form $t_{\alpha_{x}}t_{\alpha_{x}^{*}}$ are idempotents which pairwise commute (see \cite[Equation 2.1]{Tomforde:2011aa}); since a calculation we've previously done (see the proof of \textbf{ExL2}) shows 
$$S_{e_{n}^{*}}S_{e_{n}}=\sum\limits_{x\in X(e_{n})}t_{\alpha_{x}}t_{\alpha_{x}^{*}},$$
we can conclude $\{S_{e_{n}^{*}}S_{e_{n}}\}_{e_{n}\in\mathcal{G}^{1}}$ is a set of pairwise commuting idempotents.

(\textbf{EL3}). For $n\in\mathbb{N}$ and $\omega\in\{0,1\}^{n}\setminus\{0^{n}\}$, set
$$r'(\omega):=\{v\in G^{0}:|\sigma(v)|\geq n,\ \sigma(v)|_{n}=\omega\}.$$
Further, let $G_{n}^{0}$ denote $X(e_{n})\cap G^{0}$. Note that for each $\omega\in\{0,1\}^{n}\setminus\{0^{n}\}$, we have 
$$r'(\omega)\cap G_{n}^{0}=\emptyset$$
since $v\in G_{n}^{0}\implies \sigma(\omega)<n$. Given $\omega,\omega'\in\{0,1\}^{n}\setminus\{0^{n}\}$ such that $\omega\neq\omega'$, we have
$$r'(\omega)\cap r'(\omega')=\emptyset$$
due to the fact $v\in r'(\omega)\implies \sigma(\omega)|_{n}=\omega$. Further, \cite[Lemma 4.4]{Takeshi-Katsura-Paul-Muhly-Aidan-Sims--Mark-Tomforde:2010aa} states
$$r(e_{n})=G_{n}^{0}\cup\Bigg(\bigcup\limits_{\{\omega\in\{0,1\}^{n}:\omega_{n}=1\}}r'(\omega)\Bigg).$$
This means, for each $v\in r(e_{n})$, $v$ belongs exclusively to $G_{n}^{0}$, or $r'(\omega)$ for a unique $\omega$. Let $X_{\Delta,n}:=X(e_{n})\cap \Delta$. Then, 
$$S_{e_{n}^{*}}S_{e_{n}}P_{v}=\sum\limits_{x\in X(e_{n})}t_{\alpha_{x}}t_{\alpha_{x}^{*}}t_{\alpha_{v}}t_{\alpha_{v}^{*}}=\sum\limits_{v'\in G_{n}^{0}}t_{\alpha_{v'}}t_{\alpha_{v'}^{*}}t_{\alpha_{v}}t_{\alpha_{v}^{*}}+\sum\limits_{\omega\in X_{\Delta,n}}t_{\alpha_{\omega}}t_{\alpha_{\omega}^{*}}t_{\alpha_{v}}t_{\alpha_{v}^{*}}.$$
Now, $t_{\alpha_{\omega}^{*}}t_{\alpha_{v}}=0$ unless $\alpha_{v}$ extends $\alpha_{\omega}$, or $\alpha_{\omega}$ extends $\alpha_{v}$. However,
since $v\in G^{0}$, \cite[Lemma 4.6 (1)]{Takeshi-Katsura-Paul-Muhly-Aidan-Sims--Mark-Tomforde:2010aa} tells us $\alpha_{\omega}$ can't extend $\alpha_{v}$. Moreover,  \cite[Lemma 4.6 (3)]{Takeshi-Katsura-Paul-Muhly-Aidan-Sims--Mark-Tomforde:2010aa} tells us there exists a path in $F$ from $\omega\in\Delta$ to $v\in G^{0}$ if and only if $v\in r'(\omega)$. And so if $v\notin r(e_{n})$, then for $\omega\in X_{\Delta,n}$, we have $t_{\alpha_{\omega}}t_{\alpha_{\omega}^{*}}t_{\alpha_{v}}t_{\alpha_{v}^{*}}=0$. By \cite[Lemma 4.6 (1)]{Takeshi-Katsura-Paul-Muhly-Aidan-Sims--Mark-Tomforde:2010aa} and \cite[Equation 2.1]{Tomforde:2011aa}, for $v\notin r(e_{n})$ and $v'\in G_{n}^{0}$, $t_{\alpha_{v'}}t_{\alpha_{v'}^{*}}t_{\alpha_{v}}t_{\alpha_{v}^{*}}=0$. Thus,
$$v\notin r(e_{n})\implies S_{e_{n}^{*}}S_{e_{n}}P_{v}=0.$$
On the other hand, suppose $v\in r(e_{n})$. If $v\in G_{n}^{0}$, it means $v\notin r'(\omega)$ for any $\omega\in X_{\Delta,n}$. This gives us 
$$\sum\limits_{\omega\in X_{\Delta,n}}t_{\alpha_{\omega}}t_{\alpha_{\omega}^{*}}t_{\alpha_{v}}t_{\alpha_{v}^{*}}=0\ \ \text{and}\ \ \sum\limits_{v'\in G_{n}^{0}}t_{\alpha_{v'}}t_{\alpha_{v'}^{*}}t_{\alpha_{v}}t_{\alpha_{v}^{*}}=t_{\alpha_{v}}t_{\alpha_{v}^{*}};$$
that is, $S_{e_{n}^{*}}S_{e_{n}}P_{v}=P_{v}$. If $v\notin G_{n}^{0}$, then $v\in r'(\omega')$ for a unique $\omega'\in X_{\Delta,n}$, which in turn means
$$S_{e_{n}^{*}}S_{e_{n}}P_{v}=t_{\alpha_{\omega'}}t_{\alpha_{\omega'}^{*}}t_{\alpha_{v}}t_{\alpha_{v}^{*}}.$$
Because $\alpha_{v}$ must extend $\alpha_{\omega'}$, $t_{\alpha_{\omega'}}t_{\alpha_{\omega'}^{*}}t_{\alpha_{v}}t_{\alpha_{v}^{*}}=t_{\alpha_{v}}t_{\alpha_{v}^{*}}$; again, it must be $S_{e_{n}^{*}}S_{e_{n}}P_{v}=P_{v}$. A similar argument can be used to show $P_{v}S_{e_{n}^{*}}S_{e_{n}}=P_{v}$. Thus,
\begin{center}
$P_{v}S_{e_{n}^{*}}S_{e_{n}}=S_{e_{n}^{*}}S_{e_{n}}P_{v}=
\begin{cases}
P_{v} & \text{if } v\in r(e_{n}),\\
0 & \text{otherwise.}
\end{cases}$ 
\end{center}

(\textbf{ExL1} \& \textbf{EL4}). Finally, let $N,M\subseteq\mathbb{N}$ be finite. Further, suppose $N\neq\emptyset$, $N\cap M=\emptyset$, and 
$$r(N,M):=\bigcap\limits_{n\in N}r(e_{n})\setminus\bigcup\limits_{m\in M}r(e_{m})$$
is finite. We want to show
$$\prod\limits_{n\in N}S_{e_{n}^{*}}S_{e_{n}}\prod\limits_{m\in M}(1-S_{e_{m}^{*}}S_{e_{m}})=\sum\limits_{v\in r(N,M)}P_{v}.$$
We will use induction on $|N\cup M|=k$. To that end, for $k=1$,  we have $r(N,M)=r(e_{n})$ and 
$$\prod\limits_{n\in N}S_{e_{n}^{*}}S_{e_{n}}\prod\limits_{m\in M}(1-S_{e_{m}^{*}}S_{e_{m}})=S_{e_{n}^{*}}S_{e_{n}}.$$
Note, by the definitions of $\Delta$ and $X(e_{n})$, $|r(e_{n})|<\infty\implies X_{\Delta,n}=\emptyset\implies r(e_{n})=G_{n}^{0}$. Thus,
$$S_{e_{n}^{*}}S_{e_{n}}=\sum\limits_{x\in X(e_{n})}t_{\alpha_{x}}t_{\alpha_{x}^{*}}=\sum\limits_{v\in G_{n}^{0}}t_{\alpha_{v}}t_{\alpha_{v}^{*}}+\sum\limits_{\omega\in X_{\Delta,n}}t_{\alpha_{\omega}}t_{\alpha_{\omega}^{*}}=\sum\limits_{v\in G_{n}^{0}}t_{\alpha_{v}}t_{\alpha_{v}^{*}}=\sum\limits_{v\in r(e_{n})}P_{v},$$
completing the base case.

Now, suppose our assumption holds true for $|N\cup M|\leq  k$. Let, $N',\ M',$ be such that $|N'\cup M'|=k+1$. There are two cases to consider.\\

\textit{Case 1}: $N'=\{n'\}\cup N$. Note $|r(N,M)|<\infty \implies |r(N,M')|<\infty \text{ or } |r(e_{n'})|<\infty$. If $|r(N,M')|<\infty$, then by our inductive hypothesis, we have 
$$\prod\limits_{n\in N}S_{e_{n}^{*}}S_{e_{n}}\prod\limits_{m\in M'}(1-S_{e_{m}^{*}}S_{e_{m}})=\sum\limits_{v\in r(N,M')}P_{v}.$$
Using \textbf{EL3}, we can work out 
\begin{align*}
\prod\limits_{n\in N'}S_{e_{n}^{*}}S_{e_{n}}\prod\limits_{m\in M'}(1-&S_{e_{m}^{*}}S_{e_{m}})=S_{e_{n'}^{*}}S_{e_{n'}}\prod\limits_{n\in N}S_{e_{n}^{*}}S_{e_{n}}\prod\limits_{m\in M'}(1-S_{e_{m}^{*}}S_{e_{m}})\\
&=S_{e_{n}^{*}}S_{e_{n}}\sum\limits_{v\in r(N,M')}P_{v}=\sum\limits_{v\in r(N,M')\cap r(e_{n'})}P_{v}\\
&=\sum\limits_{v\in r(N',M')}P_{v}.
\end{align*}
On the other hand, if $|r(e_{n'})|<\infty$, then 
\begin{align*}
\prod\limits_{n\in N'}S_{e_{n}^{*}}S_{e_{n}}\prod\limits_{m\in M'}(1&-S_{e_{m}^{*}}S_{e_{m}})=S_{e_{n'}^{*}}S_{e_{n'}}\prod\limits_{n\in N}S_{e_{n}^{*}}S_{e_{n}}\prod\limits_{m\in M'}(1-S_{e_{m}^{*}}S_{e_{m}})\\
&=\Bigg(\sum\limits_{v\in r(e_{n'})}P_{v}\Bigg)\prod\limits_{n\in N}S_{e_{n}^{*}}S_{e_{n}}\prod\limits_{m\in M'}(1-S_{e_{m}^{*}}S_{e_{m}})\\
&=\Bigg(\sum\limits_{v\in r(e_{n'})\cap\big(\bigcap\limits_{n\in N}r(e_{n})\big)}P_{v}\Bigg)\prod\limits_{m\in M'}(1-S_{e_{m}^{*}}S_{e_{m}})\\
&=\sum\limits_{v\in r(e_{n'})\cap\big(\bigcap\limits_{n\in N}r(e_{n})\big)}P_{v}\ \ -\sum\limits_{v\in r(e_{n'})\cap\big(\bigcap\limits_{n\in N}r(e_{n})\ \cap\bigcap\limits_{m\in M'}r(e_{m})\big)}P_{v}\\
&=\sum\limits_{v\in r(N',M')}P_{v}.
\end{align*}

\textit{Case 2}: $M'=\{m'\}\cup M$. If $|r(N',M)|<\infty$,  then by our inductive hypothesis,
\begin{align*}
\prod\limits_{n\in N'}S_{e_{n}^{*}}S_{e_{n}}\prod\limits_{m\in M'}(1-S_{e_{m}^{*}}S_{e_{m}})&=\Bigg(\prod\limits_{n\in N'}S_{e_{n}^{*}}S_{e_{n}}\prod\limits_{m\in M}(1-S_{e_{m}^{*}}S_{e_{m}})\Bigg)(1-S_{e_{m'}^{*}}S_{e_{m'}})\\
&=\Bigg(\sum\limits_{v\in r(N',M)}P_{v}\Bigg)(1-S_{e_{m'}^{*}}S_{e_{m'}})\\
&=\sum\limits_{v\in r(N',M)}P_{v}\ \ \ -\sum\limits_{v\in r(N',M)\cap r(e_{m'})}P_{v}\\
&=\sum\limits_{v\in r(N',M)\setminus r(e_{m'})}P_{v}=\sum\limits_{v\in r(N',M')}P_{v}.
\end{align*}
On the other hand, if $|r(N',M)|=\infty$, then the fact $|r(N',M')|<\infty$ means $|r(N',\{m'\})|<\infty$. Applying our inductive hypothesis, we have
$$\prod\limits_{n\in N'}S_{e_{n}^{*}}S_{e_{n}}(1-S_{e_{m'}^{*}}S_{e_{m'}})=\sum\limits_{v\in r(N',\{m'\})}P_{v}.$$
And so, by also exploiting \textbf{EL2} and \textbf{EL3},
\begin{align*}
\prod\limits_{n\in N'}S_{e_{n}^{*}}S_{e_{n}}&\prod\limits_{m\in M'}(1-S_{e_{m}^{*}}S_{e_{m}})=\Bigg(\prod\limits_{n\in N'}S_{e_{n}^{*}}S_{e_{n}}\prod\limits_{m\in M}(1-S_{e_{m}^{*}}S_{e_{m}})\Bigg)(1-S_{e_{m'}^{*}}S_{e_{m'}})\\
&=\Bigg(\prod\limits_{n\in N'}S_{e_{n}^{*}}S_{e_{n}}(1-S_{e_{m'}^{*}}S_{e_{m'}})\Bigg)\prod\limits_{m\in M}(1-S_{e_{m}^{*}}S_{e_{m}})\\
&=\Bigg(\sum\limits_{v\in r(N',\{m'\})}P_{v}\Bigg)\prod\limits_{m\in M}(1-S_{e_{m}^{*}}S_{e_{m}}),\\
&\text{note: }v\in r(e_{m})\implies P_{v}(1-S_{e_{m}^{*}}S_{e_{m}})=0\implies P_{v}\prod\limits_{m\in M}(1-S_{e_{m}^{*}}S_{e_{m}})=0,\\
&\text{alternatively, }v\in\bigcap\limits_{m\in M}r(e_{m})^{c}\implies P_{v}\prod\limits_{m\in M}(1-S_{e_{m}^{*}}S_{e_{m}})=P_{v}, \text{ and so, }\\
&=\sum\limits_{v\in r(N',\{m'\})}P_{v}\ \ \ -\sum\limits_{v\in r(N',\{m'\})\cap\big(\bigcap\limits_{m\in M}r(e_{m})^{c}\big)}P_{v}\\
&=\sum\limits_{v\in r(N',M')}P_{v}.
\end{align*}
Thus, our lemma is proved.
\end{proof}

The following lemma closely follows \cite[Lemma 5.11]{Takeshi-Katsura-Paul-Muhly-Aidan-Sims--Mark-Tomforde:2010aa}.

\begin{lemma}\label{lglg}
Let $\mathcal{G}$ be an ultragraph with no singular vertices. For each $\omega\in\Delta_{k}\subseteq E_{\mathcal{G}}^{0}$, define 
$$N,M\subseteq\{1,2,\dots,k\}$$
such that for each $n\in N$, $\omega_{n}=1$, and for each $m\in M$, $\omega_{m}=0$. Then,
$$\prod\limits_{n\in N} S_{e_{n}^{*}}S_{e_{n}}\prod\limits_{m\in M}(1-S_{e_{m}^{*}}S_{e_{m}})=t_{\alpha_{\omega}}t_{\alpha_{\omega}^{*}}+\sum_{\substack{v\in r(N,M)\\ |\sigma(v)|<k}}P_{v}.$$
Note that $M=\{1,2,\dots,k\}\setminus N$.
\end{lemma}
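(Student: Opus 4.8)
The plan is to prove the identity by induction on $k=|\omega|$, reducing everything to the commuting idempotents $Q_x:=t_{\alpha_x}t_{\alpha_x^{*}}$ indexed by the vertices $x\in E_{\mathcal{G}}^{0}$ (so that $P_v=Q_v$), and writing $T_\omega$ for the left-hand side of the claimed identity. Three ingredients are needed up front. First, the computation already carried out inside the proof of Lemma \ref{lex3} gives $S_{e_n^{*}}S_{e_n}=\sum_{x\in X(e_n)}Q_x$, and by the definition of $X(e_n)$ this is a sum over the ``short'' vertices $\{v\in r(e_n):|\sigma(v)|<n\}$ and the ``deep'' words $\{\tau\in\Delta_n:\tau_n=1\}$. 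Second, Claim \ref{corth} together with \cite[Lemma 4.6]{Takeshi-Katsura-Paul-Muhly-Aidan-Sims--Mark-Tomforde:2010aa} yields the multiplication rule $Q_xQ_y=Q_x$ when $\alpha_x$ extends $\alpha_y$, $Q_xQ_y=Q_y$ when $\alpha_y$ extends $\alpha_x$, and $Q_xQ_y=0$ otherwise; in particular each $v\in G^{0}$ is a leaf of the subgraph $F$, so $Q_vQ_{v'}=\delta_{v,v'}Q_v$ for such vertices. Third, and most importantly, I will establish a \emph{splitting relation}: for $\omega'\in\Delta_{k-1}$ the vertex $v_{\omega'}$ is regular in $E_{\mathcal{G}}$, so applying \textbf{LP4} at $v_{\omega'}$ and conjugating by $t_{\alpha_{\omega'}}$ (using $t_{\alpha_{\omega'}}t_{e_v}=t_{\alpha_v}$ and $t_{\alpha_{\omega'}}t_{e_\tau}=t_{\alpha_\tau}$) gives
\begin{equation*}
Q_{\omega'}=\sum_{v\in W_{+}:\ \sigma(v)=\omega'}Q_v+\sum_{\tau\in\{(\omega',0),(\omega',1)\}\cap\Gamma_{+}}Q_\tau .
\end{equation*}

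Throughout I will use the elementary set identities $r(N,M)=r(\omega)$, and $r(\omega)=r(\omega')\cap r(e_k)$ or $r(\omega)=r(\omega')\setminus r(e_k)$ according as $\omega_k=1$ or $\omega_k=0$, together with the fact that $v\in r(\omega)$ with $|\sigma(v)|=k-1$ forces $\sigma(v)=\omega'$ (since $\sigma(v)$ and $\omega|_{k-1}$ must then agree bit-by-bit, via $v\in r(\sigma(v))$ from Lemma \ref{sig}), as well as the decomposition of $r(e_k)$ from \cite[Lemma 4.4]{Takeshi-Katsura-Paul-Muhly-Aidan-Sims--Mark-Tomforde:2010aa}. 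To run the induction I fix $j:=\min\{i:\omega_i=1\}$ and induct on $k\ge j$. In the inductive step $k>j$ the truncation $\omega':=\omega|_{k-1}$ still carries a $1$ in position $j$, hence $\omega'\in\Delta_{k-1}$ and the hypothesis applies to it. If $\omega_k=1$ I multiply the hypothesis for $T_{\omega'}$ on the right by $S_{e_k^{*}}S_{e_k}$; expanding the product of commuting idempotents and discarding the vanishing cross terms via the multiplication rule, the only deep term that survives is $Q_{(\omega',1)}=Q_\omega$, while the leaf terms collapse to exactly $\sum_{v\in r(\omega),\,|\sigma(v)|<k}Q_v$. If $\omega_k=0$ I multiply by $(1-S_{e_k^{*}}S_{e_k})$; here the splitting relation is precisely what lets me rewrite $Q_{\omega'}$ minus the $(\omega',1)$-child and the $G^{0}$-children lying in $r(e_k)$ as $Q_\omega$ plus the level-$(k-1)$ vertices of $r(\omega)$, after which the surviving leaf terms again assemble into $\sum_{v\in r(\omega),\,|\sigma(v)|<k}Q_v$.

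The base case $k=j$ is $\omega=(0^{\,j-1},1)$, which lies in $\Gamma_0$; thus $\omega$ is a root of $F$, $\alpha_\omega$ is trivial and $Q_\omega=q_\omega$. Here $\omega|_{k-1}=0^{k-1}\notin\Delta$, so the splitting relation is unavailable and I argue directly: beginning from $S_{e_j^{*}}S_{e_j}=\sum_{x\in X(e_j)}Q_x$, I absorb the factors $(1-S_{e_m^{*}}S_{e_m})$ for $m<j$ one at a time, using the multiplication rule to see that multiplying $Q_x$ by $(1-S_{e_m^{*}}S_{e_m})$ deletes exactly those $Q_x$ for which some element of $X(e_m)$ is an ancestor of $x$ (equivalently $x\in r(e_m)$) and leaves the rest untouched. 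What remains is precisely the root term $Q_\omega$ together with the $Q_v$ for $v\in r(e_j)\setminus\bigcup_{m<j}r(e_m)=r(\omega)$ satisfying $|\sigma(v)|<j$.

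I expect the main obstacle to be twofold. The delicate point is the correct handling of the distinguished $\Delta$- and $\Gamma_0$-vertices against the $G^{0}$-leaves: establishing and then correctly applying the splitting relation is where the argument really turns, since it is the mechanism by which the single projection $Q_{\omega'}$ is exchanged for $Q_\omega$ in the $\omega_k=0$ step, and it forces one to invoke the regularity of the $\Delta$-vertices of $E_{\mathcal{G}}$ and the precise decomposition of $r(e_k)$. The second obstacle is the leading-zeros base case, which cannot be folded into the inductive step and must be treated by the separate direct computation above, taking care that the root vertices (those with $\sigma(v)=\emptyset$) obey $Q_vQ_{v'}=\delta_{v,v'}Q_v$ rather than an extension relation.
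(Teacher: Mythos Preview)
Your proposal is correct and uses the same core ingredients as the paper: the formula $S_{e_n^{*}}S_{e_n}=\sum_{x\in X(e_n)}Q_x$, the extension-based product rule for the $Q_x$ coming from \cite[Lemma~4.6]{Takeshi-Katsura-Paul-Muhly-Aidan-Sims--Mark-Tomforde:2010aa}, and your splitting relation, which is exactly the paper's decomposition of $t_{\alpha_{\omega'}}t_{\alpha_{\omega'}^{*}}$ via \textbf{LP4} at the regular vertex $\omega'$. The difference is in how the induction is organized. The paper runs a single induction on $k$ with three cases in the step: $\omega=(\omega',1)$ and $\omega=(\omega',0)$ with $\omega'\neq 0^{k-1}$ (matching your two inductive cases), and a separate Case~3 for $\omega=(0^{k-1},1)$. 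That third case is the longest part of the paper's argument---it invokes the partition-of-unity Lemma~\ref{lci} to expand $S_{e_k^{*}}S_{e_k}$ over all nonempty $N'\subseteq\{1,\dots,k-1\}$, feeds each resulting term back through Case~1 or through \textbf{EL4} (according as $|r(N'',M')|$ is infinite or finite), and then subtracts. Your stratified induction instead makes $(0^{\,j-1},1)$ the base case of each stratum and handles it directly by peeling off the factors $(1-S_{e_m^{*}}S_{e_m})$ one at a time from $\sum_{x\in X(e_j)}Q_x$; this avoids Lemma~\ref{lci} entirely and is cleaner, at the modest cost of verifying the base case for each $j$ rather than just $j=1$.
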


\begin{proof}
We will show our result by induction on $k$. For $k=1$, we have $\omega=(1)$ and $r(N,M)=r(e_{1})$, and so
\begin{align*}
\prod\limits_{n\in N} S_{e_{n}^{*}}S_{e_{n}}\prod\limits_{m\in M}(&1-S_{e_{m}^{*}}S_{e_{m}})=S_{e_{1}^{*}}S_{e_{1}}\\
&=\sum\limits_{x\in X(e_{1})}t_{\alpha_{x}}t_{\alpha_{x}^{*}}=\sum\limits_{v\in G_{1}^{0}}t_{\alpha_{v}}t_{\alpha_{v}^{*}}+\sum\limits_{\omega\in X_{\Delta,1}}t_{\alpha_{\omega}}t_{\alpha_{\omega}^{*}},\\
&\text{since $|r(e_{1})|=\infty\implies G_{1}^{0}=\emptyset$, and $X_{\Delta,1}=\{\omega=(1)\}$},\\
&=t_{\alpha_{\omega}}t_{\alpha_{\omega}^{*}},
\end{align*}
completing our base case.
Now, let $\omega\in\Delta_{k}$. There are three cases to consider. For the first two cases, we will set $\omega':=\omega|_{k-1}\neq(0^{k-1})$. Further, since $r(\omega)\subseteq r(\omega')$ (in particular, $r(\omega)=r(\omega')\cap r(e_{k}) \text{ or }r(\omega)=r(\omega')\setminus r(e_{k})$), $\omega\in\Delta_{k}\implies \omega'\in\Delta_{k-1}$.

\textit{Case 1}: $\omega=(\omega',1)$. Let us set $N':=N\setminus\{k\}$, and note $N',M\subseteq\{1,2,\dots,k-1\}$. With that, we have
\begin{align*}
\prod\limits_{n\in N} S_{e_{n}^{*}}S_{e_{n}}\prod\limits_{m\in M}(1&-S_{e_{m}^{*}}S_{e_{m}})=S_{e_{k}^{*}}S_{e_{k}}\Bigg(\prod\limits_{n\in N'} S_{e_{n}^{*}}S_{e_{n}}\prod\limits_{m\in M}(1-S_{e_{m}^{*}}S_{e_{m}})\Bigg),\\
&\text{which, by our inductive hypothesis},\\
&=S_{e_{k}^{*}}S_{e_{k}}\Bigg(t_{\alpha_{\omega'}}t_{\alpha_{\omega'}^{*}}+\sum_{\substack{v\in r(N',M)\\ |\sigma(v)|<k-1}}P_{v}\Bigg)\\
&=S_{e_{k}^{*}}S_{e_{k}}t_{\alpha_{\omega'}}t_{\alpha_{\omega'}^{*}}+\sum_{\substack{v\in r(N',M)\cap r(e_{k})\\ |\sigma(v)|<k-1}}P_{v}\\
&=S_{e_{k}^{*}}S_{e_{k}}t_{\alpha_{\omega'}}t_{\alpha_{\omega'}^{*}}+\sum_{\substack{v\in r(N,M)\\ |\sigma(v)|<k-1}}P_{v}.
\end{align*}
Further,
\begin{align*}
S_{e_{k}^{*}}S_{e_{k}}t_{\alpha_{\omega'}}t_{\alpha_{\omega'}^{*}}&=\sum\limits_{x\in X(e_{k})}t_{\alpha_{x}}t_{\alpha_{x}^{*}}t_{\alpha_{\omega'}}t_{\alpha_{\omega'}^{*}}\\
&=\sum\limits_{v\in G_{k}^{0}}t_{\alpha_{v}}t_{\alpha_{v}^{*}}t_{\alpha_{\omega'}}t_{\alpha_{\omega'}^{*}}+\sum\limits_{\omega''\in X_{\Delta,k}}t_{\alpha_{\omega''}}t_{\alpha_{\omega''}^{*}}t_{\alpha_{\omega'}}t_{\alpha_{\omega'}^{*}}.
\end{align*}
Recall that $t_{\alpha_{x}}t_{\alpha_{x}^{*}}t_{\alpha_{y}}t_{\alpha_{y}^{*}}\neq0$ if and only if $\alpha_{x}$ extends $\alpha_{y}$, or vice versa. Thus, by \cite[Lemma 4.6 (2)]{Takeshi-Katsura-Paul-Muhly-Aidan-Sims--Mark-Tomforde:2010aa},  we have
$$\sum\limits_{\omega''\in X_{\Delta,k}}t_{\alpha_{\omega''}}t_{\alpha_{\omega''}^{*}}t_{\alpha_{\omega'}}t_{\alpha_{\omega'}^{*}}=t_{\alpha_{\omega}}t_{\alpha_{\omega}^{*}}.$$
Moreover, by \cite[Lemma 4.6 (1) \& (3)]{Takeshi-Katsura-Paul-Muhly-Aidan-Sims--Mark-Tomforde:2010aa}, we have
$$\sum\limits_{v\in G_{k}^{0}}t_{\alpha_{v}}t_{\alpha_{v}^{*}}t_{\alpha_{\omega'}}t_{\alpha_{\omega'}^{*}}=\sum_{\substack{\{v\in G_{k}^{0}:\ |\sigma(v)|\geq k-1,\\ \sigma(v)|_{k-1}=\omega'\}}}P_{v}.$$
Also, $v\in G_{k}^{0}\implies v\in r(e_{k})\implies|\sigma(v)|<k$; combining this with the fact $|\sigma(v)|\geq k-1$ and $\sigma(v)|_{k-1}=\omega'$, we have $\sigma(v)=\omega'$. By \cite[Lemma 3.7]{Takeshi-Katsura-Paul-Muhly-Aidan-Sims--Mark-Tomforde:2010aa}, 
$$\sigma(v)=\omega'\implies v\in r(\omega')= r(N',M),$$
which in turn means $v\in r(N,M)=r(e_{k})\cap r(N',M)$. On the other hand, suppose $v\in r(N,M)$ and $|\sigma(v)|=k-1$. Well, $v\in r(N,M)\implies v\in r(\omega').$ By\\
\noindent \cite[Lemma 3.7]{Takeshi-Katsura-Paul-Muhly-Aidan-Sims--Mark-Tomforde:2010aa}, we also have $v\in r(\sigma(v))$. For $|\omega'|=|\sigma(v)|=k-1$, we have $\sigma(v)=\omega'$. To see this, suppose $\sigma(v)\neq\omega'$. Since $|\omega'|=|\sigma(v)|=k-1$, $\sigma(v)\neq\omega'$ implies there exists $j\in\{1,2,\dots, k-1\}$ such that $\omega'_{j}\neq\sigma(v)_{j}$. WLOG, suppose $\omega'_{j}=1$ and $\sigma(v)_{j}=0$; this means $r(\omega')\subseteq r(e_{j})$ and $r(\sigma(v))\cap r(e_{j})=\emptyset$ (i.e., $r(\omega')\cap r(\sigma(v))=\emptyset$).$\Rightarrow\!\Leftarrow$Thus, 
$$v\in r(N,M) \text{ and } |\sigma(v)|=k-1\implies v\in \{v\in G_{k}^{0}:\ |\sigma(v)|\geq k-1, \sigma(v)|_{k-1}=\omega'\},$$
which in turn means
$$\sum_{\substack{\{v\in G_{k}^{0}:\ |\sigma(v)|\geq k-1,\\ \sigma(v)|_{k-1}=\omega'\}}}P_{v}=\sum_{\substack{v\in r(N,M)\\ |\sigma(v)|=k-1}}P_{v}.$$
Putting it all together, we have 
$$\prod\limits_{n\in N} S_{e_{n}^{*}}S_{e_{n}}\prod\limits_{m\in M}(1-S_{e_{m}^{*}}S_{e_{m}})=t_{\alpha_{\omega}}t_{\alpha_{\omega}^{*}}+\sum_{\substack{v\in r(N,M)\\ |\sigma(v)|<k}}P_{v}.$$

\textit{Case 2}: $\omega=(\omega',0)$. In this case, for $M':=M\setminus\{k\}$, 
\begin{align*}
\prod\limits_{n\in N} S_{e_{n}^{*}}S_{e_{n}}&\prod\limits_{m\in M}(1-S_{e_{m}^{*}}S_{e_{m}})=(1-S_{e_{k}^{*}}S_{e_{k}})\Bigg(\prod\limits_{n\in N} S_{e_{n}^{*}}S_{e_{n}}\prod\limits_{m\in M'}(1-S_{e_{m}^{*}}S_{e_{m}})\Bigg)\\
&=\prod\limits_{n\in N} S_{e_{n}^{*}}S_{e_{n}}\prod\limits_{m\in M'}(1-S_{e_{m}^{*}}S_{e_{m}})-S_{e_{k}^{*}}S_{e_{k}}\prod\limits_{n\in N} S_{e_{n}^{*}}S_{e_{n}}\prod\limits_{m\in M'}(1-S_{e_{m}^{*}}S_{e_{m}}).
\end{align*}
 By our inductive hypothesis, we have 
$$\prod\limits_{n\in N} S_{e_{n}^{*}}S_{e_{n}}\prod\limits_{m\in M'}(1-S_{e_{m}^{*}}S_{e_{m}})=t_{\alpha_{\omega'}}t_{\alpha_{\omega'}^{*}}+ \sum_{\substack{v\in r(N,M')\\ |\sigma(v)|<k-1}}P_{v}.$$
Let us set $\omega'':=(\omega',1)$ and $N'':=N\cup\{k\}$. Further, let us suppose $\omega''\in\Delta_{k}$. By case 1, we have
\begin{align*}
S_{e_{k}^{*}}S_{e_{k}}\prod\limits_{n\in N} S_{e_{n}^{*}}S_{e_{n}}&\prod\limits_{m\in M'}(1-S_{e_{m}^{*}}S_{e_{m}})=\prod\limits_{n\in N''} S_{e_{n}^{*}}S_{e_{n}}\prod\limits_{m\in M'}(1-S_{e_{m}^{*}}S_{e_{m}})\\
&=t_{\alpha_{\omega''}}t_{\alpha_{\omega''}^{*}}+ \sum_{\substack{v\in r(N'',M')\\ |\sigma(v)|<k}}P_{v}.
\end{align*}
Thus,
\begin{align*}
\prod\limits_{n\in N} S_{e_{n}^{*}}S_{e_{n}}\prod\limits_{m\in M}(1&-S_{e_{m}^{*}}S_{e_{m}})=\Bigg(t_{\alpha_{\omega'}}t_{\alpha_{\omega'}^{*}}+ \sum_{\substack{v\in r(N,M')\\ |\sigma(v)|<k-1}}P_{v}\Bigg)-\Bigg(t_{\alpha_{\omega''}}t_{\alpha_{\omega''}^{*}}+ \sum_{\substack{v\in r(N'',M')\\ |\sigma(v)|<k}}P_{v}\Bigg)\\
&=\Bigg(t_{\alpha_{\omega'}}t_{\alpha_{\omega'}^{*}}-t_{\alpha_{\omega''}}t_{\alpha_{\omega''}^{*}}\Bigg)+ \Bigg(\sum_{\substack{v\in r(N,M')\\ |\sigma(v)|<k-1}}P_{v}-\sum_{\substack{v\in r(e_{k})\cap r(N,M')\\ |\sigma(v)|<k}}P_{v}\Bigg)
\end{align*}
From \cite[Proposition 3.14]{Takeshi-Katsura-Paul-Muhly-Aidan-Sims--Mark-Tomforde:2010aa}, and \textbf{LP4}, we can deduce
$$q_{\omega'}=t_{\omega}t_{\omega^{*}}+t_{\omega''}t_{\omega''^{*}}+\sum_{\substack{\{v\in G^{0}:\\ \sigma(v)=\omega'\}}}P_{v}.$$
We can use the same argument as in case 1) to show 
$$\{v\in G^{0}:\sigma(v)=\omega'\}=\{v\in r(\omega'):|\sigma(v)|=k-1\}.$$
And so, for $r(\omega')=r(N,M')$, 
$$q_{\omega'}=t_{\omega}t_{\omega^{*}}+t_{\omega''}t_{\omega''^{*}}+\sum_{\substack{v\in r(N,M') \\ |\sigma(v)|=k-1}}P_{v}.$$
Applying \cite[Lemma 4.6]{Takeshi-Katsura-Paul-Muhly-Aidan-Sims--Mark-Tomforde:2010aa}, we have
\begin{align*}
t_{\alpha_{\omega'}}t_{\alpha_{\omega'}^{*}}&=t_{\alpha_{\omega'}}q_{\omega'}t_{\alpha_{\omega'}^{*}}\\
&=t_{\alpha_{\omega'}}\Bigg(t_{\omega}t_{\omega^{*}}+t_{\omega''}t_{\omega''^{*}}+\sum_{\substack{v\in r(N,M') \\ |\sigma(v)|=k-1}}P_{v}\Bigg)t_{\alpha_{\omega'}^{*}}\\
&=t_{\alpha_{\omega'}\omega}t_{(\alpha_{\omega'}\omega)^{*}}+t_{\alpha_{\omega'}\omega''}t_{(\alpha_{\omega'}\omega'')^{*}}+\sum_{\substack{v\in r(N,M') \\ |\sigma(v)|=k-1}}P_{v}\\
&\text{which, by the uniqueness of $\alpha_{x}$},\\
&=t_{\alpha_{\omega}}t_{\alpha_{\omega}^{*}}+t_{\alpha_{\omega''}}t_{\alpha_{\omega''}^{*}}+\sum_{\substack{v\in r(N,M') \\ |\sigma(v)|=k-1}}P_{v}.
\end{align*}
In particular, this means
{\footnotesize
\begin{align*}
\prod\limits_{n\in N} S_{e_{n}^{*}}S_{e_{n}}\prod\limits_{m\in M}(1-S_{e_{m}^{*}}S_{e_{m}})&=\Bigg(t_{\alpha_{\omega}}t_{\alpha_{\omega}^{*}}+\sum_{\substack{v\in r(N,M') \\ |\sigma(v)|=k-1}}P_{v}\Bigg)+ \Bigg(\sum_{\substack{v\in r(N,M')\\ |\sigma(v)|<k-1}}P_{v}-\sum_{\substack{v\in r(e_{k})\cap r(N,M')\\ |\sigma(v)|<k}}P_{v}\Bigg)\\
&=t_{\alpha_{\omega}}t_{\alpha_{\omega}^{*}}+\sum_{\substack{v\in r(N,M')\setminus r(e_{k})\\ |\sigma(v)|<k}}P_{v}\\
&=t_{\alpha_{\omega}}t_{\alpha_{\omega}^{*}}+\sum_{\substack{v\in r(N,M)\\ |\sigma(v)|<k}}P_{v}.
\end{align*}\par}
On the other hand, suppose $|r(\omega'')|=|r(N'',M')|<\infty$. By Lemma \ref{lex3}, we have
$$\prod\limits_{n\in N''} S_{e_{n}^{*}}S_{e_{n}}\prod\limits_{m\in M'}(1-S_{e_{m}^{*}}S_{e_{m}})=\sum\limits_{v\in r(N'',M')}P_{v}.$$
Moreover, by \cite[Lemma 4.3]{Takeshi-Katsura-Paul-Muhly-Aidan-Sims--Mark-Tomforde:2010aa},
$$r'(\omega''):=\{v\in r(N'',M'): |\sigma(v)|\geq k\}=\emptyset;$$
and so, 
$$\prod\limits_{n\in N''} S_{e_{n}^{*}}S_{e_{n}}\prod\limits_{m\in M'}(1-S_{e_{m}^{*}}S_{e_{m}})=\sum_{\substack{v\in r(N'',M')\\ |\sigma(v)|<k}}P_{v}.$$
Also note that in this case 
$$q_{\omega'}=t_{\omega}t_{\omega^{*}}+\sum_{\substack{v\in r(N,M') \\ |\sigma(v)|=k-1}}P_{v}\implies t_{\alpha_{\omega'}}t_{\alpha_{\omega'}^{*}}=t_{\alpha_{\omega}}t_{\alpha_{\omega}^{*}}+\sum_{\substack{\substack{v\in r(N,M') \\ |\sigma(v)|=k-1}}}P_{v}.$$
Putting it all together, we have
\begin{align*}
\prod\limits_{n\in N} S_{e_{n}^{*}}S_{e_{n}}\prod\limits_{m\in M}(1-S_{e_{m}^{*}}S_{e_{m}})&=\Bigg(t_{\alpha_{\omega'}}t_{\alpha_{\omega'}^{*}}+ \sum_{\substack{v\in r(N,M')\\ |\sigma(v)|<k-1}}P_{v}\Bigg)-\Bigg(\sum_{\substack{v\in r(N'',M')\\ |\sigma(v)|<k}}P_{v}\Bigg)\\
&=\Bigg(t_{\alpha_{\omega}}t_{\alpha_{\omega}^{*}}+\sum_{\substack{\substack{v\in r(N,M') \\ |\sigma(v)|=k-1}}}P_{v}\Bigg)+\sum_{\substack{v\in r(N,M')\\ |\sigma(v)|<k-1}}P_{v}-\sum_{\substack{v\in r(N'',M')\\ |\sigma(v)|<k}}P_{v}\\
&=t_{\alpha_{\omega}}t_{\alpha_{\omega}^{*}}+\sum_{\substack{v\in r(N,M)\\ |\sigma(v)|<k}}P_{v}.
\end{align*}

\textit{Case 3}: $\omega=(0^{k-1},1)$. By Lemma \ref{lci}, 
$$1=\sum\limits_{N'\subseteq\{1,2,\dots,k-1\}}\Bigg(\prod\limits_{n\in N'}S_{e_{n}^{*}}S_{e_{n}}\prod\limits_{m\in M'}(1-S_{e_{m}^{*}}S_{e_{m}}) \Bigg),$$
in the unitization of $L_{R}(E_{\mathcal{G}})$ (note, $M'=\{1,2,\dots,k-1\}\setminus N'$.); and so
{\footnotesize
\begin{align*}
&S_{e_{k}^{*}}S_{e_{k}}=S_{e_{k}^{*}}S_{e_{k}}\Bigg( \sum\limits_{N'\subseteq\{1,2,\dots,k-1\}}\Bigg(\prod\limits_{n\in N'}S_{e_{n}^{*}}S_{e_{n}}\prod\limits_{m\in M'}(1-S_{e_{m}^{*}}S_{e_{m}}) \Bigg)\Bigg)\\
&=S_{e_{k}^{*}}S_{e_{k}}\Bigg( \sum\limits_{\emptyset\neq N'\subseteq\{1,2,\dots,k-1\}}\Bigg(\prod\limits_{n\in N'}S_{e_{n}^{*}}S_{e_{n}}\prod\limits_{m\in M'}(1-S_{e_{m}^{*}}S_{e_{m}})\Bigg)+\prod\limits_{m\in\{1,2,\dots,k-1\}}(1-S_{e_{m}^{*}}S_{e_{m}})\Bigg).
\end{align*}\par}
Taking $N=\{k\}$, this means
{\footnotesize
\begin{align*}
\prod\limits_{n\in N} S_{e_{n}^{*}}S_{e_{n}}\prod\limits_{m\in M}(1-&S_{e_{m}^{*}}S_{e_{m}})=S_{e_{k}^{*}}S_{e_{k}}- \sum\limits_{\emptyset\neq N'\subseteq\{1,2,\dots,k-1\}}S_{e_{k}^{*}}S_{e_{k}}\prod\limits_{n\in N'}S_{e_{n}^{*}}S_{e_{n}}\prod\limits_{m\in M'}(1-S_{e_{m}^{*}}S_{e_{m}})\\
&=S_{e_{k}^{*}}S_{e_{k}}-\sum_{\substack{N''=N'\cup\{k\}: \\ \emptyset\neq N'\subseteq\{1,2,\dots,k-1\}}}\prod\limits_{n\in N''}S_{e_{n}^{*}}S_{e_{n}}\prod\limits_{m\in M'}(1-S_{e_{m}^{*}}S_{e_{m}}),\\
&\text{where $M'=\{1,2,\dots,k\}\setminus N''$.}
\end{align*}\par}
Observe that
$$S_{e_{k}^{*}}S_{e_{k}}=\sum\limits_{v\in G_{k}^{0}}t_{\alpha_{v}}t_{\alpha_{v}^{*}}+\sum\limits_{\omega''\in X_{\Delta,k}}t_{\alpha_{\omega''}}t_{\alpha_{\omega''}^{*}}=\sum_{\substack{v\in r(e_{k}):\\ |\sigma(v)|<k}}P_{v}+\sum_{\substack{\omega''\in\Delta_{k}:\\ \omega''_{k}=1}}t_{\alpha_{\omega''}}t_{\alpha_{\omega''}^{*}}.$$
On the other hand,
\begin{align*}
\sum_{\substack{N''=N'\cup\{k\}: \\ \emptyset\neq N'\subseteq\{1,2,\dots,k-1\}}}\prod\limits_{n\in N''}S_{e_{n}^{*}}S_{e_{n}}&\prod\limits_{m\in M'}(1-S_{e_{m}^{*}}S_{e_{m}})\\
&=\Bigg(\sum_{\substack{N''=N'\cup\{k\}: \\ \emptyset\neq N'\subseteq\{1,2,\dots,k-1\}\\ |r(N'',M')|=\infty}}\prod\limits_{n\in N''}S_{e_{n}^{*}}S_{e_{n}}\prod\limits_{m\in M'}(1-S_{e_{m}^{*}}S_{e_{m}})\Bigg)\\
&+\Bigg(\sum_{\substack{N''=N'\cup\{k\}: \\ \emptyset\neq N'\subseteq\{1,2,\dots,k-1\}\\ |r(N'',M')|<\infty}}\prod\limits_{n\in N''}S_{e_{n}^{*}}S_{e_{n}}\prod\limits_{m\in M'}(1-S_{e_{m}^{*}}S_{e_{m}})\Bigg).
\end{align*}
It is worth noting the elements of $\{r(N'',M')\}_{N''\subseteq\{1,2,\dots,k\}}$ are pairwise disjoint sets. Now, consider the equation
$$\sum_{\substack{N''=N'\cup\{k\}: \\ \emptyset\neq N'\subseteq\{1,2,\dots,k-1\}\\ |r(N'',M')|=\infty}}\prod\limits_{n\in N''}S_{e_{n}^{*}}S_{e_{n}}\prod\limits_{m\in M'}(1-S_{e_{m}^{*}}S_{e_{m}}).$$
Taking $\omega''\in\{0,1\}^{k}\setminus\{0^{k}\}$ to be such that $\omega''_{n}=1$ for $n\in N''$, and $\omega''_{m}=0$ for $m\in M'$, note that $|r(N'',M')|=\infty\implies \omega''=(\omega',1)\in \Delta_{k}$, where $\omega'\neq0^{k-1}$. More importantly, by case 1, we have
{\footnotesize
\begin{align*}
\sum_{\substack{N''=N'\cup\{k\}: \\ \emptyset\neq N'\subseteq\{1,2,\dots,k-1\}\\ |r(N'',M')|=\infty}}\prod\limits_{n\in N''}S_{e_{i}^{*}}S_{e_{i}}&\prod\limits_{m\in M'}(1-S_{e_{m}^{*}}S_{e_{m}})=\sum_{\substack{\omega''\in\Delta_{k}:\\ \omega''=(\omega',1), \omega'\in\Delta_{k-1}}}\Bigg(t_{\alpha_{\omega''}}t_{\alpha_{\omega''}^{*}}+\sum_{\substack{v\in r(N'',M'):\\ |\sigma(v)|<k}}P_{v}\Bigg)\\
&=\sum_{\substack{\omega''\in\Delta_{k}:\\ \omega''=(\omega',1), \omega'\in\Delta_{k-1}}}t_{\alpha_{\omega''}}t_{\alpha_{\omega''}^{*}}+\sum_{\substack{N''=N'\cup\{k\}: \\ \emptyset\neq N'\subseteq\{1,2,\dots,k-1\}\\ |r(N'',M')|=\infty}}\sum_{\substack{v\in r(N'',M'):\\ |\sigma(v)|<k}}P_{v}.
\end{align*}\par}
Applying \cite[Lemma 4.3]{Takeshi-Katsura-Paul-Muhly-Aidan-Sims--Mark-Tomforde:2010aa} and Lemma \ref{lex3}, we can also deduce
$$\sum_{\substack{N''=N'\cup\{k\}: \\ \emptyset\neq N'\subseteq\{1,2,\dots,k-1\}\\ |r(N'',M')|<\infty}}\prod\limits_{n\in N''}S_{e_{i}^{*}}S_{e_{i}}\prod\limits_{m\in M'}(1-S_{e_{m}^{*}}S_{e_{m}})=\sum_{\substack{N''=N'\cup\{k\}: \\ \emptyset\neq N'\subseteq\{1,2,\dots,k-1\}\\ |r(N'',M')|<\infty}}\sum_{\substack{v\in r(N'',M'):\\ |\sigma(v)|<k}}P_{v}.$$
So,
\begin{align*}
\sum_{\substack{N''=N'\cup\{k\}: \\ \emptyset\neq N'\subseteq\{1,2,\dots,k-1\}}}\prod\limits_{n\in N''}S_{e_{i}^{*}}S_{e_{i}}&\prod\limits_{m\in M'}(1-S_{e_{m}^{*}}S_{e_{m}})\\
&=\sum_{\substack{\omega''\in\Delta_{k}:\\ \omega''=(\omega',1), \omega'\in\Delta_{k-1}}}t_{\alpha_{\omega''}}t_{\alpha_{\omega''}^{*}}+\sum_{\substack{N''=N'\cup\{k\}: \\ \emptyset\neq N'\subseteq\{1,2,\dots,k-1\}\\ |r(N'',M')|=\infty}}\sum_{\substack{v\in r(N'',M'):\\ |\sigma(v)|<k}}P_{v}\\
&\hspace{1.5cm}+\sum_{\substack{N''=N'\cup\{k\}: \\ \emptyset\neq N'\subseteq\{1,2,\dots,k-1\}\\ |r(N'',M')|<\infty}}\sum_{\substack{v\in r(N'',M'):\\ |\sigma(v)|<k}}P_{v}\\
&=\sum_{\substack{\omega''\in\Delta_{k}:\\ \omega''=(\omega',1), \omega'\in\Delta_{k-1}}}t_{\alpha_{\omega''}}t_{\alpha_{\omega''}^{*}}+\sum_{\substack{N''=N'\cup\{k\}: \\ \emptyset\neq N'\subseteq\{1,2,\dots,k-1\}}}\sum_{\substack{v\in r(N'',M'):\\ |\sigma(v)|<k}}P_{v}
\end{align*}
Lastly, for $v'\in\bigg(\bigcup\limits_{i=1}^{k-1}r(e_{i})\bigg)\cap r(e_{k})$ such that $|\sigma(v')|<k$, let $N'':=\{n\in\{1,2,\cdots,k\}:v'\in r(e_{n})\}$; then, we can see $P_{v'}$ is a summand in 
$$\sum_{\substack{N''=N'\cup\{k\}: \\ \emptyset\neq N'\subseteq\{1,2,\dots,k-1\}}}\sum_{\substack{v\in r(N'',M'):\\ |\sigma(v)|<k}}P_{v}.$$
Thus,
\begin{align*}
\prod\limits_{n\in N} S_{e_{n}^{*}}S_{e_{n}}&\prod\limits_{m\in M}(1-S_{e_{m}^{*}}S_{e_{m}})=S_{e_{k}^{*}}S_{e_{k}}-\sum_{\substack{N''=N'\cup\{k\}: \\ \emptyset\neq N'\subseteq\{1,2,\dots,k-1\}}}\prod\limits_{n\in N''}S_{e_{i}^{*}}S_{e_{i}}\prod\limits_{m\in M'}(1-S_{e_{m}^{*}}S_{e_{m}})\\
&=\Bigg(\sum_{\substack{v\in r(e_{k}):\\ |\sigma(v)|<k}}P_{v}+\sum_{\substack{\omega''\in\Delta_{k}:\\ \omega''_{k}=1}}t_{\alpha_{\omega''}}t_{\alpha_{\omega''}^{*}}\Bigg)\\
&\hspace{1cm}-\Bigg(\sum_{\substack{\omega''\in\Delta_{k}:\\ \omega''=(\omega',1), \omega'\in\Delta_{k-1}}}t_{\alpha_{\omega''}}t_{\alpha_{\omega''}^{*}}+\sum_{\substack{N''=N'\cup\{k\}: \\ \emptyset\neq N'\subseteq\{1,2,\dots,k-1\}}}\sum_{\substack{v\in r(N'',M'):\\ |\sigma(v)|<k}}P_{v}\Bigg)\\
&=t_{\alpha_{\omega}}t_{\alpha_{\omega}^{*}}+\sum_{\substack{v\in r(N,M):\\ |\sigma(v)|<k}}P_{v};
\end{align*}
this completes case 3, thereby proving our lemma.
\end{proof}

The following lemma is essentially identical to \cite[Lemma 5.19]{Takeshi-Katsura-Paul-Muhly-Aidan-Sims--Mark-Tomforde:2010aa} in its statement and proof. Nonetheless, we will give it here.

\begin{lemma}\label{lglg2}
Let $\gamma$ be a path in $E_{\mathcal{G}}$ such that $s_{E}(\gamma)\in W_{0}\sqcup\Gamma_{0}$ and, 
$$\gamma=g_{0}(e_{n_{1}},x_{1})g_{1}(e_{n_{2}},x_{2})g_{2}\dots(e_{n_{k}},x_{k})g_{k},$$
as in \cite[Lemma 4.1]{Takeshi-Katsura-Paul-Muhly-Aidan-Sims--Mark-Tomforde:2010aa}. Then,
$$t_{\gamma}=S_{e_{n_{1}}}S_{e_{n_{2}}}\dots S_{e_{n_{k}}}t_{\alpha_{r_{E}(\gamma)}}.$$
It's worth noting the $g_{i}$'s can be elements in $E_{\mathcal{G}}^{0}$.
\end{lemma}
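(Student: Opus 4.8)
The plan is to induct on $k$, the number of edges of the form $(e_{n_j},x_j)$ occurring in $\gamma$, peeling off the leading factor $S_{e_{n_1}}$ at each stage. Two structural inputs will do most of the work. First, by \cite[Lemma 4.1]{Takeshi-Katsura-Paul-Muhly-Aidan-Sims--Mark-Tomforde:2010aa} the path $\gamma$ decomposes into alternating pure-$F$ segments $g_i$ and ``real'' edges $(e_{n_j},x_j)$, and by \cite[Lemma 4.6, Definition 4.7]{Takeshi-Katsura-Paul-Muhly-Aidan-Sims--Mark-Tomforde:2010aa} the path $\alpha_x$ is the \emph{unique} path in $F$ from $W_0\sqcup\Gamma_0$ to $x$. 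Second, Claim \ref{corth} tells us that for distinct $x,y\in X(e_n)$ the paths $\alpha_x,\alpha_y$ are incomparable (neither extends the other). I would also recall the multiplication rule $t_{\beta^*}t_{\alpha}=t_{\gamma}$ when $\alpha=\beta\gamma$.

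For the base case $k=0$ the path $\gamma$ lies entirely in $F$ with $s_E(\gamma)\in W_0\sqcup\Gamma_0$, so uniqueness forces $\gamma=\alpha_{r_E(\gamma)}$ and the asserted identity reduces to $t_{\gamma}=t_{\alpha_{r_E(\gamma)}}$. For the inductive step I would first use the decomposition to identify the pure-$F$ segments as tree paths: the leading segment $g_0$ is an $F$-path from $W_0\sqcup\Gamma_0$ to $s(e_{n_1})$, so $g_0=\alpha_{s(e_{n_1})}$; and $\alpha_{x_1}g_1$ is an $F$-path from $W_0\sqcup\Gamma_0$ to $s(e_{n_2})$, hence $\alpha_{x_1}g_1=\alpha_{s(e_{n_2})}$. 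I then apply the inductive hypothesis to the shorter path $\gamma':=\alpha_{s(e_{n_2})}(e_{n_2},x_2)g_2\cdots(e_{n_k},x_k)g_k$, which has source in $W_0\sqcup\Gamma_0$, range $r_E(\gamma)$, and exactly $k-1$ real edges, obtaining $t_{\gamma'}=S_{e_{n_2}}\cdots S_{e_{n_k}}t_{\alpha_{r_E(\gamma)}}$. Thus it suffices to prove $S_{e_{n_1}}t_{\gamma'}=t_{\gamma}$.

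Expanding $S_{e_{n_1}}=\sum_{x\in X(e_{n_1})}t_{\alpha_{s(e_{n_1})}}t_{(e_{n_1},x)}t_{\alpha_x^*}$ and noting that $t_{\gamma'}$ begins with $t_{\alpha_{s(e_{n_2})}}$, the decisive computation is the middle product $t_{\alpha_x^*}t_{\alpha_{s(e_{n_2})}}=t_{\alpha_x^*}t_{\alpha_{x_1}g_1}$. Here I expect the main obstacle: I must show this vanishes unless $x=x_1$. Since $\alpha_{x_1}$ is a prefix of $\alpha_{x_1}g_1$, any $x$ making $\alpha_x$ comparable to $\alpha_{x_1}g_1$ also makes $\alpha_x$ comparable to $\alpha_{x_1}$ (the prefixes of a fixed path form a chain), and Claim \ref{corth} then forces $x=x_1$; for $x=x_1$ the multiplication rule gives $t_{\alpha_{x_1}^*}t_{\alpha_{x_1}g_1}=t_{g_1}$.

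Substituting back collapses the sum over $X(e_{n_1})$ to its single surviving term, yielding
$$S_{e_{n_1}}t_{\gamma'}=t_{\alpha_{s(e_{n_1})}}t_{(e_{n_1},x_1)}t_{g_1}t_{(e_{n_2},x_2)}\cdots t_{(e_{n_k},x_k)}t_{g_k}=t_{g_0}t_{(e_{n_1},x_1)}t_{g_1}\cdots t_{g_k}=t_{\gamma},$$
where the middle equality uses $g_0=\alpha_{s(e_{n_1})}$. This completes the inductive step and hence the lemma. The only genuinely delicate point is the vanishing of the cross terms, which rests entirely on the forest structure of $F$ (uniqueness of the $\alpha_x$) together with Claim \ref{corth}; everything else is bookkeeping with the multiplication formulas for $L_R(E_{\mathcal{G}})$.
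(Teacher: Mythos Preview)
Your proof is correct and uses the same essential ingredients as the paper's --- induction on $k$, uniqueness of the $F$-paths $\alpha_x$, and Claim~\ref{corth} to collapse the sum defining $S_{e_{n_j}}$. The only difference is the direction of the induction: the paper peels off the \emph{last} real edge, writing $\gamma=\gamma'(e_{n_k},x_k)g_k$ and noting $\alpha_{r_E(\gamma)}=\alpha_{x_k}g_k$, so that the key computation is $S_{e_{n_k}}t_{\alpha_{x_k}}t_{g_k}$ and Claim~\ref{corth} applies directly to $t_{\alpha_x^*}t_{\alpha_{x_k}}$. Your left-peeling version instead confronts $t_{\alpha_x^*}t_{\alpha_{x_1}g_1}$, which needs the small extra observation (which you make correctly) that comparability of $\alpha_x$ with $\alpha_{x_1}g_1$ forces comparability with the prefix $\alpha_{x_1}$. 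The paper's route is marginally cleaner for this reason, but the two arguments are otherwise interchangeable.
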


\begin{proof} We will use induction on $k$. To that end, suppose $k=0$; which is to say, $\gamma=g_{0}$. For $s_{E}(g_{0})\in W_{0}\sqcup\Gamma_{0}$, and $g_{0}$ a path in $F$, the uniqueness of $\alpha_{r_{E}(g_{0})}$ implies $g_{0}=\alpha_{r_{E}(g_{0})}$. Thus, $t_{g_{0}}=t_{\alpha_{r_{E}(g_{0})}},$ completing our base case.

Now, for 
$$\gamma=g_{0}(e_{n_{1}},x_{1})g_{1}(e_{n_{2}},x_{2})g_{2}\dots(e_{n_{k}},x_{i})g_{k}=\gamma'(e_{n_{k}},x_{k})g_{k},$$
we have by our inductive hypothesis,
$$t_{\gamma}=S_{e_{n_{1}}}S_{e_{n_{2}}}\dots S_{e_{n_{k-1}}}t_{\alpha_{r_{E}(\gamma')}}t_{(e_{n_{k}},x_{k})}t_{g_{k}}.$$
For $r_{E}(\gamma)=r_{E}(g_{k})$, the uniqueness of $\alpha_{r_{E}(\gamma)}$ implies $\alpha_{r_{E}(\gamma)}=\alpha_{x_{k}}g_{k}$, which in turn implies $t_{\alpha_{r_{E}(\gamma)}}=t_{\alpha_{x_{k}}}t_{g_{k}}.$ And so
\begin{align*}
S_{e_{n_{k}}}t_{\alpha_{r_{E}(\gamma)}}=S_{e_{n_{k}}}t_{\alpha_{x_{k}}}t_{g_{k}}&=\sum\limits_{x\in X(e_{n_{k}})}t_{\alpha_{s(e_{n_{k}})}}t_{(e_{n_{k}},x)}t_{\alpha_{x}^{*}}t_{\alpha_{x_{k}}}t_{g_{k}}, \text{which, by Claim \ref{corth},}\\
&=t_{\alpha_{s(e_{n_{k}})}}t_{(e_{n_{k}},x_{k})}t_{g_{k}}, \text{and since $r_{E}(\gamma')=s(e_{n_{k}})$,}\\
&=t_{\alpha_{r_{E}(\gamma')}}t_{(e_{n_{k}},x_{k})}t_{g_{k}}.
\end{align*}
Hence, $t_{\gamma}=S_{e_{n_{1}}}S_{e_{n_{2}}}\dots S_{e_{n_{k}}}t_{\alpha_{r_{E}(\gamma)}}$.
\end{proof}

We are now equipped to prove one of our main results. 

\begin{theorem}\label{tlgis}
Let $\mathcal{G}$ be an ultragraph with no singular vertices, then 
$$L_{R}(\mathcal{G})\cong QL_{R}(E_{\mathcal{G}})Q.$$
\end{theorem}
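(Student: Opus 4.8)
The plan is to construct an explicit graded isomorphism $\phi\colon\mathcal{EL}_{R}(\mathcal{G})\to Q\leg Q$ and then compose with the isomorphism $\mathcal{EL}_{R}(\mathcal{G})\cong L_{R}(\mathcal{G})$ of Theorem \ref{texlg} (available since $\mathcal{G}$ has no singular vertices). By Lemma \ref{lex3}, $\{P_{v},S_{e_{n}},S_{e_{n}^{*}}\}$ is an Exel-Laca $\mathcal{G}$-family in $\leg$, so the universal property of $\mathcal{EL}_{R}(\mathcal{G})$ (Definition \ref{dexalg}) yields an $R$-algebra homomorphism $\phi$ with $\phi(\mathsf{p}_{v})=P_{v}$, $\phi(\mathsf{s}_{e})=S_{e}$, $\phi(\mathsf{s}_{e^{*}})=S_{e^{*}}$. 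First I would check that $\phi$ lands inside the corner: since each generator is built from terms $t_{\alpha}t_{\beta^{*}}$ whose initial vertices $s(\alpha),s(\beta)$ lie in $W_{0}\sqcup\Gamma_{0}$ (every path $\alpha_{x}$ starts in $W_{0}\sqcup\Gamma_{0}$ by construction), Lemma \ref{ring:lemma1} together with its evident right-hand analogue gives $QP_{v}Q=P_{v}$, $QS_{e_{n}}Q=S_{e_{n}}$, $QS_{e_{n}^{*}}Q=S_{e_{n}^{*}}$; as $Q\leg Q$ is a subring, $\operatorname{im}\phi\subseteq Q\leg Q$.

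For injectivity I would invoke Theorem \ref{thm:keythm}, which requires a $\mathbb{Z}$-grading making $\phi$ graded. The main obstacle is that the standard length grading of $\leg$ (Proposition \ref{gzgrade}) does not work: a term of $S_{e_{n}}$ has the form $t_{\alpha_{s(e_{n})}(e_{n},x)}t_{\alpha_{x}^{*}}$, and $|\alpha_{x}|$ need not equal $|\alpha_{s(e_{n})}|$, so $\phi(\mathsf{s}_{e})$ fails to be homogeneous of degree $1$. I would resolve this by weighting the edges of $E_{\mathcal{G}}$: assign weight $0$ to every $F$-edge $e_{\kappa}$ ($\kappa\in W_{+}\sqcup\Gamma_{+}$) and weight $1$ to every edge $(e_{n},x)$, and verify the standard fact that any integer weighting $w$ induces a $\mathbb{Z}$-grading of a Leavitt path algebra with $t_{\alpha}t_{\beta^{*}}$ in degree $w(\alpha)-w(\beta)$ --- this holds because grading the free algebra by $\deg e=w(e)$, $\deg e^{*}=-w(e)$, $\deg v=0$ makes every defining relation homogeneous, so the defining ideal is graded. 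Under this weighting $P_{v},S_{e_{n}},S_{e_{n}^{*}}$ are homogeneous of degrees $0,1,-1$, the corner $Q\leg Q$ inherits the grading ($Q(-)Q$ preserves degree), and $\phi$ becomes graded. It then remains to verify the positivity hypotheses: $\phi(r\mathsf{p}_{v})=rt_{\alpha_{v}}t_{\alpha_{v}^{*}}\neq0$ because $t_{\alpha_{v}^{*}}(rt_{\alpha_{v}}t_{\alpha_{v}^{*}})t_{\alpha_{v}}=rq_{r_{E}(\alpha_{v})}\neq0$, and $\phi(r\mathsf{s}_{e^{*}}\mathsf{s}_{e})=r\sum_{x\in X(e_{n})}t_{\alpha_{x}}t_{\alpha_{x}^{*}}\neq0$ by the same device applied to a fixed $x\in X(e_{n})$ (nonempty by Lemma \ref{lemxen}); hence $\phi$ is injective.

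For surjectivity I would first note, using Lemma \ref{ring:lemma1} and its right analogue, that $Q\leg Q=\operatorname{span}_{R}\{t_{\gamma}t_{\delta^{*}}:s(\gamma),s(\delta)\in W_{0}\sqcup\Gamma_{0}\}$, and that a nonzero such element forces $r_{E}(\gamma)=r_{E}(\delta)=:x$. Applying Lemma \ref{lglg2} to $\gamma$ and to $\delta$ (then passing to the ghost of the latter) gives $t_{\gamma}t_{\delta^{*}}=S_{e_{n_{1}}}\cdots S_{e_{n_{k}}}\,t_{\alpha_{x}}t_{\alpha_{x}^{*}}\,S_{e_{m_{l}}^{*}}\cdots S_{e_{m_{1}}^{*}}$, where every factor $S$ is $\phi$ of a generator; so it suffices to show $t_{\alpha_{x}}t_{\alpha_{x}^{*}}\in\operatorname{im}\phi$ for each vertex $x\in G^{0}\sqcup\Delta$. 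When $x=v\in G^{0}$ this is $P_{v}=\phi(\mathsf{p}_{v})$. When $x=\omega\in\Delta_{k}$, Lemma \ref{lglg} expresses $t_{\alpha_{\omega}}t_{\alpha_{\omega}^{*}}$ as $\prod_{n\in N}S_{e_{n}^{*}}S_{e_{n}}\prod_{m\in M}(1-S_{e_{m}^{*}}S_{e_{m}})-\sum_{v}P_{v}$, a finite sum (finiteness of $\{v\in r(N,M):|\sigma(v)|<k\}$ follows from Lemma \ref{sig}) each summand of which is $\phi$ of an element of $\mathcal{EL}_{R}(\mathcal{G})$; hence $t_{\alpha_{\omega}}t_{\alpha_{\omega}^{*}}\in\operatorname{im}\phi$. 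Thus $\phi$ is a graded isomorphism onto $Q\leg Q$, and combining with Theorem \ref{texlg} yields $L_{R}(\mathcal{G})\cong\mathcal{EL}_{R}(\mathcal{G})\cong Q\leg Q$. The delicate points are the weighted-grading device needed to run Theorem \ref{thm:keythm} and the careful bookkeeping required to invoke Lemmas \ref{lglg} and \ref{lglg2}.
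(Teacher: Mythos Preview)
Your argument follows the paper's strategy exactly: apply the universal property of $\mathcal{EL}_{R}(\mathcal{G})$ to the family of Lemma \ref{lex3}, land in the corner via Lemma \ref{ring:lemma1}, obtain injectivity from Theorem \ref{thm:keythm}, and obtain surjectivity by combining Lemma \ref{lglg2} (to factor $t_{\gamma}$ through $t_{\alpha_{r_{E}(\gamma)}}$) with Lemma \ref{lglg} (to realize $t_{\alpha_{\omega}}t_{\alpha_{\omega}^{*}}$ for $\omega\in\Delta$). The surjectivity bookkeeping and the verification of the nonvanishing hypotheses are carried out just as the paper intends.

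Your weighted grading on $\leg$ (weight $0$ on $F$-edges, weight $1$ on edges $(e_{n},x)$) is in fact a clarification the paper glosses over. The paper's proof invokes ``Lemma \ref{lex2} and Theorem \ref{thm:keythm}'' for injectivity without saying which grading on $\leg$ makes $\phi$ graded; as you correctly observe, the standard length grading of Proposition \ref{gzgrade} does \emph{not} work, since the summands $t_{\alpha_{s(e_{n})}(e_{n},x)}t_{\alpha_{x}^{*}}$ of $S_{e_{n}}$ have varying length-degree. Your weighting makes each such summand homogeneous of degree $1$, so $\phi$ becomes graded and Theorem \ref{thm:keythm} applies; this device (the algebraic analogue of the modified gauge action used in the $C^{*}$-setting) is what the paper's terse citation is implicitly relying on.
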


\begin{proof}
By Theorem \ref{texlg}, it suffices to show $\mathcal{EL}_{R}(\mathcal{G})\cong QL_{R}(\mathcal{G})Q$. To that end, we apply Lemma \ref{lex3} to get an Exel-Laca $\mathcal{G}$-family $\{P_{v}, S_{e}, S_{e^{*}}\}$ in $L_{R}(E_{\mathcal{G}})$. We know from Lemma \ref{gspan} and Proposition \ref{gzgrade}.
$$L_{R}(E_{\mathcal{G}})=\text{span}_{R}\{t_{\alpha}t_{\beta^{*}}:\ r_{E}(\alpha)=r_{E}(\beta)\},$$
and has a $\mathbb{Z}$-graded structure with
$$L_{R}(E_{\mathcal{G}})_{i}=\text{span}_{R}\{t_{\alpha}t_{\beta^{*}}:\ r_{E}(\alpha)=r_{E}(\beta),\ |\alpha|-|\beta|=i\}.$$
Given $Q\in \mathcal{M}(L_{R}(E_{\mathcal{G}}))$, it is straightforward to see
$$QL_{R}(\mathcal{G})Q=\text{span}_{R}\{t_{\alpha}t_{\beta^{*}}:\ r_{E}(\alpha)=r_{E}(\beta)\}\ \text{with}\ s_{E}(\alpha),\ s_{E}(\beta)\in W_{0}\sqcup\Gamma_{0}\}.$$
Further, by the definition of $\{P_{v}, S_{e}, S_{e^{*}}\}$, we can see $\{P_{v}, S_{e}, S_{e^{*}}\}\subseteq QL_{R}(\mathcal{G})Q$. And so, by Lemma \ref{lex2} and Theorem \ref{thm:keythm}, we have an injective map $\phi:\mathcal{EL}_{R}(\mathcal{G})\to(L_{R}(E_{\mathcal{G}}))$ such that $\text{im}\phi\subseteq QL_{R}(\mathcal{G})Q$. Finally, by \cite[Lemma 4.1]{Takeshi-Katsura-Paul-Muhly-Aidan-Sims--Mark-Tomforde:2010aa}, every path $\alpha$ in $E_{\mathcal{G}}$ can be expressed uniquely as
$$\alpha=g_{0}(e_{n_{1}},x_{1})g_{1}(e_{n_{2}},x_{2})g_{2}\dots(e_{n_{k}},x_{k})g_{k},$$
where each $e_{n_{i}}$ is in $\mathcal{G}^{1}$, and each $g_{i}$ is a path in $F$ (possibly an element of $E_{\mathcal{G}^{0}}$). Then, by Lemmas \ref{lglg} and \ref{lglg2}, we have $QL_{R}(\mathcal{G})Q\subseteq\text{im}\phi$; and so $\phi$ is surjective as well. Thus, $\mathcal{EL}_{R}(\mathcal{G})\cong QL_{R}(E_{\mathcal{G}})Q.$
\end{proof}

\begin{theorem}\label{imptheo}
Let $\mathcal{G}$ be an ultragraph with no singular vertices. Then,
$$L_{R}(\mathcal{G}) \text{ is Morita equivalent to } L_{R}(E_{\mathcal{G}}).$$
\end{theorem}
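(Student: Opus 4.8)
The plan is to assemble this statement directly from the two major results already in hand, together with two elementary formal properties of Morita equivalence. First I would invoke Theorem \ref{tlgis}: since $\mathcal{G}$ has no singular vertices, it furnishes a ring isomorphism
$$L_{R}(\mathcal{G})\cong QL_{R}(E_{\mathcal{G}})Q,$$
where $Q\in\mathcal{M}(L_{R}(E_{\mathcal{G}}))$ is the full idempotent constructed in Lemma \ref{ring:lemma1}. Next I would invoke Corollary \ref{meqcor}, which asserts that for this same $Q$ the corner ring $QL_{R}(E_{\mathcal{G}})Q$ is Morita equivalent to $L_{R}(E_{\mathcal{G}})$. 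The remaining task is then purely formal: to glue an isomorphism onto a Morita equivalence.

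To carry this out I would record the two standard facts alluded to earlier in the excerpt. First, isomorphic rings are Morita equivalent: a ring isomorphism $f\colon R\to S$ induces an isomorphism of categories $R\text{-}MOD\to S\text{-}MOD$ by restriction of scalars along $f^{-1}$ (this bijection on module structures visibly carries unital nondegenerate modules to unital nondegenerate modules, so it restricts correctly to the subcategories of Definition \ref{Moequiv}), and an isomorphism of categories is in particular an equivalence. Applying this to Theorem \ref{tlgis} shows $L_{R}(\mathcal{G})$ and $QL_{R}(E_{\mathcal{G}})Q$ are Morita equivalent. Second, Morita equivalence is transitive: by Definition \ref{Moequiv} it is an equivalence of module categories, and the composite of two equivalences of categories is again an equivalence, so Morita equivalence is an equivalence relation on rings.

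Chaining the two equivalences
$$L_{R}(\mathcal{G})\ \sim\ QL_{R}(E_{\mathcal{G}})Q\ \sim\ L_{R}(E_{\mathcal{G}})$$
through transitivity then yields that $L_{R}(\mathcal{G})$ is Morita equivalent to $L_{R}(E_{\mathcal{G}})$, as desired. There is essentially no obstacle at this final stage, since all of the combinatorial and structural difficulty has been absorbed into the construction of $Q$ (Lemma \ref{ring:lemma1}), the corner-ring Morita equivalence (Theorem \ref{ring:theorem2} and Corollary \ref{meqcor}), and the identification $L_{R}(\mathcal{G})\cong QL_{R}(E_{\mathcal{G}})Q$ (Theorem \ref{tlgis}, which itself rests on the Exel--Laca machinery culminating in Theorem \ref{texlg}). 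The one point I would flag explicitly is that the idempotent $Q$ in Theorem \ref{tlgis} is \emph{literally the same} $Q$ for which Corollary \ref{meqcor} holds, so that the two results genuinely compose; this is immediate from their common reference to Lemma \ref{ring:lemma1}.
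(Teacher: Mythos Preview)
Your proposal is correct and matches the paper's own proof, which simply states that the result follows directly from Theorem \ref{tlgis} and Corollary \ref{meqcor}. You have merely made explicit the two formal facts (that isomorphic rings are Morita equivalent, and that Morita equivalence is transitive) that the paper leaves implicit in ``follows directly from.''
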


\begin{proof}
This follows directly from Theorem \ref{tlgis} and Corollary \ref{meqcor}.
\end{proof}

\section{Desingularization and Morita Equivalence}

The goal of this section is mostly to extend Theorem \ref{imptheo} to ultragraphs with singular vertices. That is, even in the case where $\mathcal{G}$ may contain singular vertices, $L_{R}(\mathcal{G})$ is Morita equivalent to the Leavitt path algebra of a graph. The techniques and ideas used here are from \cite{G.-Abrams:2008aa}, but modified to work for ultragraphs. We begin with the notion of \textit{desingularization}, which was introduced for ultragraphs by Tomforde in \cite{Tomforde:2003aa}. To that end, let $\mathcal{G}$ be an ultragraph containing singular vertices. If $v_{0}\in G^{0}$ is a sink, we add a \textit{tail} at $v_{0}$ by introducing edges $\{f_{i}\}_{i=1}^{\infty}$, and vertices  $\{v_{i}\}_{i=1}^{\infty}$, such that $s(f_{i})=v_{i-1}$ and $r(f_{i})=v_{i}$. 

\begin{figure}[h!]
\begin{center}
\begin{tikzpicture}
\tikzset{vertex/.style = {shape=circle, draw=black!100,fill=black!100, thick, inner sep=0pt, minimum size=2 mm}}
\tikzset{edge/.style = {->, line width=1pt}}
\tikzset{v/.style = {shape=rectangle, dashed, draw, inner sep=0pt, minimum size=2em, minimum width=3em}}
    \node[vertex] (a) at (-9,1) [label=above:$v_{0}$]{};
    \node[vertex] (b) at (-6,1) [label=above:$v_{1}$]{};
    \node[vertex] (c) at (-3,1) [label=above:$v_{2}$]{};
    \node[vertex] (d) at (2,1) [label=above:$v_{i-1}$]{};
       
   \path (a) edge [->, >=latex, line width=.75pt, shorten <= 2pt, shorten >= 2pt, right] node[above]{$f_{1}$} (b);
   
   \path (b) edge [->, >=latex, line width=.75pt, shorten <= 2pt, shorten >= 2pt, right] node[above]{$f_{2}$} (c);

    \node [shape=circle,minimum size=1.5em] (d1) at (0,1) {};
    
     \path (c) edge [->, >=latex, line width=.75pt, shorten <= 2pt, shorten >= 2pt, right] (d1);

   \path (d1) to node {\dots} (d);
   
   \node [shape=circle,minimum size=1.5em] (d2) at (5,1) {};
  
   \path (d) edge [->, >=latex, line width=.75pt, shorten <= 2pt, shorten >= 2pt, right] node[above]{$f_{i}$} (d2);

     \node [shape=circle,minimum size=1.5em] (d3) at (6,1) {};
     
     \path (d2) to node {\dots} (d3);

   \end{tikzpicture}
   \caption{}
   \label{im7}
\end{center}
\end{figure}

If $v_{0}\in G^{0}$ is an infinite emitter, let $\{e_{i}\}_{i=1}^{\infty}$ be an enumeration of $s^{-1}(v)$. We add a tail starting at $v_{0}$ by adding edges $\{f_{i},g_{i}\}_{i=1}^{\infty}$, and vertices $\{v_{i}\}_{i=1}^{\infty}$, such that $s(g_{i})=s(f_{i})=v_{i-1}$, $r(f_{i})=v_{i}$, and $r(g_{i})=r(e_{i})$.

\begin{figure}[h!]
\begin{center}
\begin{adjustbox}{max width=1\textwidth}
\begin{tikzpicture}
\tikzset{vertex/.style = {shape=circle, draw=black!100,fill=black!100, thick, inner sep=0pt, minimum size=2 mm}}
\tikzset{edge/.style = {->, line width=1pt}}
\tikzset{v/.style = {shape=rectangle, dashed, draw, inner sep=0pt, minimum size=2em, minimum width=6.5em}}
    \node[vertex] (a) at (-9,1) [label=above:$v_{0}$]{};
    \node[vertex] (b) at (-6,1) [label=above:$v_{1}$]{};
    \node[vertex] (c) at (-3,1) [label=above:$v_{2}$]{};
    \node[vertex] (d) at (2,1) [label=above:$v_{i-1}$]{};
    \node[v] (e) at (-9,-1){$r(g_{1})=r(e_{1})$};
    \node[v] (f) at (-6,-1){$r(g_{2})=r(e_{2})$};
    \node[v] (g) at (-3,-1){};
    \node[v] (h) at (2,-1){$r(g_{i})=r(e_{i})$};

   \path (a) edge [->, >=latex, line width=.75pt, shorten <= 2pt, shorten >= 2pt, right] node[above]{$f_{1}$} (b);
   
   \path (b) edge [->, >=latex, line width=.75pt, shorten <= 2pt, shorten >= 2pt, right] node[above]{$f_{2}$} (c);

    \node [shape=circle,minimum size=1.5em] (d1) at (0,1) {};
    
     \path (c) edge [->, >=latex, line width=.75pt, shorten <= 2pt, shorten >= 2pt, right] (d1);

   \path (d1) to node {\dots} (d);
   
   \node [shape=circle,minimum size=1.5em] (d2) at (5,1) {};
  
   \path (d) edge [->, >=latex, line width=.75pt, shorten <= 2pt, shorten >= 2pt, right] node[above]{$f_{i}$} (d2);

     \node [shape=circle,minimum size=1.5em] (d3) at (6,1) {};
     
     \path (d2) to node {\dots} (d3);

     \node [shape=circle,minimum size=1.5em] (d5) at (2,-1) {}; 
     
         \path (g) to node {\dots} (d5);     

    \path (a) edge [->, >=latex, line width=.75pt, shorten <= 2pt, shorten >= 2pt, right] node[sloped, above]{$g_{1}$} (e);
   
     \path (b) edge [->, >=latex, line width=.75pt, shorten <= 2pt, shorten >= 2pt, right] node[sloped, above]{$g_{2}$} (f); 
     
     \path (c) edge [->, >=latex, line width=.75pt, shorten <= 2pt, shorten >= 2pt, right] node[sloped, above]{} (g); 
      
     \path (d) edge [->, >=latex, line width=.75pt, shorten <= 2pt, shorten >= 2pt, right] node[sloped, above]{$g_{i}$} (h);

   \end{tikzpicture}
   \end{adjustbox}
   \caption{}
   \label{im8}
\end{center}
\end{figure}

The ultragraph, $\mathcal{F}$, obtained by adding the appropriate tail to each singular vertex of $\mathcal{G}$ is called the \textit{desingularization of $\mathcal{G}$}. Our task now is to show $L_{R}(\mathcal{G})$ is Morita equivalent to $L_{R}(\mathcal{F})$. In the case of a graph $E$, it is straightforward to show 
$$L_{R}(E)\cong\bigoplus\limits_{v\in E^{0}}L_{R}(E)q_{v}$$
as left $L_{R}(E)$-modules. In order for us to make use of the techniques available to us to prove our result, we need a similar fact to hold for ultragraphs. One can quickly check
$$L_{R}(\mathcal{G})\ncong\bigoplus\limits_{v\in G^{0}}L_{R}(\mathcal{G})p_{v} \text{, and } L_{R}(\mathcal{G})\ncong\bigoplus\limits_{A\in \mathcal{G}^{0}}L_{R}(\mathcal{G})p_{A},$$
as left $L_{R}(\mathcal{G})$-modules. And so we need something a little different. 

\begin{lemma}\label{final:lemma1}
Let $\mathcal{G}$ be an ultragraph. Then,\\
1) $\mathcal{G}^{0}$ is countable,\\
2) there exists a collection of sets $\{\mathtt{A}_{i}\}_{i\in\mathbb{N}}\subseteq\mathcal{G}^{0}$ such that $\mathtt{A}_{i}\cap \mathtt{A}_{j}=\emptyset$ when $i\neq j$, and each $A\in\mathcal{G}^{0}$ is contained in the union of finitely many $\mathtt{A}_{i}$'s.
\end{lemma}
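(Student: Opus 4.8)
The plan is to prove the two parts separately, with part (1) an elementary cardinality count and part (2) an explicit construction of the partition.

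For part (1), I would start from the generating set $\mathcal{S} := \{\{v\} : v \in G^{0}\} \cup \{r(e) : e \in \mathcal{G}^{1}\}$ out of which $\mathcal{G}^{0}$ is built by Definition \ref{ultgraph}. Since $G^{0}$ and $\mathcal{G}^{1}$ are countable by hypothesis, $\mathcal{S}$ is countable. I would then exhibit $\mathcal{G}^{0}$ as the increasing union $\bigcup_{k\geq 0} S_{k}$, where $S_{0} = \mathcal{S}$ and $S_{k+1} = S_{k} \cup \{A\cup B,\ A\cap B,\ A\setminus B : A,B \in S_{k}\}$. An induction shows each $S_{k}$ is countable (as $S_{k} \times S_{k}$ is), so the union is countable; and because $\bigcup_{k} S_{k}$ contains $\mathcal{S}$ and is manifestly closed under finite unions, finite intersections, and relative complements, it must coincide with the smallest such set, namely $\mathcal{G}^{0}$. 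Hence $\mathcal{G}^{0}$ is countable.

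For part (2), the key reduction is that, once $\{\mathtt{A}_{i}\}$ is a partition of $G^{0}$, the condition ``$A$ is contained in the union of finitely many $\mathtt{A}_{i}$'' is equivalent to ``$A$ meets only finitely many $\mathtt{A}_{i}$.'' I would first observe that the property of meeting only finitely many parts is closed under the three operations: if $A$ meets the parts indexed by a finite set $I_{A}$ and $B$ those indexed by a finite set $I_{B}$, then $A\cup B$, $A\cap B$, and $A\setminus B$ all meet only parts indexed within $I_{A}\cup I_{B}$. Since every element of $\mathcal{G}^{0}$ is, by its inductive definition, a finite Boolean combination of finitely many generators, it therefore suffices to produce a countable partition of $G^{0}$ into $\mathcal{G}^{0}$-sets for which each generator meets only finitely many parts. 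Singleton generators $\{v\}$ meet exactly one part automatically, so the only genuine requirement is that each range $r(e)$ meet finitely many parts. To achieve this I would fix an enumeration $\mathcal{G}^{1} = \{e_{1}, e_{2}, \dots\}$ and set $\mathtt{D}_{n} := r(e_{n})\setminus\bigcup_{i<n} r(e_{i})$, adjoined with the singletons $\{v\}$ for $v \in G^{0}\setminus\bigcup_{i} r(e_{i})$; these constitute the family $\{\mathtt{A}_{i}\}$. Each $\mathtt{D}_{n}$ lies in $\mathcal{G}^{0}$ as a relative complement of a generator and a finite union of generators, the family is pairwise disjoint and covers $G^{0}$, and it is countable. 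The decisive computation is that $r(e_{j}) \subseteq \bigcup_{i<n} r(e_{i})$ whenever $j<n$, so $r(e_{j})\cap \mathtt{D}_{n} = \emptyset$ for $n>j$, while $r(e_{j})$ is disjoint from every leftover singleton; hence $r(e_{j})$ meets at most $\mathtt{D}_{1},\dots,\mathtt{D}_{j}$, finitely many parts, which is exactly what the reduction demands.

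I expect the main conceptual obstacle to be the reduction step in part (2). The natural first instinct is to seek a partition in which every $A\in\mathcal{G}^{0}$ is an \emph{exact} finite union of parts, but the presence of the singleton generators forces any such partition to separate points and collapse to singletons, which then fails for infinite sets such as an infinite $r(e)$. The resolution is to exploit the weaker ``contained in a finite union'' hypothesis, which permits parts to straddle the boundary of a given $A$, and to remember that $\mathcal{G}^{0}$ carries only relative, not absolute, complements. This is precisely why the ``first range containing a point'' device $\mathtt{D}_{n} = r(e_{n})\setminus\bigcup_{i<n}r(e_{i})$ is the correct tool rather than the absolute atoms $\bigcap_{i} r(e_{i})^{\epsilon_{i}}$: the former stays inside $\mathcal{G}^{0}$ even though $G^{0}$ itself need not belong to $\mathcal{G}^{0}$.
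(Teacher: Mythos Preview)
Your proof is correct, but part (2) follows a genuinely different route from the paper's.

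For part (1), both you and the paper argue that the countable closure of a countable generating set under finitely many finitary operations is countable; your inductive $S_{k}$ construction is the standard direct argument, while the paper takes a slightly more structural detour through the lattice $\mathcal{L}$ generated by $\{\{v\}\}\cup\{r(e)\}$ (using \cite[Lemma~2.12]{Tomforde:2003aa}) and then writes $\mathcal{G}^{0}$ explicitly as finite unions of relative complements of lattice elements. The content is the same.

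For part (2), the paper's construction is shorter: having shown in part (1) that $\mathcal{G}^{0}$ is countable, it simply fixes an enumeration $B_{1},B_{2},\dots$ of \emph{all} of $\mathcal{G}^{0}$ and disjointifies, setting $\mathtt{A}_{k}=B_{k}\setminus\bigcup_{i<k}B_{i}$. Then $B_{k}\subseteq\bigcup_{i\leq k}\mathtt{A}_{i}$ is immediate, with no closure argument needed. Your construction instead disjointifies only the ranges $r(e_{n})$ and patches in the leftover singletons, then appeals to the stability of ``meets finitely many parts'' under $\cup,\cap,\setminus$ to propagate the conclusion from generators to all of $\mathcal{G}^{0}$. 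What your approach buys is that it never uses the countability of $\mathcal{G}^{0}$ itself---only the countability of $\mathcal{G}^{1}$ and $G^{0}$---so your part (2) is logically independent of part (1). The paper's approach buys brevity: once part (1) is in hand, the disjointification-of-an-enumeration trick dispatches part (2) in three lines without any inductive reduction.
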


\begin{proof}
 1) Let $\{A'_{i}\}_{i\in\mathbb{N}}$ be an enumeration of the set $\{v\in G^{0}\}\cup\{r(e):\ e\in\mathcal{G}^{1}\}$. The smallest lattice $\mathcal{L}$ of $\mathcal{P}(G^{0})$ generated by $\{A'_{i}\}_{i\in\mathbb{N}}$ is given by 
$$\mathcal{L}=\bigg\{\bigcap_{i\in X_{1}}A'_{i}\cup\bigcap_{i\in X_{2}}A'_{i}\ \cup\dots\cup\bigcap_{i\in X_{n}}A'_{i}: X_{1},\dots, X_{n} \text{ are finite subsets of }\mathbb{N}\bigg\}$$
\cite[Lemma 2.12]{Tomforde:2003aa}.

For a given set $X$, let $\mathcal{P}_{f}(X)$ denote the subset of $\mathcal{P}(X)$ consisting of finite subsets of $X$. It is an established fact that if $X$ is countable, $\mathcal{P}_{f}(X)$ is countable as well. This in turn means $\mathcal{P}_{f}(\mathcal{P}_{f}(X))$ is countable, and so on. Since there is a surjective map $\mathcal{P}_{f}(\mathcal{P}_{f}(\mathbb{N}))\to\mathcal{L}$ given by
$$\{X_{1},X_{2},\dots,X_{n}\}\mapsto\bigcap_{i\in X_{1}}A'_{i}\cup\bigcap_{i\in X_{2}}A'_{i}\ \cup\dots\cup\bigcap_{i\in X_{n}}A'_{i},$$
we can conclude $\mathcal{L}$ is countable; as such, let $\textbf{A}_{1},\textbf{A}_{2},\dots$ be an enumeration of $\mathcal{L}$.

Now, consider the set
$$\bigg\{\bigcup_{k=1}^{n} (\textbf{A}_{i}\big\backslash\textbf{A}_{j})_{k}:\ i,j,n\in\mathbb{N}\bigg\}.$$ 
Using elementary properties of finite unions and intersections, relative complements, and the fact $\mathcal{L}$ is a lattice, we can show the set given above is an algebra within $\mathcal{P}(G^{0})$; in fact, it is a subset of $\mathcal{G}^{0}$. Since, by definition, $\mathcal{G}^{0}$ is the smallest algebra containing $\{v\in G^{0}\}\cup\{r(e):\ e\in\mathcal{G}^{1}\}$, it must be that
$$\mathcal{G}^{0}=\bigg\{\bigcup_{k=1}^{n} (\textbf{A}_{i}\big\backslash\textbf{A}_{j})_{k}:\ i,j,n\in\mathbb{N}\bigg\}.$$
Since there is a surjective map $\mathcal{P}_{f}(\mathbb{N}\times\mathbb{N})\to\big\{\bigcup_{k=1}^{n} (\textbf{A}_{i}\big\backslash\textbf{A}_{j})_{k}:\ i,j,n\in\mathbb{N}\big\}$ given by 
$$\{(i,j)_{1},(i,j)_{2},\dots,(i,j)_{n}\}\mapsto\bigcup_{k=1}^{n} (\textbf{A}_{i}\big\backslash\textbf{A}_{j})_{k},$$
$\mathcal{G}^{0}$ is countable.

2) Now that we have established $\mathcal{G}^{0}$ is countable, let $B_{1},B_{2},\dots$ be an enumeration of $\mathcal{G}^{0}$. Let

\begin{tabbing}
\hspace{5cm} \=$\mathtt{A}_{1}=B_{1}$\\
\>$\mathtt{A}_{2}=B_{2}\setminus B_{1}$\\
\> \hspace{1cm} \vdots\\
\>$\mathtt{A}_{k}=B_{k}\setminus\Big(\bigcup\limits_{i=1}^{k-1}B_{i}\Big)$\\
\> \hspace{1cm} \vdots\\
\end{tabbing}
For $i\neq j$, we have $\mathtt{A}_{i}\cap \mathtt{A}_{j}=\emptyset$; moreover, for each $k$, $B_{k}\subseteq \Big( \bigcup\limits_{i=1}^{k}\mathtt{A}_{i}\Big)=\Big( \bigcup\limits_{i=1}^{k}B_{i}\Big)$.
\end{proof}

What we now want to show is $L_{R}(\mathcal{G})\cong\bigoplus\limits_{A\in \mathcal{G}^{0}}L_{R}(\mathcal{G})p_{A}$ as left $L_{R}(\mathcal{G})$-modules. We will do this by first showing $\{\mathtt{A}_{i}\}_{i\in\mathbb{N}}$ can be used to construct a $\sigma$-unit for $L_{R}(\mathcal{G})$.

\begin{lemma}\label{final:lemma2}
Let $\{\mathtt{A}_{i}\}_{i\in\mathbb{N}}$ be as in Lemma \ref{final:lemma1}. For $k\in\mathbb{N}$, let $t_{k}:=\sum\limits_{i\leq k}p_{\mathtt{A}_{i}}$. Then, $\{t_{k}\}_{k=1}^{\infty}$ is a $\sigma$-unit for $L_{R}(\mathcal{G})$. Hence, $L_{R}(\mathcal{G})$ is a $\sigma$-unital ring.
\end{lemma}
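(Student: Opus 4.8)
The plan is to verify directly that $\{t_k\}_{k=1}^{\infty}$ meets the two defining conditions of a $\sigma$-unit (Definition \ref{ring:def9}). The starting observation is that the sets $\{\mathtt{A}_i\}$ being pairwise disjoint forces, via \textbf{uLP1}, the relation $p_{\mathtt{A}_i}p_{\mathtt{A}_j}=p_{\mathtt{A}_i\cap\mathtt{A}_j}=p_{\emptyset}=0$ for $i\neq j$, so $\{p_{\mathtt{A}_i}\}$ is a family of mutually orthogonal idempotents (each $p_{\mathtt{A}_i}$ is idempotent by the remark following Definition \ref{dulfg}). From this a one-line computation gives $t_k^2=\sum_{i,j\leq k}p_{\mathtt{A}_i}p_{\mathtt{A}_j}=\sum_{i\leq k}p_{\mathtt{A}_i}=t_k$, and likewise $t_kt_{k+1}=t_{k+1}t_k=t_k$, since the cross terms vanish and only the diagonal $i=j\leq k$ survives. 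This disposes of the second $\sigma$-unit condition immediately; note also that $t_k\in L_R(\mathcal{G})$ because each $\mathtt{A}_i\in\mathcal{G}^0$.

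The substance is the first condition, $L_R(\mathcal{G})=\bigcup_k t_kL_R(\mathcal{G})t_k$; the inclusion $\supseteq$ is trivial, so I must show every element lies in some $t_kL_R(\mathcal{G})t_k$. By Theorem \ref{uggrade} every element is a finite $R$-linear combination of generators $s_\alpha p_A s_{\beta^*}$, so it suffices to produce, for any finite collection of such generators, a single $k$ fixing their span under the map $x\mapsto t_kxt_k$. First I would show each generator $g=s_\alpha p_A s_{\beta^*}$ admits a two-sided local identity drawn from $\mathcal{G}^0$: there exists $B_g\in\mathcal{G}^0$ with $p_{B_g}g=g\,p_{B_g}=g$. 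By Lemma \ref{ulve}, left multiplication by $p_B$ fixes $g$ as soon as $B$ contains $s(\alpha)$ when $|\alpha|\geq 1$, and contains the set $A'\cap A$ when $\alpha=A'$ has length zero (so that $g=p_{A'\cap A}s_{\beta^*}$); the right-hand requirement on $\beta$ is symmetric. Taking $B_g$ to be the finite union of these singletons and intersections, all of which lie in $\mathcal{G}^0$ (which is closed under finite unions and intersections), yields the claim.

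For a general $x=\sum_m r_m\,s_{\alpha_m}p_{A_m}s_{\beta_m^*}$, set $B:=\bigcup_m B_{g_m}\in\mathcal{G}^0$, so $p_Bx=x\,p_B=x$. Now I invoke Lemma \ref{final:lemma1}(2): the set $B\in\mathcal{G}^0$ is contained in $\bigcup_{i\leq k}\mathtt{A}_i$ for all sufficiently large $k$. For such $k$, using disjointness of the $\mathtt{A}_i$ together with \textbf{uLP1},
\[
t_k p_B=\sum_{i\leq k}p_{\mathtt{A}_i}p_B=\sum_{i\leq k}p_{\mathtt{A}_i\cap B}=p_{\bigcup_{i\leq k}(\mathtt{A}_i\cap B)}=p_B,
\]
and symmetrically $p_B t_k=p_B$. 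Consequently $t_kx=t_k(p_Bx)=(t_kp_B)x=p_Bx=x$ and likewise $xt_k=x$, whence $t_kxt_k=x\in t_kL_R(\mathcal{G})t_k$. This establishes $L_R(\mathcal{G})=\bigcup_k t_kL_R(\mathcal{G})t_k$, so $\{t_k\}$ is a $\sigma$-unit and $L_R(\mathcal{G})$ is $\sigma$-unital.

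The main obstacle is not any single hard estimate but the careful bookkeeping in the length-zero cases of $\alpha$ and $\beta$, where a generator collapses to a product of vertex projections and the naive ``source vertex'' heuristic must be replaced by the genuine covering condition $A'\cap A\subseteq B$. It is precisely to handle these cases that one needs the covering property of Lemma \ref{final:lemma1}(2) — that each $A\in\mathcal{G}^0$ sits inside finitely many $\mathtt{A}_i$ — rather than a mere enumeration of the vertices of $G^0$.
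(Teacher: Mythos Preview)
Your proof is correct and follows essentially the same route as the paper's: verify the orthogonality relations via \textbf{uLP1} to get $t_kt_{k+1}=t_{k+1}t_k=t_k$, then use the spanning description from Theorem \ref{uggrade} together with the covering property of Lemma \ref{final:lemma1}(2) to show every element is fixed by some $t_k$ on both sides. The paper streamlines the length-zero bookkeeping you highlight by invoking the convention $s(A)=A$ for $A\in\mathcal{G}^0$, so that the set $V=\{s(\alpha_j),s(\beta_j)\}$ already absorbs those cases and one can apply \textbf{uLP1}--\textbf{uLP2} uniformly; your explicit treatment via $B_g\supseteq A'\cap A$ is equivalent and equally valid.
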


\begin{proof}
 We will start by showing $t_{k}$ is an idempotent for each $k$. To that end, by \textbf{uLP1} and the fact $\mathtt{A}_{i}\cap\mathtt{A}_{j}=\emptyset$ for $i\neq j$, we have 
$$t_{k}^{2}=\Big(\sum\limits_{i\leq k}p_{\mathtt{A}_{i}}\Big)\Big(\sum\limits_{j\leq k}p_{\mathtt{A}_{j}}\Big)=\sum\limits_{i\leq k}p_{\mathtt{A}_{i}}^{2}=\sum\limits_{i\leq k}p_{\mathtt{A}_{i}}=t_{k}.$$
 Now, suppose $l,k\in\mathbb{N}$ with $l\leq k$. Then,
 $$t_{k}t_{l}=\Big(\sum\limits_{i\leq k}p_{\mathtt{A}_{i}}\Big)\Big(\sum\limits_{j\leq l}p_{\mathtt{A}_{j}}\Big)=\sum\limits_{j\leq l}p_{\mathtt{A}_{j}}^{2}=\sum\limits_{j\leq l}p_{\mathtt{A}_{j}}=t_{l}.$$
 A similar argument shows $t_{l}t_{k}=t_{l}$; and so it is certainly true $t_{k}t_{k+1}=t_{k+1}t_{k}=t_{k}$ for all $k$. Finally, to show $\{t_{k}\}_{k=1}^{\infty}$ is a $\sigma$-unit, we need to show
$$L_{R}(\mathcal{G})=\bigcup\limits_{k=1}^{\infty}t_{k}L_{R}(\mathcal{G})t_{k}.$$
So, let $x=\sum\limits_{j=1}^{n}r_{j}s_{\alpha_{j}}p_{A_{j}}s_{\beta_{j}^{*}}\in L_{R}(\mathcal{G})$, and let
 $$V=\{B\in \mathcal{G}^{0}:B=s(\alpha_{j}),\ \text{or}\ B=s(\beta_{j}),\ \text{for}\ 1\leq j\leq n\}.$$
 Take care to note $B\in V$ can be a set consisting of multiple vertices. Since $V$ is a finite set, $A=\bigcup\limits_{B\in V}B$ is an element of $\mathcal{G}^{0}$ as well. By Lemma \ref{final:lemma1}, $A$ is contained in the union of finitely many $\mathtt{A}_{i}$'s. This in turn means there exists a $k$ such that $A\subseteq\bigcup\limits_{i\leq k}\mathtt{A}_{i}$. Since the $\mathtt{A}_{i}$'s are pairwise disjoint, \textbf{uLP1} gives us $t_{k}=\sum\limits_{i\leq k}p_{\mathtt{A}_{i}}=p_{\scriptstyle\bigcup\limits_{i\leq k}\mathtt{A}_{i}}$; then, \textbf{uLP1} and \textbf{uLP2} imply
$$t_{k}x=\Big(p_{\scriptstyle\bigcup\limits_{i\leq k}\mathtt{A}_{i}}\Big)\Big(\sum\limits_{j=1}^{n}r_{j}s_{\alpha_{j}}p_{A_{j}}s_{\beta_{j}^{*}}\Big)=\sum\limits_{j=1}^{n}r_{j}p_{\scriptstyle\bigcup\limits_{i\leq k}\mathtt{A}_{i}}s_{\alpha_{j}}p_{A_{j}}s_{\beta_{j}^{*}}=\sum\limits_{j=1}^{n}r_{j}s_{\alpha_{j}}p_{A_{j}}s_{\beta_{j}^{*}}=x.$$
Similarly, 
$$xt_{k}=\sum\limits_{j=1}^{n}r_{j}s_{\alpha_{j}}p_{A_{j}}s_{\beta_{j}^{*}}p_{\scriptstyle\bigcup\limits_{i\leq k}\mathtt{A}_{i}}=\sum\limits_{j=1}^{n}r_{j}s_{\alpha_{j}}p_{A_{j}}s_{\beta_{j}^{*}}=x.$$
This gives us $x=t_{k}xt_{k}$. Thus, $L_{R}(\mathcal{G})=\bigcup\limits_{k=1}^{\infty}t_{k}L_{R}(\mathcal{G})t_{k};$ meaning $\{t_{k}\}_{k=1}^{\infty}$ is a $\sigma$-unit of $L_{R}(\mathcal{G})$.
\end{proof}
 
We have the following useful corollary.

\begin{corr}\label{final:corollary1}
Let $\{\mathtt{A}_{i}\}_{i\in\mathbb{N}}$ be as in Lemma \ref{final:lemma1}. Then, $L_{R}(\mathcal{G})\cong\bigoplus\limits_{i\in\mathbb{N}}L_{R}(\mathcal{G})p_{\mathtt{A}_{i}}$ as left $L_{R}(\mathcal{G})$ modules.
\end{corr}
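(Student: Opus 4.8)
The goal is to establish a left $L_{R}(\mathcal{G})$-module isomorphism $L_{R}(\mathcal{G})\cong\bigoplus\limits_{i\in\mathbb{N}}L_{R}(\mathcal{G})p_{\mathtt{A}_{i}}$. The plan is to exhibit an explicit map and verify it is a bijective module homomorphism, using the $\sigma$-unit $\{t_{k}\}$ constructed in Lemma \ref{final:lemma2}. First I would define the natural map $\Phi:\bigoplus\limits_{i\in\mathbb{N}}L_{R}(\mathcal{G})p_{\mathtt{A}_{i}}\to L_{R}(\mathcal{G})$ by sending a finitely-supported tuple $(x_{i})_{i}$, with $x_{i}=x_{i}p_{\mathtt{A}_{i}}$, to the sum $\sum_{i}x_{i}$. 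Since each summand $L_{R}(\mathcal{G})p_{\mathtt{A}_{i}}$ is a left ideal, $\Phi$ is visibly a left $L_{R}(\mathcal{G})$-module homomorphism, so the content lies entirely in surjectivity and injectivity.

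For surjectivity, I would take an arbitrary $x\in L_{R}(\mathcal{G})$ and use the $\sigma$-unital structure. By Lemma \ref{final:lemma2} there exists $k$ with $x=t_{k}x=\bigl(\sum_{i\leq k}p_{\mathtt{A}_{i}}\bigr)x$; since the $\mathtt{A}_{i}$ are pairwise disjoint, \textbf{uLP1} gives $p_{\mathtt{A}_{i}}p_{\mathtt{A}_{j}}=\delta_{i,j}p_{\mathtt{A}_{i}}$, so the elements $p_{\mathtt{A}_{i}}x$ lie in distinct summands — but I must be careful, because these summands are \emph{left} ideals $L_{R}(\mathcal{G})p_{\mathtt{A}_{i}}$, so I actually want to write $x$ as a sum of elements each of the form $(\,\cdot\,)p_{\mathtt{A}_{i}}$, i.e. use $x=xt_{k}=\sum_{i\leq k}xp_{\mathtt{A}_{i}}$ via the right-hand identity $xt_{k}=x$. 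Each $xp_{\mathtt{A}_{i}}\in L_{R}(\mathcal{G})p_{\mathtt{A}_{i}}$, so $\Phi$ hits $x$, giving surjectivity.

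For injectivity, suppose $\Phi\bigl((x_{i})_{i}\bigr)=\sum_{i}x_{i}=0$ with each $x_{i}=x_{i}p_{\mathtt{A}_{i}}$ and only finitely many nonzero. Right-multiplying the relation $\sum_{i}x_{i}=0$ by a fixed $p_{\mathtt{A}_{j}}$ and again invoking $p_{\mathtt{A}_{i}}p_{\mathtt{A}_{j}}=\delta_{i,j}p_{\mathtt{A}_{j}}$ collapses the sum to $x_{j}p_{\mathtt{A}_{j}}=x_{j}=0$; since $j$ was arbitrary, every component vanishes and $\Phi$ is injective. Together these show $\Phi$ is an isomorphism of left $L_{R}(\mathcal{G})$-modules. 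The main obstacle — really the only subtle point — is keeping the sidedness straight: the summands are left ideals $L_{R}(\mathcal{G})p_{\mathtt{A}_{i}}$, so the decomposition of $x$ must come from multiplying on the \emph{right} by the $\sigma$-unit $t_{k}$ (using $xt_{k}=x$ from Lemma \ref{final:lemma2}) and from the orthogonality of the $p_{\mathtt{A}_{i}}$ acting on the right, rather than from any left action. Once that orthogonal-idempotent bookkeeping is handled correctly, both directions are routine.
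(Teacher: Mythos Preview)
Your proof is correct and follows essentially the same approach as the paper: both use the orthogonality $p_{\mathtt{A}_{i}}p_{\mathtt{A}_{j}}=\delta_{i,j}p_{\mathtt{A}_{i}}$ coming from the pairwise disjointness of the $\mathtt{A}_{i}$'s together with the $\sigma$-unit identity $x=xt_{k}=\sum_{i\leq k}xp_{\mathtt{A}_{i}}$ from Lemma~\ref{final:lemma2}. Your version is slightly more explicit in naming the map and verifying injectivity by right-multiplying by $p_{\mathtt{A}_{j}}$, whereas the paper phrases the same content as ``pairwise intersection is zero'' plus the decomposition of $x$, but the argument is the same.
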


\begin{proof}
Since the $\mathtt{A}_{i}$'s are pairwise disjoint, we have
$$L_{R}(\mathcal{G})p_{\mathtt{A}_{i}}\cap L_{R}(\mathcal{G})p_{\mathtt{A}_{j}}=\{0\}$$
for $i\neq j$. By Lemma \ref{final:lemma2}, for any $x\in L_{R}(\mathcal{G})$, there exists a $k$ such that $x=\sum\limits_{i\leq k}xp_{\mathtt{A}_{i}}.$ Thus, 
$$L_{R}(\mathcal{G})\cong\bigoplus\limits_{i\in\mathbb{N}}L_{R}(\mathcal{G})p_{\mathtt{A}_{i}}$$ 
as a left $L_{R}(\mathcal{G})$-module.
\end{proof}

To make our intention explicit, the goal of this section is to prove $L_{R}(\mathcal{G})$ is Morita equivalent to $L_{R}(\mathcal{F})$. Which, by Theorem \ref{imptheo}, would mean $L_{R}(\mathcal{G})$ is Morita equivalent to $L_{R}(E_{\mathcal{F}})$, thereby establishing our desired result. We will prove the Morita equivalence for $L_{R}(\mathcal{G})$ and $L_{R}(\mathcal{F})$ using the only tool available to us: Theorem \ref{ring:theorem1}; a key component we'll need to use this theorem is the existence of an injective $R$-algebra homomorphism 
$$\phi:L_{R}(\mathcal{G})\to L_{R}(\mathcal{F})$$
which commutes nicely with inclusion maps within the setting of direct limits---that is what we will do in Proposition \ref{final:proposition2}. In order to establish the existence of such a map $\phi$, we will need to first show that for any $0\neq x\in L_{R}(\mathcal{G})$, there exist $a,b\in L_{R}(\mathcal{G})$ such that:\\
1) $axb=rp_{v}$, with $r\in R\setminus\{0\}$, or\\
2) $axb=\sum\limits_{i=1}^{n} r_{i}s_{\alpha}^{i}$, for some cycle $\alpha\in\mathcal{G}^{*}$.\\
Note, case 2) is stating $axb$ is a polynomial in $s_{\alpha}$ over $R$ for some cycle $\alpha$; we will prove this assertion in Lemma \ref{final:lemma3}. Finally, in order to prove Lemma \ref{final:lemma3}, we will need to first show, given any $0\neq x\in L_{R}(\mathcal{G})$, we can find $v\in G^{0}$ such that $xp_{v}\neq 0$---this bit is straightforward in the case of graphs, not so much in the case of ultragraphs. 

\begin{prop}\label{final:proposition1}
Let $x\in L_{R}(\mathcal{G})$ such that $x\neq0$. Then, there exists $v\in G^{0}$ such that $xp_{v}\neq0$.
\end{prop}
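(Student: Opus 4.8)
The plan is to argue by contraposition: assuming $xp_v=0$ for every $v\in G^0$, I would show $x=0$. Two preliminary independence facts will drive the argument, both cheap to establish from the tools already in hand. First, for each fixed $v\in G^0$ the family $\{s_\alpha p_v:\alpha\in\mathcal{G}^*,\ v\in r(\alpha)\}$ is $R$-linearly independent: given a vanishing finite combination, the $\mathbb{Z}$-grading of Theorem~\ref{uggrade} lets me work one degree $k=|\alpha|$ at a time, and left-multiplying $\sum_{|\alpha|=k}c_\alpha s_\alpha p_v$ by $s_{\alpha_0^*}$ collapses it (via $s_{\alpha_0^*}s_\alpha=\delta_{\alpha,\alpha_0}p_{r(\alpha)}$ for paths of equal length) to $c_{\alpha_0}p_v$, which forces $c_{\alpha_0}=0$ because $rp_{\{v\}}\neq0$ for $r\neq0$. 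Second, the commutative subalgebra $\mathcal D:=\operatorname{span}_R\{p_A:A\in\mathcal G^0\}$ injects into the $R$-valued functions on $G^0$ via $d\mapsto(v\mapsto d(v))$, where $dp_v=d(v)p_v$; this follows by refining any $d=\sum_k r_kp_{A_k}$ into a combination $\sum_l t_l p_{C_l}$ over pairwise disjoint nonempty atoms $C_l\in\mathcal G^0$ and noting the $p_{C_l}$ are orthogonal idempotents, hence independent.

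Next I would put $x$ into shape. Using $s_\alpha p_A=s_\alpha p_{r(\alpha)\cap A}$ I may assume every spanning term $s_{\alpha_j}p_{A_j}s_{\beta_j^*}$ of $x$ (Theorem~\ref{uggrade}) has $A_j\subseteq r(\alpha_j)$, and I absorb all length-zero $\beta_j$ into the diagonal, writing $x=x_g+w_\emptyset$, where $x_g=\sum_{\beta}w_\beta$ collects the terms with $|\beta|\ge 1$ (grouped by their right path $\beta$, each $w_\beta=\big(\sum r_j s_{\alpha_j}p_{A_j}\big)s_{\beta^*}$) and $w_\emptyset=\sum_\alpha s_\alpha d_\alpha$ with $d_\alpha\in\mathcal D$ supported in $r(\alpha)$. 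Let $V_0$ be the finite set of source vertices $s(\beta)$ of the finitely many ghost tails appearing; by Lemma~\ref{ulve}, $w_\beta p_v=w_\beta$ if $s(\beta)=v$ and $0$ otherwise, while $w_\emptyset p_v=\sum_\alpha d_\alpha(v)\,s_\alpha p_v$. For $v\notin V_0$ the ghost part dies, so the hypothesis gives $\sum_\alpha d_\alpha(v)s_\alpha p_v=0$, and the first independence fact yields $d_\alpha(v)=0$ for all $\alpha$. Thus each $d_\alpha$, viewed as a function, is supported on the finite set $V_0$, so by the second fact $d_\alpha=\sum_{v\in V_0}d_\alpha(v)p_v$ is a finite combination of point masses. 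Consequently both $x_g=\sum_{v\in V_0}x_g p_v$ and $w_\emptyset=\sum_{v\in V_0}w_\emptyset p_v$, whence $x=\sum_{v\in V_0}xp_v=0$.

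The hard part is exactly the phenomenon the paper flags: in an ultragraph a set $A\in\mathcal G^0$ may be infinite, so $p_A$ is not a finite sum $\sum_{v\in A}p_v$, and the diagonal part $w_\emptyset$ cannot naively be localized vertex-by-vertex. The whole argument turns on the finite-support deduction of the previous paragraph---independence fact (I) pins down $d_\alpha(v)=0$ for all but finitely many $v$, collapsing each possibly-infinite $p_A$ occurring in $w_\emptyset$ to a genuine finite sum of $p_v$'s, which is what legitimizes the final identity $x=\sum_{v\in V_0}xp_v$. Notably no homogeneity reduction on $x$ is needed. I expect the only points requiring care to be the bookkeeping that absorbs the $|\beta|=0$ terms into $\mathcal D$ and the verification that $V_0$ captures every surviving ghost source, both of which are routine once the grouping by right path is fixed.
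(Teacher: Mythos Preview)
Your contrapositive argument is correct and takes a genuinely different, more structural route than the paper's. The paper proceeds by direct case analysis: Case~1 handles $x$ whose ghost tails all have $s(\beta_i)$ finite (so $x=xp_V$ for a finite $V$ and one of the $v\in V$ works); Case~2 handles $x=\sum r_is_{\alpha_i}p_{A_i}$ with no ghost part, by picking $v_1\in r(\alpha_1)\cap A_1$ and showing $xp_{v_1}\neq0$ directly via the grading and left-multiplication by $s_{\alpha_1^*}$; Case~3 reduces the mixed situation to Case~2 by multiplying by $p_{A_1\setminus V}$ to annihilate the ghost terms with finite source. You instead package the same basic moves (grading, left-multiplication by $s_{\alpha^*}$, orthogonality of idempotents for disjoint sets) into two clean independence lemmas and run a single contrapositive sweep, avoiding the three-way split entirely. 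The gain is a tidier argument that also makes transparent \emph{why} the infinite sets in $\mathcal G^0$ collapse to finite sums of point masses; the cost is the small overhead of setting up the diagonal subalgebra $\mathcal D$ and its injection into $R^{G^0}$.

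One cosmetic point to tighten: in your independence fact~(I), paths of length zero in an ultragraph are elements of $\mathcal G^0$, and every such $\alpha$ with $v\in r(\alpha)$ yields the \emph{same} element $s_\alpha p_v=p_v$, so the family is not literally independent when indexed by all of $\mathcal G^*$. State it instead for $\{p_v\}\cup\{s_\alpha p_v:|\alpha|\ge1,\ v\in r(\alpha)\}$, and correspondingly write $w_\emptyset=d_0+\sum_{|\alpha|\ge1}s_\alpha d_\alpha$ with a single degree-zero piece $d_0\in\mathcal D$. With that adjustment the argument goes through exactly as you describe.
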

 
\begin{proof}
 Suppose $x\neq0$. We will break up our proof into three cases. Before we start, recall $x$ can be expressed as $x=\sum\limits_{i=1}^{n}r_{i}s_{\alpha_{i}}p_{A_{i}}s_{\beta_{i}^{*}}$ (see Theorem \ref{uggrade}). For our first case, we will consider the possibility where $|s(\beta_{i})|<\infty$ for each $i$. Again, we will reiterate the fact $s(\beta_{i})$ need not be a vertex when $|\beta_{i}|=0$ ($|\beta_{i}|=0\implies \beta_{i}=A\in\mathcal{G}^{0}$, with $s_{\beta_{i}^{*}}=p_{A}$, in which case $s(\beta_{i})=A$).
 
 \textit{Case 1:} Let $x=\sum\limits_{i=1}^{n}r_{i}s_{\alpha_{i}}p_{A_{i}}s_{\beta_{i}^{*}}$ with $s(\beta_{i})$ finite for each $i$. Set
$$V=\{v\in G^{0}: v\in s(\beta_{i}) \text{ for } 1\leq i\leq n\}.$$
 Since each $s(\beta_{i})$ is finite, $V$ must be finite as well. Suppose  $xp_{v}=0$ for every $v\in G^{0}$; because $xp_{V}=\sum\limits_{v\in V}xp_{v}$, it must be $xp_{V}=0$. But,
$$xp_{V}=\sum\limits_{i=1}^{n}r_{i}s_{\alpha_{i}}p_{A_{i}}s_{\beta_{i}^{*}}p_{V}=\sum\limits_{i=1}^{n}r_{i}s_{\alpha_{i}}p_{A_{i}}s_{\beta_{i}^{*}}=x,$$
which means $x=0$. $\Rightarrow\!\Leftarrow$ Thus, there exists $v\in G^{0}$ such that $xp_{v}\neq0$.

Now, we must consider the possibility that not every $s(\beta_{i})$ is finite. Said more specifically, we must consider the possibility that, for some $i$, $\beta_{i}=A\in\mathcal{G}^{0}$, a countably infinite subset of $G^{0}$. We will prove it in the next case; to make things easier we will assume $|\beta_{i}|=0$ for all $1\leq i\leq n$. This means, by exploiting \textbf{uLP1}, we can express $x$ as $x=\sum\limits_{i=1}^{n}r_{i}s_{\alpha_{i}}p_{A_{i}}.$

\textit{Case 2:} Let $x=\sum\limits_{i=1}^{n}r_{i}s_{\alpha_{i}}p_{A_{i}}$ with $A_{i}$ possibly infinite. Pick a vertex $v_{1}\in r(\alpha_{1})\cap A_{1}$. If $xp_{v_{1}}=0$, we have, by applying \textbf{uLP1} and \textbf{uLP2}, 
$$xp_{v_{1}}=\sum\limits_{i=1}^{n}r_{i}s_{\alpha_{i}}p_{A_{i}\cap v_{1}}=\sum\limits_{\{i:v_{1}\in r(\alpha_{i})\cap A_{i}\}}r_{i}s_{\alpha_{i}}p_{v_{1}}=0.$$
 The sum on the right-hand side has at least one term since $v_{1}\in r(\alpha_{1})\cap A_{1}$. Rewrite $xp_{v_{1}}$ as $\sum\limits_{k}\sum\limits_{|\alpha_{i}|=k}r_{i}s_{\alpha_{i}}p_{v_{1}}.$ By the $\mathbb{Z}$-grading on $L_{R}(\mathcal{G})$ (see Theorem \ref{uggrade}), 
$$\sum\limits_{k}\sum\limits_{|\alpha_{i}|=k}r_{i}s_{\alpha_{i}}p_{v_{1}}=0\iff \sum\limits_{|\alpha_{i}|=k}r_{i}s_{\alpha_{i}}p_{v_{1}}=0\text{ for each }k.$$
And so, for what follows, we may as well assume all the $\alpha_{i}$'s have the same length in the sum
$$\sum\limits_{\{i:v_{1}\in r(\alpha_{i})\cap A_{i}\}}r_{i}s_{\alpha_{i}}p_{v_{1}}.$$
Then, by \textbf{uLP3}, we have
$$0=s_{\alpha_{1}^{*}}xp_{v_{1}}= \sum\limits_{\{i:v_{1}\in r(\alpha_{i})\}}r_{i}s_{\alpha_{1}^{*}}s_{\alpha_{i}}p_{v_{1}}=r_{1}p_{v_{1}};$$
 but we know $r_{1}p_{v_{1}}\neq0$ (recall that $rp_{A}\neq 0$ for $A\in\mathcal{G}^{0}$ and $r\in R\setminus\{0\}$, see paragraph following Definition \ref{uglpa}). $\Rightarrow\!\Leftarrow$ Thus, $xp_{v_{1}}\neq 0$.
 
For our final, and most general, case, we will consider the possibility that not all $s(\beta_{i})$'s are finite, and not all $\beta_{i}$'s have length 0. By grouping terms accordingly, we can express $x$ as
$$x=\sum\limits_{i=1}^{n}r_{i}s_{\alpha_{i}}p_{A_{i}}+\sum\limits_{j=1}^{m}r'_{j}s_{\alpha_{j}'}p_{A'_{j}}s_{\beta_{j}'^{*}}$$
where the $A_{i}$'s are infinite, and the $s(\beta_{j}')$'s are finite. Moreover, by \textbf{uLP1} and \textbf{uLP2}, we have 
$$\sum\limits_{i=1}^{n}r_{i}s_{\alpha_{i}}p_{A_{i}}=\sum\limits_{i=1}^{n}r_{i}s_{\alpha_{i}}p_{r(\alpha_{i})}p_{A_{i}}=\sum\limits_{i=1}^{n}r_{i}s_{\alpha_{i}}p_{r(\alpha_{i})\cap A_{i}}.$$
Now, if $r(\alpha_{i})\cap A_{i}$ is finite for each $i$, then by replacing each $A_{i}$ by $r(\alpha_{i})\cap A_{i}$ in our expression of $x$, and by our assumption each $s(\beta_{j}')$ is finite, we reduce to case 1. Thus, for our final case, we may as well assume $r(\alpha_{i})\cap A_{i}$ is infinite for each $i$.
 
\textit{Case 3:} Let $x=\sum\limits_{i=1}^{n}r_{i}s_{\alpha_{i}}p_{A_{i}}+\sum\limits_{j=1}^{m}r'_{j}s_{\alpha_{j}'}p_{A'_{j}}s_{\beta_{j}'^{*}}$ with $r(\alpha_{i})\cap A_{i}$ infinite for each $i$ and $s(\beta_{j}')$ finite for each $j$. Let
 $$V=\{v\in G^{0}: v\in s(\beta_{j}') \text{ for } 1\leq j\leq m\}.$$
$V$ is a finite set, and since $A_{1}$ is countably infinite, this means $A_{1}\setminus V\neq \emptyset$. Further, since $s(\beta_{j}')\cap (A_{1}\setminus V)=\emptyset$ for each $j$, we have
$$xp_{A_{1}\setminus V}=\sum\limits_{i=1}^{n}r_{i}s_{\alpha_{i}}p_{A_{i}\cap (A_{1}\setminus V)}=\sum\limits_{\{i:\ r(\alpha_{i})\cap A_{i}\cap (A_{1}\setminus V)\neq \emptyset\}}r_{i}s_{\alpha_{i}}p_{A_{i}\cap (A_{1}\setminus V)}.$$
Since, $r(\alpha_{1})\cap A_{1}\cap (A_{1}\setminus V)\neq \emptyset$ ($V$ is finite, $r(\alpha_{1})\cap A_{1}$ is not), the sum on the right has at least one term. Now, suppose
$$\sum\limits_{\{i:\ r(\alpha_{i})\cap A_{i}\cap (A_{1}\setminus V)\neq \emptyset\}}r_{i}s_{\alpha_{i}}p_{A_{i}\cap (A_{1}\setminus V)}=0.$$
By the $\mathbb{Z}$-grading on $L_{R}(\mathcal{G})$, we may assume the $\alpha_{i}$'s have the same length for each $i$ in $\{i:r(\alpha_{i})\cap A_{i}\cap (A_{1}\setminus V)\neq \emptyset\}$. This means 
$$0=s_{\alpha_{1}^{*}}xp_{A_{1}\setminus V}=r_{1}p_{r(\alpha_{1})\cap A_{1}\cap (A_{1}\setminus V)}\ \Rightarrow\!\Leftarrow.$$
So, 
$$xp_{A_{1}\setminus V}=\sum\limits_{\{i:\ r(\alpha_{i})\cap A_{i}\cap (A_{1}\setminus V)\neq \emptyset\}}r_{i}s_{\alpha_{i}}p_{A_{i}\cap (A_{1}\setminus V)}\neq 0,$$
reducing us to case 2; for which we know there exists a $v$ such that $xp_{v}\neq0$.

Since every $x\in L_{R}(\mathcal{G})$ fits into one of the three cases, we have our desired result.
\end{proof}

With our previous result in hand, we are now ready to set the next stepping stone. We invite the reader to compare the following lemma with \cite[Proposition 3.1]{G.-Aranda-Pino:2008aa}.

\begin{lemma}\label{final:lemma3}
Let $x\in L_{R}(\mathcal{G})$ such that $x\neq0$. Then, there exist $a,b\in L_{R}(\mathcal{G})$ such that $axb=rp_{v}$, for some $r\in R\setminus\{0\}$, or $axb=\sum\limits_{i=0}^{n} r_{i}s_{\alpha}^{i}$ for some cycle $\alpha\in\mathcal{G}^{*}$.
\end{lemma}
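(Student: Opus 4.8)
The plan is to reduce an arbitrary nonzero $x\in L_R(\mathcal{G})$ to a manageable form by repeatedly multiplying on the left and right by suitable elements, mirroring the classical ``reduction'' argument for graph Leavitt path algebras found in \cite{G.-Aranda-Pino:2008aa}. First I would invoke Proposition \ref{final:proposition1} to fix a vertex $v\in G^0$ with $xp_v\neq 0$, and replace $x$ by $xp_v$; this lets me assume from the outset that $x$ is ``anchored'' at a single vertex on the right, so that by Theorem \ref{uggrade} we may write
$$x=\sum_{i=1}^{n} r_i\, s_{\alpha_i} p_{A_i} s_{\beta_i^{*}},$$
where each term satisfies $v\in r(\alpha_i)\cap A_i\cap r(\beta_i)$. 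Because everything now ends in $p_v$, I can absorb the $p_{A_i}$ and $s_{\beta_i^{*}}$ factors: multiplying on the right by $s_{\gamma}$ for an appropriate path $\gamma$ out of $v$ (or just by $p_v$ itself when the $\beta_i$ have length $0$) collapses the right-hand ghost parts, leaving a sum of the form $\sum_i r_i s_{\alpha_i} p_v$ with all $\alpha_i$ real paths ending at $v$.

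Next I would use the $\mathbb{Z}$-grading of $L_R(\mathcal{G})$ (Theorem \ref{uggrade}, part 2) to isolate a single homogeneous component: since the grading is a direct sum, some graded piece of the reduced element is nonzero, and I may discard the others, so I can assume all surviving $\alpha_i$ have a common length $k$. Now left-multiply by $s_{\alpha_{1}^{*}}$. Using the multiplication rule $s_{\beta^{*}}s_{\alpha}$ recorded after Definition \ref{uglpa} (which equals $p_{r(\alpha)}$ when $\beta=\alpha$, a shorter path or ghost path when one extends the other, and $0$ otherwise), the term $i=1$ contributes $r_1 p_v$. The plan is to separate the indices into those $\alpha_i$ that are comparable to $\alpha_1$ (one extends the other) and those that are not; the incomparable ones die. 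For the comparable ones, $s_{\alpha_1^{*}}s_{\alpha_i}$ is either $p_v$ (when $\alpha_i=\alpha_1$, giving further scalar multiples of $p_v$) or of the form $s_{\delta}$ for a nonempty path $\delta$; and since all $\alpha_i$ share the length $k$, equality of length forces any extension relation to be an actual equality unless we allow the paths to wrap around a cycle. This dichotomy is exactly what produces the two alternatives in the statement: either I land on a scalar multiple $r p_v$ (with $r\neq 0$, guaranteed because $r_1 p_v\neq 0$ as noted after Definition \ref{uglpa}), or the only way distinct equal-length paths stay comparable is that $v$ lies on a cycle $\alpha$ and the $\alpha_i$ are powers of $\alpha$, yielding $\sum_i r_i s_{\alpha}^{i}$.

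The main obstacle I anticipate is the careful bookkeeping in this last cancellation step: after left-multiplying by $s_{\alpha_1^{*}}$, I must argue precisely when two equal-length real paths $\alpha_1,\alpha_i$ ending at the same vertex can fail to be cancelled to either $p_v$ or a power of a cycle. In the graph case this is handled by an explicit path-comparison lemma, and the delicate point for ultragraphs is that $r(\alpha)$ is now a set rather than a single vertex, so ``ending at $v$'' and the condition $s(\beta)\in r(\alpha)$ behave differently. I would manage this by keeping the right anchor $p_v$ fixed throughout, which pins down an honest vertex and suppresses the set-valued subtleties, and by treating the length-$0$ case ($\beta_i\in\mathcal{G}^0$) separately exactly as in Cases 1--3 of Proposition \ref{final:proposition1}. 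A secondary technical point is ensuring the scalar $r$ surviving in the first alternative is genuinely nonzero; this follows from the faithfulness fact $rp_A\neq 0$ for $r\neq0$ stated after Definition \ref{uglpa}, which I would cite to close the argument.
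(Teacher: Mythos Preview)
Your overall strategy---eliminate ghost edges, then analyze a real-path sum---matches the paper's, but there is a genuine gap at the step where you invoke the $\mathbb{Z}$-grading. You write that ``some graded piece of the reduced element is nonzero, and I may discard the others, so I can assume all surviving $\alpha_i$ have a common length $k$.'' This is not a legal move: projection onto a graded component is not of the form $x\mapsto axb$, so you cannot simply pass to a homogeneous piece while still claiming the result has the shape $axb$ for the original $x$. The paper never does this; instead it keeps all the terms $\sum r_i s_{\alpha_i}p_v$ with \emph{varying} lengths, orders them by length, and runs an induction on the number of summands. Multiplying on the left by $s_{\alpha_1^{*}}$ (for the shortest $\alpha_1$) either kills some terms (reducing the count) or forces every $\alpha_i$ to extend $\alpha_1$, producing genuine cycles $\beta_i$ of \emph{different} lengths based at $v$. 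It is precisely this length-inhomogeneity that generates the polynomial $\sum r_i s_\alpha^{i}$; your equal-length reduction would make the cycle alternative impossible to reach, since for equal-length paths $s_{\alpha_1^{*}}s_{\alpha_i}$ is either $p_{r(\alpha_1)}$ or $0$, never a nontrivial power of a cycle.

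A secondary but real gap: the sentence ``multiplying on the right by $s_\gamma$ for an appropriate path $\gamma$ out of $v$ \dots\ collapses the right-hand ghost parts'' hides the hardest part of the argument. A single multiplication by $s_\gamma$ does not eliminate ghost edges in general (if $\beta_i=\gamma\beta_i'$ then $s_{\beta_i^{*}}s_\gamma=s_{\beta_i'^{*}}$ is still a ghost). The paper needs an iterative ghost-degree reduction: write $xp_v=\sum Y_i s_{e_1^{i*}}+Y$ in minimal ghost degree, and show that either some $xp_v s_{e_1^k}\neq 0$ has strictly smaller ghost degree, or there exists $f\neq e_1^i$ with $xs_f=Ys_f\neq 0$ already ghost-free (this last step uses \textbf{uLP4} and the nonvanishing $r p_A\neq 0$). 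You should spell out this descent, and then replace the grading shortcut with the paper's induction on the number of summands, tracking how the simple-cycle decomposition of the $\beta_i$ either collapses to $r_1 p_v$ or yields a polynomial in a single simple cycle $\alpha$.
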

 
 \begin{proof}
 Let $x\in L_{R}(\mathcal{G})$ such that $x\neq 0$. Our first goal is to show that there exists $\mu\in\mathcal{G}^{*}$ such that $xs_{\mu}$ can be expressed entirely in real edges and is nonzero; put more succinctly, $xs_{\mu}$ can be expressed as
 $$xs_{\mu}=\sum\limits_{i=1}^{n}r_{i}s_{\alpha_{i}}p_{A_{i}}\neq0.$$
 To that end, let $v\in G^{0}$ such that $xp_{v}\neq0$; by Proposition \ref{final:proposition1}, such a $v$ must exist. By grouping terms based on the presence of ghost edges, we can write $xp_{v}$ as 
$$xp_{v}=\sum\limits_{i=1}^{n}r_{i}s_{\alpha_{i}}p_{A_{i}}s_{\beta_{i}^{*}}+\sum\limits_{j=1}^{m}r'_{j}s_{\zeta_{j}}p_{B_{j}}$$
with $|\beta_{i}|\geq 1$ for each $i$; further, we may also assume the degree of $\sum\limits_{i=1}^{n}r_{i}s_{\alpha_{i}}p_{A_{i}}s_{\beta_{i}^{*}}$ in ghost edges is minimal. That is, given any other expression of $xp_{v}$,
 $$xp_{v}=\sum\limits_{k}r''_{k}s_{\alpha''_{k}}p_{A_{k}}s_{\beta_{k}''^{*}},$$
 there exists some $k$ such that $|\beta''_{k}|\geq\text{max}\{|\beta_{i}|\}$. Note that 
$$xp_{v}=xp_{v}^{2}=\sum\limits_{i=1}^{n}r_{i}s_{\alpha_{i}}p_{A_{i}}s_{\beta_{i}^{*}}p_{v}+\sum\limits_{j=1}^{m}r'_{j}s_{\zeta_{j}}p_{B_{j}}p_{v}.$$
Since, by Lemma \ref{ulve}, 
 \[
r_{i}s_{\alpha_{i}}p_{A_{i}}s_{\beta_{i}^{*}}p_{v} =\left\{
   \begin{array}{ll}
   r_{i}s_{\alpha_{i}}p_{A_{i}}s_{\beta_{i}^{*}} \hspace{.5cm}\text{if}\ v=s(\beta_{i}),\\
     0 \hspace{2.2cm} \text{otherwise},
  \end{array}
  \right.
\]
and
\[
   r'_{j}s_{\zeta_{j}}p_{B_{j}}p_{v} = \left\{
   \begin{array}{ll}
   r'_{j}s_{\zeta_{j}}p_{v}\hspace{1cm}\text{if}\ v\in r(\zeta_{j})\cap B_{j},\\
     0 \hspace{2cm} \text{otherwise},
   \end{array}
  \right.
\] 
we may express $xp_{v}$ as 
$$xp_{v}=\sum\limits_{i=1}^{n}r_{i}s_{\alpha_{i}}p_{A_{i}}s_{\beta_{i}^{*}}+\sum\limits_{j=1}^{m}r'_{j}s_{\zeta_{j}}p_{v}$$
where $s(\beta_{i})=v$ and $v\in r(s_{\zeta_{j}})$ for each $i,j$; because multiplying an expression by $p_{v}$ can't increase the degree in ghost edges, the expression $xp_{v}=\sum\limits_{i=1}^{n}r_{i}s_{\alpha_{i}}p_{A_{i}}s_{\beta_{i}^{*}}+\sum\limits_{j=1}^{m}r'_{j}s_{\zeta_{j}}p_{v}$ is in minimal ghost edge degree as well. Letting $e_{1}^{i}$ denote the first edge of $\beta_{i}$, and $\beta'_{i}$ the path such that $\beta_{i}=e_{1}^{i}\beta'_{i}$ (it's possible $\beta_{i}=e_{1}^{i}$, in which case we can take $\beta_{i}'=p_{r(e_{1}^{i})}$), set $Y_{i}=r_{i}s_{\alpha_{i}}p_{A_{i}}s_{\beta_{i}^{'*}}$ and $Y=\sum\limits_{j=1}^{m}r'_{j}s_{\zeta_{j}}p_{v}$, and so 
$$xp_{v}=\sum\limits_{i=1}^{n}Y_{i}s_{e_{1}^{i^{*}}}+Y.$$
If $xp_{v}s_{e_{1}^{i}}=0$ for each $i$, we have $Y_{i}=-Ys_{e_{1}^{i}}$. In which case,
$$xp_{v}=\sum\limits_{i=1}^{n}-Ys_{e_{1}^{i}}s_{e_{1}^{i^{*}}}+Y=Y\bigg(\sum\limits_{i=1}^{n}-s_{e_{1}^{i}}s_{e_{1}^{i^{*}}}+p_{v}\bigg).$$
Since $xp_{v}\neq0$, $\sum\limits_{i=1}^{n}-s_{e_{1}^{i}}s_{e_{1}^{i^{*}}}+p_{v}\neq0$. By \textbf{uLP4}, there exists $f\in\mathcal{G}^{1}$ such that $s(f)=v$ and $f\neq e_{1}^{i}$ for any $i$. It then follows 
$$xp_{v}s_{f}=xs_{f}=\sum\limits_{i=1}^{n}-Ys_{e_{1}^{i}}s_{e_{1}^{i^{*}}}s_{f}+Ys_{f}=Ys_{f}.$$
Suppose
$$Ys_{f}=\sum\limits_{j=1}^{m}r'_{j}s_{\zeta_{j}}p_{v}s_{f}=\sum\limits_{j=1}^{m}r'_{j}s_{\zeta_{j}}s_{f}=\sum\limits_{j=1}^{m}r'_{j}s_{\zeta_{j}f}=0.$$
By the $\mathbb{Z}$-grading on $L_{R}(\mathcal{G})$ (see case 2 in proof of Proposition \ref{final:proposition1}), we may assume $|\zeta_{j}f|$ is the same for each $j$. We then have 
$$r'_{j_{0}}p_{r(f)}=r'_{j_{0}}s_{(\zeta_{j_{0}}f)^{*}}s_{\zeta_{j_{0}}f}=s_{(\zeta_{j_{0}}f)^{*}}\bigg(\sum\limits_{j=1}^{m}r'_{j}s_{\zeta_{j}f}\bigg)=0$$
for each fixed $j_{0}$, which is only possible if $r'_{j_{0}}=0$. This would then mean $r'_{j}=0$ for each $j$, implying $Y=0$, which in turn would mean $xp_{v}=0.\ \ \Rightarrow\!\Leftarrow$ Thus, $Ys_{f}\neq0$, and setting $\mu=s_{f}$, we have our desired result in the case where $xp_{v}s_{e_{1}^{i}}=0$ for all $i$. On the other hand, there is the possibility $xp_{v}s_{e_{1}^{k}}\neq0$ for some $1\leq k\leq n$. Which is to say,
$$xp_{v}s_{e_{1}^{k}}=xs_{e_{1}^{k}}=\sum\limits_{i=1}^{n}Y_{i}s_{e_{1}^{i^{*}}}s_{e_{1}^{k}}+Ys_{e_{1}^{k}}=Y_{k}+Ys_{e_{1}^{k}}\neq0.$$
Notice that $xs_{e_{1}^{k}}$ has a strictly less degree in ghost edges than $xp_{v}$. We can apply the same process to $xs_{e_{1}^{k}}$ as we did to $x$; getting either an edge $f$ such that $xs_{e_{1}^{k}}s_{f}\neq0$ and can be expressed using only real edges, or an edge $e_{1}^{k''}$ such that $xs_{e_{1}^{k}}s_{e_{1}^{k''}}\neq 0$ and has a degree in ghost edges which is strictly less than that of $xs_{e_{1}^{k}}$. Repeating this process a finite number of times, we will arrive at a $\mu\in\mathcal{G}^{*}$ such that $xs_{\mu}$ can be expressed using only real edges.
 
With that, we will show there exist $a,b\in L_{R}(\mathcal{G})$ such that $axb=rp_{v}$ for some $v\in G^{0}$ and $r\neq0$, or $axb=\sum\limits_{i=0}^{n} r_{i}s_{\alpha}^{i}$ for some cycle $\alpha\in\mathcal{G}^{*}$. To that end, let $x\in L_{R}(\mathcal{G})$. If $x$ can be expressed in only real edges we are fine. If not, as we have shown above, $x\neq0$, there exists $\mu\in\mathcal{G}^{*}$ such that $xs_{\mu}$ can be expressed using only real edges. Replacing $x$ by $xs_{\mu}$ if necessary, we may assume $x$ can be expressed in solely in real edges. Which is to say, $x$ can be expressed as
$$x=\sum\limits_{i=1}^{n}r_{i}s_{\alpha_{i}}p_{A_{i}}.$$
Also, by Proposition \ref{final:proposition1}, there exists $v\in G^{0}$ such that $xp_{v}\neq0$. Replacing $x$ by $xp_{v}$ if necessary, we may also assume
$$x=\sum\limits_{i=1}^{n}r_{i}s_{\alpha_{i}}p_{v}$$
for some $v\in G^{0}$. We will prove our assertion by induction on $n$. For $n=1$,  
$$x=r_{1}s_{\alpha_{1}}p_{v}.$$
In which case, taking $a=s_{\alpha_{1}^{*}}$, we have, by \textbf{uLP3}, 
$$ax=r_{1}s_{\alpha_{1}^{*}}s_{\alpha_{1}}p_{v}=r_{1}p_{r(\alpha_{1})}p_{v}=r_{1}p_{v};$$
setting $b=p_{v}$, we have concluded our base case.

Now, assuming our hypothesis holds for all numbers less than $n$, let
$$x=\sum\limits_{i=1}^{n}r_{i}s_{\alpha_{i}}p_{v}$$
with the summands arranged such that $|\alpha_{j}|\leq|\alpha_{k}|$ for all $j,k$ with $1\leq j\leq k\leq n$. Muliplying $x$ on the left by $s_{\alpha_{1}^{*}}$, we have 
$$s_{\alpha_{1}^{*}}x=\sum\limits_{i=1}^{n}r_{i}s_{\alpha_{1}^{*}}s_{\alpha_{i}}p_{v}=r_{1}p_{v}+\sum\limits_{i=2}^{n}r_{i}s_{\alpha_{1}^{*}}s_{\alpha_{i}}p_{v},$$
which is nonzero since. To see why, note that if $|\alpha_{j}|=|\alpha_{k}|$, for all $j,k$, \textbf{uLP3} implies $s_{\alpha_{1}^{*}}x=r_{1}p_{v}\neq0$. Otherwise, by \textbf{uLP3}, there is some $2\leq k\leq n$ such that 
$$s_{\alpha_{1}^{*}}x=r_{1}p_{v}+\sum\limits_{i=k}^{n}r_{i}s_{\alpha_{1}^{*}}s_{\alpha_{i}}p_{v}$$
with $|\alpha_{1}^{*}\alpha_{i}|\geq1$ for all $k\leq i\leq n$; meaning, given $k\leq i$, $s_{\alpha_{1}^{*}}s_{\alpha_{i}}=0$, or $s_{\alpha_{1}^{*}}s_{\alpha_{i}}\in L_{R}(\mathcal{G})_{m}$ for some $m>0$. And so,  by the $\mathbb{Z}$-grading on $L_{R}(\mathcal{G})$, we again have $s_{\alpha_{1}^{*}}x\neq0$ (see case 2 in proof of Proposition \ref{final:proposition1}). Now, if $s_{\alpha_{1}^{*}}s_{\alpha_{k}}=0$ for some $2\leq k\leq n$, we reduce to the case of less than $n$ summands, and by our inductive hypothesis, we can find $a,b$ such that $axb$ is in one of the two desired forms. The only other possibility is $s_{\alpha_{1}^{*}}s_{\alpha_{i}}\neq0$ for all $2\leq i \leq n$. In which case, there exists a path $\beta_{i}$ for each $2\leq i\leq n$, with $|\beta_{i}|\geq 1$, such that $\alpha_{i}=\alpha_{1}\beta_{i}$. We then have 
$$s_{\alpha_{1}^{*}}x=\sum\limits_{i=1}^{n}r_{i}s_{\alpha_{1}^{*}}s_{\alpha_{i}}p_{v}=r_{1}p_{v}+\sum\limits_{i=2}^{n}r_{i}s_{\beta_{i}}p_{v};$$
multiplying $s_{\alpha_{1}^{*}}$ by $p_{v}$ on the left, we have
$$p_{v}s_{\alpha_{1}^{*}}x=r_{1}p_{v}^{2}+\sum\limits_{i=2}^{n}r_{i}p_{v}s_{\beta_{i}}p_{v}=r_{1}p_{v}+\sum\limits_{i=2}^{n}r_{i}p_{v}s_{\beta_{i}}p_{v}\neq 0.$$
If $p_{v}s_{\beta_{k}}p_{v}=0$ for some $2\leq k \leq n$, we again reduce to less than $n$ summands and we are done. Otherwise, we are left with the case
$$s_{\alpha_{1}^{*}}x=r_{1}p_{v}+\sum\limits_{i=2}^{n}r_{i}s_{\beta_{i}}p_{v}$$
where $s(\beta_{i})=v$ for each $i$; which is to say, $\beta_{i}$ is a cycle for each $i$. Up to this point, having started with an arbitrary $x\in L_{R}(\mathcal{G})$, we have have whittled our way down to showing our assertion holds for $x$, or 
$$s_{\alpha_{1}^{*}}x=r_{1}p_{v}+\sum\limits_{i=2}^{n}r_{i}s_{\beta_{i}}p_{v}$$
with each $\beta_{i}$ a cycle. Now, for $s_{\alpha_{1}^{*}}x=r_{1}p_{v}+\sum\limits_{i=2}^{n}r_{i}s_{\beta_{i}}p_{v}$, suppose there is a $\nu\in\mathcal{G}^{*}$ such that $s_{\nu^{*}}s_{\beta_{k}}=0$ for some $2\leq k\leq n$ but not all such $k$. This means $s(\nu)=v$ (otherwise $s_{\nu^{*}}s_{\beta_{i}}=0$ for all $i$). In turn, this implies 
$$s_{\nu^{*}}r_{1}p_{v}s_{\nu}=r_{1}p_{r(\nu)}\neq0.$$
Note, 
$$s_{\nu^{*}}s_{\alpha_{1}^{*}}xs_{\nu}=r_{1}p_{r(\nu)}+\sum\limits_{i=2}^{n}r_{i}s_{\nu^{*}}s_{\beta_{i}}s_{\nu}.$$
And, by our assumption, there is at least one $k$ such that $s_{\nu^{*}}s_{\beta_{k}}s_{\nu}\neq0$, for which it must be $\nu^{*}\beta_{k}\nu\in\mathcal{G}^{*}$. Since $|\nu^{*}\beta_{k}\nu|=|\beta_{k}|\geq 1$, by the $\mathbb{Z}$-grading of $L_{R}(\mathcal{G})$, we have
$$s_{\nu^{*}}s_{\alpha_{1}^{*}}xs_{\nu}=r_{1}p_{r(\nu)}+\sum\limits_{i=2}^{n}r_{i}s_{\nu^{*}}s_{\beta_{i}}s_{\nu}\neq0.$$
Because we are assuming $s_{\nu^{*}}s_{\beta_{k}}=0$ for some, but not all, $2\leq k\leq n$ (and so some, but not all, the summands in the above expression are 0), our inductive hypothesis tells us there exist $a',b'$ such that $a's_{\nu^{*}}s_{\alpha_{1}^{*}}xs_{\nu}b'$ satisfies one of the specified forms. Setting $a=a's_{\nu^{*}}s_{\alpha_{1}^{*}}$ and $b=s_{\nu}b'$, we prove our assertion for $x$ in the case $s_{\nu^{*}}s_{\beta_{k}}=0$ for some, but not all, $2\leq k\leq n$. On the other hand, if $\nu\in\mathcal{G}^{*}$ is such that $s_{\nu^{*}}s_{\beta_{k}}=0$ for some $k$, then it must be $s_{\nu^{*}}s_{\beta_{i}}=0$ for all $i$. In particular, this means $s_{\beta_{j}^{*}}s_{\beta_{k}}\neq0$ for all $j,k$ (since $s_{\beta_{j}^{*}}s_{\beta_{j}}\neq0$), from which we can deduce $|\beta_{j}|<|\beta_{k}|$ for $j<k$, since $|\beta_{j}|=|\beta_{k}|$ and $\beta_{j}\neq\beta_{k}$ implies $s_{\beta_{j}^{*}}s_{\beta_{k}}=0$. Moreover, since all the $\beta_{i}$'s are cycles, this means for $j<k$ there exists a cycle $\tau$, with $|\tau|\geq1$, such that $\beta_{k}=\beta_{j}\tau$.
 
There is an important fact we should note about cycles: for $\beta\in\mathcal{G}^{*}$ a cycle, we have
$$\beta=\gamma_{1}...\gamma_{m}$$
where each $\gamma_{i}=e^{i}_{1}...e^{i}_{k}$ is a cycle with $s(e^{i}_{1})\notin r(e^{i}_{j})$ for $j<k$; we will call such cycles \textit{simple cycles}. To see this, let $\beta=e_{1}...e_{n}$ be a cycle and let $i$ be the smallest number such that $s(e_{1})\in r(e_{i})$. If $i=n$, we are done. Otherwise, letting $\gamma_{1}=e_{1}..e_{i}$ , we have $\beta=\gamma_{1}\beta'$ where $\beta'$ is also a cycle with $|\beta'|<|\beta|$. Taking $\beta'$ and applying the same process a finite number of times, we will eventually have $\beta=\gamma_{1}...\gamma_{m}$ where each $\gamma_{i}$ has the desired property.
 
So, by the observation made in the previous paragraph, and the fact $\beta_{k}=\beta_{j}\tau$, for some cycle $\tau$, when $j<k$, we can express the $\beta_{i}$'s, for $2\leq i\leq n$, as follows. We first express $\beta_{2}$ as 
$$\beta_{2}=\gamma^{2}_{1}...\gamma^{2}_{m_{2}}$$
where each of the $\gamma$'s are simple cycles. To clear up any confusion, the superscripts in the expression above are purely for indexing purposes and not to suggest exponentiation. Now, since $\beta_{3}=\beta_{2}\tau$ for some cycle $\tau$, and $\tau$ in turn can be expressed as $\tau=\gamma^{3}_{1}...\gamma^{3}_{m_{3}}$, we have
$$\beta_{3}=\gamma^{2}_{1}...\gamma^{2}_{m_{2}}\gamma^{3}_{1}...\gamma^{3}_{m_{3}}$$
where the $\gamma$'s are again simple cycles. Continuing in this manner, we may express $\beta_{n}$ as
$$\beta_{n}=\gamma^{2}_{1}...\gamma^{2}_{m_{2}}\gamma^{3}_{1}...\gamma^{3}_{m_{3}}\cdot\cdot\cdot\gamma^{n}_{1}...\gamma^{n}_{m_{n}}$$
in simple cycles. Thus, 
\begin{tabbing}
\hspace{1cm} $s_{\alpha_{1}^{*}}x\ $\=$=r_{1}p_{v}+\sum\limits_{i=2}^{n}r_{i}s_{\beta_{i}}p_{v}$\\
\>$=r_{1}p_{v}+r_{2}s_{\gamma^{2}_{1}...\gamma^{2}_{m_{2}}}p_{v}+r_{3}s_{\gamma^{2}_{1}...\gamma^{2}_{m_{2}}\gamma^{3}_{1}...\gamma^{3}_{m_{3}}}p_{v}+\cdot\cdot\cdot+r_{n}s_{\gamma^{2}_{1}...\gamma^{2}_{m_{2}}\gamma^{3}_{1}...\gamma^{3}_{m_{3}}\cdot\cdot\cdot\gamma^{n}_{1}...\gamma^{n}_{m_{n}}}p_{v}$.
\end{tabbing}
Now, if the $\gamma$'s aren't all the same cycle, it must be $\gamma^{2}_{1}\neq\gamma^{j}_{k}$ for some $j,k$. Since all the $\gamma$'s are simple cycles starting at $v$, it must then be $s_{\gamma^{j*}_{k}}s_{\gamma^{2}_{1}}=0$ and so  
$$s_{\gamma_{j_{k}}^{*}}s_{\alpha_{1}^{*}}xs_{\gamma_{j_{k}}}=r_{1}p_{v};$$
setting $a=s_{\gamma^{j*}_{k}}s_{\alpha_{1}^{*}}$ and $b=s_{\gamma^{j}_{k}}$, we have $axb=r_{1}p_{v}$. If, on the other hand, the $\gamma$'s are the same simple cycle, setting $\alpha=\gamma^{2}_{1}$, $a=s_{\alpha_{1}^{*}}$, and $b=p_{v}$, we have an expression of $axb$ as a polynomial in $s_{\alpha}$. 
\end{proof}

Armed with Lemma \ref{final:lemma3}, we are now ready to establish the last key piece we need in order to prove the main result of this section. From here on out we will denote the source and range maps of $\mathcal{G}$ by $s_{G}$ and $r_{G}$, and of $\mathcal{F}$ by $s_{F}$ and $r_{F}$, when necessary. Otherwise, we will simply use $r$ and $s$ to avoid cluttered notation and hope it's clear which is which from context.

\begin{prop}\label{final:proposition2}
Let $\mathcal{G}$ be an ultragraph and $\mathcal{F}$ its desingularization. There exists an injective R-algebra homomorphism $\phi:L_{R}(\mathcal{G})\to L_{R}(\mathcal{F})$. \textup{(See \cite[Proposition 5.5]{G.-Abrams:2008aa})}
\end{prop}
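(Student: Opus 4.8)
The plan is to construct $\phi$ via the universal property of $L_{R}(\mathcal{G})$ by exhibiting a Leavitt $\mathcal{G}$-family inside $L_{R}(\mathcal{F})$, and then to prove injectivity using the grading machinery already established, namely Theorem \ref{uggrdhomom}. The key observation is that $\mathcal{G}$ embeds into $\mathcal{F}$: every vertex $v\in G^{0}$ and every edge $e\in\mathcal{G}^{1}$ of $\mathcal{G}$ survives in $\mathcal{F}$, while the tails added at singular vertices introduce new edges and vertices. Since $\mathcal{G}^{0}\subseteq\mathcal{F}^{0}$ (as subsets of $\mathcal{P}(G^{0})$, with the tail vertices adjoined), I can try the most naive assignment first: send $p_{A}\mapsto p_{A}$ for $A\in\mathcal{G}^{0}$, and for edges distinguish two cases. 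For a non-singular source the edge $e$ of $\mathcal{G}$ is literally an edge of $\mathcal{F}$, so $s_{e}\mapsto s_{e}$, $s_{e^{*}}\mapsto s_{e^{*}}$. The subtlety is at infinite emitters: if $v_{0}$ is an infinite emitter and $\{e_{i}\}$ enumerates $s^{-1}(v_{0})$, then in $\mathcal{F}$ the edge $e_{i}$ has been ``replaced'' by the path $f_{1}f_{2}\cdots f_{i-1}g_{i}$ down the tail, so I would set $S_{e_{i}}:=s_{f_{1}}s_{f_{2}}\cdots s_{f_{i-1}}s_{g_{i}}$ with the obvious reversed product for $S_{e_{i}^{*}}$ (following \cite[Proposition 5.5]{G.-Abrams:2008aa}). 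For a sink $v_{0}$ the vertex projection $p_{v_{0}}$ in $\mathcal{G}$ maps to the projection onto the sink vertex of $\mathcal{F}$ with its tail.

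The first main step is to verify that this assignment $\{P_{A},S_{e},S_{e^{*}}\}$ is genuinely a Leavitt $\mathcal{G}$-family in $L_{R}(\mathcal{F})$, i.e. that it satisfies \textbf{uLP1}--\textbf{uLP4}. Conditions \textbf{uLP1}--\textbf{uLP3} should reduce to direct computations using the Leavitt relations in $L_{R}(\mathcal{F})$; the delicate relation is \textbf{uLP4}, which must hold only at vertices $v$ with $0<|s^{-1}(v)|<\infty$. For such regular vertices of $\mathcal{G}$ no tail has been added, so the sum $\sum_{e\in s^{-1}(v)}S_{e}S_{e^{*}}$ is computed entirely among honest edges and \textbf{uLP4} transfers verbatim. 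Crucially, at the singular vertices of $\mathcal{G}$ (sinks and infinite emitters) there is no \textbf{uLP4} relation to check in $L_{R}(\mathcal{G})$, which is exactly why desingularization works: the defining relations we must preserve are precisely those that the construction leaves undisturbed. I would verify the infinite-emitter computation carefully: $S_{e_{i}^{*}}S_{e_{i}}=s_{g_{i}^{*}}s_{f_{i-1}^{*}}\cdots s_{f_{1}^{*}}s_{f_{1}}\cdots s_{f_{i-1}}s_{g_{i}}=p_{r(g_{i})}=p_{r(e_{i})}$ using \textbf{LP3}-type collapses along the tail, and that distinct $e_{i},e_{j}$ give orthogonal contributions.

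Once $\phi$ exists by the universal property of $L_{R}(\mathcal{G})$, injectivity is the second main step. Here the cleanest route is to invoke Theorem \ref{uggrdhomom}: if I can show $\phi$ is a graded homomorphism (with respect to the $\mathbb{Z}$-gradings of Theorem \ref{uggrade} on both algebras) and that $\phi(rp_{A})\neq 0$ for all $A\in\mathcal{G}^{0}\setminus\{\emptyset\}$ and $r\in R\setminus\{0\}$, then $\phi$ is injective. Gradedness needs the assignment to be degree-preserving; the potential trouble is the infinite-emitter case, where $S_{e_{i}}=s_{f_{1}}\cdots s_{f_{i-1}}s_{g_{i}}$ is a product of $i$ real edges and so has degree $i$ rather than $1$. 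This means $\phi$ is \emph{not} graded for the standard grading, so I expect the naive application of Theorem \ref{uggrdhomom} to fail directly; this is the main obstacle. To circumvent it I would either (a) regrade $L_{R}(\mathcal{F})$ by assigning the tail edges $f_{j}$ degree $0$ and $g_{i}$ degree $1$ (a legitimate $\mathbb{Z}$-grading since it respects all Leavitt relations of $\mathcal{F}$, the tail $f_{j}$'s forming a ``path to infinity'' with no \textbf{LP4} constraint), making $\phi$ graded for this modified grading, and then check that Theorem \ref{uggrdhomom} still applies since its proof only uses the existence of a compatible grading; or (b) prove injectivity more directly by establishing the non-vanishing $\phi(rp_{A})=rp_{A}\neq 0$ in $L_{R}(\mathcal{F})$ (immediate from the analog of \cite[Theorem 2.6]{M.-Imanfar:2017aa} for $\mathcal{F}$) and invoking the graded-uniqueness theorem with respect to the regraded structure. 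The non-vanishing of $\phi(rp_{A})$ itself is easy, since $\phi$ fixes each $p_{A}$ for $A\in\mathcal{G}^{0}$ and the corresponding projections are nonzero in $L_{R}(\mathcal{F})$. I anticipate option (a) is the intended argument, mirroring how Abrams handles desingularization in \cite{G.-Abrams:2008aa}, and the bulk of the remaining work is the bookkeeping needed to confirm that the regrading is well-defined and that $\phi$ respects it.
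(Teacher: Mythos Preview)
Your construction of the Leavitt $\mathcal{G}$-family $\{P_{A},S_{e},S_{e^{*}}\}$ in $L_{R}(\mathcal{F})$ and your verification sketch of \textbf{uLP1}--\textbf{uLP4} match the paper exactly. You also correctly identify the obstacle: with the standard grading on $L_{R}(\mathcal{F})$, the map $\phi$ is \emph{not} graded at infinite emitters, so Theorem~\ref{uggrdhomom} does not apply directly.

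Where you diverge from the paper is in the remedy. Your option~(a)---regrading $L_{R}(\mathcal{F})$ so that the tail edges $f_{j}$ have degree $0$ and all other edges (including the $g_{i}$ and the surviving $\mathcal{G}$-edges) have degree $1$---is a valid $\mathbb{Z}$-grading: every generator of the defining ideal is homogeneous of degree $0$ under any edge-degree assignment, so the quotient inherits the grading. (Your parenthetical justification is slightly off: the tail vertices \emph{do} satisfy a \textbf{uLP4} relation in $\mathcal{F}$, since each emits only one or two edges; but this is harmless because $t_{e}t_{e^{*}}$ always has degree $0$.) With this regrading $\phi$ becomes graded, $\phi(rp_{A})=rq_{A}\neq 0$ for nonempty $A$, and Theorem~\ref{uggrdhomom} yields injectivity. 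This is correct and arguably cleaner than what the paper does.

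The paper takes a different route. It explicitly observes that $\phi$ is not graded and therefore abandons Theorem~\ref{uggrdhomom} entirely. Instead it invokes Lemma~\ref{final:lemma3} (built on Proposition~\ref{final:proposition1}): for any $0\neq x\in L_{R}(\mathcal{G})$ there exist $a,b$ with $axb=rp_{v}$ for some $r\neq 0$, or $axb=\sum r_{i}s_{\alpha}^{i}$ for some cycle $\alpha$. If $x\in\ker\phi$ then so is $axb$; but $\phi(rp_{v})=rq_{v}\neq 0$, and $\phi(\sum r_{i}s_{\alpha}^{i})=\sum r_{i}t_{\alpha'}^{i}$ for a cycle $\alpha'$ in $\mathcal{F}$, which is nonzero by the $\mathbb{Z}$-grading of $L_{R}(\mathcal{F})$ (here the standard grading suffices, since the summands lie in distinct graded components). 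So your anticipation that option~(a) ``mirrors Abrams'' is not quite right: the paper follows the reduction-lemma strategy of \cite{G.-Abrams:2008aa} rather than a regrading trick. Your approach buys you a shorter proof that bypasses the substantial Lemma~\ref{final:lemma3} and Proposition~\ref{final:proposition1}; the paper's approach stays closer to the cited source and yields those two results as tools of independent use.
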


\begin{proof}
We will prove our assertion by constructing a Leavitt $\mathcal{G}$-family, $\{P_{A},S_{e},S_{e}^{*}\}$, in $L_{R}(\mathcal{F})$. Then, by the universal mapping property of $L_{R}(\mathcal{G})$, we will have an $R$-algebra homomorphism 
$$\phi:L_{R}(\mathcal{G})\to L_{R}(\mathcal{F}).$$
Note, if $e\in\mathcal{G}^{1}$ is such that $s(e)$ is an infinite emitter, we will write ``$e_{i}$'' for $e$ to indicate it is the $i$-th edge in the enumeration of $s^{-1}(s(e))$. To that end, for $L_{R}(\mathcal{F})=L_{R}(\{q,t\})$ (i.e., $L_{R}(\mathcal{F})$ is generated by a universal Leavitt $\mathcal{F}$-family $\{q_{A},t_{e},t_{e}^{*}\}$), define $\{P_{A},S_{e},S_{e}^{*}\}\subseteq L_{R}(\mathcal{F})$ by:

\begin{tabbing}
\hspace{1.25cm}\=$P_{A}:=q_{A},\hspace{2.45cm}\text{for}\ A\in\mathcal{G}^{0}$,\\
    \>$S_{e}\ :=t_{e}, S_{e^{*}}:=t_{e^{*}}, \hspace{.65cm} \text{for}\ e\in\mathcal{G}^{1}\ \text{such that}\ s(e)\ \text{is not an infinite emitter}$,\\
     \>$S_{e_{i}} :=t_{\alpha},S_{e_{i}^{*}}:=t_{\alpha^{*}}, \hspace{.55cm} \text{for}\ s(e_{i})\ \text{an infinite emitter, where } \alpha=f_{1}f_{2}...f_{i-1}g_{i}$\\
     \> \hspace{4.09cm} is a portion of the tail added to $s(e_{i})$ (see Figure \ref{im8}).
\end{tabbing}
We will show  $\{P_{A},S_{e},S_{e}^{*}\}$ is indeed a Leavitt $\mathcal{G}$-family in $L_{R}(\mathcal{F})$.

(\textbf{uLP1.}) The fact $\{P_{A},S_{e},S_{e}^{*}\}$ satisfies \textbf{uLP1} follows immediately from the fact $\{q_{A},t_{e},t_{e}^{*}\}$ satisfies \textbf{uLP1}.

(\textbf{uLP2.}) Suppose $e\in\mathcal{G}^{1}$ such that $|s^{-1}(s(e))|<\infty$. Then, the fact
$$P_{s(e)}S_{e}=S_{e}P_{r(e)}=S_{e} \text{ and }P_{r(e)}S_{e^{*}}=S_{e^{*}}=S_{e^{*}}$$
follows directly from the fact $\{q_{A},t_{e},t_{e}^{*}\}$ satisfies \textbf{uLP2}. On the other hand, suppose $e_{i}\in\mathcal{G}^{1}$ with $|s^{-1}(s(e_{i}))|=\infty$. For $S_{e_{i}}:=t_{\alpha}$, and $S_{e_{i}^{*}}:=t_{\alpha^{*}}$, where $\alpha=f_{1}f_{2}...f_{i-1}g_{i}$ is a portion of the tail added to $s(e_{i})$, we have, by again exploiting the fact $\{q_{A},t_{e},t_{e}^{*}\}$ satisfies \textbf{uLP2}, 
$$P_{s(e_{i})}S_{e_{i}}=q_{s(e_{i})}t_{\alpha}=q_{s(f_{1})}t_{\alpha}=t_{\alpha}=S_{e_{i}}$$
and 
$$S_{e_{i}}P_{r(e_{i})}=t_{\alpha}q_{r(e_{i})}=t_{\alpha}q_{r(g_{i})}=t_{\alpha}=S_{e_{i}}.$$
We can similarly show, $P_{r(e_{i})}S_{e_{i}^{*}}=S_{e_{i}^{*}}P_{s(e_{i})}=S_{e_{i}^{*}}$.

(\textbf{uLP3.}) First, suppose $e,e'\in\mathcal{G}^{1}$ such that $|s^{-1}(s(e))|,|s^{-1}(s(e'))|<\infty$. Then, for $S_{e}:=t_{e}$ and $S_{e'}:=t_{e'}$ the fact $S_{e^{*}}S_{e}=\delta_{e,e'}P_{r(e)}$ follows directly from the fact $\{q_{A},t_{e},t_{e}^{*}\}$ satisfies \textbf{uLP3}. Second, suppose $e_{i},e \in\mathcal{G}^{1}$ are such that $|s^{-1}(s(e))|<\infty$ and $|s^{-1}(s(e_{i}))=\infty$. Since it is then the case $e_{i}\neq e$, it suffices to show $S_{e^{*}}S_{e_{i}}=S_{e_{i}^{*}}S_{e}=0$. To that end, for $S_{e_{i}^{*}}:=t_{\alpha^{*}}=t_{g_{i}^{*}}t_{f_{i-1}^{*}}...t_{f_{1}^{*}}$, we have
$$S_{e_{i}^{*}}S_{e}=t_{g_{i}^{*}}t_{f_{i-1}^{*}}...t_{f_{1}^{*}}t_{e}.$$
Since $f_{1}\neq e$, and $\{q_{A},t_{e},t_{e}^{*}\}$ satisfies \textbf{uLP3}, $t_{f_{1}^{*}}t_{e}=0$; and so $S_{e_{i}^{*}}S_{e}=0$. A similar argument shows $S_{e^{*}}S_{e_{i}}=0$. Lastly, suppose $e_{i},e_{j}\in\mathcal{G}^{1}$ such that $|s^{-1}(s(e_{i}))|=|s^{-1}(s(e_{j}))|=\infty$; and so $S_{e_{i}}:=t_{\alpha}$, $S_{e_{j}}:=t_{\alpha'}$, with $\alpha=f_{1}f_{2}\dots f_{i-1}g_{i}$ and $\alpha'=f'_{1}f'_{2}\dots f'_{j-1}g'_{j}$. Now, if $e_{i}=e_{j}$, it must be, by construction, $\alpha=\alpha'$. In which case, we have 
$$S_{e_{i}^{*}}S_{e_{i}}=t_{\alpha^{*}}t_{\alpha}=q_{r(g_{i})}=q_{r(e_{i})}=P_{r(e_{i})}.$$
It remains to show $S_{e_{i}^{*}}S_{e_{j}}=S_{e_{j}^{*}}S_{e_{i}}=0$ when $e_{i}\neq e_{j}$. To that end, suppose $e_{i}\neq e_{j}$. Consider first the case when $s(e_{i})\neq s(e_{j})$. Then $f_{1}\neq f'_{1}$, and so we have 
$$S_{e_{i}^{*}}S_{e_{j}}=t_{\alpha^{*}}t_{\alpha'}=t_{g_{i}^{*}}t_{f_{i-1}^{*}}...\big(t_{f_{1}^{*}}t_{f'_{1}}\big)...t_{f'_{j-1}}t_{g'_{j}}=t_{g_{i}^{*}}t_{f_{i-1}^{*}}...t_{f_{2}^{*}}0t_{f'_{2}}...t_{f'_{j-1}}t_{g'_{j}}=0.$$
On the other hand, should $s(e_{i})=s(e_{j})$, $e_{i}\neq e_{j}$ implies $i\neq j$. WLOG, let $j<i$. In which case we have $f_{k}=f'_{k}$ for $k\in\{1,\dots,j-1\}$, but $g'_{j}\neq f_{j}$, and so $S_{e_{i}^{*}}S_{e_{j}}=t_{\alpha^{*}}t_{\alpha'}=0$. These same arguments also show $S_{e_{j}^{*}}S_{e_{i}}=0$. Thus, $S_{e_{i}^{*}}S_{e_{j}}=S_{e_{j}^{*}}S_{e_{i}}=0$ when $e_{i}\neq e_{j}$. Putting it all together, we have $\{P_{A},S_{e},S_{e}^{*}\}$ satisfies \textbf{uLP3}.

(\textbf{uLP4.}) Suppose $v\in G^{0}$ such that $|s^{-1}(v)|<\infty$. Then, for each $e\in s^{-1}(v)$, we have $S_{e}=t_{e}$ and $S_{e^{*}}=t_{e^{*}}$; also note that in this case $s_{G}^{-1}(v)=s_{F}^{-1}(v)$ ($s_{G}^{-1}(v)\neq s_{F}^{-1}(v)$ if and only if $v$ is an infinite emitter in $\mathcal{G}$). Since $\{q_{A},t_{e},t_{e}^{*}\}$ satisfies \textbf{uLP4}, we have
$$P_{v}:=q_{v}=\sum\limits_{e\in s_{F}^{-1}(v)}t_{e}t_{e^{*}}=\sum\limits_{e\in s_{G}^{-1}(v)}t_{e}t_{e^{*}}=\sum\limits_{e\in s_{G}^{-1}(v)}S_{e}S_{e^{*}}.$$
And so $\{P_{A},S_{e},S_{e}^{*}\}$ satisfies \textbf{uLP4} as well.
 
Now that we have established $\{P_{A},S_{e},S_{e}^{*}\}$ is a Leavitt $\mathcal{G}$-family in $L_{R}(\mathcal{F})$, for $L_{R}(\mathcal{G})=L_{R}(\{p,s\})$, the universal mapping property of $L_{R}(\mathcal{G})$ give an $R$-algebra homomorphism 
$$\phi: L_{R}(\mathcal{G})\to L_{R}(\mathcal{F})$$ 
such that $\phi(p_{A})=P_{A}$, $\phi(s_{e})=S_{e}$, and $\phi(s_{e^{*}})=S_{e^{*}}$ for all $A\in\mathcal{G}^{0}$ and $e\in\mathcal{G}^{1}$. Our goal now is to show $\phi$ is  injective. Up to this point, we have exclusively relied on Theorem \ref{uggrdhomom} to show a map out of a Leavitt path algebra is injective. However, for $s(e_{i})$ an infinite emitter in $\mathcal{G}$, the fact $S_{e_{i}}=t_{\alpha}$ with $|\alpha|=i$ means $\phi$ is not a $\mathbb{Z}$-graded homomorphism. This means we will have to show injectivity by a different manner. To that end, suppose $0\neq x\in \text{ker }\phi$. By Lemma \ref{final:lemma3}, there exist $a,b\in L_{R}(\mathcal{G})$ such that\\
1) $0\neq axb=rp_{v}$ for some $v\in G^{0}$ and $r\in R\setminus\{0\}$, or\\
2) $0\neq axb=\sum\limits_{i=0}^{n}r_{i}s_{\alpha}^{i}$ for some cycle $\alpha\in\mathcal{G}^{*}$.\\
And, certainly, $x\in\text{ker}\phi\implies axb\in\text{ker}\phi$. Now, if $axb=rp_{v}$, we have
$$0=\phi(axb)=\phi(rp_{v})=rP_{v}=rq_{v}.$$
But, as we have previously seen, $rq_{v}\neq 0$ for $r\in R\setminus\{0\}$. $\Rightarrow\!\Leftarrow$ The remaining alternative is
$$0=\phi(axb)=\phi\big(\sum\limits_{i=0}^{n}r_{i}s_{\alpha}^{i}\big)=\sum\limits_{i=0}^{n}r_{i}\phi(s_{\alpha})^{i}=\sum\limits_{i=0}^{n}r_{i}S_{\alpha}^{i},$$
for some cycle $\alpha\in\mathcal{G}^{*}$. At this point, we call the reader's attention to the fact, due to how $\{P_{A},S_{e},S_{e^{*}}\}$ is defined,
$$S_{\alpha}=t_{\alpha'} \text{ for some cycle }\alpha'\in\mathcal{F}^{*};$$
 and, since $L_{R}(\mathcal{F})=L_{R}(\{q,t\})$, $rt_{\alpha'}=0\iff r=0$ (to see this, simply note $rt_{\alpha'^{*}}t_{\alpha'}=rq_{r(\alpha')}$). So, we have
$$0=\sum\limits_{i=0}^{n}r_{i}S_{\alpha}^{i}=\sum\limits_{i=0}^{n}r_{i}t_{\alpha'}^{i}.$$
Since $axb\neq 0$, all the $r_{i}$'s can't be zero in the expression $\sum\limits_{i=0}^{n}r_{i}t_{\alpha'}^{i}$; moreover, a summand $r_{i}t_{\alpha'}^{i}$ with $r_{i}\neq0$ uniquely belongs in $L_{R}(\mathcal{F})_{i\cdot|\alpha'|}$ (with respect to the $\mathbb{Z}$-grading on $L_{R}(\mathcal{F})$). But then, as we have seen before, the $\mathbb{Z}$-grading on $L_{R}(\mathcal{F})$ implies 
$$\sum\limits_{i=0}^{n}r_{i}t_{\alpha'}^{i}\neq 0. \Rightarrow\!\Leftarrow$$
Thus, there can't exist $0\neq x\in\text{ker}\phi$, so $\phi$ is injective.
\end{proof}

We are now ready to prove the main result of this section. As the proof is long, we will outline the main idea before proceeding. As mentioned before, we wish to leverage Theorem \ref{ring:theorem1}. Toward that goal, we'll take $\{\mathtt{A}_{i}\}_{i\in\mathbb{N}}\subseteq\mathcal{G}^{0}\subseteq\mathcal{F}^{0}$ to be as in Lemma \ref{final:lemma1}. Then, for $L_{R}(\mathcal{G})=L_{R}(\{p,s\})$, $L_{R}(\mathcal{F})=L_{R}(\{q,t\})$, and $k\in\mathbb{N}$ we set
$$t_{k}:=\sum\limits_{i\leq k}p_{\mathtt{A}_{i}} \text{ and } t'_{k}:=\sum\limits_{i\leq k}q_{\mathtt{A}_{i}}.$$
We will first show the injective map from Proposition \ref{final:proposition2} restricts to an isomorphism from $t_{k}L_{R}(\mathcal{G})t_{k}$ onto $t'_{k}L_{R}(\mathcal{F})t'_{k}$ for each $k$, a fact we will use to show
$$ \varinjlim\limits_{k\in\mathbb{N}}t'_{k}L_{R}(\mathcal{F})t'_{k}\cong \varinjlim\limits_{k\in\mathbb{N}}t_{k}L_{R}(\mathcal{G})t_{k}.$$
Having established the isomorphism of the direct limits above, we will show
$$L_{R}(\mathcal{G})\cong \varinjlim\limits_{k\in\mathbb{N}}t_{k}L_{R}(\mathcal{G})t_{k}.$$
Finally, for $L_{R}(\mathcal{F})t'_{k}$ (a finitely generated, projective, left $L_{R}(\mathcal{F})$-module) we will show the existence of a compatible set $\{L_{R}(\mathcal{F})t'_{k},\varphi,\psi,\mathbb{N}\}$ such that $\varinjlim\limits_{k\in\mathbb{N}}L_{R}(\mathcal{F})t'_{k}$ is a generator of $L_{R}(\mathcal{F})-MOD$ and
$$\varinjlim\limits_{k\in\mathbb{N}}\Big(\text{End}_{L_{R}(\mathcal{F})}(L_{R}(\mathcal{F})t'_{k})\Big)^{op}\cong \varinjlim\limits_{k\in\mathbb{N}}t'_{k}L_{R}(\mathcal{F})t'_{k}.$$
And so, for
$$\varinjlim\limits_{k\in\mathbb{N}}\Big(\text{End}_{L_{R}(\mathcal{F})}(L_{R}(\mathcal{F})t'_{k})\Big)^{op}\cong \varinjlim\limits_{k\in\mathbb{N}}t'_{k}L_{R}(\mathcal{F})t'_{k}\cong\varinjlim\limits_{k\in\mathbb{N}}t_{k}L_{R}(\mathcal{G})t_{k}\cong L_{R}(\mathcal{G}),$$
Theorem \ref{ring:theorem1} establishes the Morita equivalence of $L_{R}(\mathcal{F})$ and $L_{R}(\mathcal{G})$. Now, on to showing what we have outlined. The proof is essentially that of \cite[Theorem 5.6]{G.-Abrams:2008aa}, but modified to work in the case of ultragraph Leavitt path algebras.

\begin{theorem}\label{final:theorem1}
Let $\mathcal{G}$ be an ultragraph and $\mathcal{F}$ its desingularization. Then, $L_{R}(\mathcal{G})$ and $L_{R}(\mathcal{F})$ are Morita equivalent. 
\end{theorem}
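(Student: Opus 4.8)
The plan is to leverage Theorem \ref{ring:theorem1}, following the outline sketched in the paragraph immediately preceding the statement. The essential tool is the injective $R$-algebra homomorphism $\phi:L_{R}(\mathcal{G})\to L_{R}(\mathcal{F})$ furnished by Proposition \ref{final:proposition2}, together with the $\sigma$-unit $\{t_{k}\}$ for $L_{R}(\mathcal{G})$ constructed in Lemma \ref{final:lemma2}, where $t_{k}=\sum_{i\leq k}p_{\mathtt{A}_{i}}$ for the pairwise disjoint sets $\{\mathtt{A}_{i}\}_{i\in\mathbb{N}}\subseteq\mathcal{G}^{0}$ of Lemma \ref{final:lemma1}. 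Since each $\mathtt{A}_{i}\in\mathcal{G}^{0}\subseteq\mathcal{F}^{0}$, I can define the parallel elements $t'_{k}=\sum_{i\leq k}q_{\mathtt{A}_{i}}$ in $L_{R}(\mathcal{F})$; the same computation as in Lemma \ref{final:lemma2} shows $\{t'_{k}\}$ is a $\sigma$-unit for $L_{R}(\mathcal{F})$, because adding tails only introduces new vertices whose classes still lie in some $\mathtt{A}_{i}$ by the countability/covering property of Lemma \ref{final:lemma1}.

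The crux is the claim that $\phi$ restricts to a ring isomorphism $t_{k}L_{R}(\mathcal{G})t_{k}\xrightarrow{\;\cong\;}t'_{k}L_{R}(\mathcal{F})t'_{k}$ for each $k$. Injectivity is immediate from injectivity of $\phi$, and since $\phi(t_{k})=t'_{k}$ (as $\phi(p_{\mathtt{A}_{i}})=P_{\mathtt{A}_{i}}=q_{\mathtt{A}_{i}}$), the image lands in $t'_{k}L_{R}(\mathcal{F})t'_{k}$. The work is surjectivity: I must show every element $t'_{k}yt'_{k}$ is hit. The key observation is that $t'_{k}L_{R}(\mathcal{F})t'_{k}$ is spanned by elements $t_{\gamma}q_{A}t_{\delta^{*}}$ whose source sets meet $\bigcup_{i\leq k}\mathtt{A}_{i}$, and any tail edge $f_{j}$ or $g_{j}$ appearing in $\gamma$ or $\delta$ that starts at an added vertex $v_{j}$ gets ``absorbed'': compressing by $t'_{k}$ on both sides forces the relevant paths to begin and end at genuine vertices of $G^{0}$ (or at $\mathtt{A}_{i}$'s with $i\leq k$), and the portions of tails that remain are exactly the strings $\alpha=f_{1}\cdots f_{i-1}g_{i}$ that $\phi$ uses to define $S_{e_{i}}$. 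Thus each such generator is $\phi$ of a generator $s_{\mu}p_{A}s_{\nu^{*}}$ of $t_{k}L_{R}(\mathcal{G})t_{k}$. This is the main obstacle, and it will require a careful case analysis of how tail edges interact with the idempotents $q_{\mathtt{A}_{i}}$, mirroring the bookkeeping in \cite[Theorem 5.6]{G.-Abrams:2008aa}.

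Granting the corner isomorphisms, functoriality gives compatible direct systems and hence $\varinjlim_{k}t'_{k}L_{R}(\mathcal{F})t'_{k}\cong\varinjlim_{k}t_{k}L_{R}(\mathcal{G})t_{k}$. By the argument used to prove $\varinjlim M_i\cong M$ inside the proof of Theorem \ref{ring:theorem2} (that a $\sigma$-unital ring is the direct limit of its compressions), I get $\varinjlim_{k}t_{k}L_{R}(\mathcal{G})t_{k}\cong L_{R}(\mathcal{G})$. Separately, $L_{R}(\mathcal{F})t'_{k}=L_{R}(\mathcal{F})\big(\sum_{i\leq k}q_{\mathtt{A}_{i}}\big)$ is a finitely generated projective left module by Claim \ref{ring:remark2} (since $t'_{k}$ is idempotent). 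I will equip $\{L_{R}(\mathcal{F})t'_{k}\}$ with inclusion maps $\varphi_{k,l}$ and right-multiplication-by-$t'_{k}$ maps $\psi_{l,k}$ to form a compatible set as in Definition \ref{compset}, exactly as in the proof of Theorem \ref{ring:theorem2}; its direct limit is $L_{R}(\mathcal{F})$ (again via $\sigma$-unitality, so it is a generator of $L_{R}(\mathcal{F})\text{-}MOD$), and Proposition \ref{ring:prop2} with the evaluation-map argument yields $\varinjlim_{k}\big(\mathrm{End}_{L_{R}(\mathcal{F})}(L_{R}(\mathcal{F})t'_{k})\big)^{op}\cong\varinjlim_{k}t'_{k}L_{R}(\mathcal{F})t'_{k}$.

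Stringing these together produces the chain
\[
\varinjlim_{k}\Big(\mathrm{End}_{L_{R}(\mathcal{F})}(L_{R}(\mathcal{F})t'_{k})\Big)^{op}\cong\varinjlim_{k}t'_{k}L_{R}(\mathcal{F})t'_{k}\cong\varinjlim_{k}t_{k}L_{R}(\mathcal{G})t_{k}\cong L_{R}(\mathcal{G}),
\]
so that $L_{R}(\mathcal{G})$ realizes the direct-limit-of-endomorphism-rings invariant attached to the locally projective generator $\varinjlim_{k}L_{R}(\mathcal{F})t'_{k}$ of $L_{R}(\mathcal{F})\text{-}MOD$. Since $L_{R}(\mathcal{F})$ is a ring with local units (indeed $\sigma$-unital), Theorem \ref{ring:theorem1} then concludes that $L_{R}(\mathcal{F})$ and $L_{R}(\mathcal{G})$ are Morita equivalent, which is the desired statement. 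I expect the isomorphism of corners to be where essentially all the genuine content lies; everything downstream is a formal assembly of results already established in the category-theory and Morita sections.
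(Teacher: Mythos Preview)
Your overall architecture matches the paper's proof, but there is one genuine gap that breaks the argument as written. You assert that $\{t'_{k}\}$ is a $\sigma$-unit for $L_{R}(\mathcal{F})$ ``because adding tails only introduces new vertices whose classes still lie in some $\mathtt{A}_{i}$.'' This is false: the sets $\{\mathtt{A}_{i}\}$ were built in Lemma~\ref{final:lemma1} from $\mathcal{G}^{0}$, so they consist of subsets of $G^{0}$ and do not contain any of the new tail vertices $v_{j}\in F^{0}\setminus G^{0}$. Consequently $q_{v_{j}}t'_{k}=0$ for every $k$, so $q_{v_{j}}\notin\bigcup_{k}t'_{k}L_{R}(\mathcal{F})t'_{k}$, and $\{t'_{k}\}$ is \emph{not} a $\sigma$-unit for $L_{R}(\mathcal{F})$. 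It follows that $\varinjlim_{k}L_{R}(\mathcal{F})t'_{k}$ is \emph{not} all of $L_{R}(\mathcal{F})$; it is the proper submodule $\bigoplus_{i}L_{R}(\mathcal{F})q_{\mathtt{A}_{i}}$. Your appeal to $\sigma$-unitality to conclude the direct limit is $L_{R}(\mathcal{F})$, and hence automatically a generator, therefore fails.

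The paper repairs exactly this point with additional work: it first observes that $L_{R}(\mathcal{F})\cong\big(\bigoplus_{i}L_{R}(\mathcal{F})q_{\mathtt{A}_{i}}\big)\oplus\big(\bigoplus_{v\in F^{0}\setminus G^{0}}L_{R}(\mathcal{F})q_{v}\big)$, and then, for each tail vertex $v_{j}$ sitting on a tail emanating from a singular vertex $v_{0}\in\mathtt{A}_{i_{0}}$, uses right multiplication by the tail path $\alpha=f_{1}\cdots f_{j}$ to produce a surjection $L_{R}(\mathcal{F})q_{v_{0}}\twoheadrightarrow L_{R}(\mathcal{F})q_{v_{j}}$. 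Assembling these gives a surjection from a direct sum of copies of $\bigoplus_{i}L_{R}(\mathcal{F})q_{\mathtt{A}_{i}}$ onto $L_{R}(\mathcal{F})$, and since $L_{R}(\mathcal{F})$ is itself a generator, so is $\bigoplus_{i}L_{R}(\mathcal{F})q_{\mathtt{A}_{i}}\cong\varinjlim_{k}L_{R}(\mathcal{F})t'_{k}$. Everything else in your outline (the corner isomorphisms, the identification of the endomorphism-ring direct limit, the final invocation of Theorem~\ref{ring:theorem1}) is correct and matches the paper; you just need to replace the erroneous $\sigma$-unit claim with this extra generator argument.
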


\begin{proof}
Let $\{\mathtt{A}_{i}\}_{i\in\mathbb{N}}\subseteq\mathcal{G}^{0}\subseteq\mathcal{F}^{0}$ be as in Lemma \ref{final:lemma1}, and set
$$t_{k}:=\sum\limits_{i\leq k}p_{\mathtt{A}_{i}} \text{ and } t'_{k}:=\sum\limits_{i\leq k}q_{\mathtt{A}_{i}}.$$
The first crucial step in our proof is establishing the fact the map $\phi$ from\\
\noindent Proposition \ref{final:proposition2} restricts to an isomorphism from $t_{k}L_{R}(\mathcal{G})t_{k}$ onto $t'_{k}L_{R}(\mathcal{F})t'_{k}$ for each $k$. Since $\phi(t_{k})=t'_{k}$, it follows
$$\text{im}\phi|_{t_{k}L_{R}(\mathcal{G})t_{k}}\subseteq t'_{k}L_{R}(\mathcal{F})t'_{k}.$$
That $\phi$ is injective has already been established. What we need to show now is that it is surjective. To that end, we have
{\footnotesize 
$$t'_{k}L_{R}(\mathcal{F})t'_{k}=\text{span}_{R}\{t_{\alpha}q_{A}t_{\beta^{*}}:\alpha,\beta\in\mathcal{F}^{*},\ r_{F}(\alpha)\cap A\cap r_{F}(\beta)\neq\emptyset,\ \text{and}\ s_{F}(\alpha), s_{F}(\beta)\subseteq\bigcup_{i\leq k}\mathtt{A}_{i}\in \mathcal{G}^{0}\},$$\par} 
\noindent which follows primarily from Theorem \ref{uggrade}. And so, to establish the surjectivity of $\phi|_{t_{k}L_{R}(\mathcal{G})t_{k}}$, we need only show
$$t_{\alpha}q_{A}t_{\beta^{*}}\in\text{im}\phi|_{t_{k}L_{R}(\mathcal{G})t_{k}} \text{ for } \alpha,\beta\in\mathcal{F}^{*} \text{ with } s_{F}(\alpha),s_{F}(\beta)\subseteq\bigcup_{i\leq k}\mathtt{A}_{i}.$$
 As one might guess, understanding what $\alpha\in\mathcal{F}^{*}$, with $s_{F}(\alpha)\in\mathcal{G}^{0}$, looks like is important to our endeavor. We will show such a path $\alpha$ takes one of two forms:\\
 1) $\alpha=\alpha_{1}\dots\alpha_{n}$, where, for each $i$, $\alpha_{i}\in\mathcal{G}^{*}$, or $\alpha_{i}=f_{1}...f_{j-1}g_{j}$ in some tail added to an infinite emitter in $\mathcal{G}$, or\\
 2) $\alpha=\alpha_{1}\dots\alpha_{n}f_{1}\dots f_{m}$, where each $\alpha_{i}$ satisfies one of the conditions listed above, and $f_{1}\dots f_{m}$ is an initial segment in a tail added to a singular vertex (either a sink or an infinite emitter) in $\mathcal{G}$.\\
To see this, notice that the construction of $\mathcal{F}$ makes it so that, for $\alpha\in\mathcal{F}^{*}$, 
$$r_{F}(\alpha)\in\mathcal{G}^{0}, \text{ or } r_{F}(\alpha)=r_{F}(f)\in F^{0}\setminus G^{0},$$
where $f$ is an edge along a tail added to some singular vertex. For $\alpha$ such that $s_{F}(\alpha)\in\mathcal{G}^{0}$, consider first the case where $r_{F}(\alpha)\in\mathcal{G}^{0}$. Now, should $\alpha\in\mathcal{G}^{*}$, we are done. Otherwise, $\alpha$ must contain an edge $f$ in a tail added to a singular vertex, since $s_{F}(\alpha)\in\mathcal{G}^{0}$, this in turn means it contains an edge whose source is a singular vertex in $\mathcal{G}$. And so we can express $\alpha$ as 
$$\alpha=\alpha_{1}\alpha'$$
where $\alpha_{1}$ is a path in $\mathcal{G}$ (possibly an element of $\mathcal{G}^{0}$), and $\alpha'\in\mathcal{F}^{*}$ such that $s_{F}(\alpha')$ is a singular vertex in $\mathcal{G}$. But then the fact $r_{F}(\alpha')=r_{F}(\alpha)\in\mathcal{G}^{0}$ forces $\alpha'$ to have an initial segment of the form $f_{1}...f_{j-1}g_{j}$; i.e., $\alpha'$ can be expressed as
$$\alpha'=\alpha_{2}\alpha''$$
where $\alpha_{2}=f_{1}...f_{j-1}g_{j}$ in a tail added to $s_{F}(\alpha')$. Notice $\alpha''\in\mathcal{F}^{*}$ is such that $s_{F}(\alpha''),r_{F}(\alpha'')\in\mathcal{G}^{0}$, just like $\alpha$. Applying the same argument to $\alpha''$ as we did with $\alpha$, and so on, we have 
$$\alpha=\alpha_{1}\dots\alpha_{n}$$
such that, for each $i$, $\alpha_{i}\in\mathcal{G}^{*}$, or $\alpha_{i}=f_{1}...f_{j-1}g_{j}$ in some tail added to an infinite emitter in $\mathcal{G}$. Alternatively, suppose $r_{F}(\alpha)=r_{F}(f)=v\in F^{0}\setminus G^{0}$ for some edge $f$ in a tail added to singular vertex. Since $s_{F}(\alpha)\in \mathcal{G}^{0}$, we can express $\alpha$ as
$$\alpha=\alpha'f_{1}...f_{m}$$
where $f_{m}=f$ and $\alpha'$ is a path (possibly an element of $\mathcal{G}^{0}$) such that $s_{F}(\alpha')\in \mathcal{G}^{0}$ and $s_{F}(f_{1})\in r_{F}(\alpha')\in\mathcal{G}^{0}$. But then, by what we have previously seen, 
$$\alpha=\alpha'f_{1}\dots f_{m}=\alpha_{1}\dots\alpha_{n} f_{1}\dots f_{m}$$
where each $\alpha_{i}$ satisfies one of the given two conditions. In light of what we have just seen, consider again
{\footnotesize
$$t'_{k}L_{R}(\mathcal{F})t'_{k}=\text{span}_{R}\{t_{\alpha}q_{A}t_{\beta^{*}}:\alpha,\beta\in\mathcal{F}^{*},\ r_{F}(\alpha)\cap A\cap r_{F}(\beta)\neq\emptyset,\ \text{and}\ s_{F}(\alpha), s_{F}(\beta)\subseteq\bigcup_{i\leq k}\mathtt{A}_{i}\in \mathcal{G}^{0}\}.$$\par}
\noindent For $\alpha, \beta\in\mathcal{F}^{*}$ such that $t_{\alpha}q_{A}t_{\beta^{*}}\in t'_{k}L_{R}(\mathcal{F})t'_{k}$, it must be $\alpha=\alpha_{1}..\alpha_{n}$, or $\alpha=\alpha_{1}..\alpha_{n}f_{1}...f_{m}$, and $\beta=\beta_{1}...\beta_{k}$, or $\beta=\beta_{1}...\beta_{k}f'_{1}...f'_{l}$ as described above. There's more we can say, notice if $\alpha=\alpha_{1}..\alpha_{n}f_{1}...f_{m}$ and $\beta=\beta_{1}...\beta_{k}$ (similarly $\alpha=\alpha_{1}..\alpha_{n}$ and $\beta=\beta_{1}...\beta_{k}f'_{1}...f'_{l}$), $t_{\alpha}q_{A}t_{\beta^{*}}=0$ since $r_{F}(\alpha)\cap r_{F}(\beta)=\emptyset$. Therefore, we only need to consider $t_{\alpha}q_{A}t_{\beta^{*}}$ where $\alpha=\alpha_{1}..\alpha_{n}$ and $\beta=\beta_{1}...\beta_{k}$, or $\alpha=\alpha_{1}..\alpha_{n}f_{1}...f_{m}$ and $\beta=\beta_{1}...\beta_{k}f'_{1}...f'_{l}$.

In the case where $\alpha$ and $\beta$ are such that $\alpha=\alpha_{1}..\alpha_{n}$ and $\beta=\beta_{1}...\beta_{k}$, we can see 
$$t_{\alpha}q_{A}t_{\beta^{*}}\in\text{im}\phi|_{t_{k}L_{R}(\mathcal{G})t_{k}}$$
based on how $\phi$ is defined (see Proposition \ref{final:proposition2}). What's left is to show the same holds when $\alpha=\alpha_{1}..\alpha_{n}f_{1}...f_{m}$ and $\beta=\beta_{1}...\beta_{k}f'_{1}...f'_{l}$. To that end, it suffices to show
$$t_{f_{1}...f_{m}}q_{A}t_{(f'_{1}...f'_{l})^{*}}\in\text{im}\phi|_{t_{k}L_{R}(\mathcal{G})t_{k}}.$$
Based on the construction of $\mathcal{F}$, and \textbf{uLP2},
$$t_{f_{m}}q_{A}t_{(f'_{l})^{*}}=t_{f_{m}}q_{r_{F}(f_{m})}t_{(f'_{l})^{*}}\neq0\iff f_{m}=f'_{l}.$$
This in turn means
$$t_{f_{1}...f_{m}}q_{A}t_{(f'_{1}...f'_{l})^{*}}\neq0\iff f_{1}...f_{m}=f'_{1}...f'_{l},$$
since $f_{m}=f'_{l}$ implies $m=l$, and the fact $t_{f_{1}...f_{m}}q_{A}t_{(f'_{1}...f'_{l})^{*}}\neq0$ means $f_{i}=f'_{i}$ for each $1\leq i\leq m$. For $t_{f_{1}...f_{m}}q_{A}t_{(f_{1}...f_{m})^{*}}\in t'_{k}L_{R}(\mathcal{F})t'_{k}$, there are two cases to consider: $s_{F}(f_{1})$ is a sink, or $s_{F}(f_{1})$ is an infinite emitter in $\mathcal{G}$. In the case $s_{F}(f_{1})$ is a sink, a quick calculation shows
$$t_{f_{1}...f_{m}}q_{A}t_{(f_{1}...f_{m})^{*}}=q_{s_{F}(f_{1})}=\phi(p_{s_{F}(f_{1})})\in\text{im}\phi|_{t_{k}L_{R}(\mathcal{G})t_{k}}.$$
On the other hand, suppose $s_{F}(f_{1})$ is an infinite emitter and let $\{e_{i}\}_{i\in\mathbb{N}}$ and an enumeration of $s_{F}^{-1}(s_{F}(f_{1}))$. By \textbf{uLP4}, we have $t_{f_{m}}t_{f_{m}^{*}}=q_{s_{F}(f_{m})}-t_{g_{m}}t_{g_{m}^{*}}$, and so
\begin{align*}
t_{f_{1}...f_{m}}q_{A}t_{(f_{1}...f_{m})^{*}}&=t_{f_{1}..f_{m-1}}\big(q_{s_{F}(f_{m})}-t_{g_{m}}t_{g_{m}^{*}}\big)t_{(f_{1}...f_{m-1})^{*}}\\
&=t_{f_{1}...f_{m-2}}\big(t_{f_{m-1}}t_{f_{m-1}^{*}}\big)t_{(f_{1}...f_{m-2})^{*}}-t_{f_{1}..f_{m-1}g_{m}}t_{(f_{1}...f_{m-1}g_{m})^{*}},\\
&\text{    applying \textbf{uLP4} to }t_{f_{m-1}}t_{f_{m-1}^{*}},\\
&=t_{f_{1}...f_{m-2}}\big(q_{s_{F}(f_{m-1})}-t_{g_{m-1}}t_{g_{m-1}^{*}}\big)t_{(f_{1}...f_{m-2})^{*}}-t_{f_{1}..f_{m-1}g_{m}}t_{(f_{1}...f_{m-1}g_{m})^{*}}\\
&=t_{f_{1}...f_{m-2}}t_{(f_{1}...f_{m-2})^{*}}-\big(t_{f_{1}..f_{m-2}g_{m-1}}t_{(f_{1}...f_{m-2}g_{m-1})^{*}}\\
&+t_{f_{1}..f_{m-1}g_{m}}t_{(f_{1}...f_{m-1}g_{m})^{*}}\big),\\
&\text{   applying \textbf{uLP4} to }t_{f_{m-2}}t_{f_{m-2}^{*}}\text{ and so on, we have}\\
&=t_{f_{1}}t_{f_{1}^{*}}-\Bigg(\sum\limits_{i=1}^{m}t_{f_{1}...f_{i-1}g_{i}}t_{(f_{1}...f_{i-1}g_{i})^{*}}\Bigg)\\
&=\big(q_{s_{F}(f_{1})}-t_{g_{1}}t_{g_{1}^{*}}\big)-\Bigg(\sum\limits_{i=1}^{m}t_{f_{1}...f_{i-1}g_{i}}t_{(f_{1}...f_{i-1}g_{i})^{*}}\Bigg)\\
&=\phi(p_{s_{F}(f_{1})})-\phi(t_{e_{1}}t_{e_{1}^{*}})-\Bigg(\sum\limits_{i=1}^{m}\phi(t_{e_{i}}t_{e_{i}^{*}})\Bigg)\in\text{im}\phi|_{t_{k}L_{R}(\mathcal{G})t_{k}}.
\end{align*}
Thus, $t'_{k}L_{R}(\mathcal{F})t'_{k}\subseteq\text{im}\phi|_{t_{k}L_{R}(\mathcal{G})t_{k}}$, and so, as desired, $\phi|_{t_{k}L_{R}(\mathcal{G})t_{k}}$ is surjective. Now, for $k\leq l$, let $\varphi_{k,l}:t_{k}L_{R}(\mathcal{G})t_{k}\to t_{l}L_{R}(\mathcal{G})t_{l} \text{ and }\varphi'_{k,l}:t'_{k}L_{R}(\mathcal{F})t'_{k}\to t'_{l}L_{R}(\mathcal{F})t'_{l}$ be the inclusion maps; one can then easily check $\big\langle t_{k}L_{R}(\mathcal{G})t_{k}, \varphi_{k,l}\big\rangle$ and $\big\langle t'_{k}L_{R}(\mathcal{G})t'_{k}, \varphi'_{k,l}\big\rangle$ are direct systems over $\mathbb{N}$ (note, we are taking the direct limit in the category of rings). Moreover, we have the following commuting diagram:

\begin{figure}[h!]
\begin{center}
\begin{tikzpicture}

 \node [shape=circle,minimum size=1.5em] (d1) at (0,1) {$t_{k}L_{R}(\mathcal{G})t_{k}$};
 \node [shape=circle,minimum size=1.5em] (d2) at (6,1) {$t'_{k}L_{R}(\mathcal{F})t'_{k}$};
 \node [shape=circle,minimum size=1.5em] (d3) at (.4,-1) {$t_{k+1}L_{R}(\mathcal{G})t_{k+1}$};
 \node [shape=circle,minimum size=1.5em] (d4) at (6.4,-1) {$t'_{k+1}L_{R}(\mathcal{F})t'_{k+1}$};
 
\path (d1) edge [->, >=latex, shorten <= 2pt, shorten >= 2pt, right] node[above]{$\phi|_{t_{k}L_{R}(\mathcal{G})t_{k}}$} (d2);

\path (d3) edge [->, >=latex, shorten <= 2pt, shorten >= 2pt, right] node[above]{$\phi|_{t_{k+1}L_{R}(\mathcal{G})t_{k+1}}$} (d4);

 \node [shape=circle,minimum size=1.5em] (d6) at (0,-1) {};

 \path (d1) edge [->, >=latex, shorten <= -17pt, right] node[pos=-0.1]{$\varphi_{k,k+1}$} (d6);
 
 \node [shape=circle,minimum size=1.5em] (d7) at (6,-1) {};
 
  \path (d2) edge [->, >=latex, shorten <= -17pt, right] node[pos=-0.1]{$\varphi'_{k,k+1}$} (d7);

\end{tikzpicture}
\end{center}
\end{figure}
\noindent Since $\phi|_{t_{k}L_{R}(\mathcal{G})t_{k}}$ is an isomorphism for each $k$, we have
$$\varinjlim\limits_{k\in\mathbb{N}}t_{k}L_{R}(\mathcal{G})t_{k} \cong \varinjlim\limits_{k\in\mathbb{N}}t'_{k}L_{R}(\mathcal{F})t'_{k}$$
as rings. For verification of the stated isomorphism above see 24.4 in \cite{Wisbauer:1991aa} where it is proved within the context of modules, but the argument remains unchanged for any category which admits direct limits. Further, let $i_{k}:t_{k}L_{R}(\mathcal{G})t_{k}\to L_{R}(\mathcal{G})$ be the inclusion map for each $k$. For $l\geq k$, we have $i_{k}=i_{l}\circ\varphi_{k,l}.$ By the universal mapping property of $\varinjlim\limits_{k\in\mathbb{N}}t_{k}L_{R}(\mathcal{G})t_{k}$, there exists a ring homomorphism
$$\Phi:\varinjlim\limits_{k\in\mathbb{N}}t_{k}L_{R}(\mathcal{G})t_{k}\to L_{R}(\mathcal{G}).$$
Since $\{t_{k}\}_{k\in\mathbb{N}}$ is a set of local units for $L_{R}(\mathcal{G})$, $\Phi$ is a surjective. Further, suppose $\Phi(x)=0$. Since $x=\eta_{k}(y)$, where 
$$\eta_{k}:t_{k}L_{R}(\mathcal{G})t_{k}\to\varinjlim\limits_{k\in\mathbb{N}}t_{k}L_{R}(\mathcal{G})t_{k}$$
is as in the definition of a direct limit, we have $\Phi\circ\eta_{k}(y)=0$. But, by the properties of a direct limit, $i_{k}=\Phi\circ\eta_{k}$, meaning 
$$\Phi\circ\eta_{k}(y)=i_{k}(y)=0.$$
Well, $i_{k}$ is the inclusion map, so it must be $y=0$, and so $x=\eta_{k}(y)=0$. Thus, $\Phi$ is injective. All in all, this means
$$\varinjlim\limits_{k\in\mathbb{N}}t_{k}L_{R}(\mathcal{G})t_{k}\cong L_{R}(\mathcal{G}).$$

We have established, up to this point
$$\varinjlim\limits_{k\in\mathbb{N}}t'_{k}L_{R}(\mathcal{F})t'_{k}\cong\varinjlim\limits_{k\in\mathbb{N}}t_{k}L_{R}(\mathcal{G})t_{k}\cong L_{R}(\mathcal{G}).$$
To complete our proof, it remains to show $L_{R}(\mathcal{F})t'_{k})$ is a finitely generated, projective, left $L_{R}(\mathcal{F})$-module for each $k$, and that there exists a compatible set $\{L_{R}(\mathcal{F})t'_{k},\varphi,\psi,\mathbb{N}\}$ such that $\varinjlim\limits_{k\in\mathbb{N}}L_{R}(\mathcal{F})t'_{k}$ is a generator of $L_{R}(\mathcal{F})-MOD$ and
$$\varinjlim\limits_{k\in\mathbb{N}}\Big(\text{End}_{L_{R}(\mathcal{F})}(L_{R}(\mathcal{F})t'_{k})\Big)^{op}\cong\varinjlim\limits_{k\in\mathbb{N}}t'_{k}L_{R}(\mathcal{F})t'_{k}.$$
To that end, first note that, for each $k$, $L_{R}(\mathcal{F})t'_{k}$ is generated by $t'_{k}$ as an $L_{R}(\mathcal{F})$-module; thus, it is a finitely generated module. Further, by Claim \ref{ring:remark2}, we have $L_{R}(\mathcal{F})t'_{k}$ is projective. We now want to show $\varinjlim\limits_{k\in\mathbb{N}}L_{R}(\mathcal{F})t'_{k}$ is a generator of $L_{R}(\mathcal{F})-MOD$. As showing this fact is a bit convoluted, we will first sketch out the steps. We will first show $L_{R}(\mathcal{F})$ is a generator of $L_{R}(\mathcal{F})-MOD$; a fact which we will in turn leverage to show $\bigoplus\limits_{i\in\mathbb{N}}L_{R}(\mathcal{F})q_{\mathtt{A}_{i}}$ is a generator of $L_{R}(\mathcal{F})-MOD$. And, finally, we will establish
$$\bigoplus\limits_{i\in\mathbb{N}}L_{R}(\mathcal{F})q_{\mathtt{A}_{i}}\cong\varinjlim\limits_{k\in\mathbb{N}}L_{R}(\mathcal{F})t'_{k},$$
thereby showing $\varinjlim\limits_{k\in\mathbb{N}}L_{R}(\mathcal{F})t'_{k}$ is a generator for $L_{R}(\mathcal{F})-MOD$.

By Claim \ref{defabequiv}, to show $L_{R}(\mathcal{F})$ is a generator of $L_{R}(\mathcal{F})-MOD$, it suffices to show, for every non-zero $L_{R}(\mathcal{F})$-module homomorpshim $f:M\to N,$ there exists an $L_{R}(\mathcal{F})$-module homomorpshim $h:L_{R}(\mathcal{F})\to M$ such that $f\circ h\neq 0$. To that end, let $0\neq f:M\to N$ be an $L_{R}(\mathcal{F})$-module homomorpshim. This means there exists $m\in M$ such that $f(m)\neq0$, and since $M=L_{R}(\mathcal{F})M$, there exists $m'$ such that $m=xm'$ for some $x\in L_{R}(\mathcal{F})$. Defining $h:L_{R}(\mathcal{F})\to M$ by $x\mapsto xm'$, we have have $f\circ h\neq0$. Meaning, $L_{R}(\mathcal{F})$ is a generator of $L_{R}(\mathcal{F})-MOD$.

Now, let $\{\mathtt{A}_{i}\}_{i\in\mathbb{N}}\subseteq\mathcal{G}^{0}\subseteq\mathcal{F}^{0}$ be as in Lemma \ref{final:lemma1}. For any $A\in\mathcal{F}^{0}$, one can see $A=A'\cup\{v_{j}\}_{j=1}^{k}$ for some $A'\in\mathcal{G}^{0}$ and $\{v_{j}\}_{j=1}^{k}\subseteq F^{0}\setminus G^{0}$. Since $A'$ is contained in finitely many of the $\mathtt{A}_{i}$'s, if after enumerating $F^{0}\setminus G^{0}$ we set
$$\mathtt{B}_{2i}:=\mathtt{A}_{i} \text{ and } \mathtt{B}_{2i-1}:=\{v_{i}\}, \text{ for } v_{i}\in F^{0}\setminus G^{0},$$
we can check $\{\mathtt{B}_{i}\}_{i\in\mathbb{N}}$ satisfies the hypothesis stated in Lemma \ref{final:lemma1}. Thus, by Corollary \ref{final:corollary1},  
$$L_{R}(\mathcal{F})\cong\bigoplus\limits_{i\in\mathbb{N}}L_{R}(\mathcal{F})q_{\mathtt{B}_{i}}\cong \bigg(\bigoplus\limits_{i\in\mathbb{N}}L_{R}(\mathcal{F})q_{\mathtt{A_{i}}}\bigg)\bigoplus\bigg(\bigoplus\limits_{v\in F^{0}\setminus G^{0}}L_{R}(\mathcal{F})q_{v}\bigg).$$

Our next task is to use the fact established above to show, for some set $S$, there is a surjective module homomorphism
$$\rho:\bigoplus\limits_{s\in S}\bigg(\bigoplus\limits_{i\in\mathbb{N}}L_{R}(\mathcal{F})q_{\mathtt{A}_{i}}\bigg)\to L_{R}(\mathcal{F}),$$
thereby establishing $\bigoplus\limits_{i\in\mathbb{N}}L_{R}(\mathcal{F})q_{\mathtt{A}_{i}}$ is a generator of $L_{R}(\mathcal{F})-MOD$ as well. With this in mind, suppose $v_{0}$ is a singular vertex in $\mathcal{G}$, and $v_{j}\in F^{0}\setminus G^{0}$ is a vertex along a portion of a tail, $\alpha=f_{1}...f_{j}$, added at $v_{0}$ (see figure below). 
\begin{figure}[h!]
\begin{center}
\begin{tikzpicture}
\tikzset{vertex/.style = {shape=circle, draw=black!100,fill=black!100, thick, inner sep=0pt, minimum size=2 mm}}
\tikzset{edge/.style = {->, line width=1pt}}
\tikzset{v/.style = {shape=rectangle, dashed, draw, inner sep=0pt, minimum size=2em, minimum width=3em}}
    \node[vertex] (a) at (-9,1) [label=above:$v_{0}$]{};
    \node[vertex] (b) at (-6,1) [label=above:$v_{1}$]{};
    \node[vertex] (c) at (-3,1) [label=above:$v_{2}$]{};
    \node[vertex] (d) at (2,1) [label=above:$v_{j-1}$]{};
     \node[vertex] (e) at (5,1) [label=above:$v_{j}$]{};   
       
   \path (a) edge [->, >=latex, line width=.75pt, shorten <= 2pt, shorten >= 2pt, right] node[above]{$f_{1}$} (b);
   
   \path (b) edge [->, >=latex, line width=.75pt, shorten <= 2pt, shorten >= 2pt, right] node[above]{$f_{2}$} (c);

    \node [shape=circle,minimum size=1.5em] (d1) at (0,1) {};
    
     \path (c) edge [->, >=latex, line width=.75pt, shorten <= 2pt, shorten >= 2pt, right] (d1);

   \path (d1) to node {\dots} (d);

   \path (d) edge [->, >=latex, line width=.75pt, shorten <= 2pt, shorten >= 2pt, right] node[above]{$f_{j}$} (e);

   \end{tikzpicture}
\caption{}
\label{im9}
\end{center}
\end{figure}

\noindent We can define a module homomorphism,
$$\rho^{*}_{j}:L_{R}(\mathcal{F})q_{v_{j}}\to L_{R}(\mathcal{F})q_{v_{0}},$$
by $\rho^{*}_{j}(y)=y\alpha^{*}$; similarly, we can define
$$\rho_{j}:L_{R}(\mathcal{F})q_{v_{0}}\to L_{R}(\mathcal{F})q_{v_{j}}$$
by $\rho_{j}(x)=x\alpha$. Since $\alpha^{*}\alpha=v_{j}$, we can conclude $\rho_{j}\circ\rho^{*}_{j}=\text{Id}_{L_{R}(\mathcal{F})q_{v_{j}}}$, which in turn means $\rho_{j}$ is surjective. Suppose now $v_{0}\in\mathtt{A}_{i_{0}}$. For
$$L_{R}(\mathcal{F})q_{\mathtt{A}_{i_{0}}}\cong L_{R}(\mathcal{F})q_{\mathtt{A}_{i_{0}}\setminus\{v_{0}\}}\bigoplus L_{R}(\mathcal{F})q_{v_{0}},$$
we can take the projection onto $L_{R}(\mathcal{F})q_{v_{0}}$, and by composing with $\rho_{j}$, we have a surjective homomorphsim from $L_{R}(\mathcal{F})q_{\mathtt{A}_{i_{0}}}$ onto $L_{R}(\mathcal{F})q_{v_{j}}$. In turn, by composing with the projection of $\bigoplus\limits_{i\in \mathbb{N}}L_{R}(\mathcal{F})q_{\mathtt{A}_{i}}$ onto $L_{R}(\mathcal{F})q_{\mathtt{A}_{i_{0}}}$, we can conclude $L_{R}(\mathcal{F})q_{v_{j}}$ is the homomorphic image of $\bigoplus\limits_{i\in \mathbb{N}}L_{R}(\mathcal{F})q_{\mathtt{A}_{i}}$ for each $v_{j}\in F^{0}\setminus G^{0}$. Thus, defining homomorphisms corrdinate wise, we get a surjective homomorphism
$$\bigoplus\limits_{v_{j}\in F^{0}\setminus G^{0}}\bigg(\bigoplus\limits_{i\in\mathbb{N}}L_{R}(\mathcal{F})q_{\mathtt{A}_{i}}\bigg)\to\bigoplus\limits_{v_{j}\in F^{0}\setminus G^{0}}L_{R}(\mathcal{F})q_{v_{j}},$$
giving us a surjective homomorphism
{\footnotesize
$$\bigg(\bigoplus\limits_{i\in\mathbb{N}}L_{R}(\mathcal{F})q_{\mathtt{A}_{i}}\bigg)\oplus\bigg(\bigoplus\limits_{v_{j}\in F^{0}\setminus G^{0}}\bigg(\bigoplus\limits_{i\in\mathbb{N}}L_{R}(\mathcal{F})q_{\mathtt{A}_{i}}\bigg) \bigg)\to\bigg(\bigoplus\limits_{i\in\mathbb{N}}L_{R}(\mathcal{F})q_{\mathtt{A}_{i}}\bigg)\oplus\bigg(\bigoplus\limits_{v_{j}\in F^{0}\setminus G^{0}}L_{R}(\mathcal{F})q_{v_{j}} \bigg).$$\par}
\noindent Thus, for $L_{R}(\mathcal{F})\cong\bigg(\bigoplus\limits_{i\in\mathbb{N}}L_{R}(\mathcal{F})q_{\mathtt{A_{i}}}\bigg)\bigoplus\bigg(\bigoplus\limits_{v\in F^{0}\setminus G^{0}}L_{R}(\mathcal{F})q_{v}\bigg)$, we have a set $S$ and a surjective homomorphism
$$\rho:\bigoplus\limits_{s\in S}\bigg(\bigoplus\limits_{i\in\mathbb{N}}L_{R}(\mathcal{F})q_{\mathtt{A}_{i}}\bigg)\to L_{R}(\mathcal{F}),$$
and since $ L_{R}(\mathcal{F})$ is a generator of $L_{R}(\mathcal{F})-MOD$, this means $\bigoplus\limits_{i\in \mathbb{N}}L_{R}(\mathcal{F})q_{\mathtt{A}_{i}}$ is a generator of $L_{R}(\mathcal{F})-MOD$ as well.

For $k\leq l$, let $\varphi'_{k,l}:L_{R}(\mathcal{F})t'_{k}\to L_{R}(\mathcal{F})t'_{l}$ be the inclusion map and consider the direct system of $L_{R}(\mathcal{F})$-modules, $\Big\langle L_{R}(\mathcal{F})t'_{k}, \varphi'_{k,l}\Big\rangle$, over $\mathbb{N}$. First, notice the map given by
$$x\mapsto (xq_{\mathtt{A}_{1}},...,xq_{\mathtt{A}_{k}})$$
defines an isomorphism from $L_{R}(\mathcal{F})t'_{k}$ onto $\bigoplus\limits_{i\leq k}L_{R}(\mathcal{F})q_{\mathtt{A}_{i}}$; then, by composing with the inclusion map from $\bigoplus\limits_{i\leq k}L_{R}(\mathcal{F})q_{\mathtt{A}_{i}}$ into $\bigoplus\limits_{i\in\mathbb{N}}L_{R}(\mathcal{F})q_{\mathtt{A}_{i}}$, we get an injective map,
$$i_{k}:L_{R}(\mathcal{F})t'_{k}\to\bigoplus\limits_{i\in\mathbb{N}}L_{R}(\mathcal{F})q_{\mathtt{A}_{i}},$$
such that $i_{k}=i_{l}\circ\varphi'_{k,l}$ for each $k\leq l$. And so, by the universal mapping property of direct limits, we have an $L_{R}(\mathcal{F})$-module homomorphism
$$\theta: \varinjlim\limits_{k\in\mathbb{N}}L_{R}(\mathcal{F})t'_{k} \to \bigoplus\limits_{i\in\mathbb{N}}L_{R}(\mathcal{F})q_{\mathtt{A}_{i}}$$
such that $i_{k}=\theta\circ i_{k}$ for each $k$. Since for each $y\in \bigoplus\limits_{i\in\mathbb{N}}L_{R}(\mathcal{F})q_{\mathtt{A}_{i}}$, there exists a $k$ and $x\in L_{R}(\mathcal{F})t'_{k}$ such that $i_{k}(x)=y$, $\theta$ is surjective. Moreover, since, being the inclusion map, $\varphi'_{k,l}$ is injective for each $k\leq l$, $\theta$ is injective as well (see \cite[24.3, 1) and 2)]{Wisbauer:1991aa}). Thus,
$$\varinjlim\limits_{k\in\mathbb{N}}L_{R}(\mathcal{F})t'_{k} \cong \bigoplus\limits_{i\in\mathbb{N}}L_{R}(\mathcal{F})q_{\mathtt{A}_{i}},$$
meaning $\varinjlim\limits_{k\in\mathbb{N}}L_{R}(\mathcal{F})t'_{k}$ is a generator of $L_{R}(\mathcal{F})-MOD$ as well. Finally, to complete our proof, we need to show  
$$\varinjlim\limits_{k\in\mathbb{N}}\Big(\text{End}_{L_{R}(\mathcal{F})}(L_{R}(\mathcal{F})t'_{k})\Big)^{op}\cong\varinjlim\limits_{k\in\mathbb{N}}t'_{k}L_{R}(\mathcal{F})t'_{k}.$$
To that end, let $\phi\in\text{End}_{L_{R}(\mathcal{F})}(L_{R}(\mathcal{F})t'_{k})$. Since $\phi$ is an $L_{R}(\mathcal{F})$-module homomorphism, note $\phi(xt'_{k})=x\phi(t'_{k})$. Moreover, $t'_{k}\phi(t'_{k})=\phi((t'_{k})^{2})=\phi(t'_{k})$, and since $\phi(t'_{k})$ is already an element of $L_{R}(\mathcal{F})t'_{k}$, this means $\phi(t'_{k})\in t'_{k}L_{R}(\mathcal{F})t'_{k}$. Thus, we can conclude $\phi$ is given by right multiplication by an element of $t'_{k}L_{R}(\mathcal{F})t'_{k}$. Define, then, a map
$$\Phi_{k}: t'_{k}L_{R}(\mathcal{F})t'_{k}\to \text{End}_{L_{R}(\mathcal{F})}(L_{R}(\mathcal{F})t'_{k})$$
where $\Phi_{k}(x)\in  \text{End}_{L_{R}(\mathcal{F})}(L_{R}(\mathcal{F})t'_{k})$ is right multiplication by $x$; since every element of $\text{End}_{L_{R}(\mathcal{F})}(L_{R}(\mathcal{F})t'_{k})$ arises this way, $\Phi_{k}$ is surjective. Also, $L_{R}(\mathcal{F})$ is a ring with local units (meaning $xy=0$ for all $x$ if and only if $y=0$), and so $\Phi_{k}$ is injective as well. Lastly, we can easily check $\Phi_{k}(xy)=\Phi_{k}(y)\circ\Phi_{k}(x)$, which means $\Phi_{k}$ is an anti-isomorphism; put differently,
$$t'_{k}L_{R}(\mathcal{F})t'_{k}\cong\Big(\text{End}_{L_{R}(\mathcal{F})}(L_{R}(\mathcal{F})t'_{k})\Big)^{op}.$$
By Proposition \ref{ring:prop2}, 
$$\Big\langle \Big(\text{End}_{L_{R}(\mathcal{F})}(L_{R}(\mathcal{F})t'_{k})\Big)^{op}, \overline{\varphi'}_{k,l}\Big\rangle$$
is a direct system of rings over $\mathbb{N}$. For $k\leq l$, one can check the following diagram commutes:

\clearpage

\begin{figure}[h!]
\begin{center}
\begin{tikzpicture}

 \node [shape=circle,minimum size=1.5em] (d1) at (0,1) {$t'_{k}L_{R}(\mathcal{F})t'_{k}$};
 \node [shape=circle,minimum size=1.5em] (d2) at (6,1) {$\Big(\text{End}_{L_{R}(\mathcal{F})}(L_{R}(\mathcal{F})t'_{k})\Big)^{op}$};
 \node [shape=circle,minimum size=1.5em] (d3) at (0,-2) {$t'_{l}L_{R}(\mathcal{F})t'_{l}$};
 \node [shape=circle,minimum size=1.5em] (d4) at (6,-2) {$\Big(\text{End}_{L_{R}(\mathcal{F})}(L_{R}(\mathcal{F})t'_{l})\Big)^{op}$};
 
\path (d1) edge [->, >=latex, shorten <= 2pt, shorten >= 2pt, right] node[above]{$\Phi_{k}$} (d2);

\path (d3) edge [->, >=latex, shorten <= 2pt, shorten >= 2pt, right] node[above]{$\Phi_{l}$} (d4);

 \node [shape=circle,minimum size=1.5em] (d6) at (0,-2) {};

 \path (d1) edge [->, >=latex, shorten <= -17pt, right] node[pos=0.3]{$\varphi'_{k,l}$} (d6);
 
 \node [shape=circle,minimum size=1.5em] (d7) at (6,-2) {};
 
  \path (d2) edge [->, >=latex, shorten <= -50pt, right] node[pos=-2]{$\overline{\varphi'}_{k,l}$} (d7);

\end{tikzpicture}
\end{center}
\end{figure}

\noindent Since each $\Phi_{k}$ is an isomorphism, we have
$$\varinjlim\limits_{k\in\mathbb{N}}\Big(\text{End}_{L_{R}(\mathcal{F})}(L_{R}(\mathcal{F})t'_{k})\Big)^{op}\cong\varinjlim\limits_{k\in\mathbb{N}}t'_{k}L_{R}(\mathcal{F})t'_{k}\cong L_{R}(\mathcal{G}).$$
Thus, by Theorem \ref{ring:theorem1}, $L_{R}(\mathcal{G})$ and $L_{R}(\mathcal{F})$ are Morita equivalent. 
\end{proof}

Theorems \ref{final:theorem1} and \ref{imptheo} allow us to concisely state the main result of this thesis:

\begin{theorem}\label{dstraight}
Let $\mathcal{G}$ be any ultragraph and let $R$ be any commutative unital ring. Then, there exists a graph $E$ such that $L_{R}(\mathcal{G}) \text{ is Morita equivalent to }L_{R}(E).$
\end{theorem}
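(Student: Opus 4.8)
The plan is to assemble the final statement as a near-immediate corollary of the two Morita-equivalence theorems already proved, together with a case split on whether the ultragraph has singular vertices. The statement claims: for any ultragraph $\mathcal{G}$ and any commutative unital ring $R$, there is a graph $E$ with $L_{R}(\mathcal{G})$ Morita equivalent to $L_{R}(E)$. I would first observe that Morita equivalence is an equivalence relation on rings—in particular it is transitive—so it suffices to produce, for an arbitrary $\mathcal{G}$, a graph whose Leavitt path algebra sits at the end of a finite chain of Morita equivalences starting at $L_{R}(\mathcal{G})$.

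The argument splits naturally into two cases. If $\mathcal{G}$ has \emph{no} singular vertices, then I invoke Theorem \ref{imptheo} directly: with $E := E_{\mathcal{G}}$ the graph of Definition \ref{gfromug}, we have that $L_{R}(\mathcal{G})$ is Morita equivalent to $L_{R}(E_{\mathcal{G}})$, and we are done. If $\mathcal{G}$ \emph{does} contain singular vertices, I first pass to its desingularization $\mathcal{F}$; Theorem \ref{final:theorem1} gives that $L_{R}(\mathcal{G})$ is Morita equivalent to $L_{R}(\mathcal{F})$. The key point is that $\mathcal{F}$, by construction (adding tails at sinks and at infinite emitters as in Figures \ref{im7} and \ref{im8}), has \emph{no} singular vertices: every former sink now emits a single tail edge, and every former infinite emitter now emits exactly two edges at each stage. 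Thus Theorem \ref{imptheo} applies to $\mathcal{F}$, yielding that $L_{R}(\mathcal{F})$ is Morita equivalent to $L_{R}(E_{\mathcal{F}})$. Setting $E := E_{\mathcal{F}}$ and chaining the two equivalences through transitivity gives the result.

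The main steps, in order, are: (i) recall/state that Morita equivalence is transitive (this is standard and follows from composing the equivalences of module categories in Definition \ref{catequiv}, or equivalently from composing Morita contexts as in Theorem \ref{Mor1}); (ii) dispose of the no-singular-vertex case via Theorem \ref{imptheo}; (iii) for the singular case, cite Theorem \ref{final:theorem1} to reach $L_{R}(\mathcal{F})$; (iv) verify that $\mathcal{F}$ has no singular vertices so that Theorem \ref{imptheo} applies again; and (v) combine (iii)–(iv) by transitivity. I would present this as a short synthesis rather than a computation.

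The only genuinely substantive point—and hence the step I expect to require the most care—is step (iv): confirming that desingularization really does eliminate \emph{all} singular behavior, including the subtlety that an infinite emitter in $\mathcal{G}$ is replaced by an infinite tail whose vertices each emit finitely many edges, and that no new sinks or infinite emitters are introduced anywhere along the tails. One should check that each newly added vertex $v_i$ emits exactly the finite set $\{f_{i+1}, g_{i+1}\}$ (for an infinite-emitter tail) or exactly $\{f_{i+1}\}$ (for a sink tail), and that the original singular vertex $v_0$ now emits exactly $\{f_1, g_1\}$ or $\{f_1\}$, so that $0 < |s^{-1}(w)| < \infty$ for every vertex $w$ of $\mathcal{F}$. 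Everything else is a formal invocation of results already established, so once transitivity is stated the proof is genuinely a two-line corollary.
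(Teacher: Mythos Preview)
Your proposal is correct and matches the paper's approach exactly: the paper presents Theorem \ref{dstraight} as an immediate consequence of Theorems \ref{final:theorem1} and \ref{imptheo}, with the intended graph being $E_{\mathcal{F}}$ for $\mathcal{F}$ the desingularization of $\mathcal{G}$ (as foreshadowed in the paragraph following Corollary \ref{meqcor}). Your additional care in step (iv)---verifying that $\mathcal{F}$ has no singular vertices---is a reasonable explicit check that the paper leaves implicit in the construction of the desingularization.
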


\section{Simplicity conditions for $L_{R}(\mathcal{G})$}

In \cite{Gene-Abrams-and-Gonzalo-Aranda-Pino:2005aa}, Abrams and Pino give conditions on a row-finite graph $E$ which will guarantee $L_{K}(E)$ is a simple algebra, where $K$ is a field---a graph is row-finite if $\{v\in E^{0}: |s_{E}^{-1}(v)|=\infty\}=\emptyset.$ We will extend their result to ultragraph Leavitt path algebras using Morita equivalence. It's worth noting the result we will prove has already been shown in \cite{Daniel-Goncalves-and-Danilo-Royer:2017aa}; however, the authors achieve their result relying on entirely different techniques. Moreover, we will show, unlike the case with graphs, we need not impose the row-finite condition. Tomforde proves an analogous result for ultragraph $C^{*}$-algebras in \cite{Tomforde:2003aaa}. 

To begin, recall that a cycle in an ultragraph $\mathcal{G}$ is a path $\alpha=e_{1}\dots e_{n}$ such that $s(\alpha)\in r(\alpha)$; an edge $e$ is an \textit{exit} for $\alpha$ if there exists an $i$ such that $s(e)=s(e_{i})$ with $e\neq e_{i}$. An ultragraph $\mathcal{G}$ satisfies \textit{Condition (L)} if every cycle $\alpha=e_{1}\dots e_{n}$ in $\mathcal{G}$ has an exit, or there is an $i$ such that $r(e_{i})$ contains a sink. Similarly, for a graph $E$, a cycle is a path $\alpha=e_{1}\dots e_{n}$ in $E$ such that $r_{E}(\alpha)=s_{E}(\alpha)$; an edge $e$ is an \textit{exit} for $\alpha$ if there exists an $i$ such that $s(e)=s(e_{i})$ with $e\neq e_{i}$. A graph $E$ satisfies \textit{Condition (L)} if every cycle in $E$ has an exit. \textit{Condition (L)} is a necessary condition, for graphs and ultragraphs, in order for their respective Leavitt path algebras over $K$ to be simple. Beside \textit{Condition (L)}, there is one more necessary condition in order to guarantee simplicity. 

\begin{definition}\label{hereditarysat}
Let $\mathcal{G}$ be an ultragraph. $H\subseteq\mathcal{G}^{0}$ is \textit{hereditary} if:\\
1) for each $e\in \mathcal{G}^{1}$, $s(e)\in H \implies r(e)\in H$.\\
2) for all $A,B\in H$, $A\cup B\in H$.\\
3) given $A\in H$ and $B\in \mathcal{G}^{0}$, $B\subseteq A\implies B\in H$.\\
Also, $H\subseteq\mathcal{G}^{0}$ is \textit{saturated} if for any non-singular vertex $v\in G^{0}$, $\{r(e)\}_{e\in\mathcal{G}^{1}: s(e)=v}\subseteq H\implies \{v\}\in H.$
\end{definition}

In the case of a graph $E$, $H\subseteq E^{0}$ is \textit{hereditary} if, for each $e\in H$, $s_{E}(e)\in H\implies r_{E}(e)\in H;$ it is \textit{saturated} if, for every non-singular vertex $v\in E^{0}$, $\{r(e)\}_{e\in E^{1}: s_{E}(e)=v}\subseteq H\implies v\in H.$ Abrams and Pino show in \cite{Gene-Abrams-and-Gonzalo-Aranda-Pino:2005aa}, assuming $E$ is row-finite, $L_{K}(E)$ is simple if and only if $E$ is such that:\\
1) $E$ satisfies \textit{Condition (L)}, and\\
2) the only saturated hereditary subsets of $E^{0}$ are $E^{0}$ and $\emptyset$.

We will show that for any ultragraph $\mathcal{G}$, $L_{K}(\mathcal{G})$ is simple if and only if $\mathcal{G}$ is such that:\\
1) $\mathcal{G}$ satisfies \textit{Condition (L)}, and\\
2) the only saturated hereditary subsets of $\mathcal{G}^{0}$ are $\mathcal{G}^{0}$ and $\emptyset$. We will start by first proving it for an ultragraph $\mathcal{G}$ with no singular vertices.

\begin{prop}\label{singcoLhersat1}
Let $\mathcal{G}$ be an ultragraph with no singular vertices and $E_{\mathcal{G}}$ its associated graph. Then, $\mathcal{G}^{0}$ and $\emptyset$ are the only saturated hereditary subsets of $\mathcal{G}^{0}$ if and only if $E_{\mathcal{G}}^{0}$ and $\emptyset$ are the only saturated hereditary subsets of $E_{\mathcal{G}}^{0}$.
\end{prop}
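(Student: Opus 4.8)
The plan is to prove the statement by establishing a careful correspondence between saturated hereditary subsets of $\mathcal{G}^0$ and those of $E_{\mathcal{G}}^0$. Since $\mathcal{G}$ has no singular vertices, every $v \in G^0$ is a regular vertex (emitting finitely many, but at least one, edge), and $G^0 \subseteq E_{\mathcal{G}}^0$ under the identification in Definition \ref{gfromug}. The natural approach is \emph{contrapositive in both directions}: I would show that a nontrivial saturated hereditary subset of $\mathcal{G}^0$ produces a nontrivial saturated hereditary subset of $E_{\mathcal{G}}^0$, and conversely. First I would recall the structure of $E_{\mathcal{G}}$: its vertices are indexed by $G^0 \sqcup \Delta$, and the edges come in the three families of Definition \ref{gfromug} (the $W_+$-edges $e_v$ realizing $\sigma$, the $\Gamma_+$-edges $e_\omega$ building up the ``$\Delta$-tree,'' and the $(e_n,x)$-edges encoding the original edges of $\mathcal{G}$). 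The key geometric fact I would lean on is that every vertex of $E_{\mathcal{G}}$ connects, via paths, back to $G^0$ and that the $\Delta$-vertices $\omega$ have range sets built from finite intersections/differences of the $r(e_i)$.

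The main construction is a map sending a saturated hereditary $H \subseteq \mathcal{G}^0$ to a candidate subset $\widetilde{H} \subseteq E_{\mathcal{G}}^0$. The obvious guess is
\[
\widetilde{H} := \{\, v_v : v \in G^0,\ \{v\} \in H \,\} \;\cup\; \{\, v_\omega : \omega \in \Delta,\ r(\omega) \subseteq \textstyle\bigcup H \,\},
\]
where $\bigcup H$ denotes the union of all sets in $H$ (which lies in $\mathcal{G}^0$ when $H$ is appropriately closed, using Lemma \ref{final:lemma1}-style countability and the closure of $\mathcal{G}^0$ under finite operations). I would then verify directly that $\widetilde{H}$ is hereditary and saturated in $E_{\mathcal{G}}$ by checking the three types of edges: for an edge $e_v$ with $v \in W_+$, its source is $v_{\sigma(v)}$ and range $v_v$, so heredity forces a condition relating $\sigma(v)$-membership to $v$-membership; for the $\Gamma_+$-edges the source/range both lie in the $\Delta$-tree and heredity is a statement about nested range sets; and for the $(e_n,x)$-edges, heredity translates precisely the heredity of $H$ under $r$ in $\mathcal{G}$. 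Saturation in $E_{\mathcal{G}}$ at a regular vertex must be matched against saturation of $H$ at the corresponding vertex of $\mathcal{G}$, using that $X(e_n)$ (Lemma \ref{lemxen}) enumerates the edges out of $v_{s(e_n)}$ in $E_{\mathcal{G}}$.

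For the converse, given a saturated hereditary $K \subseteq E_{\mathcal{G}}^0$, I would restrict to $G^0$ by setting $H_K := \{\, A \in \mathcal{G}^0 : A \subseteq \{v : v_v \in K\}\,\}$, and check it is saturated and hereditary in $\mathcal{G}$, then show that the two constructions are mutually inverse on the level of triviality — i.e.\ $H$ is trivial (equal to $\emptyset$ or $\mathcal{G}^0$) exactly when $\widetilde{H}$ is, and symmetrically for $K$ and $H_K$. The crucial point making both directions work is that the $\Delta$-vertices and the intermediate tail vertices of $E_{\mathcal{G}}$ are ``forced'': any hereditary set containing a piece of $G^0$ must, by following the $(e_n,x)$ and $\Gamma_+$ edges, either stay confined to the corresponding $r$-closed piece of $\mathcal{G}^0$ or sweep up everything.

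The hard part will be handling the $\Delta$-vertices and the auxiliary tree structure honestly. Unlike $G^0$, the elements of $\Delta$ have no direct counterpart in $\mathcal{G}^0$, so I must argue that membership of a $\Delta$-vertex $\omega$ in a saturated hereditary $K$ is completely determined by which genuine vertices $v \in r(\omega)$ lie in $K$ — this is where saturation at the infinite-range vertices and the finiteness of $\sigma^{-1}(\omega)$ (Lemma \ref{sig}) and of $X(e_n)$ (Lemma \ref{lemxen}) are indispensable, since they guarantee the relevant sums in \textbf{LP4}/saturation conditions are finite. I expect the bookkeeping to reduce to the assertion that $E_{\mathcal{G}}$ is, roughly, ``$\mathcal{G}$ with each vertex and each generalized range resolved into a finite tree reaching genuine vertices,'' so that no \emph{new} invariant subsets are created in passing from $\mathcal{G}$ to $E_{\mathcal{G}}$; making this precise for the $\Gamma_0$ versus $\Gamma_+$ distinction (the $W_0 \sqcup \Gamma_0$ sources, cf.\ Lemma \ref{ring:lemma1}) is the step most likely to require delicate case analysis.
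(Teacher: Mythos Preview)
Your approach is sound in outline but takes a genuinely different route from the paper. The paper does \emph{not} construct an explicit combinatorial correspondence between saturated hereditary subsets; instead it passes through the $C^{*}$-algebra theory. Specifically, the paper invokes three external results: (i) for an ultragraph with no singular vertices, the saturated hereditary subsets of $\mathcal{G}^{0}$ are in bijection with the gauge-invariant ideals of $C^{*}(\mathcal{G})$ \cite[Theorem 6.12]{Takeshi-Katsura-Paul-Muhly-Aidan-Sims--Mark-Tomforde:2008aa}; (ii) $E_{\mathcal{G}}$ likewise has no singular vertices, so the analogous bijection holds for $E_{\mathcal{G}}^{0}$ and $C^{*}(E_{\mathcal{G}})$ \cite[Theorem 3.6]{Teresa-Bates-Jeong-Hee-Hong-Iain-Raeburn--Wojciech-Szymanski:2002aa}; and (iii) the Morita equivalence of $C^{*}(\mathcal{G})$ and $C^{*}(E_{\mathcal{G}})$ from \cite[Section 6]{Takeshi-Katsura-Paul-Muhly-Aidan-Sims--Mark-Tomforde:2010aa} induces a bijection between their gauge-invariant ideals. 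Chaining these gives a bijection between saturated hereditary subsets on the two sides, from which the proposition follows immediately.

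Your direct combinatorial argument has the advantage of being self-contained within the algebraic setting of the thesis---it avoids the detour through $C^{*}$-algebras entirely, which is arguably more in keeping with the paper's stated goal of proving algebraic analogues of $C^{*}$-results rather than citing them. On the other hand, the paper's approach is essentially a one-paragraph appeal to the literature, whereas your plan requires honest case analysis on the three edge types of $E_{\mathcal{G}}$ and careful bookkeeping on the $\Delta$-tree (as you correctly anticipate). One small comment on your sketch: since $\mathcal{G}$ has no singular vertices, neither does $E_{\mathcal{G}}$ (this is \cite[Proposition 3.14]{Takeshi-Katsura-Paul-Muhly-Aidan-Sims--Mark-Tomforde:2010aa}), so every vertex of $E_{\mathcal{G}}$ is regular and the saturation condition is uniform---this should simplify the ``delicate case analysis'' you flag at the end. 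Also note that the statement only asks for the equivalence of \emph{triviality} on the two sides, so you need not establish a full bijection; showing that each nontrivial $H$ produces a nontrivial $\widetilde{H}$ (and conversely) suffices.
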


\begin{proof}
Since $\mathcal{G}$ doesn't contain any singular vertices, \cite[Theorem 6.12]{Takeshi-Katsura-Paul-Muhly-Aidan-Sims--Mark-Tomforde:2008aa} implies there is a one-to-one correspondence between the gauge-invariant ideals of $C^{*}(\mathcal{G})$ and the saturated hereditary subsets of $\mathcal{G}^{0}$. By \cite[Proposition 3.14]{Takeshi-Katsura-Paul-Muhly-Aidan-Sims--Mark-Tomforde:2010aa}, $E_{\mathcal{G}}$ doesn't contain singular vertices if and only if $\mathcal{G}$ doesn't contain any singular vertices; which, by \cite[Theorem 3.6]{Teresa-Bates-Jeong-Hee-Hong-Iain-Raeburn--Wojciech-Szymanski:2002aa}, means there is a one-to-one correspondence between the saturated hereditary subsets of $E_{\mathcal{G}}^{0}$ and the gauge-invariant ideals of $C^{*}(E_{\mathcal{G}})$. Finally, in section 6 of \cite{Takeshi-Katsura-Paul-Muhly-Aidan-Sims--Mark-Tomforde:2010aa}, it's shown there is a bijection between the gauge-invariant ideals of $C^{*}(\mathcal{G})$ and the gauge-invariant ideals of $C^{*}(E_{\mathcal{G}})$. Thus, for $\mathcal{G}$ with no singular vertices, there is a bijection between the saturated hereditary subsets of $\mathcal{G}^{0}$ and the saturated hereditary subsets of $E_{\mathcal{G}}^{0}$.
\end{proof}

\begin{prop}\label{singcoLhersat2}
Let $\mathcal{G}$ be an ultragraph and $E_{\mathcal{G}}$ its associated graph. $\mathcal{G}$ satisfies \textit{Condition (L)} if and only if $E_{\mathcal{G}}$ satisfies \textit{Condition (L)}.
\end{prop}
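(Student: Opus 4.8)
The plan is to prove the equivalence by relating cycles in $\mathcal{G}$ to cycles in $E_{\mathcal{G}}$ through the canonical description of paths in $E_{\mathcal{G}}$, and then matching up the exit/sink data; I would argue the contrapositive in both directions, i.e. $\mathcal{G}$ \emph{fails} Condition (L) iff $E_{\mathcal{G}}$ \emph{fails} it. First I would set up the structural backbone. Using the uniqueness of the path $\alpha_{x}$ in the subgraph $F$ from $W_{0}\sqcup\Gamma_{0}$ to each $x$ (\cite[Lemma 4.6]{Takeshi-Katsura-Paul-Muhly-Aidan-Sims--Mark-Tomforde:2010aa}, see Remark \ref{FsubE}), the subgraph $F$ is a forest with roots in $W_{0}\sqcup\Gamma_{0}$; in particular no cycle of $E_{\mathcal{G}}$ lies entirely in $F$. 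Combining this with the canonical factorization of \cite[Lemma 4.1]{Takeshi-Katsura-Paul-Muhly-Aidan-Sims--Mark-Tomforde:2010aa}, any cycle $c$ in $E_{\mathcal{G}}$ can be rotated to begin with a real edge and written as $c=(e_{n_{1}},x_{1})g_{1}\dots(e_{n_{k}},x_{k})g_{k}$ with $k\geq 1$, each $g_{i}$ a path in $F$. The range decomposition $r(e_{n})=G_{n}^{0}\cup\bigcup_{\omega_{n}=1}r'(\omega)$ (\cite[Lemma 4.4]{Takeshi-Katsura-Paul-Muhly-Aidan-Sims--Mark-Tomforde:2010aa}) then shows that the $F$-path $g_{i}$ joining $v_{x_{i}}$ (with $x_{i}\in X(e_{n_{i}})$) to $v_{s(e_{n_{i+1}})}$ witnesses $s(e_{n_{i+1}})\in r(e_{n_{i}})$, so that $\pi(c):=e_{n_{1}}\cdots e_{n_{k}}$ is a genuine cycle in $\mathcal{G}$; conversely each cycle of $\mathcal{G}$ lifts to one in $E_{\mathcal{G}}$ by choosing $x_{i}\in X(e_{i})$ (nonempty by Lemma \ref{lemxen}) and the appropriate $F$-segment of $\alpha_{s(e_{i+1})}$.

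The second ingredient is a local out-degree analysis. A vertex $v_{s(e_{n_{i}})}$ on $c$ emits no $F$-edge, since the source of any $F$-edge lies in $\Delta$; hence it emits exactly the edges $(e_{m},x)$ with $s(e_{m})=s(e_{n_{i}})$ and $x\in X(e_{m})$, so its out-degree is $\sum_{m:\,s(e_{m})=s(e_{n_{i}})}|X(e_{m})|$. I would translate ``$c$ has no exit'' into ``every vertex of $c$ has out-degree one,'' and read off from the displayed count that this forces both $s^{-1}(s(e_{n_{i}}))=\{e_{n_{i}}\}$ (no source-exit for $\pi(c)$ in $\mathcal{G}$) and $|X(e_{n_{i}})|=1$; one then uses $|X(e_{n_{i}})|=1$, the absence of branching along the $g_{i}$, and Lemma 4.4 to locate the obstruction to a sink inside each $r(e_{n_{i}})$, and vice versa for the tail vertices traversed by the $g_{i}$.

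The hard part will be this exit-matching step, and I want to flag at the outset that it is \emph{not} a literal cycle-by-cycle correspondence of exits. A non-singleton range $r(e_{i})$ in $\mathcal{G}$ produces several out-edges $(e_{i},x)$ at $v_{s(e_{i})}$ in $E_{\mathcal{G}}$, so an exit-free cycle of $\mathcal{G}$ may well lift to a cycle carrying an exit; the equivalence can only hold at the level of the global Condition-(L) status, mediated by the sink alternative in the ultragraph formulation. Thus the crux is to show precisely that a sink inside some $r(e_{n_{i}})$ corresponds to an extra out-edge of $c$ that is off the cycle — a second real edge $(e_{n_{i}},x)$ when the sink lies in $G_{n_{i}}^{0}$, or a branch off the tail $\alpha_{x}$ when it lies in some $r'(\omega)$ — and conversely that an exit of $c$ arises only from a genuine source-exit in $\mathcal{G}$ or from such a sink. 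Getting this accounting right, keeping careful track of which elements of $r(e_{n_{i}})$ land in $X(e_{n_{i}})$ as opposed to being reached only through the $\Delta$-indexed tails, and how each contributes to out-degrees, is where all the delicacy sits; this is where I would rely most heavily on Lemmas 4.4 and 4.6 of the Katsura--Muhly--Sims--Tomforde construction, and where one must be scrupulous about the exact form of the exit-or-sink condition being used.
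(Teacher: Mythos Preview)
Your proposal is correct and follows essentially the same route as the paper's proof: both arguments rest on the forest structure of the subgraph $F$ (so every $E_{\mathcal{G}}$-cycle must contain real edges $(e_{n},x)$), the canonical factorization of paths from \cite[Lemma 4.1]{Takeshi-Katsura-Paul-Muhly-Aidan-Sims--Mark-Tomforde:2010aa}, and the range decomposition of \cite[Lemma 4.4]{Takeshi-Katsura-Paul-Muhly-Aidan-Sims--Mark-Tomforde:2010aa} together with \cite[Lemma 4.6]{Takeshi-Katsura-Paul-Muhly-Aidan-Sims--Mark-Tomforde:2010aa} to match exits in $E_{\mathcal{G}}$ with the exit-or-sink alternative in $\mathcal{G}$. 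The only cosmetic difference is packaging: you argue both directions by contrapositive and phrase ``no exit'' as ``every vertex on the cycle has out-degree one,'' whereas the paper works directly and splits into the cases ``$\gamma$ has an exit'' versus ``some $r(e_{n_i})$ contains a sink'' in one direction, and ``some $|r(e_{n_i})|>1$'' versus ``all $|r(e_{n_i})|=1$'' in the other.
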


\begin{proof}
$(\Rightarrow)$ Suppose $\mathcal{G}$ satisfies \textit{Condition (L)}. Let $\alpha$ be a cycle in $E_{\mathcal{G}}$. The graph $F$ from Remark \ref{FsubE} doesn't contain any cycles. Thus, $\alpha$ must contain an edge of the form $(e_{n},x)$; which in turn means $\alpha$ must pass through a vertex $v\in G^{0}$. We will take $v$ to be the base point of $\alpha$. Recall that $\alpha$ can be expressed as 
$$\alpha=g_{0}(e_{n_{1}},x_{1})g_{1}(e_{n_{2}},x_{2})g_{2}\dots(e_{n_{k}},x_{k})g_{k}$$
where, for each $i$, $e_{n_{i}}\in\mathcal{G}^{1}$, $x_{i}\in X(e_{n_{i}})$, and $g_{i}\in F^{*}$; because we are assuming $v$ to be the base point, we can safely assume $s_{E}(g_{0})=v=r_{E}(g_{k})$. Then, \cite[Lemma 4.9]{Takeshi-Katsura-Paul-Muhly-Aidan-Sims--Mark-Tomforde:2010aa} implies $\gamma=e_{n_{1}}e_{n_{2}}\dots e_{n_{k}}$ is a cycle in $\mathcal{G}$ with $v=s(e_{n_{1}})\in r(e_{n_{k}})$. Since we are assuming $\mathcal{G}$ satisfies \textit{Condition (L)}, one of two things must hold.

\textit{Case 1:} $\gamma$ has an exit $e$. In this case, there is an edge, $(e,x)$, which is an exit for $\alpha$.
 
 \textit{Case 2:} $\gamma$ contains an edge $e_{n_{i}}$, for some $i$, such that there exists a sink $w\in r(e_{n_{i}})$. There are two possibilities here. First, if $w\in X(e_{n_{i}})$, then $(e_{n_{i}},w)$ is an edge in $E_{\mathcal{G}}$. Further, by \cite[Proposition 3.14]{Takeshi-Katsura-Paul-Muhly-Aidan-Sims--Mark-Tomforde:2010aa}, $w$ is a sink in $E_{\mathcal{G}}$ as well, which means $(e_{n_{i}},w)\neq (e_{n_{i}},x_{i})$ since $x_{i}=s_{E}(g_{i})$. And so $(e_{n_{i}},w)$ is an exit for $\alpha$. If, on the other hand, $w\notin X(e_{n_{i}})$, then we have, by \cite[Lemma 4.4]{Takeshi-Katsura-Paul-Muhly-Aidan-Sims--Mark-Tomforde:2010aa}, $\omega\in X(e_{n_{i}})\cap \Delta$ such that $w\in r'(\omega)$. Should $\omega\neq x_{i}$, we would have $(e_{n_{i}},\omega)$ as an exit for $\alpha$. So, suppose $\omega=x_{i}$. By \cite[Lemma 4.6 (3)]{Takeshi-Katsura-Paul-Muhly-Aidan-Sims--Mark-Tomforde:2010aa}, there exists a path $g$ in $F$ such that $s_{E}(g)=\omega$ and $r_{E}(g)=w$. Now, note $s_{E}(g_{i})=\omega$ and $r_{E}(g_{i})=s(e_{n_{i+1}})\neq w$. This means $g\neq g_{i}$; however, since $s_{E}(g)=s_{E}(g_{i})$, this implies $\alpha$ must have an exit.
 
Taking the two cases together, we have that $E_{\mathcal{G}}$ must satisfy \textit{Condition (L)} as well.
 
$(\Leftarrow)$ Suppose $E_{\mathcal{G}}$ satisfies \textit{Condition (L)}. Let $\gamma=e_{n_{1}}e_{n_{2}}\dots e_{n_{k}}$ be a cycle in $\mathcal{G}$. Suppose there is an $i$ such that $|r(e_{n_{i}})|>1$. If $r(e_{n_{i}})$ contains a sink, we are done. If not, it must be that there exists an edge $e$ such that $s(e)\in r(e_{n_{i}})$ and $s(e)\neq s(e_{n_{i+1}})$; in which case, $e$ is an exist for $\gamma$. If, on the other hand, $|r(e_{n_{i}})|=1$ for each $i$, it must be $r(e_{n_{i}})=X(e_{n_{i}})=\{s(e_{n_{i+1}})\}$ for $1\leq i \leq k-1$ and $r(e_{n_{k}})=\{s(e_{n_{1}})\}$. And so we get a cycle 
$$\alpha=(e_{n_{1}},s(e_{n_{2}}))(e_{n_{2}},s(e_{n_{3}}))\dots(e_{n_{k}},s(e_{n_{1}}))$$
in $E_{\mathcal{G}}$. Since $E_{\mathcal{G}}$ satisfies \textit{Condition (L)}, $\alpha$ has an exit; further, by \cite[Lemma 4.6 (1)]{Takeshi-Katsura-Paul-Muhly-Aidan-Sims--Mark-Tomforde:2010aa}, the exit must be of the form $(e,x)$. Moreover, for $s(e)=s(e_{n_{i}})$, it must be that $e\neq e_{n_{i}}$. This is because $e_{n_{i}}= e$, and $(e,x)\neq (e_{n_{i}},s(e_{n_{i+1}}))$, imply $x\neq s(e_{n_{i+1}})$, but then this contradicts the fact $X(e_{n_{i}})=\{s(e_{n_{i+1}})\}$. And so, for $(e,x)$ an exit for $\alpha$, we have that $e$ is an exit for $\gamma$. Thus, $\gamma$ has an exist, or there exists an $i$ such that $r(e_{n_{i}})$ contains a sink; meaning $\mathcal{G}$ satisfies \textit{Condition (L)}.
\end{proof}

\begin{theorem}\label{singsim}
Let $\mathcal{G}$ be an ultragraph with no singular vertices, and $K$ a field. Then, $L_{K}(\mathcal{G})$ is simple if and only if $\mathcal{G}$ satisfies \textit{Condition (L)} and $\mathcal{G}^{0}$ and $\emptyset$ are the only saturated hereditary subsets of $\mathcal{G}^{0}$.
\end{theorem}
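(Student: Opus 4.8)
The plan is to reduce the simplicity of $L_{K}(\mathcal{G})$ to the already-established theory for graph Leavitt path algebras via the Morita equivalence developed earlier in this thesis. Since $\mathcal{G}$ has no singular vertices, Theorem \ref{imptheo} tells us that $L_{K}(\mathcal{G})$ is Morita equivalent to $L_{K}(E_{\mathcal{G}})$. The first key step is to invoke the fact that Morita equivalence preserves the lattice of (suitable) ideals: by Proposition \ref{Morid}, the lattice $\mathcal{I}_{L_{K}(\mathcal{G})}$ of ideals $I$ with $L_{K}(\mathcal{G})\,I\,L_{K}(\mathcal{G})=I$ is isomorphic to the corresponding lattice for $L_{K}(E_{\mathcal{G}})$. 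Because both algebras are rings with local units (they are $\sigma$-unital, and in fact every ideal $I$ satisfies $RIR=I$ for a ring with local units, as noted after Proposition \ref{Morid}), \emph{every} ideal lies in these lattices. Hence $L_{K}(\mathcal{G})$ is simple if and only if $L_{K}(E_{\mathcal{G}})$ is simple.

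Next I would bring in the known simplicity criterion for graph Leavitt path algebras. Since $\mathcal{G}$ has no singular vertices, Proposition \ref{singcoLhersat2} shows $E_{\mathcal{G}}$ has no singular vertices either (citing \cite[Proposition 3.14]{Takeshi-Katsura-Paul-Muhly-Aidan-Sims--Mark-Tomforde:2010aa}), so in particular $E_{\mathcal{G}}$ is row-finite. I would then apply the Abrams--Pino simplicity theorem (the version over a field $K$ for row-finite graphs stated in the text): $L_{K}(E_{\mathcal{G}})$ is simple if and only if $E_{\mathcal{G}}$ satisfies \emph{Condition (L)} and the only saturated hereditary subsets of $E_{\mathcal{G}}^{0}$ are $E_{\mathcal{G}}^{0}$ and $\emptyset$.

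The final step is purely translational, and here the two propositions immediately preceding the theorem do the work. By Proposition \ref{singcoLhersat2}, $\mathcal{G}$ satisfies \emph{Condition (L)} if and only if $E_{\mathcal{G}}$ does. By Proposition \ref{singcoLhersat1}, $\mathcal{G}^{0}$ and $\emptyset$ are the only saturated hereditary subsets of $\mathcal{G}^{0}$ if and only if $E_{\mathcal{G}}^{0}$ and $\emptyset$ are the only saturated hereditary subsets of $E_{\mathcal{G}}^{0}$. Chaining these equivalences together yields: $L_{K}(\mathcal{G})$ is simple $\iff$ $L_{K}(E_{\mathcal{G}})$ is simple $\iff$ $E_{\mathcal{G}}$ satisfies Condition (L) and has no nontrivial saturated hereditary subsets $\iff$ $\mathcal{G}$ satisfies Condition (L) and has no nontrivial saturated hereditary subsets, which is exactly the desired statement.

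The main obstacle I anticipate is the very first step, namely justifying rigorously that Morita equivalence transfers simplicity. The subtlety is that Proposition \ref{Morid} only gives a lattice isomorphism between the ideals satisfying $RIR=I$, so I must verify that for these non-unital (but local-unit) algebras this condition captures all two-sided ideals; otherwise simplicity of one algebra would not transfer to the other. I would address this by explicitly recording that $L_{K}(\mathcal{G})$ and $L_{K}(E_{\mathcal{G}})$ are rings with local units (using the $\sigma$-unit $\{t_{k}\}$ from Lemma \ref{final:lemma2} together with the standard fact that a $\sigma$-unit built from orthogonal idempotents yields local units), and then appealing to the observation after Proposition \ref{Morid} that for a ring with local units every ideal $I$ satisfies $RIR=I$. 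With that lattice-of-\emph{all}-ideals isomorphism in place, an algebra is simple precisely when its ideal lattice has exactly two elements, so simplicity genuinely transfers across the Morita equivalence and the remainder of the argument is routine.
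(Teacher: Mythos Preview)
Your proposal is correct and follows essentially the same route as the paper's own proof: Morita equivalence via Theorem \ref{imptheo}, transfer of the ideal lattice via Proposition \ref{Morid}, the Abrams--Pino simplicity criterion for the row-finite graph $E_{\mathcal{G}}$, and the translation back to $\mathcal{G}$ via Propositions \ref{singcoLhersat1} and \ref{singcoLhersat2}. One small slip: the fact that $E_{\mathcal{G}}$ has no singular vertices comes from \cite[Proposition 3.14]{Takeshi-Katsura-Paul-Muhly-Aidan-Sims--Mark-Tomforde:2010aa} directly, not from Proposition \ref{singcoLhersat2} (which concerns Condition (L)); your parenthetical citation is right but the label in front of it is not.
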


\begin{proof}
 Since $L_{K}(\mathcal{G})$ and $L_{K}(E_{\mathcal{G}})$ are Morita equivalent, Proposition \ref{Morid} then implies $L_{K}(\mathcal{G})$ and $L_{K}(E_{\mathcal{G}})$ have isomorphic lattice of ideals. Since $\mathcal{G}$ doesn't have any singular vertices, $E_{\mathcal{G}}$ doesn't have any singular vertices either \cite[Proposition 3.14]{Takeshi-Katsura-Paul-Muhly-Aidan-Sims--Mark-Tomforde:2010aa}. Which, by the work of Abrams and Pino (see \cite{Gene-Abrams-and-Gonzalo-Aranda-Pino:2005aa}), means $L_{K}(E_{\mathcal{G}})$ is  simple if and only if $E_{\mathcal{G}}$ satisfies \textit{Condition (L)} and the only saturated hereditary subsets of $E_{\mathcal{G}}^{0}$ are $E_{\mathcal{G}}^{0}$ and $\emptyset$. But, by propositions \ref{singcoLhersat1}  and \ref{singcoLhersat2}, $E_{\mathcal{G}}$ satisfies \textit{Condition (L)}, with $E_{\mathcal{G}}^{0}$ and $\emptyset$ being the only saturated hereditary subsets of $E_{\mathcal{G}}^{0}$, if and only if  $\mathcal{G}$ satisfies \textit{Condition (L)} and $\mathcal{G}^{0}$ and $\emptyset$ are the only saturated hereditary subsets of $\mathcal{G}^{0}$. Thus, $L_{K}(\mathcal{G})$ is simple if and only if $\mathcal{G}$ satisfies \textit{Condition (L)} and $\mathcal{G}^{0}$ and $\emptyset$ are the only saturated hereditary subsets of $\mathcal{G}^{0}$.
\end{proof}

We now want to extend the previous theorem to the case where $\mathcal{G}$ may contain singular vertices. In order to do that, we need the following proposition. 

\begin{prop}\label{desinconl}
Let $\mathcal{G}$ be an ultragraph, and $\mathcal{F}$ its desingularization. Then,\\
1) $\mathcal{G}^{0}$ and $\emptyset$ are the only saturated hereditary subsets of $\mathcal{G}^{0}$ if and only if $\mathcal{F}^{0}$ and $\emptyset$ are the only saturated hereditary subsets of $\mathcal{F}^{0}$.\\ 
2) $\mathcal{G}$ satisfies \textit{Condition (L)} if and only if $\mathcal{F}$ satisfies \textit{Condition (L)}.
\end{prop}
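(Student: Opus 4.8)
The plan is to establish each direction by tracking how the desingularization construction interacts with the hereditary/saturated conditions and with cycles, using the explicit description of the tails added to singular vertices. For part 1), the key observation is that the desingularization only adds vertices along tails, and those new vertices $\{v_i\}_{i\geq 1}\subseteq F^0\setminus G^0$ sit ``downstream'' of the original singular vertex $v_0$. First I would set up a correspondence between saturated hereditary subsets of $\mathcal{G}^0$ and those of $\mathcal{F}^0$. Given a saturated hereditary $H\subseteq\mathcal{G}^0$, I would define $\overline{H}\subseteq\mathcal{F}^0$ by adjoining the appropriate tail vertices: if $v_0\in H$ is a singular vertex of $\mathcal{G}$, then heredity forces all the tail vertices $\{v_i\}$ (and, in the infinite-emitter case, the ranges $r(g_i)=r(e_i)$, which already lie in $H$ by heredity of $H$) to be included in $\overline{H}$. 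Conversely, given a saturated hereditary $K\subseteq\mathcal{F}^0$, I would intersect with $\mathcal{G}^0$. The main content is checking these operations preserve the saturated and hereditary conditions and are mutually inverse; in particular, I must verify that the new degree-one (or the forced-emission) structure of the tail vertices makes the saturation condition at each $v_i$ automatic, so that no ``new'' nontrivial saturated hereditary set can appear. Having set up the bijection, the statement that only $\emptyset$ and the whole vertex set occur is preserved in both directions.

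For part 2), I would argue directly at the level of cycles, exploiting the fact that the tails added during desingularization contain no cycles (each tail is an infinite one-way path). For the forward direction, suppose $\mathcal{G}$ satisfies Condition (L) and let $\alpha$ be a cycle in $\mathcal{F}$; since the tails are acyclic and every edge $g_i$ exits a tail into $r(e_i)$, the cycle $\alpha$ must project to a genuine cycle $\gamma$ in $\mathcal{G}$ (the edges $f_i$ collapse and the $g_i$ or original edges carry the combinatorics), and an exit for $\gamma$ — or a sink in some $r(e_{n_i})$ — lifts to an exit for $\alpha$. Here I would borrow the same case analysis used in Proposition \ref{singcoLhersat2}, adapted to the tail structure rather than the $E_{\mathcal{G}}$ structure: an exit edge $e$ of $\gamma$ in $\mathcal{G}$ either survives directly in $\mathcal{F}$ or becomes one of the $g_i$-edges, and a sink in $r(e_{n_i})$ remains a sink in $\mathcal{F}$. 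For the reverse direction, any cycle $\gamma$ in $\mathcal{G}$ is itself (after replacing edges out of infinite emitters $e_i$ by the tail detour $f_1\cdots f_{i-1}g_i$) realized as a cycle in $\mathcal{F}$, so an exit in $\mathcal{F}$ descends to an exit in $\mathcal{G}$ — the only subtlety being edges out of singular vertices, which is exactly where the tail bookkeeping must be handled carefully.

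The hard part will be the infinite-emitter case in part 1): when $v_0$ is an infinite emitter, its tail carries edges $g_i$ whose ranges are the original ranges $r(e_i)$, and I must check that the saturation condition at $v_0$ in $\mathcal{G}$ (which involves infinitely many edges and so is vacuous, since $v_0$ is singular) corresponds correctly to saturation along the tail in $\mathcal{F}$ (where each $v_{i-1}$ now emits exactly the two edges $f_i,g_i$ and is therefore non-singular, imposing a genuine saturation constraint). Verifying that the infinitely many finite constraints along the tail in $\mathcal{F}$ exactly reconstitute ``being in $H$ at $v_0$'' without introducing spurious saturated hereditary sets is the delicate point, and I expect it to require an inductive argument walking down the tail. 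Once that is handled, both parts assemble into the stated equivalences, and together with Theorem \ref{singsim} and Proposition \ref{singcoLhersat2} they will yield the simplicity criterion for ultragraphs with singular vertices.
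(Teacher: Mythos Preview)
Your proposal is workable but diverges from the paper's proof in both parts, and in ways worth noting.

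For part 1), the paper does not construct any explicit correspondence between saturated hereditary subsets. Instead it invokes $C^{*}$-algebra results as a black box: by \cite[Theorem 3.10]{Tomforde:2003aaa}, ``only trivial saturated hereditary subsets'' is equivalent to simplicity of the ultragraph $C^{*}$-algebra; by \cite[Proposition 6.6]{Tomforde:2003aa} the algebras $C^{*}(\mathcal{G})$ and $C^{*}(\mathcal{F})$ are Morita equivalent; and Morita equivalence preserves simplicity. Your combinatorial bijection is more elementary and self-contained, and in fact proves a stronger statement than needed (a full bijection rather than just preservation of triviality). The inductive walk down the tail that you flag as the ``hard part'' is genuine work and can be carried out, but note that you need more than what you sketched: when $v_0\notin H$, you must argue that \emph{no} tail vertex is forced into $\overline{H}$ by saturation in $\mathcal{F}$, which requires controlling the two-edge saturation constraint $\{r(f_i),r(g_i)\}\subseteq\overline{H}\Rightarrow v_{i-1}\in\overline{H}$ at every level simultaneously. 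The paper's route trades this bookkeeping for dependence on analytic results.

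For part 2), you are overcomplicating matters. The paper's argument is a two-line observation: a cycle $\alpha$ in $\mathcal{F}$ either avoids all infinite emitters of $\mathcal{G}$ (in which case it is already a cycle in $\mathcal{G}$, since tails are acyclic and sink-tails have no return edges), or it passes through some infinite emitter $v_0$; but in $\mathcal{F}$ the vertex $v_0$ emits the two distinct edges $f_1$ and $g_1$, so $\alpha$ automatically has an exit there. The reverse direction is equally short: a cycle in $\mathcal{G}$ through an infinite emitter already has an exit (infinitely many, in fact), and otherwise it sits unchanged in $\mathcal{F}$. There is no need to project cycles, lift exits, or replace edges by tail detours; the case analysis you borrow from Proposition~\ref{singcoLhersat2} is appropriate for the $E_{\mathcal{G}}$ construction but is unnecessary here because the desingularization modifies the graph only locally at singular vertices.
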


\begin{proof}
1) Suppose $\mathcal{G}^{0}$ and $\emptyset$ are the only hereditary subsets of $\mathcal{G}^{0}$. By \cite[Theorem 3.10]{Tomforde:2003aaa}, $C^{*}(\mathcal{G})$ is a simple $C^{*}$-algebra. Further, by \cite[Proposition 6.6]{Tomforde:2003aa}, $C^{*}(\mathcal{G})$ and $C^{*}(\mathcal{F})$ are Morita equivalent as $C^{*}$-algebras. This means, by \cite[Theorem 3.22]{Raeburn:1998aa}, there is a lattice isomorphism between the closed ideals of $C^{*}(\mathcal{G})$ and the closed ideals of $C^{*}(\mathcal{F})$; which in turn means $C^{*}(\mathcal{F})$ is also a simple $C^{*}$-algebra. Applying \cite[Theorem 3.10]{Tomforde:2003aaa} again, we have $\mathcal{F}^{0}$ and $\emptyset$ are the only saturated hereditary subsets of $\mathcal{F}^{0}$. The same argument establishes the opposite direction. Thus, $\mathcal{G}^{0}$ and $\emptyset$ are the only saturated hereditary subsets of $\mathcal{G}^{0}$ if and only if $\mathcal{F}^{0}$ and $\emptyset$ are the only saturated hereditary subsets of $\mathcal{F}^{0}$. 

2) $(\Rightarrow)$ Suppose $\mathcal{G}$ satisfies \textit{Condition (L)}, and let $\alpha$ be a cycle in $\mathcal{F}$. If $\alpha$ doesn't pass through an infinite emitter in $\mathcal{G}$, then $\alpha$ is also a cycle in $\mathcal{G}$ and we are done. Otherwise, $\alpha$ must pass through an infinite emitter, $v_{0}$, in $\mathcal{G}$. By the construction of $\mathcal{F}$, there are edges $f_{1}$ and $g_{1}$ such that $s_{F}(f_{1})=s_{F}(g_{1})=v_{0}$ with $f_{1}\neq g_{1}$; thus, $\alpha$ must have an exit. And so, all in all, if $\mathcal{G}$ satisfies \textit{Condition (L)}, $\mathcal{F}$ must satisfy \textit{Condition (L)} as well.

$(\Leftarrow)$ Suppose $\mathcal{F}$ satisfies \textit{Condition (L)}, and let $\alpha$ be a cycle in $\mathcal{G}$. If $\alpha$ doesn't pass through an infinite emitter, then $\alpha$ is a cycle in $\mathcal{F}$ and we are done. If, on the other hand, $\alpha$ does pass through an infinite emitter, then it certainly has an exit. Thus, $\mathcal{F}$ satisfying \textit{Condition (L)} implies $\mathcal{G}$ satisfies \textit{Condition (L)}.
\end{proof}

\begin{theorem}\label{singsim1}
Let $\mathcal{G}$ be an ultragraph and $K$ a field. Then, $L_{K}(\mathcal{G})$ is simple if and only if $\mathcal{G}$ satisfies \textit{Condition (L)} and $\mathcal{G}^{0}$ and $\emptyset$ are the only saturated hereditary subsets of $\mathcal{G}^{0}$.
\end{theorem}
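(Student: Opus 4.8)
The plan is to reduce the general case to the singular-vertex-free case already settled in Theorem \ref{singsim}, using desingularization as the bridge. The strategy mirrors the structure of the preceding two simplicity theorems: I would combine a Morita-invariance argument (to transfer simplicity back and forth along the desingularization) with the combinatorial equivalences relating the lattice-theoretic and dynamical hypotheses of $\mathcal{G}$ and $\mathcal{F}$.

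First I would invoke Theorem \ref{final:theorem1} to note that $L_{K}(\mathcal{G})$ and $L_{K}(\mathcal{F})$ are Morita equivalent, where $\mathcal{F}$ is the desingularization of $\mathcal{G}$. Since $\mathcal{F}$ has no singular vertices by construction (every sink and every infinite emitter has had a tail attached), Theorem \ref{singsim} applies to $L_{K}(\mathcal{F})$: it is simple if and only if $\mathcal{F}$ satisfies \emph{Condition (L)} and the only saturated hereditary subsets of $\mathcal{F}^{0}$ are $\mathcal{F}^{0}$ and $\emptyset$. The next step is to transfer simplicity across the Morita equivalence. Here I would appeal to Proposition \ref{Morid}: Morita equivalent rings (with local units, which both Leavitt path algebras are by Lemma \ref{final:lemma2}) have isomorphic lattices of (suitable) ideals, so $L_{K}(\mathcal{G})$ is simple if and only if $L_{K}(\mathcal{F})$ is simple. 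One point to verify carefully is that the lattice isomorphism of Proposition \ref{Morid} is between $\mathcal{I}_{R}=\{I\lhd R: RIR=I\}$ and $\mathcal{I}_{S}$, but since both algebras are rings with local units, \emph{every} ideal satisfies $RIR=I$ (as remarked after Proposition \ref{Morid}), so this genuinely gives a bijection of all two-sided ideals and hence preserves simplicity.

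With simplicity of $L_{K}(\mathcal{G})$ now equivalent to simplicity of $L_{K}(\mathcal{F})$, and the latter characterized by Theorem \ref{singsim} in terms of \emph{Condition (L)} and the saturated hereditary subsets of $\mathcal{F}^{0}$, all that remains is to translate these two conditions on $\mathcal{F}$ back into the corresponding conditions on $\mathcal{G}$. This is exactly what Proposition \ref{desinconl} provides: part 1) gives that $\mathcal{G}^{0}$ and $\emptyset$ are the only saturated hereditary subsets of $\mathcal{G}^{0}$ if and only if the same holds for $\mathcal{F}^{0}$, and part 2) gives that $\mathcal{G}$ satisfies \emph{Condition (L)} if and only if $\mathcal{F}$ does. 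Chaining these equivalences together yields the result. Concretely, the proof would read: $L_{K}(\mathcal{G})$ is simple $\iff$ $L_{K}(\mathcal{F})$ is simple (by Theorem \ref{final:theorem1} and Proposition \ref{Morid}) $\iff$ $\mathcal{F}$ satisfies \emph{Condition (L)} and $\mathcal{F}^{0},\emptyset$ are its only saturated hereditary subsets (by Theorem \ref{singsim}) $\iff$ $\mathcal{G}$ satisfies \emph{Condition (L)} and $\mathcal{G}^{0},\emptyset$ are its only saturated hereditary subsets (by Proposition \ref{desinconl}).

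Since every nontrivial ingredient has already been established in the excerpt, I do not anticipate a genuine mathematical obstacle; the work of this theorem is purely in assembling the chain of equivalences. If there is a subtle point demanding care, it is the application of Proposition \ref{Morid} to conclude that simplicity is a Morita invariant here: one must confirm that the lattice isomorphism it furnishes restricts to a bijection between \emph{all} ideals (using the local-units property to discharge the hypothesis $RIR=I$), so that ``only trivial ideals'' on one side forces ``only trivial ideals'' on the other. Beyond that, the argument is a short, clean composition of the Morita equivalence from Theorem \ref{final:theorem1}, the base case Theorem \ref{singsim}, and the combinatorial dictionary of Proposition \ref{desinconl}.
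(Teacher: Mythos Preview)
Your proposal is correct and follows essentially the same approach as the paper's proof: pass to the desingularization $\mathcal{F}$ via Theorem~\ref{final:theorem1}, use Proposition~\ref{Morid} to transfer simplicity across the Morita equivalence, apply Theorem~\ref{singsim} to $\mathcal{F}$, and then translate the conditions back to $\mathcal{G}$ via Proposition~\ref{desinconl}. Your explicit remark that the local-units property guarantees $RIR=I$ for every ideal (so that Proposition~\ref{Morid} yields a bijection of \emph{all} two-sided ideals) is a nice point of care that the paper leaves implicit.
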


\begin{proof}
Let $\mathcal{G}$ be an ultragraph, and $\mathcal{F}$ its desingularization. By Theorem \ref{final:theorem1} and Proposition  \ref{Morid}, $L_{K}(\mathcal{G})$ is simple if and only if $L_{K}(\mathcal{F})$ is simple. But, by\\
\noindent Theorem \ref{singsim}, $L_{K}(\mathcal{F})$ is simple if and only if $\mathcal{F}$ satisfies \textit{Condition (L)} and $\mathcal{F}^{0}$ and $\emptyset$ are the only saturated hereditary subsets of $\mathcal{F}^{0}$. Finally, applying Proposition \ref{desinconl}, we can conclude $L_{K}(\mathcal{G})$ is simple if and only if $\mathcal{G}$ satisfies \textit{Condition (L)} and $\mathcal{G}^{0}$ and $\emptyset$ are the only saturated hereditary subsets of $\mathcal{G}^{0}$.
\end{proof} 

Now, one might ask what sort of simplicity conditions exist for $L_{R}(\mathcal{G})$, where $R$ is any unital commutative ring. In general, studying the ideal structure of $L_{R}(\mathcal{G})$ purely in terms of the properties of $\mathcal{G}$ is difficult, if not impossible. The main reason being, if $R$ is not a field, it can have a rich ideal structure of its own, which in turn influences the ideal structure of $L_{R}(\mathcal{G})$; obviously, the ideal structure of $R$ has nothing to do with the properties of $\mathcal{G}$. There are, however, types of ideals we can study in terms of the properties of $\mathcal{G}$. 

\begin{definition}\label{bideal}
Let $\mathcal{G}$ be an ultragraph. $I \lhd L_{R}(\mathcal{G})$ is called a \textit{basic ideal} if for any $r\in R\setminus\{0\}$ and any $A\in \mathcal{G}^{0}$, $rp_{A}\in I\implies p_{A}\in I$. Further, $L_{R}(\mathcal{G})$ is \textit{basically simple} if $\{0\}$ and $L_{R}(\mathcal{G})$ are its only basic ideals.
\end{definition}

In the case of a graph $E$, an ideal $I$ in $L_{R}(E)$ is basic if $rq_{v}\in I$, for any $r\in R\setminus\{0\}$, implies $q_{v}\in I$. Similarly, we say $L_{R}(E)$ is \textit{basically simple} if $\{0\}$ and $L_{R}(E)$ are the only basic ideals of $L_{R}(E)$. Let $H\subseteq \mathcal{G}^{0}$ be saturated and hereditary. We set
$$B_{H}:=\Big\{v\in G^{0}: |s^{-1}(v)|=\infty \text{ with } 0<|s^{-1}(v)\cap\{e:r(e)\notin H\}|<\infty\Big\}.$$
The elements of $B_{H}$ are called the \textit{breaking vertices} of $H$. An \textit{admissible pair in $\mathcal{G}$} is a pair $(H,S)$ where $H$ is a saturated hereditary subset of $\mathcal{G}^{0}$ and $S\subseteq B_{H}$. Interestingly, there is a bijection between the set of admissible pairs of $\mathcal{G}$ and the graded basic ideals of $L_{R}(\mathcal{G})$ \cite[Theorem 4.4 (2)]{M.-Imanfar:2017aa}. In the case of a graph $E$, this reduces to a bijection between the set of saturated hereditary subsets of $E^{0}$ and the graded basic ideals of $L_{R}(E)$ (see \cite[Theorem 7.9]{Tomforde:2011aa}).   

Tomforde shows in \cite{Tomforde:2011aa}, given a row-finite graph $E$, $L_{R}(E)$ is basically simple if and only if $E$ satisfies \textit{Condition (L)} and $\emptyset$ and $E^{0}$ are the only saturated hereditary subsets  of $E^{0}$. One can easily be led to believe a similar result should hold for $L_{R}(\mathcal{G})$. One might further be tempted to use Morita equivalence, as previously done, to establish such a result. However, a challenge particular to this method is establishing the lattice isomorphism of ideals given in Proposition  \ref{Morid} restricts to an isomorphism between basic ideals; the author has not yet been able to establish this. Perhaps the better approach is to work directly using methods similar to the ones found in \cite{Tomforde:2011aa}.

\bibliographystyle{amsplain}
\bibliography{thesisbib}
\end{document}